\newcommand{\myeqc}[1]{ {\tiny\textcircled{#1}} }
\newtheorem{theorem}{Theorem}
\newtheorem{lemma}{Lemma}
\newtheorem{corollary}{Corollary}
\newtheorem{remark}{Remark}
\newtheorem{assumption}{Assumption}
\renewcommand{\top}{T}
\newcommand{\set}[1]{\left\{#1\right\}}
\newcommand{\sets}[1]{\{#1\}}
\newcommand{\norms}[1]{\Vert#1\Vert}
\newcommand{\dom}[1]{\mathrm{dom}\left(#1\right)}
\newcommand{\iprods}[1]{\langle#1\rangle}
\newcommand{\R}{\mathbb{R}}
\newcommand{\Rext}{\R\cup\{+\infty\}}
\newcommand{\prox}{\mathrm{prox}}
\newcommand{\argmin}{\mathrm{arg}\!\displaystyle\min}
\newcommand{\argmax}{\mathrm{arg}\!\displaystyle\max}
\newcommand{\zero}[1]{{\boldsymbol{0}}}
\newcommand{\Expb}[1]{\mathbb{E}\big[#1\big]}
\newcommand{\E}{\mathbb{E}}
\newcommand{\Fc}{\mathcal{F}}
\newcommand{\Eb}{\mathbb{E}} 
\newcommand{\Hc}{\mathcal{H}} 
\newcommand{\Lc}{\mathcal{L}} 
\newcommand{\Vc}{\mathcal{V}} 
\newcommand{\Tc}{\mathcal{T}} 
\newcommand{\Gc}{\mathcal{G}}
\newcommand{\Oc}{\mathcal{O}} 
\newcommand{\BigO}[1]{\mathcal{O}\big(#1\big)} 
\newcommand{\BigOs}[1]{\mathcal{O}(#1)}
\newcommand{\beforesec}{\vspace{-2ex}}
\newcommand{\aftersec}{\vspace{-2ex}}
\newcommand{\beforesubsec}{\vspace{-1.5ex}}
\newcommand{\aftersubsec}{\vspace{-1ex}}
\newcommand{\beforesubsubsec}{\vspace{-2ex}}
\newcommand{\aftersubsubsec}{\vspace{-1ex}}
\title{Shuffling Gradient-Based Methods for Nonconvex-Concave Minimax Optimization}
\author{%
Quoc Tran-Dinh\\
Department of Statistics and Operations Research \\  
The University of North Carolina at Chapel Hill\\
\texttt{quoctd@email.unc.edu} 
\And
Trang H. Tran\\
School of OR and Information Engineering \\  
 Cornell University, Ithaca, NY \\  
 \texttt{htt27@cornell.edu}  
\And
Lam M. Nguyen \\
IBM Research, Thomas J. Watson Research Center\\ 
Yorktown Heights, NY \\
\texttt{LamNguyen.MLTD@ibm.com}
}
\begin{document}
\maketitle

\begin{abstract}
This paper aims at developing novel shuffling gradient-based methods for tackling two classes of minimax problems: \textit{nonconvex-linear} and \textit{nonconvex-strongly concave} settings.
The first algorithm addresses the nonconvex-linear minimax model and achieves the state-of-the-art oracle complexity typically observed in nonconvex optimization. 
It also employs a new shuffling estimator for the ``hyper-gradient'', departing from standard shuffling techniques in optimization.
The second method consists of two variants: \textit{semi-shuffling} and \textit{full-shuffling} schemes. 
These variants tackle the nonconvex-strongly concave minimax setting. 
We establish their oracle complexity bounds under standard assumptions, which, to our best knowledge, are the best-known for this specific setting.
Numerical examples demonstrate the performance of our algorithms and compare them with two other methods. 
Our results show that the new methods achieve comparable performance with SGD, supporting the potential of incorporating shuffling strategies into minimax algorithms.
\end{abstract}

\section{Introduction}\label{sec_intro}
\aftersec
Minimax problems arise in various applications across generative machine learning, game theory, robust optimization, online learning, and reinforcement learning (e.g., \cite{arjovsky2017wasserstein,azar2017minimax,Ben-Tal2009,bhatia2020online,gidel2018variational,goodfellow2014generative,ho2022game,jabbar2021survey,lin2022distributionally,madry2018towards,wang2021adversarial,zhang2021multi}). 
These models often involve stochastic settings or large finite-sum objective functions.
To tackle these problems, existing methods frequently adapt stochastic gradient descent (SGD) principles to develop algorithms for solving the underlying minimax problems \cite{beznosikov2023stochastic,goodfellow2014generative}. 
For instance, in generative adversarial networks (GANs), early algorithms employed stochastic gradient descent-ascent methods where two routines, each using an SGD loop, ran iteratively \cite{goodfellow2014generative}.
However, practical implementations of SGD often incorporate shuffling strategies, as seen in popular deep learning libraries like TensorFlow and PyTorch. 
This has motivated recent research on developing shuffling techniques specifically for optimization algorithms \cite{mishchenko2020random,mishchenko2022proximal,nguyen2020unified,haochen2018random,meng2019convergence,Safran2019HowGoodSGDShuffling,ying2017convergence}. 
Our work builds upon this trend by developing shuffling methods for two specific classes of minimax problems.

\textbf{Problem statement.}
In this paper, we study the following minimax optimization problem:
\vspace{-0.5ex}
\begin{equation}\label{eq:minimax_prob}
\min_{w \in \mathbb{R}^p}\max_{u \in \R^q}\Big\{ \Lc(w, u) := f(w) + \mathcal{H}(w, u)  - h(u) \equiv f(w) +  \tfrac{1}{n}\sum_{i=1}^n \mathcal{H}_i(w, u) - h(u) \Big\},
\end{equation}
where $f : \R^p\to\Rext$ is a proper, closed, and convex function, $\mathcal{H}_i : \R^p\times \R^q \to \R$ are smooth for all $i \in [n] := \sets{1,2,\cdots, n}$, and $h : \R^q\to \Rext$ is also a proper, closed, and convex function.
In this paper, we will focus on two classes of problems in \eqref{eq:minimax_prob}, overlapped to each other.
\vspace{-0.25ex}
\begin{compactitem}
\item[(NL)] $\mathcal{H}_i$ is nonconvex in $w$ and linear in $u$ as $\mathcal{H}_i(w, u) := \iprods{F_i(w), Ku}$ for a given function $F_i : \R^p \to \R^m$ and a matrix $K \in \R^{q\times m}$ for all $i \in [n]$ and $(w, u) \in \dom{\Lc}$.
\item[(NC)] $\mathcal{H}_i$ is nonconvex in $w$ and $\mathcal{H}_i(w, \cdot) - h(\cdot)$ is strongly concave in $u$ for all $(w, u)\in\dom{\Lc}$.
\end{compactitem}
\vspace{-0.25ex}
Although (NC) looks more general than (NL), both cases can be overlapped, but one is not a special case of the other. 
Under these two settings, our approach will rely on a \textit{bilevel optimization} approach, where the lower-level problem is to solve $\max_u\Lc(w, u)$, while the upper-level one is $\min_w\Lc(w, u)$.

\textbf{Challenges.}
The setting (NL) is a special case of stochastic nonconvex-concave minimax problems because the objective term $\Hc(w, u) := \iprods{F(w), Ku}$ is linear in $u$. 
It is equivalent to the compositional model \eqref{eq:com_opt0} described below. 
However, if $h$ is only merely convex and not strongly convex (e.g., the indicator of a standard simplex), then $\Phi_0$ in \eqref{eq:com_opt0} becomes nonsmooth regardless of $F$'s properties.
This presents our first challenge.
A natural approach to address this issue, as discussed in Section~\ref{sec:Ass_and_Grad}, is to smooth $\Phi_0$.
The second challenge arises from the composition between the outer function $h^{*}$ and the finite sum $F(\cdot)$ in \eqref{eq:com_opt0}. 
Unlike standard finite-sum optimization, this composition prevents any direct use of existing techniques, requiring a novel approach for algorithmic development and analysis.
The third challenge involves unbiased estimators for gradients or ``hyper-gradients'' in minimax problems. 
Most existing methods rely on unbiased estimators for objective gradients, with limited work exploring biased estimators. 
While biased estimators can be used, they require variance reduction properties (see, e.g., \cite{driggs2019bias}).
The setting (NC) faces the same second and third challenges as the setting (NL). 
Additionally, when reformulating it as a minimization problem using a bilevel optimization approach \eqref{eq:upper_level_min}, constructing a shuffling estimator for the ``hyper-gradient'' $\nabla{\Phi}_0$ becomes unclear. 
This requires solving the lower-level maximization problem \eqref{eq:lower_level_max}.
Therefore, it remains an open question whether shuffling gradient-type methods can be extended to this bilevel optimization approach to address \eqref{eq:minimax_prob}.
In this paper, we address the following research question:
\begin{center}
\vspace{-1ex}
\textit{Can we efficiently develop shuffling gradient methods to solve  \eqref{eq:minimax_prob} for both  $\mathrm{(NL)}$ and $\mathrm{(NC)}$ settings?}
\vspace{-1ex}
\end{center}
Our attempt to tackle this question leads to a novel way of constructing shuffling estimators for the hyper-gradient $\nabla{\Phi}_0$ or its smoothed counterpart. 
This allows us to develop two shuffling gradient-based algorithms with rigorous theoretical  guarantees on oracle complexity, matching state-of-the-art complexity results in shuffling-type algorithms for nonconvex optimization.

\textbf{Related work.}
Shuffling optimization algorithms have gained significant attention in optimization and machine communities, demonstrating advantages over standard SGDs, see, e.g., \cite{mishchenko2020random,mishchenko2022proximal,nguyen2020unified,haochen2018random,meng2019convergence,Safran2019HowGoodSGDShuffling,ying2017convergence}. 
Nevertheless, applying these techniques to minimax problems like \eqref{eq:minimax_prob} remains challenging, with limited existing literature (e.g., \cite{cho2022sgda,das2022sampling,emmanouilidis2024stochastic}).
Das \textit{et al.} in \cite{das2022sampling} explored a specific case of \eqref{eq:minimax_prob} without nonsmooth terms $f$ and $h$, assuming strong monotonicity and $L$-Lipschitz continuity of the gradient $\nabla{\Hc} := [\nabla_w{\Hc}, -\nabla_u{\Hc} ]$ of the joint objective $\Hc$.
Their algorithm simplifies to a shuffling variant of fixed-point iteration or a gradient descent-ascent scheme, not applicable to our settings.
Cho and Yun in \cite{cho2022sgda} built upon \cite{das2022sampling} by relaxing the strong monotonicity to Polyak-{\L}ojasiewicz (P{\L}) conditions.
This work is perhaps the most closely related one to our algorithm, Algorithm~\ref{alg:SGM2}, for the (NC) setting.
Note that the method in \cite{cho2022sgda} exploits Nash's equilibrium perspective with a simultaneous update, which is different from our alternative update.
Moreover, \cite{cho2022sgda} only considers the noncomposite case with $f=0$ and $h = 0$.
Though we only focus on a nonconvex-strongly-concave setting (NC), our results here can be extended to the P{\L} condition as in  \cite{cho2022sgda}.
Very recently, Konstantinos \textit{et al.} in \cite{emmanouilidis2024stochastic} introduced shuffling extragradient methods for variational inequalities, which encompass convex-concave minimax problems as a special case. 
However, this also falls outside the scope of our work due to the nonconvexity of \eqref{eq:minimax_prob} in $w$. 
Again, all the existing works in \cite{cho2022sgda,das2022sampling,emmanouilidis2024stochastic} utilize a Nash's equilibrium perspective, while ours leverages a bilevel optimization technique.
Besides, in contrast to our sampling-without-replacement approach, stochastic and randomized methods (i.e. using i.i.d. sampling strategies) have been extensively studied for minimax problems, see, e.g., \cite{TranDinh2020f,gorbunov2022stochastic,hamedani2018iteration,hsieh2020explore,loizou2021stochastic,luo2020stochastic,palaniappan2016stochastic,yang2020global,zhao2019optimal}.
A comprehensive comparison can be found, e.g., in \cite{cho2022sgda}.

\textbf{Contribution.}
Our main contribution can be summarized as follows.
\begin{compactitem}
\item[(a)] For setting (NL), we suggest to reformulate \eqref{eq:minimax_prob} into a compositional minimization and exploit a smoothing technique to treat this reformulation.
We propose a new way of constructing shuffling estimators for the ``hyper-gradient'' $\nabla{\Phi_{\gamma}}$ (cf. \eqref{eq:smoothed_com_opt_prob}) and establish their properties.

\item[(b)] We propose a novel shuffling gradient-based algorithm (\textit{cf.} Algorithm~\ref{alg:SGM1}) to approximate an $\epsilon$-KKT point of \eqref{eq:minimax_prob} for the setting (NL).
Our method requires $\Oc(n\epsilon^{-3})$ evaluations of $F_i$ and $\nabla{F_i}$ under the strong convexity of $h$, and $\Oc(n\epsilon^{-7/2})$  evaluations of $F_i$ and $\nabla{F_i}$  without the strong convexity of $h$, for a desired accuracy $\epsilon > 0$.

\item[(c)] For setting (NC), we develop two variants of the shuffling gradient method: \textit{semi-shuffling} and \textit{full-shuffling} schemes (\textit{cf.} Algorithm~\ref{alg:SGM2}).
The semi-shuffling variant combines both gradient ascent and shuffling gradient methods to construct a new algorithm, which requires  $\Oc(n\epsilon^{-3})$ evaluations of both $\nabla_w{\Hc}_i$ and $\nabla_u{\Hc}_i$.
The full-shuffling scheme allows to perform both shuffling schemes on the maximization and the minimization alternatively, requiring either $\Oc(n\epsilon^{-3})$ or $\Oc(n\epsilon^{-4})$ evaluations of $\nabla_u{\Hc}_i$ depending on our assumptions, while maintaining $\Oc(n\epsilon^{-3})$ evaluations of $\nabla_w{\Hc}_i$ for a given desired accuracy $\epsilon > 0$.
\end{compactitem}
If a random shuffling strategy is used in our algorithms, then the oracle complexity in all the cases presented above is improved by a factor  of $\sqrt{n}$.
Our settings (NL) and (NC) of \eqref{eq:minimax_prob} are different from existing works \cite{cho2022sgda,das2022sampling,emmanouilidis2024stochastic}, as we work with general nonconvexity in $w$, and linearity or [strong] concavity in $u$, and both $f$ and $h$ are possibly nonsmooth.
Our algorithms are not reduced or similar to existing shuffling methods for optimization, but we use shuffling strategies to form estimators for the hyper-gradient $\nabla{\Phi}_0$ in \eqref{eq:exact_grad_phi}.
The oracle complexity in both settings (NL) and (NC) is similar to the ones in nonconvex optimization and in a special case of \eqref{eq:minimax_prob} from \cite{cho2022sgda} (up to a constant factor). 

\textbf{Paper outline.}
The rest of this paper is organized as follows.
Section~\ref{sec:Ass_and_Grad} presents our bilevel optimization approach to \eqref{eq:minimax_prob} and recalls necessary preliminary results.
Section~\ref{sec:SGM1_method} develops our shuffling algorithm to solve the setting (NL) of  \eqref{eq:minimax_prob} and establishes its convergence.
Section~\ref{sec:SGM2_method} proposes new shuffling methods to solve the setting (NC) and investigates their convergence.
Section~\ref{sec:num_experiments} presents numerical experiments, while technical proofs and supporting results are deferred to Supp. Docs.

\textbf{Notations.}
For a function $f$, we use $\dom{f}$ to denote its effective domain, and $\nabla{f}$ for its gradient or Jacobian.
If $f$ is convex, then $\nabla{f}$ denotes a subgradient, $\partial{f}$ is its subdifferential, and $\prox_f$ is its proximal operator.
We use $\Fc_t$ to denote $\sigma(w_0, w_1,\cdots, w_t)$, a $\sigma$-algebra generated by random vectors $w_0, w_1, \cdots, w_t$, $\Eb_t[\cdot] = \Eb[ \cdot \vert \Fc_t]$ is a conditional expectation, and $\Eb[\cdot]$ is the full expectation.
As usual, $\Oc(\cdot)$ denotes Big-O notation in the theory of algorithm complexity.

\beforesec
\section{Bilevel Optimization Approach and Preliminary Results}\label{sec:Ass_and_Grad}
\aftersec
Our approach relies on a bilevel optimization technique \cite{dempe2002foundations} in contrast to Nash's game viewpoint \cite{Luo1996}, which treats the maximization as a lower level  and the minimization as an upper level problem.

\beforesubsec
\subsection{Bilevel optimization approach}\label{subsec:bilevel_opt_approach}
\aftersubsec
The minimax model \eqref{eq:minimax_prob} is split into a \textit{lower-level $($i.e. a follower$)$ maximization problem} of the form:
\begin{equation}\label{eq:lower_level_max}
\begin{array}{lcl}
\Phi_0(w) & := &  {\displaystyle\max_{u\in\R^q}}\big\{ \mathcal{H}(w, u) - h(u) \equiv \frac{1}{n}\sum_{i=1}^n \mathcal{H}_i(w, u) - h(u) \big\}, \\
u_0^{*}(w) & := &  {\argmax_{u \in \R^q}} \big\{ \mathcal{H}(w, u) - h(u) \equiv \frac{1}{n}\sum_{i=1}^n \mathcal{H}_i(w, u) - h(u) \big\}.
\end{array}
\end{equation}
For $\Phi_0$ defined by \eqref{eq:lower_level_max}, then the \textit{upper-level $($i.e. the leader$)$ minimization problem} can be written as
\begin{equation}\label{eq:upper_level_min}
\Psi_0^{\star} := \min_{w \in \R^p} \Big\{ \Psi_0(w) := \Phi_0(w) + f(w) \Big\}.
\end{equation}
Clearly, this approach is sequential, and only works if $\Phi_0$ is well-defined, i.e. \eqref{eq:lower_level_max} is globally solvable. 
Hence, the concavity of $\Hc(w,\cdot) - h(\cdot)$ w.r.t. to $u$ is crucial for this approach as stated below.
However, this assumption can be relaxed to a global solvability of \eqref{eq:lower_level_max} combined with a P{\L} condition as in \cite{cho2022sgda}.

\begin{assumption}[Basic]\label{as:A0}
Problems \eqref{eq:minimax_prob} and  \eqref{eq:upper_level_min} satisfy the following assumptions for all $i \in [n]$:
\begin{compactitem}
\item[$\mathrm{(a)}$] $\Psi_0^{\star} := \inf_{w}\Psi_0(w) > -\infty$.
\item[$\mathrm{(b)}$] $\Hc_i$ is differentiable w.r.t. $(w, u) \in \dom{\Lc}$ and $\Hc_i(w,\cdot)$ is concave in $u$ for any $w$.
\item[$\mathrm{(c)}$] Both $f : \R^p \to\Rext$ and $h : \R^q \to\Rext$ are proper, closed, and convex.
\end{compactitem}
\end{assumption}
This assumption remains preliminary. 
To develop our algorithms, we will need more conditions on $\Hc_i$ and possibly on $f$ and $h$, which will be stated later.
In addition, we can work with a sublevel set
\begin{equation}\label{eq:Psi0_sublevel_set}
\Lc_{\Psi_0}(w_0) := \set{ w \in \dom{\Psi_0} : \Psi_0(w) \leq \Psi_0(w_0)}
\end{equation}
 of $\Psi_0$ for a given initial point $w_0$ from our methods.
If $u_0^{*}(w)$ is uniquely well-defined for given $w \in \Lc_{\Psi_0}(w_0)$, then by the well-known Danskin's theorem, $\Phi_0$ is differential at $w$ and its gradient is
\begin{equation}\label{eq:exact_grad_phi}
\begin{array}{ll}
\nabla{\Phi}_0(w) = \nabla_{w}\mathcal{H}(w, u_0^{*}(w)) =  \frac{1}{n}\sum_{i=1}^n \nabla_{w}\mathcal{H}_i(w, u_0^{*}(w)).
\end{array}
\end{equation}
We adopt the term ``hyper-gradient'' from bilevel optimization to name $\nabla{\Phi}_0$ in this paper.

\beforesubsec
\subsection{Technical assumptions and properties of $\Phi_0$ for nonconvex-linear setting (NL)}\label{subsec:nonconvex_linear_case}
\aftersubsec
\textbf{$\mathrm{(a)}$~\textit{Compositional minimization formulation}.}
If $\mathcal{H}_i(w, u) := \iprods{F_i(w), Ku}$ as in setting (NL), then \eqref{eq:minimax_prob} is equivalently reformulated into the following \textit{nonconvex compositional minimization} problem:
\begin{equation}\label{eq:com_opt0}
\hspace{-1ex}
\min_{w\in\mathbb{R}^p}\Big\{ \Psi_0(w) := f(w) + \Phi_0(w) = f(w) + h^{*}\Big(\tfrac{1}{n}\sum_{i=1}^nK^{\top}F_i(w)\Big) \Big\},
\hspace{-4ex}
\tag{CO}
\end{equation}
where $h^{*}(v) := \sup_{u}\{ \langle v, u\rangle - h(u) \}$, the Fenchel conjugate of $h$, and $\Phi_0(w) = h^{*}(K^{\top}F(w))$.
If $h$ is not strongly convex, then $h^{*}$ is convex but possibly nonsmooth.  

\textbf{$\mathrm{(b)}$~\textit{Technical assumptions}.}
To develop our algorithms, we also need the following assumptions.
\begin{assumption}\label{as:A2}
$h$ is $\mu_h$-strongly convex with $\mu_h \geq 0$, and $\mathrm{dom}(h)$ is bounded by $M_h < +\infty$.
\end{assumption}
\begin{assumption}[For $F_i$]\label{as:A1}
For setting $\mathrm{(NL)}$ with $\Hc_i(w, u) := \iprods{F_i(w), Ku}$ $(i \in [n])$, assume that
\begin{compactitem}
\item[($\mathrm{a)}$] $F_i$ is continuously differentiable, and its Jacobian $\nabla{F_i}$ is $L_{F_i}$-Lipschitz continuous.  
\item[$\mathrm{(b)}$] $F_i$ is also $M_{F_i}$-Lipschitz continuous or equivalently, its Jacobian $\nabla{F_i}$ is $M_{F_i}$-bounded.  
\item[$\mathrm{(c)}$] There exists a positive constant $\sigma_J \in (0, +\infty)$ such that
\begin{equation}\label{eq:bounded_variance}
\begin{array}{l}
\frac{1}{n}\sum_{i=1}^n\Vert \nabla{F_i}(w) - \nabla{F}(w)\Vert^2 \leq \sigma_J^2, \quad \forall w \in \dom{F}.
\end{array}
\end{equation}
\end{compactitem}
\end{assumption}
Assumption~\ref{as:A2}  allows $\mu_h = 0$ that also covers the non-strong convexity of $h$.
Assumption~\ref{as:A1} is rather standard to develop gradient-based methods for solving \eqref{eq:minimax_prob}. 
Under Assumption~\ref{as:A1}, the finite-sum $F$ is also $M_F$-Lipschitz continuous and the Jacobian $\nabla{F}$ of $F$ is also $L_F$-Lipschitz continuous with 
\begin{equation}\label{eq:MF_and_LF}
M_F := \max\sets{ M_{F_i} : i \in [n] }\quad  \text{and} \quad L_F :=  \max\sets{ L_{F_i} : i \in [n]}.
\end{equation}
Condition~\eqref{eq:bounded_variance} can be relaxed to the form $\frac{1}{n}\sum_{i=1}^n\Vert \nabla{F_i}(w) - \nabla{F}(w)\Vert^2 \leq \sigma_J^2 + \Theta_J\norms{\nabla\Phi_0(w)}^2$ for some $\Theta_J \geq 0$, where $\nabla{\Phi_0}$ is a [sub]gradient of $\Phi_0$ or $\Phi_{\gamma}$ (its smoothed approximation).
Moreover, under Assumption \ref{as:A1}, if $\mu_h > 0$, then $\nabla{h^{*}}$  is $L_{h^{*}}$-Lipschitz continuous with $L_{h^{*}} := \frac{1}{\mu_h}$.
Thus it is possible (see \cite{TranDinh2020f}) to prove that $\Phi_0$ is differentiable, and $\nabla{\Phi_0}$ is also $L_{\Phi_0}$-Lipschitz continuous with $L_{\Phi_0} := M_{h}\norms{K} L_F + \frac{M_F^2\norms{K}^2}{\mu_{h}}$ as a consequence of Lemma~\ref{le:smoothness_of_phi}  when $\gamma\downarrow 0^{+}$ in Supp. Doc.~\ref{apdx:sec:techical_results}.
 
\textbf{$\mathrm{(c)}$~\textit{Smoothing technique for lower-level maximization problem \eqref{eq:lower_level_max}}.}
If $h$ is only merely convex (i.e. $\mu_h = 0$), then \eqref{eq:lower_level_max} may not be uniquely solvable, leading to the possible non-differentiability of $\Phi_0$.
Let us define the following convex  function:
\begin{equation}\label{eq:psi_func}
\phi_0(v) := \max_{u \in \R^q}\left\{ \langle v, Ku\rangle - h(u) \right\} = h^{*}(K^Tv).
\end{equation}
Then, $\Phi_0$ in \eqref{eq:lower_level_max} or \eqref{eq:com_opt0} can be written as $\Phi_0(w) = \phi_0(F(w)) = \phi_0\left( \frac{1}{n}\sum_{i=1}^nF_i(w) \right)$.
Our goal is to smooth $\phi_0$ if $h$ is not strongly convex, leading to 
\begin{equation}\label{eq:smoothed_phi0}
\left\{\begin{array}{lcl}
\phi_{\gamma}(v) & := &  {\displaystyle\max_u}\left\{ \langle v, Ku\rangle - h(u) - \gamma b(u)\right\}, \\
u^{*}_{\gamma}(v) &:= & \argmax_u \left\{ \langle v, Ku\rangle - h(u) - \gamma b(u)\right\},
\end{array}\right.
\end{equation}
where $\gamma > 0$ is a given smoothness parameter and $b : \R^q\to\R$ is a proper, closed, and $1$-strongly convex function such that $\mathrm{dom}(h)\subseteq\mathrm{dom}(b)$.
We also denote $D_b := \sup\sets{\norms{\nabla{b}(u)} : u \in \dom{h}}$.
In particular, if we choose $b(u) := \frac{1}{2}\norms{u - \bar{u}}^2$ for a fixed $\bar{u}$, then $u^{*}_{\gamma}(v) = \prox_{h/\gamma}(\bar{u} - K^Tv)$. 

Using $\phi_{\gamma}$, problem \eqref{eq:com_opt0} can be approximated by its smoothed formulation:
\begin{equation}\label{eq:smoothed_com_opt_prob}
\begin{array}{lcl}
{\displaystyle\min_{w\in\mathbb{R}^p}}\Big\{ \Psi_{\gamma}(w) := f(w) + \Phi_{\gamma}(w) = f(w) + \phi_{\gamma}(F(w)) \equiv f(w) + \phi_{\gamma} \Big(\frac{1}{n}\sum_{i=1}^nF_i(w)\Big) \Big\}.
\end{array}
\end{equation}
To develop our method, one key step is to approximate the hyper-gradient of $\Phi_{\gamma}$ in \eqref{eq:smoothed_com_opt_prob}, where
\begin{equation}\label{eq:grad_phi_gamma}
\begin{array}{lcl}
\nabla{\Phi_{\gamma}}(w) & = & \nabla{F}(w)^{\top}\nabla{\phi}_{\gamma}(F(w)) = \frac{1}{n}\sum_{i=1}^n \nabla{F_i}(w)^{\top}\nabla{\phi}_{\gamma}(F(w)).
\end{array}
\end{equation}
Then, $\nabla{\Phi}_{\gamma}$  is $L_{\Phi_{\gamma}}$-Lipschitz continuous  with $L_{\Phi_{\gamma}} := M_{h}\norms{K} L_F + \frac{M_F^2\norms{K}^2}{\mu_{h} + \gamma}$ (see Lemma~\ref{le:smoothness_of_phi}).

\beforesubsec
\subsection{Technical assumptions and properties of $\Phi_0$ for the nonconvex-strongly-concave setting}\label{subsec:nonconvex_scvx_case}
\aftersubsec
To develop our shuffling gradient-based algorithms for solving \eqref{eq:minimax_prob} under the nonconvex-strongly-concave setting (NC), we impose the following assumptions.
\begin{assumption}[For $\Hc_i$]\label{as:A3_SGM2}
$\mathcal{H}_i$ for all $i \in [n]$ in \eqref{eq:minimax_prob} satisfies the following conditions:
\begin{compactitem}
\item[$\mathrm{(a)}$] For any given $w$ such that $(w, u) \in \dom{\Hc}$, $\mathcal{H}_i(w, \cdot)$ is $\mu_{H}$-strongly concave w.r.t. $u$.
\item[$\mathrm{(b)}$] $\nabla{\mathcal{H}}_i$ is $(L_w, L_u)$-Lipschitz continuous, i.e. for all $(w, u), (\hat{w}, \hat{u}) \in \dom{\Hc}$:  
\begin{equation}\label{eq:Lsmooth2}
\hspace{-2ex}
\begin{array}{lcl}
\Vert \nabla{\mathcal{H}}_i(w, u) - \nabla{\mathcal{H}}_i(\hat{w}, \hat{u}) \Vert^2 \leq L_w^2\norms{w - \hat{w}}^2 + L_u^2\norms{u - \hat{u}}^2.  
\end{array}
\hspace{-2ex}
\end{equation}
\item[$\mathrm{(c)}$] There exist  two constants $\Theta_w \geq 0$ and $\sigma_{w} \geq 0$  such that for $(w, u) \in\dom{\mathcal{H}}$, we have
\begin{equation}\label{eq:bounded_variance2}
\begin{array}{lcl}
\frac{1}{n}\sum_{i=1}^n\Vert \nabla_w{\mathcal{H}}_i(w, u) - \nabla_w{\mathcal{H}}(w, u)\Vert^2 & \leq & \Theta_w \norms{  \nabla_w{\mathcal{H}}(w, u) }^2 + \sigma_{w}^2.
\end{array}
\end{equation}
There exist two constants  $\Theta_u \geq 0$ and  $\sigma_{u} \geq 0$ such that for all $(w, u) \in\dom{\mathcal{H}}$, we have
\begin{equation}\label{eq:bounded_variance2b}
\begin{array}{lcl}
\frac{1}{n}\sum_{i=1}^n\Vert \nabla_u{\mathcal{H}}_i(w, u) - \nabla_u{\mathcal{H}}(w, u)\Vert^2 & \leq &  \Theta_u \norms{ \nabla_u{\mathcal{H}}(w, u)} ^2 + \sigma_{u}^2.
\end{array}
\end{equation}
\end{compactitem}
\end{assumption}
Assumption~\ref{as:A3_SGM2}(a) makes sure that our lower-level maximization of \eqref{eq:minimax_prob} is well-defined.
Assumption~\ref{as:A3_SGM2}(b) and (c) are standard in shuffling gradient-type methods as often seen in nonconvex optimization \cite{TranDinh2020f}.

\begin{lemma}[Smoothness of $\Phi_0$]\label{le:property_of_Phi0}
Under Assumptions~\ref{as:A2} and~\ref{as:A3_SGM2}, $u_0^{*}(\cdot)$ in \eqref{eq:lower_level_max} is $\kappa$-Lipschitz continuous with $\kappa := \frac{L_u}{\mu_{H} + \mu_h}$.
Moreover, $\nabla{\Phi}_0$ in \eqref{eq:exact_grad_phi} is $L_{\Phi_0}$-Lipschitz continuous with $L_{\Phi_0} := (1 + \kappa)L_w$.
\end{lemma}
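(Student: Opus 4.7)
The plan is to prove the two statements in order, since the Lipschitz continuity of $u_0^{*}$ feeds directly into the bound for $\nabla\Phi_0$.

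For the first claim (Lipschitz continuity of $u_0^{*}$), I would start from the first-order optimality condition for the inner maximization. Fix $w_1,w_2$ and let $u_i^{*} := u_0^{*}(w_i)$; by Assumptions \ref{as:A0}(c) and \ref{as:A3_SGM2}(a), the strict concavity of $\Hc(w,\cdot)-h(\cdot)$ guarantees uniqueness and the optimality condition reads $\nabla_u\Hc(w_i,u_i^{*}) \in \partial h(u_i^{*})$. From here the standard route is:
\begin{itemize}
\item apply $\mu_h$-strong monotonicity of $\partial h$ to obtain $\langle \nabla_u\Hc(w_1,u_1^{*}) - \nabla_u\Hc(w_2,u_2^{*}),\, u_1^{*}-u_2^{*}\rangle \ge \mu_h \|u_1^{*}-u_2^{*}\|^2$;
\item insert $\pm \nabla_u\Hc(w_1,u_2^{*})$ and use $\mu_H$-strong concavity of $\Hc(w_1,\cdot)$ (equivalently $\mu_H$-strong monotonicity of $-\nabla_u\Hc(w_1,\cdot)$) to convert one of the cross terms into $-\mu_H\|u_1^{*}-u_2^{*}\|^2$;
\item move it to the left-hand side to obtain $(\mu_H+\mu_h)\|u_1^{*}-u_2^{*}\|^2 \le \langle \nabla_u\Hc(w_1,u_2^{*})-\nabla_u\Hc(w_2,u_2^{*}),\, u_1^{*}-u_2^{*}\rangle$;
\item bound the right-hand side by Cauchy--Schwarz and Assumption \ref{as:A3_SGM2}(b) (extracting the $u$-component of the joint Lipschitz bound \eqref{eq:Lsmooth2} at $\hat u = u$), cancel a factor $\|u_1^{*}-u_2^{*}\|$, and read off the Lipschitz constant $\kappa = L_u/(\mu_H+\mu_h)$.
\end{itemize}

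For the second claim (Lipschitz smoothness of $\nabla\Phi_0$), the uniqueness of $u_0^{*}(w)$ justifies Danskin's theorem, so $\nabla\Phi_0(w) = \nabla_w\Hc(w,u_0^{*}(w))$ as recorded in \eqref{eq:exact_grad_phi}. I would then estimate
\begin{equation*}
\|\nabla\Phi_0(w_1)-\nabla\Phi_0(w_2)\| \le \|\nabla_w\Hc(w_1,u_1^{*})-\nabla_w\Hc(w_2,u_1^{*})\| + \|\nabla_w\Hc(w_2,u_1^{*})-\nabla_w\Hc(w_2,u_2^{*})\|
\end{equation*}
via the triangle inequality, bound the first term by $L_w\|w_1-w_2\|$ and the second by $L_w\|u_1^{*}-u_2^{*}\|$ using the partial Lipschitz constants obtained by specializing \eqref{eq:Lsmooth2}, and finally plug in $\|u_1^{*}-u_2^{*}\|\le \kappa\|w_1-w_2\|$ from Part~1 to obtain $L_{\Phi_0} = (1+\kappa)L_w$.

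The only non-routine step is the monotonicity manipulation in Part~1: one needs to carry the subgradient arithmetic cleanly while simultaneously invoking strong concavity of $\Hc(w_1,\cdot)$ and strong convexity of $h$, and to verify that the relevant partial Lipschitz constant of $\nabla_u\Hc$ in the $w$-variable (read off from the joint bound \eqref{eq:Lsmooth2} by setting $\hat u = u$) is the constant that propagates into $\kappa$. Part~2 is then a straightforward application of Danskin's theorem plus the triangle inequality, and no additional technical hurdles appear since the composition of two Lipschitz maps is Lipschitz with the product/sum of constants.
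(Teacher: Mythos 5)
Your overall strategy (strong monotonicity of the inner optimality conditions for the Lipschitz bound on $u_0^{*}$, then Danskin plus the triangle inequality for $\nabla\Phi_0$) is the canonical route, and it is in effect the paper's route as well: the supplement only restates the result as Lemma~\ref{le:property_of_Phi0_details} and omits the proof, deferring to standard references, so there is no alternative argument in the paper to compare against.

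There is, however, a concrete constant-tracking error in your sketch: the two partial Lipschitz constants extracted from \eqref{eq:Lsmooth2} are swapped, and as written your Part~1 does not deliver the constant you claim. Specializing \eqref{eq:Lsmooth2} at $\hat u = u$ bounds the variation of the full gradient (hence of its $u$-block) under a change of $w$ by $L_w\norms{w-\hat w}$, not $L_u\norms{w-\hat w}$. So your chain $(\mu_H+\mu_h)\norms{u_1^{*}-u_2^{*}}^2 \le \iprods{\nabla_u\Hc(w_1,u_2^{*})-\nabla_u\Hc(w_2,u_2^{*}),\,u_1^{*}-u_2^{*}}$ followed by Cauchy--Schwarz yields $\norms{u_1^{*}-u_2^{*}} \le \tfrac{L_w}{\mu_H+\mu_h}\norms{w_1-w_2}$; the stated $\kappa=\tfrac{L_u}{\mu_H+\mu_h}$ is not what this argument "reads off" and coincides with it only under the additional convention $L_u\ge L_w$. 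Symmetrically, in Part~2 the term $\norms{\nabla_w\Hc(w_2,u_1^{*})-\nabla_w\Hc(w_2,u_2^{*})}$ is controlled by specializing \eqref{eq:Lsmooth2} at $w=\hat w$, which gives the constant $L_u$, not the $L_w$ you wrote. The two swaps cancel in the end: with the corrected constants one gets $\norms{\nabla\Phi_0(w_1)-\nabla\Phi_0(w_2)} \le \big(L_w + L_u\tfrac{L_w}{\mu_H+\mu_h}\big)\norms{w_1-w_2} = (1+\kappa)L_w\norms{w_1-w_2}$ with $\kappa=\tfrac{L_u}{\mu_H+\mu_h}$, so the smoothness constant $L_{\Phi_0}$ is recovered exactly, but the first claim of the lemma (the Lipschitz constant of $u_0^{*}$) follows from your argument only with $\tfrac{L_w}{\mu_H+\mu_h}$ in place of $\tfrac{L_u}{\mu_H+\mu_h}$. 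Fix the assignment of $L_w$ and $L_u$ in both steps, and either note the implicit use of $L_u\ge L_w$ or present $\kappa$ simply as the factor that enters $L_{\Phi_0}$; everything else in the proposal (uniqueness via strong concavity, the subgradient optimality condition, Danskin, and the triangle-inequality decomposition) is sound.
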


\beforesubsec
\subsection{Approximate KKT points and approximate stationary points}\label{subsec:KKT_points}
\aftersubsec
\textbf{$\mathrm{(a)}$~\textit{Exact and approximate KKT points and stationary points}.}
A pair $(w^{\star}, u^{\star}) \in \dom{\Lc}$ is called a KKT (Karush-Kuhn-Tucker) point of \eqref{eq:minimax_prob} if
\begin{equation}\label{eq:KKT_point}
0 \in \nabla_w{\Hc}(w^{\star}, u^{\star}) +  \partial{f}(w^{\star}) \quad \text{and} \quad 0 \in  - \nabla_u{\Hc}(w^{\star}, u^{\star}) + \partial{h}(u^{\star}).
\end{equation}
Given a tolerance $\epsilon > 0$, \textbf{our goal} is to find an $\epsilon$-approximate KKT point $(\widehat{w}, \widehat{u})$ of \eqref{eq:minimax_prob} defined as
\begin{equation}\label{eq:app_KKT_point}
r_w \in \nabla_w{\Hc}(\widehat{w}, \widehat{u}) + \partial{f}(\widehat{w}),  \quad r_u \in - \nabla_u{\Hc}(\widehat{w}, \widehat{u}) + \partial{h}(\widehat{u}), \quad \text{and} \quad \Expb{\norms{[r_w, r_u]}^2} \leq \epsilon^2.
\end{equation}
A vector $w^{\star} \in \dom{\Psi_0}$ is said to be a stationary point of \eqref{eq:upper_level_min} if
\begin{equation}\label{eq:stationary_point}
0 \in \nabla{\Phi}_0(w^{\star}) + \partial{f}(w^{\star}).
\end{equation}
Since $f$ is  possibly nonsmooth, we can define a stationary point of  \eqref{eq:upper_level_min} via a gradient mapping as:
\begin{equation}\label{eq:upper_level_min_grad_mapping}
{\Gc}_{\eta}(w) := \eta^{-1}\big(w - \mathrm{prox}_{\eta f}(w - \eta \nabla{\Phi}_0(w) )\big),
\end{equation}
where $\eta > 0$ is given.
It is well-known that ${\Gc}_{\eta}(w^{\star}) = 0$ iff $w^{\star}$ is a stationary point of \eqref{eq:upper_level_min}.
Again, since we cannot exactly compute $w^{\star}$, we expect to find an $\epsilon$-stationary point $\widehat{w}_T$ of \eqref{eq:upper_level_min} such that $\Expb{\norms{\Gc_{\eta}(\widehat{w}_T)}^2 } \leq \epsilon^2$ for a given tolerance $\epsilon > 0$. 

\textbf{$\mathrm{(b)}$~\textit{Constructing an approximate stationary point and KKT point from algorithms}.}
Our algorithms below generate a sequence $\sets{\widetilde{w}_t}_{t\geq 0}^T$ such that $\frac{1}{T+1}\sum_{t=0}^T \Expb{\norms{\Gc_{\eta}(\widetilde{w}_t)}^2 } \leq \epsilon^2$.
Hence, we construct an $\epsilon$-stationary point $\widehat{w}_T$ using one of the following two options:
\begin{equation}\label{eq:approx_stationary_point}
\widehat{w}_T := \widetilde{w}_{t_{*}},  \ \ \text{where} \ 
\left\{\begin{array}{llll}
& t_{*} := \argmin\sets{ \norms{\Gc_{\eta}(\widetilde{w}_t)} : 0 \leq t \leq T},  &\text{(\textit{Option 1})}& \text{or} \\
&\text{$t_{*}$ is uniformly randomly chosen from $\sets{0, 1, \cdots, T}$} & \text{(\textit{Option 2})}.
\end{array}\right.
\end{equation}
Clearly, we have $\Expb{ \norms{\Gc_{\eta}(\widehat{w}_T)}^2 } \leq \frac{1}{T+1}\sum_{t=0}^T \Expb{\norms{\Gc_{\eta}(\widetilde{w}_t)}^2 } \leq \epsilon^2$.
We need the following result.  

\begin{lemma}\label{le:kkt_point}
$\mathrm{(a)}$~If $(w^{\star}, u^{\star})$ is a KKT point of \eqref{eq:minimax_prob}, then $w^{\star}$ is a stationary point of \eqref{eq:upper_level_min}.
Conversely, if $w^{\star}$ is a stationary point of \eqref{eq:upper_level_min}, then $(w^{\star}, u_0^{*}(w^{\star}))$ is a KKT point of \eqref{eq:minimax_prob}.

$\mathrm{(b)}$~If $\widehat{w}_T$ is an $\epsilon$-stationary point of \eqref{eq:upper_level_min} and $\nabla{\Phi}_0$ is $L_{\Phi_0}$-Lipschitz continuous, then $(\overline{w}_T, \overline{u}_T)$ is an $\hat{\epsilon}$-KKT point of \eqref{eq:minimax_prob}, where $\overline{w}_T := \prox_{\eta f}(\widehat{w}_T - \eta\nabla{\Phi_0}(\widehat{w}_T))$, $\overline{u}_T := u_0^{*}(\overline{w}_T)$, and $\hat{\epsilon} := (1+L_{\Phi_0}\eta)\epsilon$.

$\mathrm{(c)}$~If $\widehat{w}_T$ is an $\epsilon$-stationary point of \eqref{eq:smoothed_com_opt_prob}, then $(\overline{w}_T, \overline{u}_T)$ is an $\hat{\epsilon}$-KKT point of \eqref{eq:minimax_prob}, where $\overline{w}_T := \prox_{\eta f}(\widehat{w}_T - \eta\nabla{\Phi_{\gamma}}(\widehat{w}_T))$, $\overline{u}_T := u_{\gamma}^{*}(F(\overline{w}_T))$, and $\hat{\epsilon} := \max\sets{(1 + L_{\Phi_{\gamma}}\eta)\epsilon, \gamma D_b}$.
\end{lemma}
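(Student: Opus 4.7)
The plan is to leverage Danskin's theorem together with the uniqueness of the lower-level maximizer. Under Assumption~\ref{as:A0}, the lower-level problem \eqref{eq:lower_level_max} has a unique solution $u_0^{*}(w)$ characterized by $0 \in -\nabla_u\Hc(w, u_0^{*}(w)) + \partial h(u_0^{*}(w))$, and Danskin gives $\nabla\Phi_0(w) = \nabla_w\Hc(w, u_0^{*}(w))$. If $(w^{\star}, u^{\star})$ is KKT, the second condition in \eqref{eq:KKT_point} identifies $u^{\star} = u_0^{*}(w^{\star})$, whence substituting into the first condition yields $0 \in \nabla\Phi_0(w^{\star}) + \partial f(w^{\star})$. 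Conversely, if $w^{\star}$ is stationary for \eqref{eq:upper_level_min}, I set $u^{\star} := u_0^{*}(w^{\star})$ and read off both KKT relations directly.

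\textbf{Plan for part (b).} I will unwrap the gradient mapping and use Lipschitz continuity of $\nabla\Phi_0$. Writing $\overline{w}_T = \prox_{\eta f}(\widehat{w}_T - \eta\nabla\Phi_0(\widehat{w}_T))$, the definition of the proximal operator gives $\widehat{w}_T - \overline{w}_T - \eta\nabla\Phi_0(\widehat{w}_T) \in \eta\,\partial f(\overline{w}_T)$, which (after dividing by $\eta$ and adding and subtracting $\nabla\Phi_0(\overline{w}_T)$) rearranges to
\begin{equation*}
r_w := \Gc_\eta(\widehat{w}_T) + \nabla\Phi_0(\overline{w}_T) - \nabla\Phi_0(\widehat{w}_T) \in \nabla\Phi_0(\overline{w}_T) + \partial f(\overline{w}_T).
\end{equation*}
Using $\nabla\Phi_0(\overline{w}_T) = \nabla_w\Hc(\overline{w}_T, u_0^{*}(\overline{w}_T)) = \nabla_w\Hc(\overline{w}_T, \overline{u}_T)$ by Danskin (with $\overline{u}_T := u_0^{*}(\overline{w}_T)$) delivers the first inclusion in \eqref{eq:app_KKT_point}. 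For the $u$-component, the optimality of $\overline{u}_T$ in \eqref{eq:lower_level_max} means we may take $r_u = 0$. The bound then follows from $\norms{\overline{w}_T - \widehat{w}_T} = \eta\norms{\Gc_\eta(\widehat{w}_T)}$ and the $L_{\Phi_0}$-Lipschitz continuity of $\nabla\Phi_0$, giving $\norms{r_w} \leq (1 + L_{\Phi_0}\eta)\norms{\Gc_\eta(\widehat{w}_T)}$; squaring and taking expectation yields $\hat\epsilon := (1 + L_{\Phi_0}\eta)\epsilon$.

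\textbf{Plan for part (c).} I repeat the argument of (b) with $\Phi_\gamma$ in place of $\Phi_0$, which produces $r_w \in \nabla\Phi_\gamma(\overline{w}_T) + \partial f(\overline{w}_T)$ with $\norms{r_w} \leq (1 + L_{\Phi_\gamma}\eta)\norms{\Gc_\eta(\widehat{w}_T)}$. The new ingredient is converting $\nabla\Phi_\gamma(\overline{w}_T)$ into a $\nabla_w\Hc$ at a matching $\overline{u}_T$. In the (NL) setting, \eqref{eq:grad_phi_gamma} gives $\nabla\Phi_\gamma(\overline{w}_T) = \nabla F(\overline{w}_T)^\top K\, u_\gamma^{*}(F(\overline{w}_T)) = \nabla_w\Hc(\overline{w}_T, \overline{u}_T)$ for $\overline{u}_T := u_\gamma^{*}(F(\overline{w}_T))$. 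For the $u$-residual, I use the optimality condition of the smoothed problem \eqref{eq:smoothed_phi0}: $K^\top F(\overline{w}_T) - \gamma\nabla b(\overline{u}_T) \in \partial h(\overline{u}_T)$. Since $\nabla_u\Hc(\overline{w}_T, \overline{u}_T) = K^\top F(\overline{w}_T)$, setting $r_u := -\gamma\nabla b(\overline{u}_T)$ gives $r_u \in -\nabla_u\Hc(\overline{w}_T, \overline{u}_T) + \partial h(\overline{u}_T)$ with $\norms{r_u} \leq \gamma D_b$. Combining the two component bounds through the definition of $\hat\epsilon := \max\{(1+L_{\Phi_\gamma}\eta)\epsilon, \gamma D_b\}$ completes the argument.

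\textbf{Main obstacle.} The bookkeeping in part (a) and part (b) is routine once Danskin is invoked. The subtle point is in part (c): the smoothed maximizer $\overline{u}_T = u_\gamma^{*}(F(\overline{w}_T))$ does not satisfy the original lower-level optimality, so I must isolate the smoothing residual $-\gamma\nabla b(\overline{u}_T)$ and absorb it into $r_u$, which is exactly where the bound $D_b := \sup\{\norms{\nabla b(u)} : u \in \dom h\}$ enters. Correctly matching $\nabla_u\Hc$ with $K^\top F(\overline{w}_T)$ in the (NL) bilinear structure is what makes this identification clean.
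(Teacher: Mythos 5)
Your proposal is correct and follows essentially the same route as the paper's proof: Danskin's theorem plus the lower-level optimality condition for part (a), unwrapping the proximal/gradient-mapping inclusion and using the $L_{\Phi_0}$- (resp. $L_{\Phi_\gamma}$-) Lipschitz continuity to bound the $w$-residual for parts (b) and (c), and isolating the smoothing residual $-\gamma\nabla b(\overline{u}_T)$ bounded by $\gamma D_b$ as the $u$-residual in (c). No gaps to report.
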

Lemma~\ref{le:kkt_point} allows us to construct an $\hat{\epsilon}$-approximate KKT point $(\overline{w}_T, \overline{u}_T)$ of \eqref{eq:minimax_prob} from an $\epsilon$-stationary point $\widehat{w}_T$ of either \eqref{eq:upper_level_min} or its smoothed problem \eqref{eq:smoothed_com_opt_prob}, where $\hat{\epsilon} = \Oc(\max\sets{\epsilon,\gamma})$.

\beforesubsec
\subsection{Technical condition to handle the possible nonsmooth term $f$}\label{subsec:grad_op_vs_grad_mapping}
\aftersubsec
To handle the nonsmooth term $f$ of \eqref{eq:minimax_prob} in our algorithms we require one more condition as in  \cite{mishchenko2022proximal}.  

\begin{assumption}\label{as:A3}
Let $\Phi_{\gamma}$ be defined by \eqref{eq:smoothed_com_opt_prob}, which reduces to $\Phi_0$ given by \eqref{eq:lower_level_max} as $\gamma\downarrow 0^{+}$, and ${\Gc}_{\eta}$ be defined by \eqref{eq:upper_level_min_grad_mapping}.
Assume that there exist two constants $\Lambda_0 \geq 1$ and $\Lambda_1 \geq 0$ such that:
\begin{equation}\label{eq:grad_mapp_bound}
\norms{\nabla{\Phi}_{\gamma}(w) }^2 \leq \Lambda_0\norms{{\Gc}_{\eta}(w)}^2 + \Lambda_1, \quad \forall w \in \dom{\Phi_0}.
\end{equation}
\end{assumption}

If $f = 0$, then ${\Gc}_{\eta}(w) \equiv \nabla{\Phi}_{\gamma}(w)$, and Assumption~\ref{as:A3} automatically holds with $\Lambda_0 = 1$ and $\Lambda_1 = 0$.
If $f \neq 0$, then it is crucial to have $\Lambda_0 \geq 1$ in \eqref{eq:grad_mapp_bound}.
Let us consider two examples to see why?
\begin{compactitem}
\item[(i)]~ If $f$ is $M_f$-Lipschitz continuous (e.g., $\ell_1$-norm), then \eqref{eq:grad_mapp_bound} also holds with $\Lambda_0 := 1 + \nu > 1$ and $\Lambda_1 :=  \frac{1 + \nu}{\nu}M_f$ for a given $\nu > 0$.
\item[(ii])~If $f = \delta_{\mathcal{W}}$, the indicator of a nonempty, closed, convex, and bounded set $\mathcal{W}$, then  Assumption~\ref{as:A3} also holds by the same reason as in Example (i) (see Supp. Doc. \ref{apdx:sec:techical_results}).
\end{compactitem}

\beforesec
\section{Shuffling Gradient Method for Nonconvex-Linear Minimax Problems}\label{sec:SGM1_method}
\aftersec
We first propose a new construction using shuffling techniques to approximate the true gradient $\nabla{\Phi}_{\gamma}$ in \eqref{eq:grad_phi_gamma} for any $\gamma \geq 0$.
Next, we propose our algorithm and analyze its convergence.

\beforesubsec
\subsection{The shuffling gradient estimators for $\nabla{\Phi}_{\gamma}$}\label{subsec:grad_estimator}
\aftersubsec
\textbf{Challenges.} 
To evaluate $\nabla{\Phi_{\gamma}}(w)$ in \eqref{eq:grad_phi_gamma}, we need to evaluate both $\nabla{F}(w)$ and $F(w)$ at each $w$.
However, in SGD or shuffling gradient methods, we want to approximate both quantities at each iteration.
Note that this gradient can be written in a finite-sum $\frac{1}{n}\sum_{i=1}^n\nabla{F_i}(w)^{\top}\nabla{\phi}_{\gamma}(F(w))$ (see \eqref{eq:grad_phi_gamma}), but every summand requires $\nabla{\phi}_{\gamma}(F(w))$, which involves the full evaluation of $F$.

\textbf{Our estimators.}
Let $F_{\pi^{(t)}(i)}(w_{i-1}^{(t)})$ and $\nabla{F}_{\hat{\pi}^{(t)}(i)}(w_{i-1}^{(t)})$ be the function value and  the Jacobian component  evaluated at $w_{i-1}^{(t)}$ respectively for $i \in [n]$, where $\pi^{(t)} = (\pi^{(t)}(1), \pi^{(t)}(2), \cdots, \pi^{(t)}(n))$ and $\hat{\pi}^{(t)} = (\hat{\pi}^{(t)}(1), \hat{\pi}^{(t)}(2), \cdots, \hat{\pi}^{(t)}(n))$ are two permutations of $[n] := \{1, 2, \cdots, n\}$.
We want to use these quantities to approximate the function value $F(w_0^{(t)})$ and its Jacobian $\nabla{F}(w_0^{(t)})$ of $F$ at $w_0^{(t)}$, respectively, where $w_0^{(t)}$ the iterate vector at the beginning of each epoch $t$.

For function value $F(w_0^{(t)})$, we suggest the following approximation at each \textit{inner iteration} $i \in [n]$:
\begin{equation}\label{eq:approxi_Ft}
\begin{array}{l l lcl}
\textbf{Option 1:} & \qquad\quad & F_i^{(t)} & := & \frac{1}{n} \left[ \sum_{j=1}^{i} F_{\pi^{(t)}(j)}(w_{j-1}^{(t)}) + \sum_{j=i+1}^{n}F_{\pi^{(t)}(j)}(w_0^{(t)}) \right].
\end{array}
\end{equation}
Alternative to \eqref{eq:approxi_Ft}, for all $i \in [n]$, we can simply choose another option:
\begin{equation}\label{eq:approxi_Ftb}
\begin{array}{l l lcl}
\textbf{Option 2:} & \qquad\quad &  F_i^{(t)} & := & \frac{1}{n}\sum_{j=1}^nF_j (w_0^{(t)}) = \frac{1}{n}\sum_{j=1}^nF_{\pi^{(t)}(j)} (w_0^{(t)}). \quad\qquad \ \
\end{array}
\end{equation}
For Jacobian $\nabla{F}(w^{(t)}_0)$, we suggest to use the following standard shuffling estimator for all $i \in [n]$:
\begin{equation}\label{eq:approxi_Jt}
\nabla{F}_i^{(t)} := \nabla{F_{\hat{\pi}^{(t)}(i)}}(w_{i-1}^{(t)}).  
\end{equation}
For $F_i^{(t)}$  from \eqref{eq:approxi_Ft} (or \eqref{eq:approxi_Ftb}) and for $\nabla{F}_i^{(t)}$ from \eqref{eq:approxi_Jt},  we form an approximation of $\nabla{\Phi_{\gamma}}(w_0^{(t)})$ as
\begin{equation}\label{eq:approx_grad_Phi}
\widetilde{\nabla}{\Phi_{\gamma}}(w_{i-1}^{(t)}) := (\nabla{F}_i^{(t)})^{\top}\nabla{\phi}_{\gamma}(F_i^{(t)}) \equiv (\nabla{F}_i^{(t)})^{\top}Ku^{*}_{\gamma}(F_i^{(t)}). 
\end{equation}
\textbf{Discussion.}
The estimator $F_i^{(t)}$ for $F$ requires $n-i$ more function evaluations $F_{\pi^{(t)}(j)}(w_0^{(t)})$ at each epoch $t$. 
The first option \eqref{eq:approxi_Ft} for $F$ uses $2n$ function evaluations $F_i$, while the second one in \eqref{eq:approxi_Ftb} only needs $n$ function evaluations at each epoch $t \geq 0$.
However, \eqref{eq:approxi_Ft}  uses the most updated information up to the \textit{inner iteration} $i$ compared to \eqref{eq:approxi_Ftb}, which is expected to perform better. 
The Jacobian estimator $\nabla{F}_i^{(t)}$ is standard and only uses one sample or a mini-batch at each iteration $i$.

\beforesubsec
\subsection{The shuffling gradient-type algorithm for nonconvex-linear setting (NL)}\label{subsec:alg}
\aftersubsec
We propose Algorithm~\ref{alg:SGM1}, a shuffling gradient-type method, to approximate a stationary point of \eqref{eq:smoothed_com_opt_prob}.

\begin{algorithm}[hpt!]
\caption{(Shuffling Proximal Gradient-Based Algorithm for Solving \eqref{eq:smoothed_com_opt_prob})}\label{alg:SGM1}
\begin{algorithmic}[1]
   \STATE {\bfseries Initialization:} Choose an initial point $\widetilde{w}_0\in\dom{\Phi_0}$ and a smoothness parameter $\gamma > 0$.
   \FOR{$t=1,2,\cdots,T $}
   \STATE Set $w_0^{(t)} := \widetilde{w}_{t-1}$;
   \STATE Generate two permutations  $\pi^{(t)}$ and $\hat{\pi}^{(t)}$ of $[n]$ (identically or randomly and independently)
   \FOR{$i = 1,\cdots, n$}
     \STATE Evaluate $F_i^{(t)}$ by  either  \eqref{eq:approxi_Ft} or  \eqref{eq:approxi_Ftb} using $\pi^{(t)}$, and $\nabla{F}_i^{(t)}$ by \eqref{eq:approxi_Jt} using $\hat{\pi}^{(t)}$.
     \STATE Solve \eqref{eq:smoothed_phi0} to get $u^{*}_{\gamma}(F_i^{(t)})$ and form $\widetilde{\nabla}{\Phi_{\gamma}}(w^{(t)}_{i-1} ) := (\nabla{F}_i^{(t)})^{\top}Ku^{*}_{\gamma}(F_i^{(t)})$.
    \STATE Update $w_{i}^{(t)} := w_{i-1}^{(t)} - \frac{\eta_t}{n} \widetilde{\nabla}{\Phi_{\gamma}}(w^{(t)}_{i-1})$; 
   \ENDFOR
   \STATE Compute $\widetilde{w}_t := \mathrm{prox}_{\eta_t f }(w_{n}^{(t)})$;
   \ENDFOR
\end{algorithmic}
\end{algorithm} 

\textbf{Discussion.}
First, the  cost per epoch of Algorithm~\ref{alg:SGM1} consists of either $2n$ or $n$ function evaluations $F_i$, and $n$ Jacobian evaluations $\nabla{F_i}$.
Compare to standard shuffling gradient-type methods, e.g., in \cite{nguyen2020unified}, Algorithm~\ref{alg:SGM1} has either  $n$ more evaluations of $F_i$ or the same cost.
Second, when implementing Algorithm~\ref{alg:SGM1}, we do not need to evaluate the full Jacobian $\nabla{F}_i^{(t)}$, but rather the product of  matrix $(\nabla{F}_i^{(t)})^{\top}$ and vector $\nabla{\Phi}_{\gamma}(F_i^{(t)})$ as $\widetilde{\nabla}{\Phi_{\gamma}}(w^{(t)}_{i-1} ) := (\nabla{F}_i^{(t)})^{\top}\nabla{\Phi}_{\gamma}(F_i^{(t)})$.
Evaluating this matrix-vector multiplication is much more efficient than evaluating the full Jacobian $\nabla{F}_i^{(t)}$ and $\nabla{\Phi}_{\gamma}(F_i^{(t)})$ individually. 
Third, thanks to Assumption~\ref{as:A3}, the proximal step $\widetilde{w}_t := \mathrm{prox}_{\eta_t f }(w_{n}^{(t)})$ is only required at the end of each epoch $t$.
This significantly reduces the computational cost if $\prox_{\eta_t f}$ is expensive.

\beforesubsec
\subsection{Convergence Analysis of Algorithm~\ref{alg:SGM1} for Nonconvex-Linear Setting (NL)}\label{sec_analysis}
\aftersubsec
Now, we are ready to state the convergence result of Algorithm~\ref{alg:SGM1} in a short version: Theorem~\ref{th:main_result1_short}.
The full version of this theorem is Theorem~\ref{th:main_result1}, which can be found in Supp. Doc.~\ref{apdx:sec:SGM1_convergence}.

\begin{theorem}\label{th:main_result1_short}
Suppose that Assumptions~\ref{as:A0}, \ref{as:A2}, \ref{as:A1}, and \ref{as:A3} holds for the setting $\mathrm{(NL)}$ of \eqref{eq:minimax_prob} and $\epsilon > 0$ is a sufficiently small tolerance. 
Let $\sets{\widetilde{w}_t }$ be generated by Algorithm~\ref{alg:SGM1} after $T = \BigOs{\epsilon^{-3}}$ epochs using arbitrarily permutations $\pi^{(t)}$ and $\hat{\pi}^{(t)}$ and a learning rate $\eta_t = \eta := \BigOs{\epsilon}$ $($see Theorem~\ref{th:main_result1} in Supp. Doc.~\ref{apdx:sec:SGM1_convergence} for the exact formulas of $T$ and $\eta$$)$.
Then, we have $\frac{1}{T+1}\sum_{t=0}^{T}\norms{{\Gc}_{\eta_t}(\widetilde{w}_{t})}^2 \leq \epsilon^2$.

Alternatively, if  $\sets{\widetilde{w}_t }$ is generated by Algorithm~\ref{alg:SGM1} after $T := \BigOs{n^{-1/2}\epsilon^{-3}}$ epochs using two random and independent permutations $\pi^{(t)}$ and $\hat{\pi}^{(t)}$ and a learning rate $\eta_t = \eta := \BigOs{n^{1/2}\epsilon}$ $($see Theorem~\ref{th:main_result1} in Supp. Doc.~\ref{apdx:sec:SGM1_convergence} for the exact formulas$)$.
Then, we have $\frac{1}{T+1}\sum_{t=0}^{T} \Eb[ \norms{{\Gc}_{\eta_t}(\widetilde{w}_{t})}^2 ] \leq \epsilon^2$.
\end{theorem}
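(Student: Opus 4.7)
\textbf{Proof proposal for Theorem~\ref{th:main_result1_short}.}

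My plan is to analyze Algorithm~\ref{alg:SGM1} as an inexact proximal gradient method on the smoothed problem \eqref{eq:smoothed_com_opt_prob} and then convert the resulting stationarity guarantee for $\Psi_\gamma$ into a guarantee on $\mathcal{G}_{\eta_t}(\widetilde{w}_t)$. First, I will invoke the $L_{\Phi_\gamma}$-smoothness of $\Phi_\gamma$ (Lemma~\ref{le:smoothness_of_phi}) and a standard proximal descent lemma applied to $\Psi_\gamma(w) = f(w) + \Phi_\gamma(w)$ at the endpoints $\widetilde{w}_{t-1} \mapsto \widetilde{w}_t$, writing $\widetilde{w}_t = \prox_{\eta_t f}(\widetilde{w}_{t-1} - \eta_t\, \bar d_t)$, where $\bar d_t := \frac{1}{n}\sum_{i=1}^n \widetilde{\nabla}\Phi_\gamma(w_{i-1}^{(t)})$ is the effective direction obtained by unrolling the inner loop $w_i^{(t)} = w_{i-1}^{(t)} - \frac{\eta_t}{n}\widetilde{\nabla}\Phi_\gamma(w_{i-1}^{(t)})$. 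This produces a descent inequality of the form
\begin{equation*}
\Psi_\gamma(\widetilde{w}_t) \le \Psi_\gamma(\widetilde{w}_{t-1}) - c_1\eta_t\|\mathcal{G}_{\eta_t}(\widetilde{w}_{t-1})\|^2 + c_2\eta_t\,\|\bar d_t - \nabla\Phi_\gamma(\widetilde{w}_{t-1})\|^2,
\end{equation*}
with $c_1,c_2$ depending only on $L_{\Phi_\gamma}$.

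The core technical task is bounding $\|\bar d_t - \nabla\Phi_\gamma(\widetilde{w}_{t-1})\|^2$ uniformly, and also the inner drift $\|w_{i-1}^{(t)} - w_0^{(t)}\|^2$. For the drift, $M_F\|K\|\,M_h$ gives a uniform bound on $\|\widetilde{\nabla}\Phi_\gamma\|$, so $\|w_{i-1}^{(t)} - w_0^{(t)}\| \le \eta_t \cdot (i-1)\, \|K\| M_F M_h/n \le \eta_t\|K\|M_F M_h$. For the estimator error I decompose
\begin{equation*}
\widetilde{\nabla}\Phi_\gamma(w_{i-1}^{(t)}) - \nabla\Phi_\gamma(w_0^{(t)}) = \underbrace{(\nabla F_i^{(t)} - \nabla F(w_0^{(t)}))^\top K u^*_\gamma(F_i^{(t)})}_{\text{(I) Jacobian error}} + \underbrace{\nabla F(w_0^{(t)})^\top K\bigl(u^*_\gamma(F_i^{(t)}) - u^*_\gamma(F(w_0^{(t)}))\bigr)}_{\text{(II) composition error}}.
\end{equation*}
For (II), $u^*_\gamma$ is $\|K\|/(\mu_h+\gamma)$-Lipschitz (by strong concavity of the smoothed inner problem), so it reduces to controlling $\|F_i^{(t)} - F(w_0^{(t)})\|$, which under Option 1 telescopes to $\frac{1}{n}\sum_{j\le i}\|F_{\pi^{(t)}(j)}(w_{j-1}^{(t)}) - F_{\pi^{(t)}(j)}(w_0^{(t)})\| \le M_F\cdot \max_{j\le i}\|w_{j-1}^{(t)}-w_0^{(t)}\|$, i.e.\ of order $\eta_t$. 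For (I), I use $M_h\|K\|\,$-boundedness of $u^*_\gamma$ together with Lipschitzness of $\nabla F_i$ (giving an $L_F\eta_t$ contribution from drift) and the finite-sum variance bound \eqref{eq:bounded_variance}. For \emph{any} permutation $\hat\pi^{(t)}$, averaging over $i\in[n]$ removes the mean $\nabla F(w_0^{(t)})$ exactly, so the mean-over-$i$ of (I) at iterates anchored to $w_0^{(t)}$ vanishes, leaving only the $O(\eta_t^2)$ drift residual plus the $\sigma_J^2/n$-style term that the shuffling-SGD literature controls; see, e.g., the bounded-variance argument in \cite{nguyen2020unified,mishchenko2022proximal}. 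This gives
\begin{equation*}
\|\bar d_t - \nabla\Phi_\gamma(\widetilde{w}_{t-1})\|^2 \le A_1 \eta_t^2 + A_2\, \sigma_J^2\, \rho_n,
\end{equation*}
where $\rho_n = 1$ for arbitrary permutations and $\rho_n = 1/n$ for a uniformly random permutation (the standard RandShuffle improvement).

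Combining these with the descent inequality, telescoping over $t=1,\dots,T$, invoking Assumption~\ref{as:A0}(a) to lower-bound $\Psi_\gamma^\star$, and using Assumption~\ref{as:A3} to convert occurrences of $\|\nabla\Phi_\gamma\|^2$ into $\Lambda_0\|\mathcal{G}_{\eta_t}\|^2 + \Lambda_1$, I obtain
\begin{equation*}
\frac{1}{T+1}\sum_{t=0}^T \Eb\bigl[\|\mathcal{G}_{\eta_t}(\widetilde{w}_t)\|^2\bigr] \le \frac{C_0}{\eta T} + C_1\eta^2 + C_2\rho_n\sigma_J^2 + C_3\gamma D_b.
\end{equation*}
Choosing $\gamma = \Theta(\epsilon)$ (so that $\Phi_0$ is approximated to accuracy $\epsilon$), and then balancing $\eta = \Theta(\epsilon)$ against $1/(\eta T) = \Theta(\epsilon^2)$ yields $T = \Theta(\epsilon^{-3})$ for the arbitrary-permutation case, and the $\rho_n = 1/n$ improvement lets us take $\eta = \Theta(\sqrt{n}\,\epsilon)$ and $T = \Theta(n^{-1/2}\epsilon^{-3})$ in the random case.

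\textbf{Main obstacle.} The delicate step is the estimator bound: the coupling between the Jacobian estimator $\nabla F_{\hat\pi^{(t)}(i)}$ and the ``inner'' argument $u^*_\gamma(F_i^{(t)})$ — which itself depends on an evolving approximation of $F$ along the epoch — is unlike standard shuffling analyses, where the summands are evaluated independently. Carefully tracking the two sources of error, ensuring the $\gamma$-dependence in the Lipschitz constant of $u^*_\gamma$ is not fatal (which is why a smoothing $\gamma = \Theta(\epsilon)$ rather than $\gamma\to 0$ is needed when $\mu_h=0$), and obtaining a clean cancellation of the permutation-averaged Jacobian bias at the anchor $w_0^{(t)}$ are the non-routine pieces; everything else is standard bookkeeping once these bounds are in hand.
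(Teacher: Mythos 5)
Your overall architecture matches the paper's: an inexact proximal-gradient descent lemma for $\Psi_{\gamma}$ driven by the aggregated direction $\bar d_t$, a drift bound, telescoping, and Assumption~\ref{as:A3} to absorb $\norms{\nabla{\Phi}_{\gamma}}^2$. However, your estimator bound, and hence your final bound, has a gap that defeats both claims. First, the cancellation you invoke for term (I) is not available as written: the multiplier $u^{*}_{\gamma}(F_i^{(t)})$ varies with $i$, so averaging over $i$ does not annihilate $\sum_i\big(\nabla F_{\hat{\pi}^{(t)}(i)}(w_0^{(t)})-\nabla F(w_0^{(t)})\big)$; one must first swap to the common multiplier $\nabla\phi_{\gamma}(F(w_0^{(t)}))$ (this is exactly the splitting used in Lemma~\ref{le:key_estimates_det}), paying an additional drift term. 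Second, and more damaging, your final bound carries the variance as an unscaled additive term $C_2\rho_n\sigma_J^2$ with $\rho_n=1$ for arbitrary permutations; that is a non-vanishing floor, so the deterministic claim $\frac{1}{T+1}\sum_t\norms{\Gc_{\eta_t}(\widetilde{w}_t)}^2\le\epsilon^2$ cannot be concluded for small $\epsilon$. In the paper, the full-epoch direction carries \emph{no} $\sigma_J^2$ term even for arbitrary permutations (the anchored Jacobian deviations sum to zero over a complete epoch; see the bound on $\mathcal{T}_{[n]}$ in \eqref{eq:grad_Phi_bound2}); $\sigma_J^2$ enters only through the drift estimate and therefore appears multiplied by $\eta^2$ after telescoping, which is what lets the bound be driven below $\epsilon^2$.

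Two further points. Your uniform drift bound $\norms{w_{i-1}^{(t)}-w_0^{(t)}}\le\eta_t\norms{K}M_FM_h$ is valid and would suffice for the deterministic $\Oc(\epsilon^{-3})$ claim, but it erases the mechanism for the $\sqrt{n}$ improvement: with it, the coefficient of $\eta^2$ in the final bound is $n$-independent, so taking $\eta=\Theta(\sqrt{n}\,\epsilon)$ makes that term of order $n\epsilon^2$. The paper instead bounds the drift by $\eta_t^2\big(\norms{\nabla{\Phi}_{\gamma}(w_0^{(t)})}^2+\sigma_J^2\big)$-type quantities (Lemma~\ref{le:ShGD_main_est1}), absorbs $\norms{\nabla{\Phi}_{\gamma}}^2$ via Assumption~\ref{as:A3}, and uses the sampling-without-replacement variance lemma of Mishchenko et al.\ to replace $\sigma_J^2$ by $\sigma_J^2/n$ under random permutations; this, together with $\Lambda_1=\Oc(1/n)$, is what permits $\eta=\Theta(\sqrt{n}\,\epsilon)$ and $T=\Theta(n^{-1/2}\epsilon^{-3})$. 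Finally, setting $\gamma=\Theta(\epsilon)$ inside this theorem conflates it with Corollary~\ref{co:app_KKT_for_nonconvex_linear_case}(b) and Lemma~\ref{le:kkt_point}(c): the theorem is a statement about the smoothed problem with $\gamma$ fixed, and its constants depend on $Q_{\gamma}$ and $L_{\Phi_{\gamma}}$; if $\mu_h=0$ and $\gamma=\Theta(\epsilon)$, these are $\Theta(\epsilon^{-1})$ and your balancing yields $T=\Theta(\epsilon^{-7/2})$, not $\Theta(\epsilon^{-3})$, while the $\gamma D_b$ term belongs to the KKT-conversion step rather than to the bound on gradient-mapping norms.
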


Our first goal is to approximate a stationary point $w^{\star}$ of \eqref{eq:com_opt0} as $\Eb[ \norms{{\Gc}_{\eta}(\widehat{w}) }^2] \leq \epsilon^2$, while Algorithm~\ref{alg:SGM1} only provides an $\epsilon$-stationary of  \eqref{eq:smoothed_com_opt_prob}.
For a proper choice of $\gamma$, it is also an $\epsilon$-stationary point of \eqref{eq:upper_level_min}.

\begin{corollary}\label{co:app_KKT_for_nonconvex_linear_case}
Let $\widehat{w}_T$ defined by \eqref{eq:approx_stationary_point} be generated from $\sets{\widetilde{w}_t}$ of Algorithm~\ref{alg:SGM1}.
Under the conditions of Theorem~\ref{th:main_result1_short} and any permutations $\pi^{(t)}$ and $\hat{\pi}^{(t)}$, the following statements hold.
\vspace{-0.5ex}
\begin{compactitem}
\item[$\mathrm{(a)}$] If $h$ is $\mu_h$-strongly convex with $\mu_h > 0$, then we can set $\gamma = 0$, and Algorithm~\ref{alg:SGM1} requires $\Oc(n\epsilon^{-3})$ evaluations of $F_i$ and $\nabla{F_i}$ to achieve an $\epsilon$-stationary $\widehat{w}_T$ of \eqref{eq:upper_level_min}.
\item[$\mathrm{(b)}$] If $h$ is only convex $($i.e. $\mu_h = 0$$)$, then we can set $\gamma := \Oc(\epsilon)$, and Algorithm~\ref{alg:SGM1} needs $\Oc(n\epsilon^{-7/2})$ evaluations of $F_i$ and $\nabla{F_i}$ to achieve an $\epsilon$-stationary $\widehat{w}_T$ of \eqref{eq:upper_level_min}.
\vspace{-0.5ex}
\end{compactitem}
If, in addition, $\pi^{(t)}$ and $\hat{\pi}^{(t)}$ are sampled uniformly at random without replacement and independently, and $\Lambda_1 = \Oc(n^{-1})$, then the numbers of  evaluations of $F_i$ and $\nabla{F_i}$ are reduced by a factor of $\sqrt{n}$.
\end{corollary}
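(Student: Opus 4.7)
The plan is to derive the corollary as a direct consequence of Theorem~\ref{th:main_result1_short} combined with Lemma~\ref{le:kkt_point}, by (i) selecting $\gamma$ to balance the smoothing error against the target accuracy $\epsilon$, (ii) tracking how the Lipschitz constant $L_{\Phi_{\gamma}} = M_h\|K\|L_F + M_F^2\|K\|^2/(\mu_h + \gamma)$ enters the epoch complexity, and (iii) multiplying by the per-epoch oracle cost, which is $\Oc(n)$ evaluations of $F_i$ and $\nabla{F_i}$ (either $n$ or $2n$ function evaluations depending on Option 1 or 2 of \eqref{eq:approxi_Ft}--\eqref{eq:approxi_Ftb}, plus $n$ Jacobian evaluations).

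For part (a), since $\mu_h > 0$, the function $\phi_0 = h^{*}\circ K^{\top}$ already has Lipschitz-continuous gradient, so $\Phi_0 = \Phi_{\gamma}|_{\gamma = 0}$ is smooth with $L_{\Phi_0}$ finite and independent of $\epsilon$. I would therefore apply Theorem~\ref{th:main_result1_short} with $\gamma = 0$, which directly gives $\frac{1}{T+1}\sum_{t=0}^T \Eb[\|\Gc_{\eta}(\widetilde{w}_t)\|^2] \leq \epsilon^2$ after $T = \Oc(\epsilon^{-3})$ epochs; the construction \eqref{eq:approx_stationary_point} then yields $\Eb[\|\Gc_{\eta}(\widehat{w}_T)\|^2] \leq \epsilon^2$, i.e.\ an $\epsilon$-stationary point of \eqref{eq:upper_level_min}, and Lemma~\ref{le:kkt_point}(b) upgrades it to an $\Oc(\epsilon)$-KKT point of \eqref{eq:minimax_prob}. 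Multiplying $T$ by the $\Oc(n)$ oracle cost per epoch delivers the stated $\Oc(n\epsilon^{-3})$ bound.

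For part (b), with $\mu_h = 0$, Lemma~\ref{le:kkt_point}(c) forces $\gamma = \Oc(\epsilon)$ so that the smoothing bias $\gamma D_b$ does not exceed $\epsilon$; this choice inflates the hyper-gradient Lipschitz constant to $L_{\Phi_{\gamma}} = \Oc(1/\gamma) = \Oc(\epsilon^{-1})$. The step size formula of Theorem~\ref{th:main_result1} absorbs this constant, so that $\eta = \Oc(\epsilon/\sqrt{L_{\Phi_{\gamma}}})$ and correspondingly the epoch count picks up a factor $\sqrt{L_{\Phi_{\gamma}}} = \Oc(\epsilon^{-1/2})$, giving $T = \Oc(\epsilon^{-7/2})$. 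Multiplying by the $\Oc(n)$ per-epoch oracle cost yields the $\Oc(n\epsilon^{-7/2})$ complexity; applying Lemma~\ref{le:kkt_point}(c) with $\overline{u}_T := u_{\gamma}^{*}(F(\overline{w}_T))$ then packages the result as an $\Oc(\epsilon)$-KKT point of \eqref{eq:minimax_prob}. The $\sqrt{L_{\Phi_\gamma}}$ dependence in the epoch count, rather than $L_{\Phi_\gamma}$, is what distinguishes $\epsilon^{-7/2}$ from a worse $\epsilon^{-4}$ scaling, and verifying this is the main technical point to watch when reading off the constants from Theorem~\ref{th:main_result1}.

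For the final claim, switching to uniformly random and independent permutations $\pi^{(t)}, \hat{\pi}^{(t)}$ invokes the second half of Theorem~\ref{th:main_result1_short}: the variance of the shuffling estimator $\widetilde{\nabla}\Phi_{\gamma}(w_{i-1}^{(t)})$ in \eqref{eq:approx_grad_Phi}, aggregated over an epoch, gains a factor $1/n$ from sampling without replacement, which improves $T$ by $\sqrt{n}$ provided the nonsmooth contribution $\Lambda_1$ from Assumption~\ref{as:A3} is $\Oc(n^{-1})$ so it does not dominate the variance reduction. Repeating the above two arguments with this sharper epoch bound gives $\Oc(\sqrt{n}\epsilon^{-3})$ and $\Oc(\sqrt{n}\epsilon^{-7/2})$ oracle evaluations in parts (a) and (b), respectively. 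The main obstacle in making this rigorous is checking that the smoothing substitution $\gamma = \Oc(\epsilon)$ commutes cleanly with the Theorem~\ref{th:main_result1} derivation, which in turn reduces to verifying that all occurrences of $L_{\Phi_{\gamma}}$ in the step size and iteration count appear only through the combination already quantified in Lemma~\ref{le:smoothness_of_phi}.
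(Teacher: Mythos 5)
Your proposal is correct and follows essentially the same route as the paper: instantiate the explicit formulas of Theorem~\ref{th:main_result1} (where $\eta = \Oc(\epsilon/\sqrt{Q_\gamma})$ and $T = \Oc(\sqrt{Q_\gamma}\,\epsilon^{-3})$ with $Q_\gamma$ playing the role of your $L_{\Phi_\gamma}$), set $\gamma=0$ when $\mu_h>0$ and $\gamma=\Oc(\epsilon)$ when $\mu_h=0$ so that $\sqrt{Q_\gamma}=\Oc(\epsilon^{-1/2})$ yields $T=\Oc(\epsilon^{-7/2})$, multiply by the $\Oc(n)$ per-epoch oracle cost, and use the randomized bound with $\Lambda_1=\Oc(n^{-1})$ for the $\sqrt{n}$ gain. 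The square-root dependence on $Q_\gamma$ you flag as the key point to verify is exactly what the paper's explicit $T$ formula exhibits, so no gap remains.
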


\beforesec
\section{Shuffling Method for Nonconvex-Strongly Concave Minimax Problems}\label{sec:SGM2_method}
\aftersec
In this section, we develop shuffling gradient-based methods to solve \eqref{eq:minimax_prob} under the nonconvex-strongly concave setting (NC).
Since this setting does not cover the nonconvex-linear setting (NL) in Section~\ref{sec:SGM1_method} as a special case,  we need to treat it separately using different ideas and proof techniques.

\beforesubsec
\subsection{The construction of algorithm}\label{subsec:SGM2_construction}
\aftersubsec
Unlike the linear case with $\mathcal{H}_i(w, u) = \iprods{F_i(w), Ku}$ in Section~\ref{sec:SGM1_method}, we cannot generally compute the solution $u_0^{*}(\widetilde{w}_{t-1})$ in \eqref{eq:lower_level_max} exactly for a given $\widetilde{w}_{t-1}$.
We can only approximate $u_0^{*}(\widetilde{w}_{t-1})$ by some $\widetilde{u}_t$.
This leads to another level of inexactness in an approximate ``hyper-gradient'' $\widetilde{\nabla}{\Phi_0}(w_{i-1}^{(t)})$ defined by
\begin{equation}\label{eq:approx_grad_phi}
\widetilde{\nabla}{\Phi}_0(w_{i-1}^{(t)}) := \nabla_w\mathcal{H}_{\hat{\pi}^{(t)}(i)}(w^{(t)}_{i-1}, \widetilde{u}_t). 
\end{equation}
There are different options to approximate $u_0^{*}(\widetilde{w}_{t-1})$.
We propose two options below, but other choices are possible, including accelerated gradient ascent methods and stochastic algorithms \cite{Nesterov2004,lan2020first}.

\textbf{$\mathrm{(a_1)}$~Gradient ascent scheme for the lower-level problem.}
We apply a standard gradient ascent scheme to update $\widetilde{u}_t$:
\textit{Starting from $s=0$ with $u_0^{(t)} := \widetilde{u}_{t-1}$, at each epoch $s=1,\cdots, S$, we update
\begin{equation}\label{eq:GD4lower_prob}
\begin{array}{lcl}
\widehat{u}^{(t)}_{s} &:= & \prox_{\hat{\eta}_t h}\big(  \widehat{u}^{(t)}_{s-1} + \frac{\hat{\eta}_t}{n} \sum_{i=1}^n \nabla_u\mathcal{H}_i(\widetilde{w}_{t-1}, \widehat{u}^{(t)}_{s-1}) \big),
\end{array} 
\end{equation}
for a given learning rate $\hat{\eta}_t > 0$.
Then, we finally output $\widetilde{u}_t := \widehat{u}_S^{(t)}$ to approximate $u_0^{*}(\widetilde{w}_{t-1})$.}

To make our method more flexible, we allow to perform either only \textit{one iteration} (i.e. $S=1$) or \textit{multiple iterations} (i.e. $S>1$) of \eqref{eq:GD4lower_prob}.
Each iteration $s$ requires $n$ evaluations of $\nabla_u\mathcal{H}_i$.

\textbf{$\mathrm{(a_2)}$~Shuffling gradient ascent scheme for the lower-level problem.}
We can also construct $\widetilde{u}_t$ by a \textit{shuffling gradient ascent scheme}.
Again, we allow to run either only \textit{one epoch} (i.e. $S=1$) or \textit{multiple epochs} (i.e. $S > 1$) of the shuffling algorithm to update $\widetilde{u}_t$, leading to the following scheme:
\textit{Starting from $s := 1$ with $\widehat{u}_{0}^{(t)} := \widetilde{u}_{t-1}$, at each epoch $s=1,2,\cdots, S$, having $\widehat{u}_{s-1}^{(t)}$, we generate a permutation $\pi^{(s,t)}$ of $[n]$ and run a shuffling gradient ascent scheme as 
\begin{equation}\label{eq:SGM4lower_prob}
\left\{\begin{array}{lll}
&u_0^{(s,t)} := \widehat{u}_{s-1}^{(t)}, \\ 
& \text{For $i=1,2, \cdots, n$, update}\\
&\qquad u_{i}^{(s,t)} :=  u_{i-1}^{(s,t)} + \frac{\hat{\eta}_t}{n}  \nabla_u\mathcal{H}_{\pi^{(s,t)}(i)}(\widetilde{w}_{t-1}, u_{i-1}^{(s,t)}), \\
& \widehat{u}_s^{(t)} :=   \prox_{\hat{\eta}_t h}(u_n^{(s,t)}).
\end{array}\right.
\end{equation}
At the end of the $S$-th epoch, we output $\widetilde{u}_t := \widehat{u}_{S}^{(t)}$ as an approximation to $u_0^{*}(\widetilde{w}_{t-1})$.
}
Here, we use the same learning rate $\hat{\eta}_t$ for all epochs $s \in [S]$.
Each epoch $s$ requires $n$ evaluations of $\nabla_u\mathcal{H}_i$.

\textbf{$\mathrm{(b)}$~Shuffling gradient descent scheme for the upper-level minimization problem.}
Having $\widetilde{u}_t$ from either \eqref{eq:GD4lower_prob} or \eqref{eq:SGM4lower_prob}, we run a \textit{shuffling gradient descent epoch} to update  $\widetilde{w}_t$ from $\widetilde{w}_{t-1}$ as
\begin{equation}\label{eq:SGM4upper_prob}
\left\{\begin{array}{lll}
&w_0^{(t)} := \widetilde{w}_{t-1}, \\
&\textrm{For $i=1,2,\cdots, n$, update}\\
& \qquad w_i^{(t)}   :=    w_{i-1}^{(t)} - \frac{\eta_t}{n}\widetilde{\nabla}{\Phi_0}(w_{i-1}^{(t)}) \equiv w_{i-1}^{(t)} - \frac{\eta_t}{n}\nabla_w\mathcal{H}_{\hat{\pi}^{(t)}(i)}(w^{(t)}_{i-1}, \widetilde{u}_t), \\
& \widetilde{w}_t :=  \prox_{\eta_tf}(w_n^{(t)}).
\end{array}\right.
\end{equation}
These two steps \eqref{eq:GD4lower_prob} (or \eqref{eq:SGM4lower_prob}) in $u$ and \eqref{eq:SGM4upper_prob} in $w$ are implemented alternatively for $t=1, \cdots, T$.

\textbf{$\mathrm{(c)}$~The full algorithm.} 
Combining both steps \eqref{eq:GD4lower_prob}  (or \eqref{eq:SGM4lower_prob}) and \eqref{eq:SGM4upper_prob}, we can present an \textit{alternating shuffling proximal gradient algorithm} for solving \eqref{eq:minimax_prob} as in Algorithm~\ref{alg:SGM2}.

\begin{algorithm}[hpt!]
\caption{(Alternating Shuffling Proximal Gradient Algorithm for Solving \eqref{eq:minimax_prob} under setting (NC))}\label{alg:SGM2}
\begin{algorithmic}[1]
   \STATE {\bfseries Initialization:} Choose an initial point $(\widetilde{w}_0, \widetilde{u}_0) \in\dom{\Lc}$.
   \FOR{$t=1,2,\cdots,T $}
   \STATE\label{step:SGM2_step3} Compute $\widetilde{u}_t$ using either \eqref{eq:GD4lower_prob}  or \eqref{eq:SGM4lower_prob}. 
   \STATE\label{step:SGM2_step3} Set $w_0^{(t)} := \widetilde{w}_{t-1}$ and generate a permutation  $\hat{\pi}^{(t)}$ of $[n]$.
   \FOR{$i = 1,\cdots, n$}
     \STATE\label{step:SGM2_step12} Evaluate $\widetilde{\nabla}{\Phi_0}(w_{i-1}^{(t)}) := \nabla_w{\mathcal{H}}_{\hat{\pi}^{(t)}(i)} (w_{i-1}^{(t)}, \widetilde{u}_t )$. 
    \STATE\label{step:SGM2_step13} Update $w_{i}^{(t)} := w_{i-1}^{(t)} - \frac{\eta_t}{n} \widetilde{\nabla}{\Phi_0}(w_{i-1}^{(t)})$. 
   \ENDFOR
   \STATE\label{step:SGM2_step15} Compute $\widetilde{w}_t := \mathrm{prox}_{\eta_t f }(w_{n}^{(t)})$.
   \ENDFOR
\end{algorithmic}
\end{algorithm} 
\textbf{Discussion.}
Algorithm~\ref{alg:SGM2} has a similar form as Algorithm~\ref{alg:SGM1}, where $u_0^{*}(\widetilde{w}_{t-1})$ is approximated by $\widetilde{u}_t$.
In Algorithm~\ref{alg:SGM1}, $u_0^{*}(\widetilde{w}_{t-1})$ is approximated by $u^{*}_{\gamma}(F_i^{(t)})$. 
Moreover, Algorithm~\ref{alg:SGM1} solves the smoothed problem \eqref{eq:smoothed_com_opt_prob} of \eqref{eq:upper_level_min}, while Algorithm~\ref{alg:SGM2} directly solves \eqref{eq:upper_level_min}.
Depending on the choice of method to approximate $u_0^{*}(\widetilde{w}_{t-1})$, we obtain different variants of Algorithm~\ref{alg:SGM2}.
We have proposed two variants:
\begin{compactitem}
\item \textbf{Semi-shuffling variant:} We use \eqref{eq:GD4lower_prob} for computing $\widetilde{u}_t$ to approximate  $u_0^{*}(\widetilde{w}_{t-1})$.
\item \textbf{Full-shuffling variant:} We use \eqref{eq:SGM4lower_prob} for computing $\widetilde{u}_t$ to approximate  $u_0^{*}(\widetilde{w}_{t-1})$.
\end{compactitem}
Note that Algorithm~\ref{alg:SGM2} works in an alternative manner, where it approximates $u_0^{*}(\widetilde{w}_{t-1})$ up to a certain accuracy before updating $\widetilde{w}_t$.
This alternating update is very natural and has been widely applied to solve minimax optimization as well as bilevel optimization problems, see, e.g., \cite{arjovsky2017wasserstein,dempe2002foundations,goodfellow2014generative}.

\beforesubsec
\subsection{Convergence analysis}\label{subsec:SGM2_convergence}
\aftersubsec
Now, we state the convergence of both variants of Algorithm~\ref{alg:SGM2}: \textit{semi-shuffling} and \textit{full-shuffling} variants.
The full proof of the following theorems can be found in Supp. Doc.~\ref{apdx:sec:SGM2_convergence}.

\textbf{$\mathrm{(a)}$~\textit{Convergence of the semi-shuffling variant}.} 
Our first result is as follows.

\begin{theorem}\label{th:SGM2_semi_shuffling_variant_convergence}
Suppose that Assumptions~\ref{as:A0}, \ref{as:A2},~\ref{as:A3_SGM2}, and ~\ref{as:A3} hold for \eqref{eq:minimax_prob}, 
and ${\Gc}_{\eta}$ is defined by \eqref{eq:upper_level_min_grad_mapping}.

Let $\sets{(\widetilde{w}_t, \widetilde{u}_t)}$ be generated by Algorithm~\ref{alg:SGM2} using  the \textbf{gradient ascent scheme \eqref{eq:GD4lower_prob}} with  $\eta := \mathcal{O}(\epsilon)$ explicitly given in Theorem~\ref{th:SGM2_full_shuffling_main_result1_supp} of Supp. Doc.~\ref{apdx:sec:SGM2_convergence}, $\hat{\eta} \in (0, \frac{2}{L_u+\mu_h}]$, $S := \Oc\big( \frac{1}{\hat{\eta}} \big( \mu_h + \frac{4L_u\mu_H}{L_u + \mu_H} \big)^{-1} \big) = \Oc(1)$, and $T := \mathcal{O}(\epsilon^{-3})$ explicitly given in Theorem~\ref{th:SGM2_full_shuffling_main_result1_supp}.
Then, we have $\frac{1}{T+1}\sum_{t=0}^T \norms{ {\Gc}_{\eta}(\widetilde{w}_{t}) }^2 \leq \epsilon^2$.

Consequently, Algorithm~\ref{alg:SGM2} requires $\mathcal{O}( n\epsilon^{-3})$ evaluations of both $\nabla_w{\Hc}_i$  and  $\nabla_u{\Hc}_i$ to achieve an $\epsilon$-stationary point $\widehat{w}_T$ of \eqref{eq:upper_level_min} computed by \eqref{eq:approx_stationary_point}.
\end{theorem}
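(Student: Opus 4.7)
The plan is to run an ``inexact'' descent argument for $\Psi_0$ along the outer sequence $\{\widetilde{w}_t\}$, with the inexactness split cleanly into a shuffling term and an inner-loop term, and then convert the resulting gradient bound into a bound on $\|\Gc_{\eta}(\widetilde{w}_t)\|^2$ via Assumption~\ref{as:A3}.

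First I would bound the inner-loop error $e_t^2 := \|\widetilde{u}_t - u_0^{*}(\widetilde{w}_{t-1})\|^2$. Under Assumptions~\ref{as:A2} and~\ref{as:A3_SGM2}, the dual objective $-\mathcal{H}(\widetilde{w}_{t-1},\cdot) + h(\cdot)$ is $(\mu_H+\mu_h)$-strongly convex with $L_u$-smooth part, so the proximal gradient contraction for \eqref{eq:GD4lower_prob} yields $\|\widehat{u}^{(t)}_s - u_0^{*}(\widetilde{w}_{t-1})\|^2 \le (1-\rho)\|\widehat{u}^{(t)}_{s-1} - u_0^{*}(\widetilde{w}_{t-1})\|^2$ with $\rho := \hat{\eta}\bigl(\mu_h + \tfrac{4L_u\mu_H}{L_u+\mu_H}\bigr)$ for $\hat{\eta}\in(0,\tfrac{2}{L_u+\mu_H}]$. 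Taking $S = \Oc(1/\rho) = \Oc(1)$ inner steps shrinks the error by a fixed factor, e.g.\ $e_t^2 \le \tfrac14\|\widetilde{u}_{t-1} - u_0^{*}(\widetilde{w}_{t-1})\|^2$. Using the $\kappa$-Lipschitz continuity of $u_0^{*}$ from Lemma~\ref{le:property_of_Phi0} and Young's inequality, this turns into a one-step recursion
\begin{equation*}
e_t^2 \;\le\; \tfrac{1}{2}\,e_{t-1}^2 \;+\; C\kappa^2 \,\|\widetilde{w}_{t-1} - \widetilde{w}_{t-2}\|^2,
\end{equation*}
which telescopes to $\sum_{t=0}^T e_t^2 \lesssim e_0^2 + \kappa^2\sum_{t=0}^T \|\widetilde{w}_t - \widetilde{w}_{t-1}\|^2$.

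Next, I would carry out the standard one-epoch shuffling-gradient descent analysis for the outer update \eqref{eq:SGM4upper_prob}. Writing $\widetilde{w}_t = \prox_{\eta f}(\widetilde{w}_{t-1} - \eta\, \bar g_t)$ with $\bar g_t := \tfrac{1}{n}\sum_{i=1}^n \nabla_w\mathcal{H}_{\hat{\pi}^{(t)}(i)}(w_{i-1}^{(t)},\widetilde{u}_t)$, the $L_{\Phi_0}$-smoothness of $\Phi_0$ (Lemma~\ref{le:property_of_Phi0}) combined with nonexpansiveness of $\prox_{\eta f}$ gives a descent inequality of the form
\begin{equation*}
\Psi_0(\widetilde{w}_t) \le \Psi_0(\widetilde{w}_{t-1}) - \tfrac{\eta}{2}\|\Gc_{\eta}(\widetilde{w}_{t-1})\|^2 + C_1\eta\,\|\bar g_t - \nabla\Phi_0(\widetilde{w}_{t-1})\|^2 + \tfrac{C_2\eta}{n^2}\sum_{i=1}^n\|w_{i-1}^{(t)} - \widetilde{w}_{t-1}\|^2.
\end{equation*}
I then split $\|\bar g_t - \nabla\Phi_0(\widetilde{w}_{t-1})\|^2$ into a shuffling variance part, controlled via Assumption~\ref{as:A3_SGM2}(c) by $\tfrac{\sigma_w^2}{n} + \tfrac{\Theta_w}{n}\|\nabla_w\mathcal{H}(\widetilde{w}_{t-1},\widetilde{u}_t)\|^2$, and an inner-loop part $\|\nabla_w\mathcal{H}(\widetilde{w}_{t-1},\widetilde{u}_t) - \nabla\Phi_0(\widetilde{w}_{t-1})\|^2 \le L_w^2 e_t^2$ from the $(L_w,L_u)$-smoothness. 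The usual shuffling ``drift'' bound gives $\sum_i\|w_{i-1}^{(t)}-\widetilde{w}_{t-1}\|^2 = \Oc(\eta^2 n^2)\cdot(\sigma_w^2 + \Theta_w\|\nabla\Phi_0\|^2 + L_w^2 e_t^2 + \Lambda_1)$, and Assumption~\ref{as:A3} converts all residual $\|\nabla\Phi_0(\widetilde{w}_{t-1})\|^2$ terms into $\Lambda_0\|\Gc_\eta(\widetilde{w}_{t-1})\|^2 + \Lambda_1$.

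Finally, I would sum the descent inequality over $t=0,\dots,T$, plug in the telescoped bound on $\sum_t e_t^2$, and note that $\|\widetilde{w}_t - \widetilde{w}_{t-1}\|^2 \le \eta^2\|\Gc_\eta(\widetilde{w}_{t-1})\|^2$ from the definition of the prox-gradient mapping. Choosing $\eta = \Oc(\epsilon)$ small enough so that every coefficient in front of $\|\Gc_\eta(\widetilde{w}_t)\|^2$ on the right-hand side is dominated by the leading $\tfrac{\eta}{2}$, one obtains
\begin{equation*}
\tfrac{1}{T+1}\sum_{t=0}^T \|\Gc_\eta(\widetilde{w}_t)\|^2 \;\lesssim\; \frac{\Psi_0(\widetilde{w}_0) - \Psi_0^{\star}}{(T+1)\eta} + \eta\,(\sigma_w^2 + \Lambda_1).
\end{equation*}
Balancing the two terms with $T = \Oc(\epsilon^{-3})$ and $\eta = \Oc(\epsilon)$ gives the claimed $\epsilon^2$ bound. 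The oracle cost is $nT$ evaluations of $\nabla_w\Hc_i$ in the outer loop and $SnT = \Oc(nT)$ evaluations of $\nabla_u\Hc_i$ in the inner loop, i.e.\ $\Oc(n\epsilon^{-3})$ each. The main obstacle will be the coupling in the recursion for $e_t^2$: it depends on $\|\widetilde{w}_{t-1}-\widetilde{w}_{t-2}\|^2$, which in turn is controlled only via $\|\Gc_\eta\|^2$ inflated by $e_t^2$ through the drift term. This feedback loop is closed precisely by choosing $S$ large enough that the $e_t$-recursion is genuinely contractive (the factor $\tfrac12$ above), and by keeping $\eta$ small enough that $\kappa^2\eta^2 L_w^2$ coefficients are absorbed into the leading descent term.
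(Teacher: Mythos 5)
Your overall architecture (contraction of the inner gradient-ascent loop, a one-epoch shuffling descent inequality for $\Psi_0$, Assumption~\ref{as:A3} to convert $\norms{\nabla\Phi_0}^2$ into $\norms{\Gc_\eta}^2$, and the $\kappa$-Lipschitz continuity of $u_0^{*}$ to close the feedback between the inner error and the outer step) is the same as the paper's; the paper packages the coupling as a per-iteration Lyapunov function $\Psi_0(\widetilde{w}_t)+\tfrac{\omega L_u^2\eta}{2}\norms{\widetilde{u}_t-u_0^{*}(\widetilde{w}_t)}^2$ (Lemma~\ref{le:SGM2_keybound_for_semi_shuffling}), whereas you unroll the same recursion globally; that difference is cosmetic. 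However, two of your concrete steps are wrong as stated. First, $\norms{\widetilde{w}_t-\widetilde{w}_{t-1}}^2\le \eta^2\norms{\Gc_\eta(\widetilde{w}_{t-1})}^2$ does not hold: $\widetilde{w}_t=\prox_{\eta f}(\widetilde{w}_{t-1}-\eta g_t)$ is built from the inexact shuffled estimator $g_t$, not from $\nabla\Phi_0(\widetilde{w}_{t-1})$, so at best $\norms{\widetilde{w}_t-\widetilde{w}_{t-1}}^2\le 2\eta^2\norms{\Gc_\eta(\widetilde{w}_{t-1})}^2+2\eta^2\norms{g_t-\nabla\Phi_0(\widetilde{w}_{t-1})}^2$, which re-injects the drift and the inner error $e_t^2$ into your telescoped bound on $\sum_t e_t^2$; this is repairable (the extra terms carry $\eta^2$ factors), or avoided entirely by retaining the negative $-\tfrac{1-L_{\Phi_0}\eta}{2\eta}\norms{\widetilde{w}_t-\widetilde{w}_{t-1}}^2$ term from the descent lemma and absorbing the $\kappa^2\norms{\widetilde{w}_t-\widetilde{w}_{t-1}}^2$ term against it, as the paper does, but it must be addressed.

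Second, and more seriously, your decomposition of $\norms{\bar g_t-\nabla\Phi_0(\widetilde{w}_{t-1})}^2$ into a standalone ``shuffling variance part'' of size $\tfrac{\sigma_w^2}{n}+\tfrac{\Theta_w}{n}\norms{\nabla_w\Hc(\widetilde{w}_{t-1},\widetilde{u}_t)}^2$ is not valid. For a full epoch over any permutation, $\tfrac1n\sum_{i}\nabla_w\Hc_{\hat\pi^{(t)}(i)}(\widetilde{w}_{t-1},\widetilde{u}_t)=\nabla_w\Hc(\widetilde{w}_{t-1},\widetilde{u}_t)$ exactly, so the only error sources are the within-epoch drift (bounded by $L_w^2\Delta_t=\Oc(\eta^2)$, where the variance $\sigma_w^2$ legitimately appears) and the inner-loop error $L_u^2 e_t^2$; this is precisely the third line of \eqref{eq:SGM2_key_bounds_for_gt_2}. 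Moreover there is no $1/n$ variance reduction for arbitrary permutations, which is the regime of this theorem. If you keep a per-step variance floor of order $\sigma_w^2$ (or $\sigma_w^2/n$) multiplied by $\eta$, the averaged bound retains a constant term that cannot be driven below $\epsilon^2$ by tuning $\eta$ and $T$. Relatedly, your final display $\tfrac{\Psi_0(\widetilde{w}_0)-\Psi_0^{\star}}{(T+1)\eta}+\eta(\sigma_w^2+\Lambda_1)$ is inconsistent with your own drift accounting: with the variance entering only through the $\Oc(\eta^2)$ drift, the error term is $\Oc(\eta^2(\sigma_w^2+\Lambda_1))$ (as in \eqref{eq:SGM2_semi_shuffling_bound1}), and only this $\eta^2$ scaling makes the stated choice $\eta=\Oc(\epsilon)$, $T=\Oc(\epsilon^{-3})$ deliver the $\epsilon^2$ bound; with an $\eta$-scaling you would be forced to $\eta=\Oc(\epsilon^2)$ and $T=\Oc(\epsilon^{-4})$, contradicting the theorem.
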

Note that Theorem~\ref{th:SGM2_semi_shuffling_variant_convergence} holds for both $S > 1$ and $S = 1$ (i.e. we perform only one iteration of \eqref{eq:GD4lower_prob}).

\textbf{$\mathrm{(b)}$~\textit{Convergence of the full-shuffling variant -- The case $S > 1$ with multiple epochs}.}
We state our results for two separated cases: only $\Hc_i$ is $\mu_H$-strongly convex, and only $h$ is $\mu_h$-strongly convex.

\begin{theorem}[Strong convexity of $\Hc_i$]\label{th:SGM2_full_shuffling_main_result1}
Suppose that Assumptions~\ref{as:A0}, \ref{as:A2},~\ref{as:A3_SGM2}, and ~\ref{as:A3} hold, and $\Hc_i$ is $\mu_H$-strongly concave with $\mu_H > 0$ for $i\in [n]$, but $h$ is only merely convex.

Let $\sets{(\widetilde{w}_t, \widetilde{u}_t)}$ be generated by Algorithm~\ref{alg:SGM2} using  $S$ epochs of the \textbf{shuffling routine \eqref{eq:SGM4lower_prob}} and fixed learning rates $\eta_t = \eta := \mathcal{O}(\epsilon)$ as given in Theorem~\ref{th:SGM2_full_shuffling_main_result1_supp} of Supp. Doc.~\ref{apdx:sec:SGM2_convergence} for a given $\epsilon > 0$, $\hat{\eta}_t := \hat{\eta} = \Oc(\epsilon)$, $S  := \big\lfloor \frac{\ln(7/2)}{\mu_H \hat{\eta} } \big\rfloor$, and $T := \mathcal{O}( \epsilon^{-3})$.
Then, we have  $\frac{1}{T+1}\sum_{t=0}^T \norms{ {\Gc}_{\eta}(\widetilde{w}_{t}) }^2 \leq \epsilon^2$.

Consequently, Algorithm~\ref{alg:SGM2} requires $\mathcal{O}( n \epsilon^{-3})$ evaluations of $\nabla_w{\Hc_i}$ and $\mathcal{O}( n\epsilon^{-4})$ evaluations of $\nabla_u{\Hc}_i$  to achieve an $\epsilon$-stationary point $\widehat{w}_T$ of \eqref{eq:upper_level_min} computed by \eqref{eq:approx_stationary_point}.
\end{theorem}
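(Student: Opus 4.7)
}

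The overall plan is a standard two-timescale Lyapunov argument, but with the twist that the lower-level solver is itself a shuffling gradient ascent routine rather than a simple gradient step. I would introduce the potential
\begin{equation*}
V_t := \Psi_0(\widetilde{w}_t) - \Psi_0^{\star} + \beta\, \Eb\big[\norms{\widetilde{u}_t - u_0^{*}(\widetilde{w}_t)}^2\big],
\end{equation*}
for a small constant $\beta > 0$ to be tuned, and show that $V_t$ decreases by at least $\Omega(\eta)\,\Eb[\norms{\Gc_{\eta}(\widetilde{w}_{t-1})}^2]$ per outer epoch, up to constant noise terms of order $\eta^2$ and $\hat{\eta}^2$. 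Summing from $t=0$ to $T$ and rearranging will yield the claimed averaged bound, and the parameter choices $\eta = \Oc(\epsilon)$, $\hat{\eta} = \Oc(\epsilon)$, $S = \Theta(1/(\mu_H \hat{\eta}))$, $T = \Oc(\epsilon^{-3})$ will balance the three sources of error (outer variance, inner residual, and drift of $u_0^{*}$) so that the right-hand side is at most $\epsilon^2$.

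First, I would establish a descent inequality for the outer update \eqref{eq:SGM4upper_prob}. Using the $L_{\Phi_0}$-smoothness of $\Phi_0$ from Lemma~\ref{le:property_of_Phi0}, the proximal step at the end of the epoch, and the standard shuffling analysis (tracking how each $w_i^{(t)}$ deviates from $w_0^{(t)} = \widetilde{w}_{t-1}$ via \eqref{eq:bounded_variance2}), I would obtain something of the form
\begin{equation*}
\Psi_0(\widetilde{w}_t) \leq \Psi_0(\widetilde{w}_{t-1}) - \tfrac{\eta}{2}\norms{\Gc_{\eta}(\widetilde{w}_{t-1})}^2 + c_1 \eta\, L_u^2 \norms{\widetilde{u}_t - u_0^{*}(\widetilde{w}_{t-1})}^2 + c_2\, \eta^3(\sigma_w^2 + \Theta_w\norms{\nabla\Phi_0(\widetilde{w}_{t-1})}^2),
\end{equation*}
where the middle term captures the bias of replacing $u_0^{*}(\widetilde{w}_{t-1})$ by $\widetilde{u}_t$ inside the hyper-gradient estimator \eqref{eq:approx_grad_phi}, and the last term is the familiar per-epoch shuffling variance. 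The $\Theta_w\norms{\nabla\Phi_0}^2$ contribution is absorbed using Assumption~\ref{as:A3} together with a sufficiently small $\eta$.

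Next, and this is the main technical step, I would analyze $S$ epochs of the inner shuffling ascent \eqref{eq:SGM4lower_prob} on the $\mu_H$-strongly concave problem $\max_u \{\Hc(\widetilde{w}_{t-1}, u) - h(u)\}$ (with merely convex $h$). Per epoch, a shuffling-type contraction lemma gives $\norms{\widehat{u}_s^{(t)} - u_0^{*}(\widetilde{w}_{t-1})}^2 \leq (1 - \mu_H \hat{\eta})\norms{\widehat{u}_{s-1}^{(t)} - u_0^{*}(\widetilde{w}_{t-1})}^2 + c_3\, \hat{\eta}^3\sigma_u^2$, using Assumption~\ref{as:A3_SGM2}(b)--(c). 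Iterating $S$ times and using the choice $S = \lfloor \ln(7/2)/(\mu_H\hat{\eta})\rfloor$ makes $(1-\mu_H\hat{\eta})^S \leq 2/7$, yielding
\begin{equation*}
\norms{\widetilde{u}_t - u_0^{*}(\widetilde{w}_{t-1})}^2 \leq \tfrac{2}{7}\norms{\widetilde{u}_{t-1} - u_0^{*}(\widetilde{w}_{t-1})}^2 + c_4\,\hat{\eta}^2 \sigma_u^2 / \mu_H.
\end{equation*}
To link this back to $V_t$, I would apply Young's inequality and the $\kappa$-Lipschitz continuity of $u_0^{*}$ from Lemma~\ref{le:property_of_Phi0} to bound $\norms{\widetilde{u}_{t-1} - u_0^{*}(\widetilde{w}_{t-1})}^2 \leq (1+\alpha)\norms{\widetilde{u}_{t-1} - u_0^{*}(\widetilde{w}_{t-2})}^2 + (1+1/\alpha)\kappa^2\norms{\widetilde{w}_{t-1}-\widetilde{w}_{t-2}}^2$, and use Assumption~\ref{as:A3} once more to turn $\norms{\widetilde{w}_{t-1}-\widetilde{w}_{t-2}}^2$ into $\Oc(\eta^2)\norms{\Gc_{\eta}(\widetilde{w}_{t-2})}^2$.

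The hard part is tuning $\alpha$, $\beta$, and the step sizes so that the contraction factor $2/7$ (slightly below $1/2$) in the inner residual dominates the blow-ups from Young's inequality and the drift term, making $V_t - V_{t-1}$ a genuine descent in $\norms{\Gc_\eta(\widetilde{w}_{t-1})}^2$. Once this is in place, telescoping and dividing by $T+1$ gives $\frac{1}{T+1}\sum_{t=0}^T \Eb[\norms{\Gc_{\eta}(\widetilde{w}_t)}^2] \leq \frac{c_5(V_0 + \text{noise})}{\eta T} + c_6(\eta^2 + \hat{\eta}^2)$. Setting $\eta, \hat{\eta} = \Theta(\epsilon)$ makes the noise term $\Oc(\epsilon^2)$, and $T = \Theta(\epsilon^{-3})$ makes the first term also $\Oc(\epsilon^2)$. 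The resulting oracle count is $T$ epochs of $n$ evaluations of $\nabla_w\Hc_i$ giving $\Oc(n\epsilon^{-3})$, while the inner loop contributes $T\cdot S\cdot n = \Oc(n\epsilon^{-3}\cdot \epsilon^{-1}) = \Oc(n\epsilon^{-4})$ evaluations of $\nabla_u\Hc_i$, matching the stated complexity. The $\epsilon$-stationarity of $\widehat{w}_T$ in the sense of \eqref{eq:approx_stationary_point} is then immediate from the definition.
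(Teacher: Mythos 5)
Your proposal follows essentially the same route as the paper's own proof (Lemmas~\ref{le:SGM2_key_bounds_for_ut2}--\ref{le:SGM2_key_bounds_for_Phi_2}, Lemma~\ref{le:SGM2_keybound_for_full_shuffling}, and Theorem~\ref{th:SGM2_full_shuffling_main_result1_supp}): the same Lyapunov function $\Psi_0(\widetilde{w}_t)+\tfrac{\omega L_u^2\eta}{2}\norms{\widetilde{u}_t-u_0^{*}(\widetilde{w}_t)}^2$, a per-outer-epoch descent with shuffling variance, an $S$-epoch inner contraction with $S=\lfloor\ln(7/2)/(\mu_H\hat{\eta})\rfloor$ so the factor drops below $2/7$, Young's inequality with the $\kappa$-Lipschitz continuity of $u_0^{*}$ to handle the drift, absorption of gradient terms through Assumption~\ref{as:A3}, telescoping, and the identical parameter and oracle accounting $Tn=\Oc(n\epsilon^{-3})$, $TSn=\Oc(n\epsilon^{-4})$. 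The one inaccuracy is that, under the relative variance condition of Assumption~\ref{as:A3_SGM2}(c), the inner-epoch error is not merely $c_3\hat{\eta}^3\sigma_u^2$ but also carries a $(\Theta_u+1)$-weighted gradient term (cf.~\eqref{eq:SGM2_key_est_for_ut2c}), which the paper folds into the coefficient of $\norms{\Gc_{\eta}(\widetilde{w}_{t-1})}^2$ via Assumption~\ref{as:A3} exactly as you already do for the $\Theta_w$ term, so your argument goes through with that bookkeeping added.
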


\begin{theorem}[Strong convexity of $h$]\label{th:SGM2_full_shuffling_main_result2}
Suppose that Assumptions~\ref{as:A0}, \ref{as:A2}, \ref{as:A3_SGM2}, and ~\ref{as:A3} hold for \eqref{eq:minimax_prob}, and $h$ is $\mu_h$-strongly convex with $\mu_h > 0$, but $\Hc_i$ is only merely concave for all $i\in [n]$.
Then, under the same settings as in Theorem~\ref{th:SGM2_full_shuffling_main_result1}, but with  $S := \big\lfloor\frac{\ln(7/2)}{\mu_h\hat{\eta}}\big\rfloor$, the conclusions of Theorem~\ref{th:SGM2_full_shuffling_main_result1} still hold.
\end{theorem}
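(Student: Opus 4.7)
The plan is to observe that Theorem~\ref{th:SGM2_full_shuffling_main_result2} is structurally dual to Theorem~\ref{th:SGM2_full_shuffling_main_result1}: in both cases the lower-level maximization $\max_u\{\Hc(w,u) - h(u)\}$ is strongly concave, only the source of strong concavity has moved from the smooth part $\Hc_i$ to the nonsmooth part $h$. Consequently, the overall analysis skeleton---decomposing the one-epoch descent in $w$ into (i) the ``exact hyper-gradient'' descent on $\Phi_0$, (ii) the shuffling-gradient error on the $w$-side, and (iii) the inexactness $\norms{\widetilde{u}_t - u_0^{*}(\widetilde{w}_{t-1})}^2$ produced by the inner shuffling routine \eqref{eq:SGM4lower_prob}---will be reused verbatim from Theorem~\ref{th:SGM2_full_shuffling_main_result1}. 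Only the inner contraction bound on $\norms{\widetilde{u}_t - u_0^{*}(\widetilde{w}_{t-1})}^2$ must be re-derived, and it is precisely this change that forces the new choice $S := \lfloor \ln(7/2)/(\mu_h \hat{\eta})\rfloor$.

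The first step will be to show that the $u$-level problem retains a $\mu_h$-strongly concave structure: since $\Hc(\widetilde{w}_{t-1},\cdot)$ is now only concave (and smooth with Lipschitz constant $L_u$ by Assumption~\ref{as:A3_SGM2}(b)) while $h$ is $\mu_h$-strongly convex, the map $u \mapsto \Hc(\widetilde{w}_{t-1},u) - h(u)$ is $\mu_h$-strongly concave, so $u_0^{*}(w)$ is uniquely defined and, by Lemma~\ref{le:property_of_Phi0} applied with the roles of $\mu_H$ and $\mu_h$ swapped in the constant $\kappa = L_u/(\mu_H + \mu_h)$, remains Lipschitz continuous and $\nabla{\Phi}_0$ remains $L_{\Phi_0}$-Lipschitz.

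The main technical step---and the principal obstacle---will be the per-epoch contraction estimate for the inner shuffling gradient ascent \eqref{eq:SGM4lower_prob}. When $\Hc_i$ was strongly concave (as in Theorem~\ref{th:SGM2_full_shuffling_main_result1}) the contraction $(1 - \mu_H\hat{\eta})$ on each ascent step came directly from the smooth part before the prox. Here that ingredient is absent; instead the contraction has to come entirely from the firm nonexpansiveness of $\prox_{\hat{\eta} h}$ for $\mu_h$-strongly convex $h$, which yields $\norms{\prox_{\hat{\eta} h}(x) - \prox_{\hat{\eta} h}(y)} \leq (1 + \mu_h\hat{\eta})^{-1}\norms{x - y}$. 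Combining this with an epoch-level bound of shuffling ascent on the merely-concave smooth part (controlled in terms of $\hat{\eta}$, $L_u$ and the shuffling variance $\sigma_u^2$, $\Theta_u$) will produce, for sufficiently small $\hat{\eta}$, a one-epoch recursion of the form
\begin{equation*}
\Eb\big[\norms{\widehat{u}_s^{(t)} - u_0^{*}(\widetilde{w}_{t-1})}^2\big] \leq (1 - c\mu_h\hat{\eta})\,\Eb\big[\norms{\widehat{u}_{s-1}^{(t)} - u_0^{*}(\widetilde{w}_{t-1})}^2\big] + \Oc(\hat{\eta}^2 \sigma_u^2).
\end{equation*}
Unrolling this over $S$ epochs and choosing $S = \lfloor \ln(7/2)/(\mu_h \hat{\eta})\rfloor$ makes the contraction factor $(1 - c\mu_h\hat{\eta})^S \leq 2/7$, which is the same target contraction used in Theorem~\ref{th:SGM2_full_shuffling_main_result1} (there obtained via $\mu_H$). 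This is exactly why the new $S$ is obtained from the previous one by the substitution $\mu_H \rightsquigarrow \mu_h$.

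Once this contraction bound is in hand, the remainder of the proof copies Theorem~\ref{th:SGM2_full_shuffling_main_result1}: plug the inner-accuracy bound into the upper-level shuffling gradient descent epoch analysis \eqref{eq:SGM4upper_prob} (using Assumptions~\ref{as:A3_SGM2}(b,c) and~\ref{as:A3} to translate between $\norms{\nabla{\Phi}_0}$ and the gradient mapping $\Gc_{\eta}$), sum over $t = 0, \dots, T$, and invoke $\Psi_0^{\star} > -\infty$ from Assumption~\ref{as:A0}(a) to telescope the descent. With $\eta = \Oc(\epsilon)$, $\hat{\eta} = \Oc(\epsilon)$ and $T = \Oc(\epsilon^{-3})$ as in Theorem~\ref{th:SGM2_full_shuffling_main_result1}, we obtain $\frac{1}{T+1}\sum_{t=0}^T \norms{\Gc_{\eta}(\widetilde{w}_t)}^2 \leq \epsilon^2$, and the $\nabla_w\Hc_i$/$\nabla_u\Hc_i$ oracle counts follow exactly as before (the latter picks up an extra factor $S = \Oc(\epsilon^{-1})$ relative to the $\nabla_w\Hc_i$ count, giving $\Oc(n\epsilon^{-3})$ and $\Oc(n\epsilon^{-4})$ respectively). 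The only care point beyond the inner contraction is bookkeeping the constants that now depend on $\mu_h$ rather than $\mu_H$ (notably in $\kappa$ and $L_{\Phi_0}$), but none of these alter the $\epsilon$-scaling.
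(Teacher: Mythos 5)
Your overall route is the same as the paper's: the inner contraction now comes from the proximal step of the $\mu_h$-strongly convex $h$ (fact $\mathrm{[F_1]}$, which gives the squared-norm factor $(1+2\mu_h\hat{\eta})^{-1}$ per epoch), the choice $S=\lfloor \ln(7/2)/(\mu_h\hat{\eta})\rfloor$ is forced by requiring $(1+2\mu_h\hat{\eta})^S \geq 2+(2L_u^2\kappa^2+4L_w^2)\eta^2$ via $\ln(1+\tau)\geq \tau/2$, and the outer analysis of Theorem~\ref{th:SGM2_full_shuffling_main_result1} is reused unchanged. One small observation: nothing actually needs to be ``re-derived'' on the $u$-side, because Lemma~\ref{le:SGM2_key_bounds_for_ut2} is already proved for general $\mu_h\geq 0$, $\mu_H\geq 0$ (the prox contraction is built into it); the paper's proof is a pure specialization in which $\mu_H=0$ removes the $(1-\mu_H\hat{\eta}/n)^{nS}$ factor, the constant $C_S$ is bounded by $\Oc\big(\tfrac{1}{\mu_h\hat{\eta}}\big)$ (up to the factor absorbed by $1/n$), and the second condition of \eqref{eq:SGM2_full_shuffling_para_cond1} is checked exactly as you describe.

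The one concrete gap is the order of the per-epoch bias in your inner recursion. You state it as $\Oc(\hat{\eta}^2\sigma_u^2)$, but the shuffling analysis (Lemma~\ref{le:SGM2_bound_ut_dist2} combined with Lemma~\ref{le:SGM2_key_bounds_for_ut2}) gives $\Oc\big(L_u\hat{\eta}^3\big[(\Theta_u+1)\norms{\nabla\Phi_0(\widetilde{w}_{t-1})}^2+\sigma_u^2\big]\big)$ per epoch: the $\Oc(\hat{\eta}^2)$ path deviation enters only through the term $\tfrac{L_u\hat{\eta}}{n}\norms{u_{i-1}^{s*}-u_0^{*}(\widetilde{w}_{t-1})}^2$, which supplies an extra factor $\hat{\eta}$. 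That extra power is essential for the stated complexity: accumulating a per-epoch bias $b$ over $S=\Theta\big(\tfrac{1}{\mu_h\hat{\eta}}\big)$ epochs yields an inner error of order $b/(\mu_h\hat{\eta})$, so the paper's $b=\Oc(\hat{\eta}^3)$ gives $\Oc(\hat{\eta}^2/\mu_h)$ and $\hat{\eta}=\Oc(\epsilon)$ suffices, whereas your $b=\Oc(\hat{\eta}^2)$ gives only $\Oc(\hat{\eta}/\mu_h)$; after this enters the outer descent inequality with coefficient $\Oc(L_u^2\eta)$ and the telescoped sum is divided by $\eta(T+1)$, you would need $\hat{\eta}=\Oc(\epsilon^2)$, hence $S=\Oc(\epsilon^{-2})$ and $\Oc(n\epsilon^{-5})$ evaluations of $\nabla_u{\Hc}_i$, not the claimed $\Oc(n\epsilon^{-4})$. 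Since you intend to reuse the Theorem~\ref{th:SGM2_full_shuffling_main_result1} machinery verbatim, the correct $\hat{\eta}^3$-order bound comes for free and the plan goes through; but the recursion as you wrote it is too crude to deliver the stated oracle complexity.
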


\textbf{$\mathrm{(c)}$~\textit{Convergence of the full-shuffling variant  -- The case $S=1$ with one epoch}.}
Both Theorems~\ref{th:SGM2_full_shuffling_main_result1} and \ref{th:SGM2_full_shuffling_main_result2} require $\Oc(n\epsilon^{-4})$ evaluations of $\nabla_u{\Hc}_i$.
To improve this complexity, we need two additional assumptions but can perform only one epoch of \eqref{eq:SGM4lower_prob}, i.e. $S=1$.

\begin{assumption}\label{as:A4}
Let $\hat{\Gc}_{\eta}(u) := \eta^{-1}(u - \prox_{\eta h}(u + \eta\nabla_u{\Hc}(w, u)  ) )$ be the gradient mapping of $\psi(w, \cdot) := - \Hc(w, \cdot) + h(\cdot)$.
Assume that there exist   $\hat{\Lambda}_0 \geq 1$ and $\hat{\Lambda}_1 \geq 0$ such that
\begin{equation}\label{eq:grad_mapp_bound2}
\norms{\nabla_u{\Hc}(w, u) }^2 \leq \hat{\Lambda}_0\norms{ \hat{\Gc}_{\eta}(u)}^2 + \hat{\Lambda}_1, \quad \forall (w, u) \in \dom{\Lc}.
\vspace{-0.5ex}
\end{equation}
\end{assumption}
Clearly, if $h = 0$, then $\hat{\Gc}_{\eta}(u) = -\nabla_u{\Hc}(w, u)$ and \eqref{eq:grad_mapp_bound} automatically holds for $\hat{\Lambda}_0 = 1$ and $\hat{\Lambda}_1 = 0$.
Assumption~\ref{as:A4} is similar to Assumption~\ref{as:A3}, and it is required to handle the $\prox$ operator of $h$ in \eqref{eq:SGM4lower_prob}.

\begin{assumption}\label{as:A5}
For $f$ in \eqref{eq:minimax_prob}, there exists $L_f \geq 0$ such that 
\begin{equation}\label{eq:SGM2_f_smoothness}
f(y) \leq f(x) + \iprods{f'(x), y - x} + \tfrac{L_f}{2}\norms{y - x}^2, \quad \forall x, y \in \dom{f}, \ f'(x) \in \partial{f}(x).
\end{equation}
\end{assumption}
Clearly, if $f$ is $L_f$-smooth, then \eqref{eq:SGM2_f_smoothness} holds.
If $f$ is also convex, then \eqref{eq:SGM2_f_smoothness} implies that $f$ is $L_f$-smooth.

Under these additional assumptions, we have the following result.

\begin{theorem}\label{th:SGM2_full_shuffling_main_result3}
Suppose that Assumptions~\ref{as:A0}, \ref{as:A2},~\ref{as:A3_SGM2}, ~\ref{as:A3},~ \ref{as:A4}, and \ref{as:A5} hold and ${\Gc}_{\eta}$ is defined by  \eqref{eq:upper_level_min_grad_mapping}.

Let $\sets{(\widetilde{w}_t, \widetilde{u}_t)}$ be generated by Algorithm~\ref{alg:SGM2} using \textbf{one epoch} $(S=1)$ of the \textbf{shuffling routine \eqref{eq:SGM4lower_prob}}, and fixed learning rates $\eta_t = \eta := \mathcal{O}(\epsilon)$ as in Theorem~\ref{th:SGM3_full_shuffling_convergence1} of Supp. Doc.~\ref{apdx:sec:SGM2_convergence} for a given $\epsilon > 0$, $\hat{\eta}_t := \hat{\eta} = 30\kappa^2\eta$, and $T := \mathcal{O}( \epsilon^{-3})$, where $\kappa := \frac{L_u}{\mu_H+\mu_h}$.
Then, we have $\frac{1}{T+1}\sum_{t=0}^T \norms{ {\Gc}_{\eta}(\widetilde{w}_{t}) }^2 \leq \epsilon^2$.

Consequently, Algorithm~\ref{alg:SGM2} requires $\mathcal{O}( n \epsilon^{-3})$ evaluations of both $\nabla_w{\Hc_i}$ and of $\nabla_u{\Hc}_i$  to achieve an $\epsilon$-stationary point $\widehat{w}_T$ of \eqref{eq:upper_level_min} computed by \eqref{eq:approx_stationary_point}.
\end{theorem}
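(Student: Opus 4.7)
The plan is to build a Lyapunov function $V_t := \Psi_0(\widetilde{w}_t) + \tfrac{\alpha}{\eta}e_t$, where $e_t := \norms{\widetilde{u}_t - u_0^{*}(\widetilde{w}_{t-1})}^2$ measures the one-step tracking error of the inner shuffling ascent and $\alpha > 0$ is a constant to be tuned. The goal is to show $V_t - V_{t-1} \leq -c_0 \eta \norms{\Gc_\eta(\widetilde{w}_{t-1})}^2 + \Oc(\eta^3)$ for some $c_0 > 0$, then telescope over $t$, use $V_t \geq \Psi_0^{\star}$ from Assumption~\ref{as:A0}, and pick $\eta = \Theta(\epsilon)$, $T = \Theta(\epsilon^{-3})$ to conclude. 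The per-epoch cost of $n$ evaluations of each of $\nabla_w \Hc_i$ and $\nabla_u \Hc_i$ (only one inner epoch because $S=1$) then yields the claimed $\Oc(n\epsilon^{-3})$ total.

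Outer descent: I would mimic the shuffling-gradient proof used for Theorem~\ref{th:main_result1_short} but apply it to $\Phi_0$ directly, using that $\nabla\Phi_0$ is $L_{\Phi_0}$-Lipschitz by Lemma~\ref{le:property_of_Phi0}. The error between $\widetilde{\nabla}\Phi_0(w_{i-1}^{(t)}) = \nabla_w\Hc_{\hat\pi^{(t)}(i)}(w_{i-1}^{(t)},\widetilde{u}_t)$ and $\nabla_w\Hc_{\hat\pi^{(t)}(i)}(w_{i-1}^{(t)}, u_0^{*}(\widetilde{w}_{t-1}))$ is controlled by the joint Lipschitzness in Assumption~\ref{as:A3_SGM2}(b), producing an $L_w^2 e_t$ term; Assumption~\ref{as:A5} is invoked to handle the nonsmooth $f$ along the shuffling path (turning $f(\widetilde{w}_t)-f(\widetilde{w}_{t-1})$ into a quadratic majorant); Assumption~\ref{as:A3_SGM2}(c) gives the $\sigma_w^2$ variance bound; and Assumption~\ref{as:A3} is used to pass from $\norms{\nabla_w \Hc(\widetilde{w}_{t-1},u_0^{*}(\widetilde{w}_{t-1}))}^2 = \norms{\nabla\Phi_0(\widetilde{w}_{t-1})}^2$ to $\norms{\Gc_\eta(\widetilde{w}_{t-1})}^2$. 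The expected output is
\begin{equation*}
\Psi_0(\widetilde{w}_t) \leq \Psi_0(\widetilde{w}_{t-1}) - c_1 \eta \norms{\Gc_\eta(\widetilde{w}_{t-1})}^2 + c_2 L_w^2 \eta \, e_t + c_3 \eta^3.
\end{equation*}

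Inner contraction: For one epoch of \eqref{eq:SGM4lower_prob} on the $(\mu_H+\mu_h)$-strongly concave subproblem, a standard shuffling strong-convexity argument, using Assumption~\ref{as:A4} to turn $\norms{\nabla_u\Hc}^2$ into inner-gradient-mapping terms and Assumption~\ref{as:A3_SGM2}(c) for $\sigma_u^2$, should give
\begin{equation*}
\norms{\widetilde{u}_t - u_0^{*}(\widetilde{w}_{t-1})}^2 \leq (1 - c_4 \hat\eta)\norms{\widetilde{u}_{t-1} - u_0^{*}(\widetilde{w}_{t-1})}^2 + c_5 \hat\eta^3 \sigma_u^2,
\end{equation*}
with $c_4 = \Theta(\mu_H + \mu_h)$. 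To relate the right-hand side to $e_{t-1}$, I apply Young's inequality with the $\kappa$-Lipschitz continuity of $u_0^{*}$ from Lemma~\ref{le:property_of_Phi0} and bound $\norms{\widetilde{w}_{t-1} - \widetilde{w}_{t-2}}^2 \leq \eta^2 \norms{\Gc_\eta(\widetilde{w}_{t-2})}^2 + \Oc(\eta^3)$ from the outer one-epoch update, yielding
\begin{equation*}
e_t \leq (1 - c_6 \hat\eta)e_{t-1} + c_7 \kappa^2 \eta^2 \norms{\Gc_\eta(\widetilde{w}_{t-2})}^2 + \Oc(\hat\eta^3 + \eta^3).
\end{equation*}

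Combining these two inequalities in $V_t - V_{t-1}$, I would pick $\alpha$ so that the Lyapunov drop $-(\alpha/\eta)c_6\hat\eta\,e_{t-1}$ dominates the outer coupling $c_2 L_w^2 \eta\, e_t$ (after re-indexing), and so that $(\alpha/\eta) c_7 \kappa^2 \eta^2 \leq \tfrac{c_1}{2}\eta$ (so the leaked $\Gc_\eta(\widetilde{w}_{t-2})$ term is absorbed by the $\Gc_\eta(\widetilde{w}_{t-1})$ term after a one-step shift in the telescoping). A short calculation shows both constraints are simultaneously satisfied when $\hat\eta/\eta \propto \kappa^2$, which is exactly the role of the prescribed $\hat\eta = 30\kappa^2 \eta$. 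The main obstacle is precisely this joint calibration: with $S=1$ the inner contraction factor is only $1-\Theta(\hat\eta)$, not arbitrarily small, so $\alpha$, the Young parameter $\delta$, and the ratio $\hat\eta/\eta$ must be chosen together to absorb both the $L_w^2$ cross-coupling from the outer descent and the $\kappa^2$ drift of $u_0^{*}$ from the outer step, while preserving a strictly negative coefficient on $\norms{\Gc_\eta(\widetilde{w}_{t-1})}^2$ and an $\Oc(\eta^3)$ error floor. Once aligned, telescoping and the stated choice of $\eta, T$ yield $\frac{1}{T+1}\sum_{t=0}^T \norms{\Gc_\eta(\widetilde{w}_t)}^2 \leq \epsilon^2$, and the oracle count follows immediately.
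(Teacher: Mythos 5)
Your route is genuinely different from the paper's. For the $S=1$ case the paper abandons the distance-to-$u_0^{*}$ Lyapunov function used in its $S>1$ analysis and instead works with the primal-gap potential $\Vc_{\lambda}(w,u) := \lambda[\Psi_0(w)-\Psi_0^{\star}] + \Psi_0(w)-\Lc(w,u)$ of \eqref{eq:SGM3_potential_func} (with $\lambda=3$): the inner progress is measured through the gap $\psi(\widetilde{w}_{t-1},\widehat{u}_t)-\psi(\widetilde{w}_{t-1},u_0^{*}(\widetilde{w}_{t-1}))$ and a P{\L}-type inequality (Lemma~\ref{le:SGM3_key_properties_of_psi}), the one-epoch ascent gives a function-value descent bound (Lemma~\ref{le:GA_ut_shuffling}), Assumption~\ref{as:A5} enters only to control $\Lc(\widetilde{w}_{t-1},\widetilde{u}_t)-\Lc(\widetilde{w}_t,\widetilde{u}_t)$ (Lemma~\ref{le:SGM3_key_bounds_for_Phi_2b}), and Assumption~\ref{as:A4} is applied at the \emph{current} iterate $\widetilde{u}_{t-1}$ to absorb $\norms{v_t-\nabla_u\Hc(\widetilde{w}_{t-1},\widetilde{u}_{t-1})}^2$ into $\norms{\hat{\Gc}_{\hat\eta}(\widetilde{u}_{t-1})}^2$ (Lemmas~\ref{le:SGM3_bounding_V}--\ref{le:SGM3_full_shuffling_V_bound1}, Theorem~\ref{th:SGM3_full_shuffling_convergence1}). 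You instead extend the distance-based Lyapunov of Lemmas~\ref{le:SGM2_keybound_for_semi_shuffling}--\ref{le:SGM2_keybound_for_full_shuffling} to $S=1$ by letting the weight on $\norms{\widetilde{u}_t-u_0^{*}(\cdot)}^2$ scale with $\hat\eta/\kappa^2$ rather than being $\Oc(1)$; your calibration showing both constraints close exactly when $\hat\eta/\eta \propto \kappa^2$ is the right mechanism and matches the theorem's prescribed ratio, and it would, if completed, arguably not need Assumption~\ref{as:A5} at all (convexity of $f$ suffices in the distance-based descent lemma, cf.\ Lemma~\ref{le:SGM2_key_bounds_for_Phi_2}), which is a genuine advantage of your route over the paper's.

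Two steps need repair before this closes. First, your anchoring $e_t := \norms{\widetilde{u}_t-u_0^{*}(\widetilde{w}_{t-1})}^2$ forces the drift term onto the \emph{previous} outer step, and the bound you invoke, $\norms{\widetilde{w}_{t-1}-\widetilde{w}_{t-2}}^2 \leq \eta^2\norms{\Gc_{\eta}(\widetilde{w}_{t-2})}^2 + \Oc(\eta^3)$, is false as stated: $\widetilde{w}_{t-1} = \prox_{\eta f}(\widetilde{w}_{t-2}-\eta g_{t-1})$ uses the inexact shuffling estimator, so the correct bound carries an extra $\eta^2\norms{g_{t-1}-\nabla\Phi_0(\widetilde{w}_{t-2})}^2 \lesssim \eta^2(L_w^2\Delta_{t-1}+L_u^2 e_{t-1})$, which feeds $e_{t-1}$ and path-deviation terms back into the recursion and is not a deterministic $\Oc(\eta^3)$. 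The cleaner fix — which is what the paper does in its other theorems — is to anchor at $u_0^{*}(\widetilde{w}_t)$, so the Young-split drift is $\kappa^2\norms{\widetilde{w}_t-\widetilde{w}_{t-1}}^2$ and is absorbed directly by the negative $\tfrac{1-L_{\Phi_0}\eta}{2\eta}\norms{\widetilde{w}_t-\widetilde{w}_{t-1}}^2$ term already present in the same outer inequality, with no index shift. Second, your inner one-epoch contraction cannot have only a $\hat\eta^3\sigma_u^2$ floor: the $S=1$ instance of Lemma~\ref{le:SGM2_key_bounds_for_ut2} carries an additional $\hat\eta^3$-term proportional to the squared gradient at the anchor $u_0^{*}(\widetilde{w}_{t-1})$; to reduce it to a constant you must apply Assumption~\ref{as:A4} at the anchor, where $\hat{\Gc}_{\hat\eta}(u_0^{*})=0$ gives the bound $\hat\Lambda_1$. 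You gesture at Assumption~\ref{as:A4} for this purpose, but this observation has to be made explicit, otherwise an $\norms{\nabla\Phi_0(\widetilde{w}_{t-1})}^2$-type term survives in the floor and must separately be absorbed via Assumption~\ref{as:A3} into the $\norms{\Gc_{\eta}(\widetilde{w}_{t-1})}^2$ coefficient. Both issues are fixable, after which your telescoping and the choices $\eta=\Theta(\epsilon)$, $T=\Theta(\epsilon^{-3})$ do give the claimed complexity.
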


Similar to Algorithm~\ref{alg:SGM1}, if $\pi^{(s,t)}$ and $\hat{\pi}^{(t)}$ are generated randomly and independently,  $\Lambda_1 = \Oc(1/n)$, and $\hat{\Lambda}_1 = \Oc(1/n)$, then our complexity stated above can be improved by a factor of $\sqrt{n}$.
Nevertheless, we omit this analysis.
Finally, we can combine each Theorem~\ref{th:SGM2_semi_shuffling_variant_convergence}, \ref{th:SGM2_full_shuffling_main_result1}, \ref{th:SGM2_full_shuffling_main_result2} or \ref{th:SGM2_full_shuffling_main_result3} and Lemma~\ref{le:kkt_point} to construct an $\hat{\epsilon}$-KKT point of \eqref{eq:minimax_prob}.
Theorem~\ref{th:SGM2_full_shuffling_main_result3} has a better complexity than Theorems~\ref{th:SGM2_full_shuffling_main_result1} and \ref{th:SGM2_full_shuffling_main_result2}, but requires stronger assumptions.
Algorithm~\ref{alg:SGM2} is also  different from the one in \cite{cho2022sgda} both in terms of algorithmic form and the underlying problem to be solved, while achieving the same oracle complexity.


\beforesec
\section{Numerical Experiments}\label{sec:num_experiments}
\aftersec
We perform some experiments to illustrate Algorithm~\ref{alg:SGM1} and compare it with two existing and related algorithms.
Further details  and additional experiments can be found in Supp. Doc. \ref{apdx:add_experiments}.

We consider the following regularized stochastic minimax problem studied, e.g., in \cite{TranDinh2020f,shapiro2002minimax}:
\begin{equation}\label{eq:min_max_stochastic_opt}
\begin{array}{ll}
{\displaystyle\min_{w\in \R^p} \Big\{ \max_{1\leq j\leq m}} \big\{ \frac{1}{n}\sum_{i=1}^n F_{i,j}(w) \big\} + \tfrac{\lambda}{2}\norms{w}^2\Big\},
\end{array}
\end{equation}
where $ F_{i,j} : \R^p\times\Omega\to\R_{+}$ can be viewed as the loss of the $j$-th model for data point $i \in [n]$.
If we define $\phi_0(v) := \max_{1\leq j\leq m}\{v_j\}$ and $ f(w) := \frac{\lambda}{2}\norms{w}^2$, then \eqref{eq:min_max_stochastic_opt} can be reformulated into \eqref{eq:upper_level_min}.
Since $v_j \geq 0$, we have $\phi_0(v) := \max_{1\leq j\leq m}\{v_j\} = \Vert v \Vert_{\infty} = \max_{\norms{u}_1 \leq 1} \iprods{v, u}$, which is nonsmooth.
Thus we can smooth $\phi_0$ as $\phi_{\gamma}(v) := \max_{\norms{u}_1 \leq 1}\{\iprods{v, u} - (\gamma/2)\norms{u}^2\}$ using $b(u) := \frac{1}{2}\norms{u}^2$.

Here, we apply our problem \eqref{eq:min_max_stochastic_opt} to solve a model selection problem in binary classification with nonnegative nonconvex losses, see, e.g., \cite{zhao2010convex}.
Each function $F_{i,j}$ belongs to $4$ different nonconvex losses $(m=4)$: $ F_{i,1}(w, \xi) := 1 - \tanh(b_i\iprods{a_i,w})$, $ F_{i,2}(w, \xi) := \log(1 + \exp(-b_i\iprods{a_i,w})) - \log(1 + \exp(-b_i\iprods{a_i,w}-1))$,  $ F_{i,3}(w, \xi) := (1 - 1/(\exp(-b_i\iprods{a_i,w})+1))^2$, and $ F_{i,4}(w, \xi) := \log(1 + \exp(-b_i\iprods{a_i,w}))$ (see \cite{zhao2010convex} for more details), where $ (a_i, b_i)$ represents data samples.

We implement 4 algorithms: our \texttt{SGM} with 2 options, \texttt{SGD} from \cite{wang2017stochastic}, and \texttt{Prox-Linear} from \cite{zhang2020stochastic}. 
We test these algorithms on two datasets from LIBSVM \cite{CC01a}. 
We set $\lambda := 10^{-4}$ and update the smooothing parameter $\gamma_t$ as $\gamma_t := \frac{1}{2(t+1)^{1/3}}$. 
The learning rate $\eta$ for all algorithms is finely tuned from $\sets{100, 50, 10, 5, 1, 0.5, 0.1, 0.05, 0.01, 0.001, 0.0001}$, and the results are shown in Figure \ref{fig:min_max_stochastic_opt} for  \textbf{w8a} and \textbf{rcv1} datasets using $k_b=32$ blocks.
The details of this experiment is given in Supp. Doc. \ref{apdx:add_experiments}.

\begin{figure}[ht!]
\vspace{-1ex}
\centering
\includegraphics[width=0.49\textwidth]{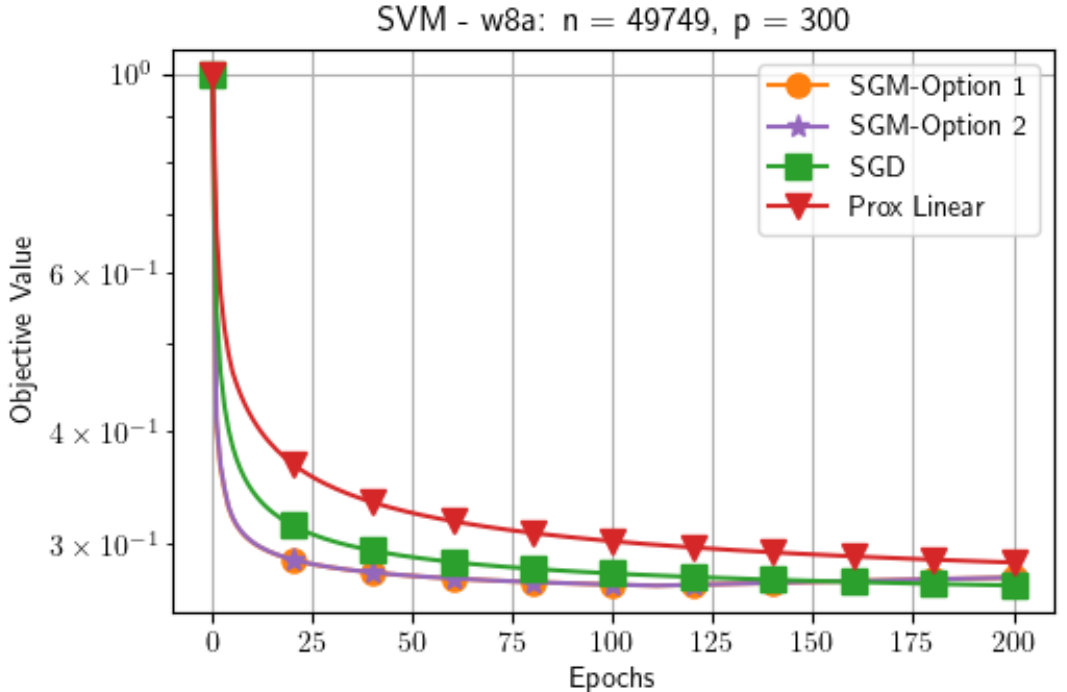}
\includegraphics[width=0.49\textwidth]{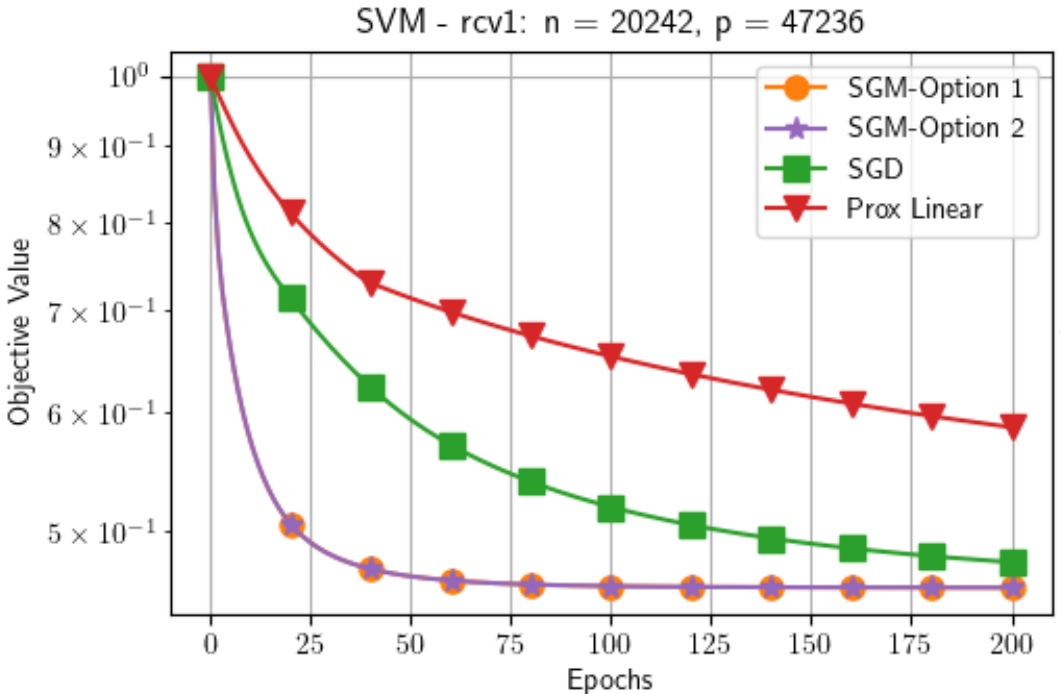}
\vspace{-0.75ex}
\caption{
The performance of 4 algorithms for solving \eqref{eq:min_max_stochastic_opt} on two datasets after 200 epochs.
}
\label{fig:min_max_stochastic_opt}
\vspace{-1ex}
\end{figure}

As shown in Figure \ref{fig:min_max_stochastic_opt}, the two variants of our \texttt{SGM} have a comparable performance with \texttt{SGD} and \texttt{Prox-Linear}, providing supportive evidence for using shuffling strategies in minimax algorithms. 

\beforesec
\section{Conclusions}\label{sec:conclusions}
\aftersec
This work explores a bilevel optimization approach to address two prevalent classes of nonconvex-concave minimax problems. 
These problems find numerous applications in practice, including robust learning and generative AIs.
Motivated by the widespread use of shuffling strategies in implementing gradient-based methods within the machine learning community, we develop novel shuffling-based algorithms for solving these problems under standard assumptions.
The first algorithm uses a non-standard shuffling strategy and achieves the state-of-the-art oracle complexity typically observed in nonconvex optimization.  
The second algorithm is also new, flexible, and offers a promising possibility for further exploration.
Our results are expected to provide theoretical justification for incorporating shuffling strategies into minimax optimization algorithms, especially in nonconvex settings.

\begin{ack}
This work was partly supported by the National Science Foundation (NSF): NSF-RTG grant No. NSF DMS-2134107 and the Office of Naval Research (ONR), grant No. N00014-23-1-2588.
\end{ack}




\appendix

\beforesec
\section{Supplementary Document: Technical Results and Proofs}\label{apdx:sec:techical_results}
\aftersec
This section gives the details of results related to minimax problem \eqref{eq:minimax_prob}, and discusses the underlying technical assumptions and the properties of related functions and quantities used in this paper.

\textbf{$\mathrm{(a)}$~Elementary facts.}
We recall the following facts, which will be used in the sequel.
\begin{compactitem}
\item[$\mathrm{[F_1]}$] If $h : \R^p \to \Rext$ is proper, closed, and $\mu_h$-strongly convex, and $\prox_{\eta h}$ is the proximal operator of $\eta h$ for any $\eta > 0$, then for any $u, \hat{u} \in\dom{h}$, we have
\begin{equation}\label{eq:prox_contraction}
\begin{array}{lcl}
\norms{\prox_{\eta h}(u) - \prox_{\eta h}(\hat{u})}^2 & \leq & \frac{1}{1 + 2\mu_h\eta}\norms{u - \hat{u}}^2.
\end{array}
\end{equation}
\item[$\mathrm{[F_2]}$] For any proper, closed, and convex function $h : \R^p\to\Rext$ and $\eta > 0$, we have  
\begin{equation*}
x - \prox_{\eta h}(x)  \in \eta \partial{h}(\prox_{\eta h}(x)).
\end{equation*}
\item[$\mathrm{[F_3]}$] Consider the lower level maximization problem \eqref{eq:lower_level_max} as
\begin{equation*} 
\begin{array}{lcl}
u^{*}_0(w) := {\displaystyle\argmax_{u\in\R^q}}\big\{ \mathcal{H}(w, u) - h(u) \equiv \frac{1}{n}\sum_{i=1}^n \mathcal{H}_i(w, u) - h(u) \big\}.
\end{array}
\end{equation*}
Then, under Assumption~\ref{as:A0}, its optimality condition can be written as
\begin{equation}\label{eq:lower_level_max_opt_cond}
\begin{array}{lcl}
\nabla_u\mathcal{H}(w, u_0^{*}(w)) \in \partial{h}(u_0^{*}(w)).
\end{array}
\end{equation}
\end{compactitem}

\textbf{$\mathrm{(b)}$~Details of Assumption~\ref{as:A3} and Assumption~\ref{as:A4}.}
Both Assumptions ~\ref{as:A3} and \ref{as:A4} look relatively technical, though they have been used in previous works such as \cite{mishchenko2022proximal}.
Both assumptions are the same, but one for $f$ and the other for $h$, and thus we only discuss Assumption~\ref{as:A3}.

Note that \cite{mishchenko2022proximal} did not provide any example to motivate Assumption~\ref{as:A3} for the case $f\neq 0$.
Assumption~\ref{as:A3} extends the one from \cite{mishchenko2022proximal} so that it holds for certain cases, including the two examples described after Assumption~\ref{as:A3}.
Here, we further elaborate these examples in detail.
\begin{compactitem}
\item[$\mathrm{(i)}$]~\underline{\textit{Example 1}}. 
If $f$ is $M_f$-Lipschitz continuous (e.g., the $\ell_1$-norm), then \eqref{eq:grad_mapp_bound} in Assumption~\ref{as:A3} also holds.
Indeed, since $f$ is $M_f$-Lipschitz continuous, it is obvious that $\partial{f}$ is $M_f$-bounded, and hence, by the fact $[F_2]$ above, we have $\norms{\prox_{\eta f}(u) - u} \leq \eta M_f$ for any $u$.
Using this inequality, and the definition of $\Gc_{\eta}$ in \eqref{eq:upper_level_min_grad_mapping}, we can easily show that
\begin{equation*}
\norms{\nabla{\Phi}_{\gamma}(w) - {\Gc}_{\eta}(w)} = \gamma^{-1}\Vert \mathrm{prox}_{\eta f}(w - \gamma \nabla{\Phi}_{\gamma}(w)) - (w - \gamma \nabla{\Phi}_{\gamma}(w)) \Vert \leq M_f.
\end{equation*}
Then, for any $\nu > 0$, by Young's inequality, we have $\norms{\nabla{\Phi}_{\gamma}(w) }^2 \leq (1 + \nu)\norms{{\Gc}_{\eta}(w)}^2 + \frac{1 + \nu}{\nu}\norms{\nabla{\Phi}_{\gamma}(w) - {\Gc}_{\eta}(w)}^2 \leq  (1 + \nu)\norms{{\Gc}_{\eta}(w)}^2 + \frac{1 + \nu}{\nu}M_f$.
Hence, Assumption~\ref{as:A3} holds for $\Lambda_0 := 1 + \nu$ and $\Lambda_1 :=  \frac{1 + \nu}{\nu}M_f$.

\item[$\mathrm{(ii)}$]~\underline{\textit{Example 2}}.
It is also easy to check that if $f = \delta_{\mathcal{W}}$, the indicator of a nonempty, closed, convex, and bounded set $\mathcal{W}$, then for any $w \in \mathcal{W}$, we also have $\Vert \prox_{\eta f}(w) - w\Vert = \Vert \mathrm{proj}_{\mathcal{W}}(w) - w\Vert  \leq 2\mathrm{diam}(\mathcal{W})$, where $\mathrm{diam}(\mathcal{W})$ is the diameter of $\mathcal{W}$.
Hence, by the same proof as in \textit{Example 1}, Assumption~\ref{as:A3} also holds.
\end{compactitem}


\textbf{$\mathrm{(c)}$~Technical results.}
The following lemma summarizes the properties of $\phi_{\gamma}$ defined by \eqref{eq:smoothed_phi0}, which was proved in \cite{TranDinh2020f}.
It will be used in the sequel for analyzing Algorithm~\ref{alg:SGM1}.

\begin{lemma}\label{le:psi_eta_pro1}
Let $\phi_0$ and $\phi_{\gamma}$ be defined by \eqref{eq:psi_func} and \eqref{eq:smoothed_phi0}, respectively.
Then, under Assumption~\ref{as:A1}:
\begin{itemize}
\item[$\mathrm{(a)}$] $\mathrm{dom}(h)$ is bounded by $M_h$ iff $\phi_{\gamma}$ is $M_{\Phi_0}$-Lipschitz continuous with $M_{\phi_0} := M_h\norms{K}$.
\item[$\mathrm{(b)}$] $\phi_{\gamma}$ is $L_{\phi_{\gamma}}$-smooth with $L_{\phi_{\gamma}} := \frac{\norms{K}^2}{\mu_h + \gamma}$ $($i.e. $\nabla{\phi}_{\gamma}$ is $L_{\phi_{\gamma}}$-Lipschitz continuous$)$.
\item[$\mathrm{(c)}$] $\phi_{\gamma}(v) \leq \phi_0(v) \leq \phi_{\gamma}(v) + \gamma B_{\phi_0}$ for any $v$, where $B_{\phi_0} := \sup\sets{b(u) : u \in\dom{h}}$.
\item[$\mathrm{(d)}$] For any $ \hat{\gamma} \geq \gamma > 0$ and $v$, we have $\phi_{\gamma}(v) \leq \phi_{\hat{\gamma}}(v) + (\hat{\gamma} - \gamma)b(u^{*}_{\gamma}(v)) \leq \phi_{\hat{\gamma}}(v) + (\hat{\gamma} - \gamma)B_{\phi_0}$.
\end{itemize}
\end{lemma}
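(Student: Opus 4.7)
\textbf{Proof plan for Lemma~\ref{le:psi_eta_pro1}.}
My approach exploits the Fenchel conjugate structure of $\phi_\gamma$. Observe that $\phi_\gamma(v) = (h + \gamma b)^*(K^T v)$, where $h+\gamma b$ is proper, closed, and $(\mu_h + \gamma)$-strongly convex on $\dom{h}$ (since $b$ is $1$-strongly convex and $h$ is $\mu_h$-convex). Also, by Danskin's theorem applied to the strongly concave maximization defining $\phi_\gamma$, the maximizer $u^*_\gamma(v)$ is unique and $\nabla \phi_\gamma(v) = K u^*_\gamma(v)$. WLOG I will assume $b \ge 0$ on $\dom{h}$ (otherwise shift $b$ by its minimum, which only redefines $B_{\phi_0}$).

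For part (c), I would write, for any $u$, the pointwise inequality $\langle v, Ku\rangle - h(u) - \gamma b(u) \le \langle v, Ku\rangle - h(u)$ and take $\sup_u$ to obtain $\phi_\gamma(v) \le \phi_0(v)$. For the reverse bound, for any $u \in \dom{h}$,
\[
\langle v, Ku\rangle - h(u) = \bigl[\langle v, Ku\rangle - h(u) - \gamma b(u)\bigr] + \gamma b(u) \le \phi_\gamma(v) + \gamma B_{\phi_0},
\]
and taking $\sup$ over $u$ gives $\phi_0(v) \le \phi_\gamma(v) + \gamma B_{\phi_0}$. Part (d) is entirely analogous: starting from the identity $\phi_\gamma(v) = \langle v, Ku^*_\gamma(v)\rangle - h(u^*_\gamma(v)) - \gamma b(u^*_\gamma(v))$, add and subtract $\hat\gamma b(u^*_\gamma(v))$, then use that the resulting $\langle v, Ku^*_\gamma(v)\rangle - h(u^*_\gamma(v)) - \hat\gamma b(u^*_\gamma(v)) \le \phi_{\hat\gamma}(v)$ by definition, and finally bound $b(u^*_\gamma(v)) \le B_{\phi_0}$.

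For part (b), I would invoke the standard duality between strong convexity and smoothness: since $h + \gamma b$ is $(\mu_h + \gamma)$-strongly convex, its conjugate $(h+\gamma b)^*$ is differentiable with $\tfrac{1}{\mu_h+\gamma}$-Lipschitz gradient. Composing with the linear map $v \mapsto K^T v$ yields, for any $v, \hat v$,
\[
\norms{\nabla \phi_\gamma(v) - \nabla \phi_\gamma(\hat v)} = \norms{K \bigl(\nabla (h+\gamma b)^*(K^T v) - \nabla (h+\gamma b)^*(K^T \hat v)\bigr)} \le \frac{\norms{K}^2}{\mu_h + \gamma}\norms{v - \hat v},
\]
which is the claimed $L_{\phi_\gamma}$-smoothness.

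For part (a), the ``only if'' direction is the easier one: assuming $\dom{h}$ is bounded by $M_h$, then $u^*_\gamma(v) \in \dom{h}$ gives $\|u^*_\gamma(v)\| \le M_h$, so
\[
\phi_\gamma(v) - \phi_\gamma(\hat v) \le \langle v - \hat v, K u^*_\gamma(v)\rangle \le M_h \norms{K}\, \norms{v - \hat v},
\]
where the first inequality follows by testing the maximum defining $\phi_\gamma(\hat v)$ with $u^*_\gamma(v)$; swapping the roles of $v$ and $\hat v$ gives the Lipschitz bound with $M_{\phi_0} = M_h\norms{K}$. The converse is the main technical obstacle: from $M_{\phi_0}$-Lipschitz continuity of $\phi_\gamma$, I would use the conjugacy $\phi_\gamma = (h+\gamma b)^* \circ K^T$ and the standard Fenchel duality fact that a closed convex function has $M$-Lipschitz values iff its conjugate has domain bounded in norm by $M$. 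Applying this to $(h+\gamma b)^*$ restricted to the range of $K^T$ and using that $b$ has full effective domain containing $\dom{h}$, one concludes that $\dom{h} \subseteq \dom{h+\gamma b}$ is bounded by $M_h = M_{\phi_0}/\norms{K}$. This is the only step requiring more than elementary manipulations; everything else reduces to direct calculation with the variational definition of $\phi_\gamma$.
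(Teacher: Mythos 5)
The paper itself does not prove this lemma: it is stated with a pointer to \cite{TranDinh2020f}, so there is no in-paper argument to compare against. Your proof is the standard smoothing/conjugate argument one would expect there, and parts (b), (c), (d) are correct as written: the pointwise comparison plus $b(u)\leq B_{\phi_0}$ gives (c) and (d) (your WLOG normalization $b\geq 0$ on $\dom{h}$ is needed for the lower bound $\phi_{\gamma}\leq\phi_0$ and is consistent with the paper's choice $b(u)=\tfrac{1}{2}\norms{u-\bar{u}}^2$), and writing $\phi_{\gamma}=(h+\gamma b)^{*}\circ K^{\top}$ with $h+\gamma b$ being $(\mu_h+\gamma)$-strongly convex gives differentiability, $\nabla\phi_{\gamma}(v)=Ku_{\gamma}^{*}(v)$, and the Lipschitz constant $\norms{K}^2/(\mu_h+\gamma)$ exactly as claimed. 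The "only if" direction of (a) via testing the max at $u_{\gamma}^{*}(v)$ is also fine.

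The gap is in your converse direction of (a). The duality fact you invoke (a closed convex function is $M$-Lipschitz iff its conjugate has domain in the ball of radius $M$) applies to $\phi_{\gamma}$ itself, whose conjugate is the image function of $h+\gamma b$ under $K$; its domain is (the closure of) $K(\dom{h})$, not $\dom{h}$. So Lipschitz continuity of $\phi_{\gamma}$ only yields $\norms{Ku}\leq M_h\norms{K}$ for $u\in\dom{h}$, which does not bound $\norms{u}$ when $K$ has a nontrivial null space (take $K=0$: $\phi_{\gamma}$ is constant, hence Lipschitz, while $\dom{h}$ may be unbounded), and even for injective $K$ it gives $\norms{u}\leq M_h\norms{K}/\sigma_{\min}(K)$ rather than $M_h$. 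The remark that $b$ has full domain does not repair this, since the obstruction lives in the kernel of $K$, about which $\phi_{\gamma}$ carries no information. So the converse cannot be obtained by this route without extra assumptions on $K$; as stated it is the "iff" itself that is suspect in degenerate cases. This does not affect the paper's analysis, which only ever uses the forward direction (bounded $\dom{h}$ implies $M_h\norms{K}$-Lipschitz $\phi_{\gamma}$, equivalently $\norms{\nabla\phi_{\gamma}}\leq M_h\norms{K}$), but you should either restrict (a) to that implication or add an injectivity/conditioning assumption on $K$ before claiming the equivalence.
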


\textbf{$\mathrm{(d)}$~The smoothness of $\Phi_{\gamma}$ and $\Phi_0$.}
One key step to develop our algorithms is to show that $\Phi_{\gamma}$ defined by  \eqref{eq:smoothed_com_opt_prob} and $\Phi_0$ in \eqref{eq:lower_level_max} are $L$-smooth (i.e. their gradient is Lipschitz continuous).
The following lemma shows the $L_{\Phi_{\gamma}}$-smoothness of $\Phi_{\gamma}$ defined in \eqref{eq:smoothed_com_opt_prob}, whose proof is given in  \cite[Lemma A.3.]{TranDinh2020f}.

\begin{lemma}[Smoothness of $\Phi_{\gamma}$]\label{le:smoothness_of_phi}
Under Assumption~\ref{as:A1}, $\nabla{\Phi_{\gamma}}$ of $\Phi_{\gamma}$ defined by \eqref{eq:grad_phi_gamma}  is $L_{\Phi_{\gamma}}$-Lipschitz continuous with $L_{\Phi_{\gamma}} := M_{h}\norms{K} L_F + \frac{M_F^2\norms{K}^2}{\mu_{h} + \gamma}$, where $\gamma \geq 0$ such that $\mu_h + \gamma > 0$.

Consequently, for any $w, \hat{w} \in \dom{\Phi_{\gamma}}$, we have
\begin{equation}\label{eq:Lsmooth_Phi_gamma}
\arraycolsep=0.2em
\begin{array}{lcl}
- \frac{L_{\Phi_{\gamma}}}{2}\norms{\hat{w} - w}^2 \leq \Phi_{\gamma}(\hat{w}) - \Phi_{\gamma}(w) - \iprods{\nabla{\Phi}_{\gamma}(w), \hat{w} - w}  \leq  \frac{L_{\Phi_{\gamma}}}{2}\norms{\hat{w} - w}^2.
\end{array}
\end{equation}
\end{lemma}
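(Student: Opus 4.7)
The plan is to prove the Lipschitz continuity of $\nabla \Phi_\gamma$ by a direct chain-rule argument on the composition $\Phi_\gamma(w) = \phi_\gamma(F(w))$, combining the smoothness and Lipschitz properties of $\phi_\gamma$ from Lemma~\ref{le:psi_eta_pro1} with those of $F$ from Assumption~\ref{as:A1}. The descent-type bound \eqref{eq:Lsmooth_Phi_gamma} then follows immediately from the classical integral-form argument for $L$-smooth functions.

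First I set up the ingredients. Since the standing hypothesis $\mu_h + \gamma > 0$ is in force, Lemma~\ref{le:psi_eta_pro1}(a)--(b) supplies that $\phi_\gamma$ is differentiable with $\norms{\nabla \phi_\gamma(\cdot)} \leq M_h\norms{K}$ (equivalently $\phi_\gamma$ is $M_h\norms{K}$-Lipschitz) and that $\nabla \phi_\gamma$ is Lipschitz with constant $L_{\phi_\gamma} = \norms{K}^2/(\mu_h + \gamma)$. On the other side, Assumption~\ref{as:A1} together with \eqref{eq:MF_and_LF} implies $F = \frac{1}{n}\sum_i F_i$ is $M_F$-Lipschitz and has $L_F$-Lipschitz Jacobian, by the triangle inequality applied to the finite sum. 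The chain rule then yields $\nabla \Phi_\gamma(w) = \nabla F(w)^\top \nabla \phi_\gamma(F(w))$.

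The main estimate uses a standard add-and-subtract decomposition: for any $w, \hat{w} \in \dom{\Phi_\gamma}$,
\[
\nabla \Phi_\gamma(w) - \nabla \Phi_\gamma(\hat{w}) = [\nabla F(w) - \nabla F(\hat{w})]^\top \nabla \phi_\gamma(F(w)) + \nabla F(\hat{w})^\top [\nabla \phi_\gamma(F(w)) - \nabla \phi_\gamma(F(\hat{w}))].
\]
The triangle inequality and operator-norm submultiplicativity bound the first piece by $L_F \cdot M_h\norms{K} \cdot \norms{w-\hat{w}}$, and the second by $M_F \cdot \tfrac{\norms{K}^2}{\mu_h+\gamma} \cdot \norms{F(w)-F(\hat{w})} \leq \tfrac{M_F^2\norms{K}^2}{\mu_h+\gamma}\norms{w-\hat{w}}$, using the $M_F$-Lipschitzness of $F$ in the final step. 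Adding these two bounds produces exactly $L_{\Phi_\gamma} = M_h\norms{K}L_F + M_F^2\norms{K}^2/(\mu_h+\gamma)$. The two-sided estimate \eqref{eq:Lsmooth_Phi_gamma} then follows by writing
\[
\Phi_\gamma(\hat{w}) - \Phi_\gamma(w) - \iprods{\nabla \Phi_\gamma(w), \hat{w}-w} = \int_0^1 \iprods{\nabla \Phi_\gamma(w + \tau(\hat{w}-w)) - \nabla \Phi_\gamma(w), \hat{w}-w} \, d\tau
\]
and applying Cauchy--Schwarz together with the Lipschitz bound just established.

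There is no deep obstacle here; the only subtlety is ensuring $\nabla \phi_\gamma$ is well defined, which requires strong convexity of the inner objective $h + \gamma b$ and is exactly what the hypothesis $\mu_h + \gamma > 0$ guarantees via Lemma~\ref{le:psi_eta_pro1}(b). A trivial bookkeeping check is that the uniform per-summand constants $M_F, L_F$ in \eqref{eq:MF_and_LF} dominate those of the averaged $F$, which is immediate from the triangle inequality on $\frac{1}{n}\sum_i F_i$. In fact this lemma is already established as \cite[Lemma A.3]{TranDinh2020f}, so the present task reduces to reproducing that short argument in the paper's current notation.
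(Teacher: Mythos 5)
Your proof is correct: the add-and-subtract decomposition of $\nabla\Phi_{\gamma}(w)-\nabla\Phi_{\gamma}(\hat{w})$, bounded via Lemma~\ref{le:psi_eta_pro1} and the constants $M_F$, $L_F$ from \eqref{eq:MF_and_LF}, gives exactly $L_{\Phi_{\gamma}}$, and the integral argument yields \eqref{eq:Lsmooth_Phi_gamma}. The paper itself does not spell out a proof but defers to \cite[Lemma A.3]{TranDinh2020f}, and your chain-rule argument is precisely that standard one, so there is nothing to add.
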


Alternatively, Lemma~\ref{le:property_of_Phi0} in the main text can be expanded in detail as follows.
\begin{lemma}\label{le:property_of_Phi0_details}
Under Assumption~\ref{as:A3_SGM2}, let $u_0^{*}(\cdot)$ and $\Phi_0$ be defined by \eqref{eq:lower_level_max}.
Then, $u_0^{*}(\cdot)$ is $\kappa$-Lipschitz continuous with $\kappa := \frac{L_u}{\mu_{H} + \mu_h} > 0$, i.e.:
\begin{equation}\label{eq:u_star_smoothness}
\norms{ u_0^{*}(w) - u_0^{*}(\hat{w})} \leq \kappa \norms{w - \hat{w}}, \quad \forall w, \hat{w} \in  \dom{\Phi_0}.
\end{equation}
Moreover, $\Phi_0$ is $L_{\Phi_0}$-smooth, i.e. $\Vert \nabla{\Phi}_0(w) - \nabla{\Phi}_0(\hat{w})\Vert \leq L_{\Phi_0}\norms{w - \hat{w}}$ for all $w, \hat{w} \in  \dom{\Phi_0}$, where $L_{\Phi_0} := (1 + \kappa)L_w$.
Consequently, for all $w, \hat{w} \in \dom{\Phi_0}$, we have
\begin{equation}\label{eq:Lsmooth_Phi2}
-\frac{L_{\Phi_0}}{2} \Vert \hat{w} - w \Vert^2 \leq \Phi_0( \hat{w} ) - \Phi_0 (w) - \langle \nabla{\Phi_0}(w), \hat{w} - w \rangle  \leq  \frac{L_{\Phi_0}}{2} \Vert \hat{w} - w \Vert^2.
\end{equation}
\end{lemma}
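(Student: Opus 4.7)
The plan is to establish the three claims in sequence: Lipschitz continuity of the solution map $u_0^*(\cdot)$, Lipschitz continuity of the hyper-gradient $\nabla \Phi_0$, and the resulting quadratic upper/lower bounds on $\Phi_0$. The only nontrivial inputs are Danskin's theorem (which applies since strong concavity makes $u_0^*(w)$ unique) and the monotonicity/Lipschitz machinery from Assumptions~\ref{as:A2} and \ref{as:A3_SGM2}.

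\textbf{Step 1 (Lipschitz continuity of $u_0^*$).} By fact $[F_3]$, the first-order optimality condition for \eqref{eq:lower_level_max} reads $\nabla_u \mathcal{H}(w, u_0^*(w)) \in \partial h(u_0^*(w))$. I will write this inclusion at two arbitrary points $w, \hat w \in \dom{\Phi_0}$, take their difference, and pair with $u_0^*(w) - u_0^*(\hat w)$. Using the $\mu_h$-strong monotonicity of $\partial h$ (a consequence of Assumption~\ref{as:A2}) on the right-hand side and adding/subtracting $\nabla_u \mathcal{H}(w, u_0^*(\hat w))$ on the left, then invoking the $\mu_H$-strong concavity of $\mathcal{H}(w, \cdot)$ (Assumption~\ref{as:A3_SGM2}(a)), yields
\[
(\mu_H + \mu_h) \norms{u_0^*(w) - u_0^*(\hat w)}^2 \leq \iprods{ \nabla_u \mathcal{H}(w, u_0^*(\hat w)) - \nabla_u \mathcal{H}(\hat w, u_0^*(\hat w)), u_0^*(w) - u_0^*(\hat w)}.
\]
Applying Cauchy--Schwarz together with the cross-Lipschitz bound $\|\nabla_u \mathcal H(w, u) - \nabla_u \mathcal H(\hat w, u)\| \leq L_u \|w - \hat w\|$ (a consequence of \eqref{eq:Lsmooth2} evaluated at $\hat u = u$) and dividing by $\|u_0^*(w)-u_0^*(\hat w)\|$ delivers the claim $\|u_0^*(w) - u_0^*(\hat w)\| \leq \kappa \|w - \hat w\|$ with $\kappa = L_u/(\mu_H + \mu_h)$.

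\textbf{Step 2 (Smoothness of $\Phi_0$).} Strong concavity makes $u_0^*(w)$ a singleton for every $w$, so Danskin's theorem yields $\nabla \Phi_0(w) = \nabla_w \mathcal{H}(w, u_0^*(w))$ as in \eqref{eq:exact_grad_phi}. To bound $\|\nabla \Phi_0(w) - \nabla \Phi_0(\hat w)\|$, I insert $\pm \nabla_w \mathcal{H}(\hat w, u_0^*(w))$ and apply the triangle inequality:
\[
\norms{\nabla \Phi_0(w) - \nabla \Phi_0(\hat w)} \leq \norms{\nabla_w \mathcal{H}(w, u_0^*(w)) - \nabla_w \mathcal{H}(\hat w, u_0^*(w))} + \norms{\nabla_w \mathcal{H}(\hat w, u_0^*(w)) - \nabla_w \mathcal{H}(\hat w, u_0^*(\hat w))}.
\]
The first summand is bounded by $L_w \|w - \hat w\|$ via \eqref{eq:Lsmooth2} (with $\hat u = u_0^*(w)$), and the second is bounded by $L_w \|u_0^*(w) - u_0^*(\hat w)\| \leq L_w \kappa \|w - \hat w\|$ using \eqref{eq:Lsmooth2} (with $\hat w$ fixed) together with Step~1. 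Summing gives $\|\nabla \Phi_0(w) - \nabla \Phi_0(\hat w)\| \leq (1 + \kappa) L_w \|w - \hat w\| = L_{\Phi_0} \|w - \hat w\|$.

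\textbf{Step 3 (Quadratic bounds).} The two-sided inequality \eqref{eq:Lsmooth_Phi2} is the standard descent lemma for an $L_{\Phi_0}$-smooth function, obtained from Step~2 via the fundamental theorem of calculus applied to $t \mapsto \Phi_0(w + t(\hat w - w))$ combined with Cauchy--Schwarz; this is identical in form to \eqref{eq:Lsmooth_Phi_gamma} in Lemma~\ref{le:smoothness_of_phi}. The main obstacle is Step~1, since $h$ is only proper, closed, convex and potentially nonsmooth, so I must argue via the subdifferential inclusion and its strong monotonicity rather than with an explicit fixed-point/implicit-function theorem. Once the Lipschitz estimate for $u_0^*$ is in hand, the rest reduces to bookkeeping through Danskin's identity.
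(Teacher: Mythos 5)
The paper itself omits the proof of this lemma (it is stated to follow the standard argument as in \cite{Nesterov2005c}), and your three-step route --- the optimality inclusion $\nabla_u\mathcal{H}(w,u_0^{*}(w))\in\partial h(u_0^{*}(w))$ combined with strong monotonicity of $\partial h$ and $\mu_H$-strong concavity of $\mathcal{H}(w,\cdot)$, then Danskin plus a triangle inequality, then the descent lemma --- is exactly that standard argument, and the algebra in your Step 1 (splitting off $\nabla_u\mathcal{H}(w,u_0^{*}(\hat w))$ and applying Cauchy--Schwarz) is correct. One piece of bookkeeping deserves a flag, however: \eqref{eq:Lsmooth2} as literally written bounds the variation of the \emph{full} gradient by $L_w^2\norms{w-\hat w}^2 + L_u^2\norms{u-\hat u}^2$, so evaluating it at $\hat u = u$ gives $\norms{\nabla_u\mathcal{H}(w,u)-\nabla_u\mathcal{H}(\hat w,u)}\le L_w\norms{w-\hat w}$, with constant $L_w$ rather than the $L_u$ you claim in Step 1; symmetrically, the second summand in your Step 2 is controlled by $L_u\norms{u_0^{*}(w)-u_0^{*}(\hat w)}$, not $L_w\norms{\cdot}$. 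With the literal constants your argument yields $\norms{u_0^{*}(w)-u_0^{*}(\hat w)}\le \frac{L_w}{\mu_H+\mu_h}\norms{w-\hat w}$ and then $L_{\Phi_0}= L_w + L_u\cdot\frac{L_w}{\mu_H+\mu_h}=(1+\kappa)L_w$, so the smoothness conclusion and \eqref{eq:Lsmooth_Phi2} come out exactly as stated; the claim that $u_0^{*}$ is $\kappa$-Lipschitz with $\kappa = \frac{L_u}{\mu_H+\mu_h}$, on the other hand, is obtained only under the block-wise reading of the assumption (namely that $\nabla_u\mathcal{H}_i$ is $L_u$-Lipschitz and $\nabla_w\mathcal{H}_i$ is $L_w$-Lipschitz, each jointly in $(w,u)$), which is evidently the convention the lemma's constants encode. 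So your proof is structurally complete and matches the intended argument; just be explicit about which Lipschitz constant \eqref{eq:Lsmooth2} actually supplies in each of the two places, since as written your citations of that inequality assign them the wrong way around.
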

This lemma is proven similar to the one, e.g., in \cite{Nesterov2005c}, and we omit it here.

\textbf{$\mathrm{(e)}$~Proof of Lemma~\ref{le:kkt_point} -- Approximate stationary and KKT points.}
Now, we provide the proof of Lemma~\ref{le:kkt_point} in the main text.

\begin{proof}[\textbf{Proof of Lemma~\ref{le:kkt_point}}]
(a)~If $(w^{\star}, u^{\star})$ is a KKT point of \eqref{eq:minimax_prob}, then 
\begin{equation*}
\arraycolsep=0.2em
\begin{array}{lcl}
0 \in \nabla_w{\Hc}(w^{\star}, u^{\star}) + \partial{f}(w^{\star}) \quad \text{and} \quad 0 \in -\nabla_u{\Hc}(w^{\star}, u^{\star}) + \partial{h}(u^{\star}).
\end{array}
\end{equation*} 
Since $\Hc(w^{\star}, \cdot) - h(\cdot)$ is concave, $0 \in -\nabla_u{\Hc}(w^{\star}, u^{\star}) + \partial{h}(u^{\star})$ implies that $u^{\star} \in \argmax_u\sets{\Hc(w^{\star}, u) - h(u)}$.
For $\Phi_0$ defined by \eqref{eq:lower_level_max}, by Danskin's theorem, we have $\nabla{\Phi}_0(w^{\star}) = \nabla_w{\Hc}(w^{\star}, u^{\star})$.
Hence, combining this relation and $0 \in \nabla_w{\Hc}(w^{\star}, u^{\star}) + \partial{f}(w^{\star})$, we have $0 \in \nabla{\Phi}_0(w^{\star}) + \partial{f}(w^{\star})$, which shows that $w^{\star}$ is a stationary point of \eqref{eq:upper_level_min}.
The converse statement is proved similarly, and we omit.

(b)~If $\widehat{w}_T$ is an $\epsilon$-stationary point of \eqref{eq:upper_level_min}, then using a shorthand $g_T := \Gc_{\eta}(\widehat{w}_T)$,  we have $\Eb[ \norms{g_T}^2 ] \leq \epsilon^2$.
From \eqref{eq:upper_level_min_grad_mapping}, we have $g_T = \eta^{-1}(\widehat{w}_T - \prox_{\eta f}(\widehat{w}_T - \eta\nabla{\Phi}_{0}(\widehat{w}_T)))$, which is equivalent to $g_T \in \nabla{\Phi}_{0}(\widehat{w}_T)  + \partial{f}(\widehat{w}_T - \eta g_T)$.
Let us define $\overline{w}_T$ as in Lemma~\ref{le:kkt_point} and $e_T$ as follows:
\begin{equation}\label{eq:kkt_point_lm1_proof1}
\left\{\begin{array}{lcl}
\overline{w}_T & := & \widehat{w}_T - \eta g_T = \prox_{\eta f}(\widehat{w}_T - \eta\nabla{\Phi}_{0}(\widehat{w}_T))), \\
e_T & := &  g_T + \nabla{\Phi}_{0}(\overline{w}_T) - \nabla{\Phi}_{0}(\widehat{w}_T).
\end{array}\right.
\end{equation} 
Then, $g_T \in \nabla{\Phi}_{0}(\widehat{w}_T)  + \partial{f}(\overline{w}_T)$ is equivalent to $e_T \in \nabla{\Phi}_{0}(\overline{w}_T)  + \partial{f}(\overline{w}_T) = \nabla_w{\Hc}(\overline{w}_T, u_0^{*}(\overline{w}_T)) + \partial{f}(\overline{w}_T)$.
On the other hand, from \eqref{eq:lower_level_max_opt_cond}, we have $0 \in -\nabla_u{\Hc}(\overline{w}_T, u_0^{*}(\overline{w}_T)) + \partial{h}(u_0^{*}(\overline{w}_T))$.
By the triangle inequality, and the $L_{\Phi_0}$-Lipschitz continuity of $\nabla{\Phi}_0$, we have
\begin{equation*}
\arraycolsep=0.2em
\begin{array}{lcl}
\norms{e_T} & \overset{\tiny \eqref{eq:kkt_point_lm1_proof1} }{\leq} &  \norms{g_T} + \norms{ \nabla{\Phi}_{0}(\overline{w}_T) - \nabla{\Phi}_{0}(\widehat{w}_T)} \\
& \leq & \norms{g_T} + L_{\Phi_0}\norms{\overline{w}_T - \widehat{w}_T} \\
& \overset{\tiny\eqref{eq:kkt_point_lm1_proof1} }{\leq} & (1 + L_{\Phi_0}\eta)\norms{g_T}.
\end{array}
\end{equation*} 
Hence, we get 
\begin{equation*}
\arraycolsep=0.2em
\begin{array}{lcl}
\Eb[ \norms{e_T}^2 ] \leq  (1 + L_{\Phi_0}\eta)^2 \Eb[\norms{g_T}^2] \leq (1 + L_{\Phi_0}\eta)^2\epsilon^2.
\end{array}
\end{equation*}
This concludes that if  $\widehat{w}_T$ is an $\epsilon$-stationary point of \eqref{eq:upper_level_min}, then $(\overline{w}_T, u_0^{*}(\overline{w}_T))$ is an $\hat{\epsilon}$-KKT point of \eqref{eq:minimax_prob} with $\hat{\epsilon} := (1 + L_{\Phi_0}\eta)\epsilon$.

(c)~Since $\overline{w}_T  :=  \prox_{\eta f}(\widehat{w}_T - \eta\nabla{\Phi}_{\gamma}(\widehat{w}_T)))$, we have $\widehat{w}_T - \overline{w}_T - \eta \nabla{\Phi}_{\gamma}(\widehat{w}_T) \in \eta \partial{f}(\overline{w}_T)$.
Using this inclusion and 
\begin{equation*}
\begin{array}{lcl}
\nabla{\Phi}_{\gamma}(\overline{w}_T)  = \nabla{F}(\overline{w}_T)^{\top}\nabla{\phi}_{\gamma}(F(\overline{w}_T)) =  \nabla{F}(\overline{w}_T)^{\top}Ku^{*}_{\gamma}(F(\overline{w}_T)) = \nabla_w{\Hc}(\overline{w}_T, u^{*}_{\gamma}(\overline{w}_T)),
\end{array}
\end{equation*} 
we can show that
\begin{equation*}
\begin{array}{lcl}
\bar{r}_w &:= &  \eta^{-1}(\widehat{w}_T - \overline{w}_T) +    \nabla{\Phi}_{\gamma}(\overline{w}_T) - \nabla{\Phi}_{\gamma}(\widehat{w}_T) \in \nabla{\Phi}_{\gamma}(\overline{w}_T) +  \partial{f}(\overline{w}_T) \\
& \equiv & \nabla_w{\Hc}(\overline{w}_T, u^{*}_{\gamma}(\overline{w}_T)) +  \partial{f}(\overline{w}_T).
\end{array}
\end{equation*} 
Since $\nabla{\Phi}_{\gamma}$ is $L_{\Phi_{\gamma}}$-Lipschitz continuous and $\Gc_{\eta}(\overline{w}_T) = \eta^{-1}(\widehat{w}_T - \overline{w}_T)$, we have 
\begin{equation*}
\begin{array}{lcl}
\norms{ \bar{r}_w } &\leq & \norms{ \Gc_{\eta}(\overline{w}_T) } + \norms{ \nabla{\Phi}_{\gamma}(\overline{w}_T) - \nabla{\Phi}_{\gamma}(\widehat{w}_T) } \leq (1 + \eta L_{\Phi_{\gamma}})\norms{\Gc_{\eta}(\overline{w}_T)}.
\end{array}
\end{equation*} 
On the other hand, since $\overline{u}_T := u^{*}_{\gamma}(F(\overline{w}_T))$, using the optimality condition of \eqref{eq:smoothed_phi0}, and noticing that $\Hc(w, u) = \iprods{F(w), Ku}$, we have
\begin{equation*}
\begin{array}{lcl}
\bar{r}_u := -\gamma\nabla{b}(\overline{u}_T) &\in &  -K^{\top} F(\overline{u}_T) + \partial{h}(\overline{u}_T) \equiv -\nabla_u{\Hc}(\overline{w}_T, \overline{u}_T) + \partial{h}(\overline{u}_T).
\end{array}
\end{equation*} 
Since $\dom{h}$ is bounded by $M_h$ by Assumption~\ref{as:A2}, we can show that $\norms{\bar{r}_u} = \gamma\norms{\nabla{b}(\overline{u}_T)} \leq \gamma D_b$, where $D_b := \sup\sets{ \norms{\nabla{b}(u)} : u \in \dom{h} }$.
Combining the above analysis and noticing that $\Eb[  \norms{ \Gc_{\eta}(\overline{w}_T) }^2 ] \leq \epsilon^2$, we can show that
\begin{equation*}
\begin{array}{lcl}
\bar{r}_w \in \nabla_w{\Hc}(\overline{w}_T, \overline{u}_T) +  \partial{f}(\overline{w}_T) \quad \text{and} \quad \bar{r}_u \in -\nabla_u{\Hc}(\overline{w}_T, \overline{u}_T) + \partial{h}(\overline{u}_T).
\end{array}
\end{equation*} 
where $\Eb[ \norms{\bar{r}_w}^2 ] \leq (1 + \eta L_{\Phi_{\gamma}})^2\epsilon^2$ and $\Eb[ \norms{\bar{r}_u}^2 ] \leq \gamma^2 D_b^2$.
This proves that $(\overline{w}_T, \overline{u}_T)$ is an $\hat{\epsilon}$-KKT of \eqref{eq:minimax_prob} with $\hat{\epsilon} := \max\sets{ (1 + \eta L_{\Phi_{\gamma}})\epsilon, \gamma D_b}$.
Clearly, we have $\hat{\epsilon} = \Oc(\max\sets{\epsilon, \gamma})$.
In particular, if we choose $\eta := \Oc(\epsilon)$ and $\gamma := \Oc(\epsilon)$, then $\hat{\epsilon} = \Oc(\epsilon)$.
\end{proof}

\beforesec
\section{Convergence Analysis of Algorithm~\ref{alg:SGM1} -- The NL Setting}\label{apdx:sec:SGM1_convergence}
\aftersec
We first prove some key estimates for the shuffling estimator of $\nabla{\Phi}_{\gamma}(w)$.
Next, we establish the technical lemmas that will be used to prove Theorem~\ref{th:main_result1}.
Finally, we prove Theorem~\ref{th:main_result1} and Corollary~\ref{co:app_KKT_for_nonconvex_linear_case}.

\beforesubsec
\subsection{Properties of shuffling estimators}\label{apdx:subsec:properties}
\aftersubsec
We state the following properties of $\widetilde{\nabla}{\Phi_{\gamma}}(\cdot)$ defined by \eqref{eq:approx_grad_Phi}, which could be of independent interest.


\begin{lemma}[Arbitrary permutation]\label{le:key_estimates_det}
Assume that Assumption~\ref{as:A1} holds.
Then
\begin{compactitem}
\item[$\mathrm{(a)}$] For any $i \in [n]$, the approximation $F_i^{(t)}$ defined by \eqref{eq:approxi_Ft} satisfies 
\begin{equation}\label{eq:main_est2}
\begin{array}{lcl}
\norms{ F_i^{(t)} -  F(w_0^{(t)})}^2  \leq  \frac{M_F^2}{n}\sum_{j=1}^{n}\Vert w_{j-1}^{(t)}  - w^{(t)}_0 \Vert^2.
\end{array}
\end{equation}

\item[$\mathrm{(b)}$] Let $\mathcal{T}_{[i]} :=  \Vert \frac{1}{i}\sum_{j=1}^i\widetilde{\nabla}{\Phi_{\gamma}}(w_{j-1}^{(t)}) - \nabla{\Phi_{\gamma}}(w_0^{(t)}) \Vert^2$ for $\widetilde{\nabla}\Phi_{\gamma}(w_{i-1}^{(t)})$ defined by \eqref{eq:approx_grad_Phi}. 
Then 
\begin{equation}\label{eq:grad_Phi_bound2}
\begin{array}{lcl}
\mathcal{T}_{[i]}
& \leq & \left(\frac{C_1}{n}+ \frac{2C_2L_F^2}{i}\right)\sum_{j=1}^{n}\Vert w_{j-1}^{(t)}  - w^{(t)}_0 \Vert^2 +  \frac{2n C_2 \sigma_J^2}{i} \\
\mathcal{T}_{[n]} & \leq &\frac{1}{n} \left( C_1 + 2C_2 L_{F}^2 \right)\sum_{j=1}^{n}\Vert w_{j-1}^{(t)}  - w^{(t)}_0 \Vert^2, 
\end{array}
\end{equation}
where $C_1 := \frac{2M_F^4\norms{K}^4}{(\mu_h + \gamma)^2}$ and $C_2 := 2M_h^2\norms{K}^2$.
\end{compactitem}
\end{lemma}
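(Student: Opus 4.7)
\textbf{Plan for the proof of Lemma~\ref{le:key_estimates_det}.}
For part (a), the key observation is that since $\pi^{(t)}$ is a permutation, $F(w_0^{(t)})=\tfrac{1}{n}\sum_{j=1}^{n}F_{\pi^{(t)}(j)}(w_0^{(t)})$, so the last $n-i$ terms in the definition \eqref{eq:approxi_Ft} cancel exactly, leaving
\[
F_i^{(t)} - F(w_0^{(t)}) = \tfrac{1}{n}\sum_{j=1}^{i}\bigl[F_{\pi^{(t)}(j)}(w_{j-1}^{(t)}) - F_{\pi^{(t)}(j)}(w_0^{(t)})\bigr].
\]
Applying Cauchy--Schwarz to the $i$-term sum and the $M_F$-Lipschitz continuity of each $F_k$ from Assumption~\ref{as:A1}(b) (together with \eqref{eq:MF_and_LF}), I obtain $\|F_i^{(t)}-F(w_0^{(t)})\|^2 \leq \tfrac{iM_F^2}{n^2}\sum_{j=1}^i\|w_{j-1}^{(t)}-w_0^{(t)}\|^2$; bounding $i\leq n$ and extending the sum to $j=1,\dots,n$ gives \eqref{eq:main_est2}.

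For part (b), the strategy is to split $\widetilde{\nabla}\Phi_\gamma(w_{j-1}^{(t)}) - \nabla\Phi_\gamma(w_0^{(t)}) = A_j + B_j$, where
\[
A_j := (\nabla F_j^{(t)})^{\top}\bigl[\nabla\phi_\gamma(F_j^{(t)}) - \nabla\phi_\gamma(F(w_0^{(t)}))\bigr], \qquad B_j := \bigl[\nabla F_j^{(t)} - \nabla F(w_0^{(t)})\bigr]^{\top}\nabla\phi_\gamma(F(w_0^{(t)})).
\]
Young's inequality gives $\mathcal{T}_{[i]} \leq 2\bigl\|\tfrac{1}{i}\sum_{j=1}^{i}A_j\bigr\|^2 + 2\bigl\|\tfrac{1}{i}\sum_{j=1}^{i}B_j\bigr\|^2$. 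For the $A_j$-sum, I use $\|\nabla F_j^{(t)}\|\leq M_F$, the $L_{\phi_\gamma}=\|K\|^2/(\mu_h+\gamma)$-smoothness of $\phi_\gamma$ from Lemma~\ref{le:psi_eta_pro1}(b), and part~(a), yielding the $C_1/n$ term. For the $B_j$-sum, I exploit that $\tfrac{1}{i}\sum_{j=1}^{i}B_j$ is linear in the Jacobians, so I can pull out $\nabla\phi_\gamma(F(w_0^{(t)}))$, whose norm is bounded by $M_h\|K\|$ via Lemma~\ref{le:psi_eta_pro1}(a). The residual $E:=\tfrac{1}{i}\sum_{j=1}^{i}\nabla F_{\hat\pi^{(t)}(j)}(w_{j-1}^{(t)}) - \nabla F(w_0^{(t)})$ is further split as $E = D_1 + D_2$ with
\[
D_1 := \tfrac{1}{i}\sum_{j=1}^{i}\bigl[\nabla F_{\hat\pi^{(t)}(j)}(w_{j-1}^{(t)}) - \nabla F_{\hat\pi^{(t)}(j)}(w_0^{(t)})\bigr], \qquad D_2 := \tfrac{1}{i}\sum_{j=1}^{i}\nabla F_{\hat\pi^{(t)}(j)}(w_0^{(t)}) - \nabla F(w_0^{(t)}).
\]
The $D_1$-term produces the $2C_2L_F^2/i$ contribution through Cauchy--Schwarz and the $L_F$-Lipschitz continuity of each $\nabla F_k$, and the $D_2$-term is bounded by $n\sigma_J^2/i$ using Cauchy--Schwarz together with the variance bound \eqref{eq:bounded_variance} from Assumption~\ref{as:A1}(c); this gives the $2nC_2\sigma_J^2/i$ term.

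For the second bound in \eqref{eq:grad_Phi_bound2} at $i=n$, the crucial observation is that because $\hat\pi^{(t)}$ is a permutation, $\tfrac{1}{n}\sum_{j=1}^{n}\nabla F_{\hat\pi^{(t)}(j)}(w_0^{(t)}) = \nabla F(w_0^{(t)})$, so $D_2$ vanishes identically and the variance term $\tfrac{2nC_2\sigma_J^2}{n}$ disappears, leaving only the deterministic contribution from $A_j$ and $D_1$ with the stated constants. The main bookkeeping obstacle is keeping the constants and factors of two consistent across the two Young-style splits (outer $A_j + B_j$ and inner $D_1 + D_2$), and verifying that the $i\leq n$ relaxation on the inner sum does not lose an extra factor when substituted into the gradient-mapping recursion used later; the ideas themselves are otherwise standard Lipschitz/variance estimates tailored to the shuffling estimators \eqref{eq:approxi_Ft} and \eqref{eq:approxi_Jt}.
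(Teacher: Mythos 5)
Your proposal is correct and follows essentially the same route as the paper's proof: the exact same cancellation and Cauchy--Schwarz/Lipschitz argument for part (a), and for part (b) the identical two-level splitting (outer $A_j+B_j$ via Young, inner $D_1+D_2$ for the Jacobian residual), with the bounds from Lemma~\ref{le:psi_eta_pro1}, the $L_F$-Lipschitz estimate, the variance bound \eqref{eq:bounded_variance}, and the observation that the $D_2$-term vanishes at $i=n$ because the permutation average recovers $\nabla F(w_0^{(t)})$ exactly. The constants you describe assemble to the stated $C_1$, $C_2$ values just as in the paper.
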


\begin{proof} 
(a)~Since $F(w_0^{(t)}) = \frac{1}{n}\sum_{j=1}^{n}F_j(w^{(t)}_0) = \frac{1}{n}\sum_{j=1}^{n}F_{\pi^{(t)}(j)}(w^{(t)}_0)$, using \textbf{Option 1} as \eqref{eq:approxi_Ft}, we have
\begin{equation*} 
\begin{array}{lcl}
\norms{ F_i^{(t)} -  F(w_0^{(t)})}^2 &= & \frac{1}{n^2} \Vert \sum_{j=1}^{i}F_{\pi^{(t)}(j)}(w_{j-1}^{(t)}) + \sum_{j=i+1}^{n}F_{\pi^{(t)}(j)}(w^{(t)}_0) - \sum_{j=1}^{n} F_{\pi^{(t)}(j)}(w^{(t)}_0)  \Vert^2 \\
&= & \frac{1}{n^2}\Vert \sum_{j=1}^{i}\big[ F_{\pi^{(t)}(j)}(w_{j-1}^{(t)})  - F_{\pi^{(t)}(j)}(w^{(t)}_0) \big]\Vert^2 \\
& \leq & \frac{i}{n^2}\sum_{j=1}^{i}\Vert F_{\pi^{(t)}(j)}(w_{j-1}^{(t)})  - F_{\pi^{(t)}(j)}(w^{(t)}_0) \Vert^2 \\
& \leq & \frac{i\cdot M_F^2}{n^2}\sum_{j=1}^{i}\Vert w_{j-1}^{(t)}  - w^{(t)}_0 \Vert^2 \\
& \leq & \frac{M_F^2}{n}\sum_{j=1}^{n}\Vert w_{j-1}^{(t)}  - w^{(t)}_0 \Vert^2,
\end{array}
\end{equation*}
which proves \eqref{eq:main_est2} due to $1 \leq i \leq n$.

Alternatively if we use the update \eqref{eq:approxi_Ftb} as  in \textbf{Option 2}, then we have $F_i^{(t)} =  F(w_0^{(t)})$ which also automatically satisfies \eqref{eq:main_est2}.

(b)~From the definition of $\nabla{\Phi_{\gamma}}(w_0^{(t)})$ in \eqref{eq:grad_phi_gamma} and of $\widetilde{\nabla}{\Phi_{\gamma}}(w_{i-1}^{(t)})$  in \eqref{eq:approx_grad_Phi}, by Young's inequality in $\myeqc{1}$ and $\myeqc{2}$, the Cauchy-Schwarz inequality in $\myeqc{2}$, and Lemma~\ref{le:psi_eta_pro1} in $\myeqc{3}$, we have
\begin{equation*}
\begin{array}{lcl}
\mathcal{T}_{[i]} &:= & \Vert \frac{1}{i}\sum_{j=1}^i\widetilde{\nabla}{\Phi_{\gamma}}(w_{j-1}^{(t)}) - \nabla{\Phi_{\gamma}}(w_0^{(t)}) \Vert^2 \\
&= & \norms{ \frac{1}{i}\sum_{j=1}^i \big[ (\nabla{F}_{j}^{(t)})^{\top}\nabla{\phi}_{\gamma}(F_{j}^{(t)}) - \nabla{F}(w_0^{(t)})^{\top} \nabla{\phi}_{\gamma}(F(w_0^{(t)})) \big] }^2 \\
&= & \norms{ \frac{1}{i}\sum_{j=1}^i \big[ (\nabla{F}_{j}^{(t)})^{\top}\nabla{\phi}_{\gamma}(F_{j}^{(t)}) - (\nabla{F}_{j}^{(t)})^{\top}\nabla{\phi}_{\gamma}(F(w_0^{(t)}) \\
&& \qquad + {~} (\nabla{F}_{j}^{(t)})^{\top}\nabla{\phi}_{\gamma}(F(w_0^{(t)})   - \nabla{F}(w_0^{(t)})^{\top} \nabla{\phi}_{\gamma}(F(w_0^{(t)})) \big] }^2 \\
& \overset{\myeqc{1}}{\leq} & 2 \Vert \frac{1}{i}\sum_{j=1}^i (\nabla{F}_{j}^{(t)})^{\top} \big[ \nabla{\phi}_{\gamma}(F_{j}^{(t)}) -  \nabla{\phi}_{\gamma}(F(w_0^{(t)}) \big] \Vert^2 \\
&& + {~}  2\Vert \frac{1}{i}\sum_{j=1}^i \big[ \nabla{F}_{j}^{(t)} - \nabla{F}(w_0^{(t)}) \big]^{\top}\nabla{\phi}_{\gamma}(F(w_0^{(t)}) \Vert^2 \\
& \overset{\myeqc{2}}{\leq} & \frac{2}{i} \sum_{j=1}^i \Vert \nabla{F}_{j}^{(t)}\Vert^2\Vert \nabla{\phi}_{\gamma}(F_{j}^{(t)}) -  \nabla{\phi}_{\gamma}(F(w_0^{(t)}) \Vert^2 \\
&& + {~}  2\Vert \nabla{\phi}_{\gamma}(F(w_0^{(t)})\Vert^2\Vert \frac{1}{i}\sum_{j=1}^i \big[ \nabla{F}_{j}^{(t)} - \nabla{F}(w_0^{(t)}) \big] \Vert^2 \\
& \overset{\myeqc{3}}{\leq} & \frac{2M_F^2\norms{K}^4}{i(\mu_h+\gamma)^2} \sum_{j=1}^i \ \Vert  F_{j}^{(t)} -  F(w_0^{(t)})\Vert^2 +  2M_h^2\norms{K}^2\Vert \frac{1}{i}\sum_{j=1}^i \big[ \nabla{F}_{j}^{(t)} - \nabla{F}(w_0^{(t)}) \big] \Vert^2.
\end{array}
\end{equation*}
Substituting \eqref{eq:main_est2} into this estimate  and noting that $C_1 = \frac{2M_F^4\norms{K}^4}{(\mu_h + \gamma)^2}$ and $C_2 = 2M_h^2\norms{K}^2$ we obtain
\begin{equation} \label{eq:main_est2a}
\hspace{-0.02ex}
\begin{array}{lcl}
\mathcal{T}_{[i]}
& \leq & \frac{2M_F^2\norms{K}^4}{i(\mu_h+\gamma)^2} \sum_{j=1}^i \frac{M_F^2}{n}\sum_{j=1}^{n}\Vert w_{j-1}^{(t)}  - w^{(t)}_0 \Vert^2 +  C_2\Vert \frac{1}{i}\sum_{j=1}^i \big[ \nabla{F}_{j}^{(t)} - \nabla{F}(w_0^{(t)}) \big] \Vert^2 \\
& \leq & C_2 \Vert \frac{1}{i}\sum_{j=1}^i \big[ \nabla{F}_j^{(t)} - \nabla{F}_{\pi^{(t)}(j)}(w_0^{(t)}) \big] + \frac{1}{i}\sum_{j=1}^i \big[ \nabla{F}_{\pi^{(t)}(j)}(w_0^{(t)})  - \nabla{F}(w_0^{(t)}) \big] \Vert^2 \\
&& + {~}  \frac{C_1}{n}\sum_{j=1}^{n}\Vert w_{j-1}^{(t)}  - w^{(t)}_0 \Vert^2 \\
& \leq & \frac{C_1}{n}\sum_{j=1}^{n}\Vert w_{j-1}^{(t)}  - w^{(t)}_0 \Vert^2 +  2C_2 \Vert \frac{1}{i}\sum_{j=1}^i \big[ \nabla{F}_{\pi^{(t)}(j)}(w_{j-1}^{(t)}) - \nabla{F}_{\pi^{(t)}(j)}(w_0^{(t)}) \big]\Vert^2 \\
&& + {~} 2C_2 \Vert \frac{1}{i}\sum_{j=1}^i \big[ \nabla{F}_{\pi^{(t)}(j)}(w_0^{(t)})  - \nabla{F}(w_0^{(t)}) \big] \Vert^2 \\
& \leq & \frac{C_1}{n}\sum_{j=1}^{n}\Vert w_{j-1}^{(t)}  - w^{(t)}_0 \Vert^2 +  2C_2  \frac{1}{i}\sum_{j=1}^i \Vert  \nabla{F}_{\pi^{(t)}(j)}(w_{j-1}^{(t)}) - \nabla{F}_{\pi^{(t)}(j)}(w_0^{(t)}) \Vert^2 \\
&& + 2C_2 \Vert \frac{1}{i}\sum_{j=1}^i \big[ \nabla{F}_{\pi^{(t)}(j)}(w_0^{(t)})  - \nabla{F}(w_0^{(t)}) \big] \Vert^2 \\
& \leq & \frac{C_1}{n}\sum_{j=1}^{n}\Vert w_{j-1}^{(t)}  - w^{(t)}_0 \Vert^2 +  2C_2  \frac{1}{i}\sum_{j=1}^n L_{F_j}^2\Vert w_{j-1}^{(t)} - w_0^{(t)} \Vert^2 \\
&& + 2C_2 \Vert \frac{1}{i}\sum_{j=1}^i \big[ \nabla{F}_{\pi^{(t)}(j)}(w_0^{(t)})  - \nabla{F}(w_0^{(t)}) \big] \Vert^2 \\
& \leq & \left( \frac{C_1}{n} + \frac{2C_2 L_{F}^2}{i} \right)\sum_{j=1}^{n}\Vert w_{j-1}^{(t)}  - w^{(t)}_0 \Vert^2 + 2C_2 \Vert \frac{1}{i}\sum_{j=1}^i \big[ \nabla{F}_{\pi^{(t)}(j)}(w_0^{(t)})  - \nabla{F}(w_0^{(t)}) \big] \Vert^2.
\end{array}
\hspace{-6ex}
\end{equation}
For $i=n$, we have 
\begin{equation*}
\begin{array}{lcl}
\mathcal{T}_{[n]}
& \leq & \left( \frac{C_1}{n} + \frac{2C_2 L_{F}^2}{n} \right)\sum_{j=1}^{n}\Vert w_{j-1}^{(t)}  - w^{(t)}_0 \Vert^2 + 2C_2 \Vert \frac{1}{n}\sum_{j=1}^n \big[ \nabla{F}_{\pi^{(t)}(j)}(w_0^{(t)})  - \nabla{F}(w_0^{(t)}) \big] \Vert^2 \\
& \leq & \frac{1}{n} \left( C_1 + 2C_2 L_{F}^2 \right)\sum_{j=1}^{n}\Vert w_{j-1}^{(t)}  - w^{(t)}_0 \Vert^2.
\end{array}
\end{equation*}
For any other index $i \in [n]$ and $i < n$, we can show that
\begin{equation*}
\begin{array}{lcl}
\mathcal{T}_{[i]}
& \leq & \left( \frac{C_1}{n} +  \frac{2C_2 L_{F}^2}{i} \right)\sum_{j=1}^{n}\Vert w_{j-1}^{(t)}  - w^{(t)}_0 \Vert^2 + 2C_2 \frac{1}{i}\sum_{j=1}^i\Vert   \nabla{F}_{\pi^{(t)}(j)}(w_0^{(t)})  - \nabla{F}(w_0^{(t)})  \Vert^2 \\
& \leq & \left( \frac{C_1}{n} +  \frac{2C_2 L_{F}^2}{i} \right)\sum_{j=1}^{n}\Vert w_{j-1}^{(t)}  - w^{(t)}_0 \Vert^2 + 2C_2 \frac{n}{i}\frac{1}{n}\sum_{j=1}^n\Vert   \nabla{F}_{\pi^{(t)}(j)}(w_0^{(t)})  - \nabla{F}(w_0^{(t)})  \Vert^2, 
\end{array}
\end{equation*}
which proves the desired estimate. 
\end{proof}

If $\pi^{(t)}$ and $\hat{\pi}^{(t)}$ are generated randomly and independently, the we have the following result.

\begin{lemma}[Random permutation]\label{le:key_estimates_rand}
Assume that Assumption~\ref{as:A1} holds, and $\pi^{(t)} $ and $\hat{\pi}^{(t)}$ are two random permutations of $[n]$.
We recall that $\mathcal{T}_{[i]} :=  \Vert \frac{1}{i}\sum_{j=1}^i\widetilde{\nabla}{\Phi_{\gamma}}(w_{j-1}^{(t)}) - \nabla{\Phi_{\gamma}}(w_0^{(t)}) \Vert^2$. 
Then
\begin{equation}\label{eq:grad_Phi_bound2_rand}
\begin{array}{lcl}
\Eb[\mathcal{T}_{[i]}]
& \leq & \left( \frac{C_1}{n} + \frac{2C_2 L_{F}^2}{i} \right)\sum_{j=1}^{n} \Eb\big[\Vert w_{j-1}^{(t)}  - w^{(t)}_0 \Vert^2\big] + \frac{2C_2 }{i} \sigma_J^2, 
\end{array}
\end{equation}
where $C_1 := \frac{2M_F^4\norms{K}^4}{(\mu_h + \gamma)^2}$ and $C_2 := 2M_h^2\norms{K}^2$.
\end{lemma}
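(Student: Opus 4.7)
The plan is to re-use the deterministic derivation of Lemma~\ref{le:key_estimates_det} verbatim up to the penultimate estimate \eqref{eq:main_est2a}, and only sharpen the very last step by exploiting the randomness of $\hat{\pi}^{(t)}$. That derivation already yields
\begin{equation*}
\mathcal{T}_{[i]} \leq \Big(\tfrac{C_1}{n} + \tfrac{2C_2 L_F^2}{i}\Big)\sum_{j=1}^n \|w_{j-1}^{(t)} - w_0^{(t)}\|^2 + 2C_2\,V_i^{(t)},
\end{equation*}
where $V_i^{(t)} := \big\|\tfrac{1}{i}\sum_{j=1}^i (\nabla F_{\hat{\pi}^{(t)}(j)}(w_0^{(t)}) - \nabla F(w_0^{(t)}))\big\|^2$. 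For arbitrary shuffling, $V_i^{(t)}$ was bounded crudely by $\tfrac{n}{i}\sigma_J^2$ via Cauchy--Schwarz; the entire factor-$n$ saving in the random case will come from replacing this bound by a sampling-without-replacement variance identity.

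First I would condition on the $\sigma$-algebra $\mathcal{F}_{t-1}$ generated up to the start of epoch $t$, so that $w_0^{(t)}$ is frozen, and set $X_k := \nabla F_k(w_0^{(t)}) - \nabla F(w_0^{(t)})$. Then $\sum_{k=1}^n X_k = 0$ and $\tfrac{1}{n}\sum_{k=1}^n \|X_k\|^2 \leq \sigma_J^2$ by Assumption~\ref{as:A1}(c). Since $\hat{\pi}^{(t)}$ is independent of $\mathcal{F}_{t-1}$ and uniform on the symmetric group, exchangeability gives, for any $j\neq l$,
\begin{equation*}
\mathbb{E}\big[\|X_{\hat{\pi}^{(t)}(j)}\|^2 \,\big|\, \mathcal{F}_{t-1}\big] = \tfrac{1}{n}\sum_{k=1}^n\|X_k\|^2, \qquad \mathbb{E}\big[\langle X_{\hat{\pi}^{(t)}(j)}, X_{\hat{\pi}^{(t)}(l)}\rangle \,\big|\, \mathcal{F}_{t-1}\big] = -\tfrac{1}{n(n-1)}\sum_{k=1}^n \|X_k\|^2,
\end{equation*}
where the second identity uses $\|\sum_k X_k\|^2 = 0$. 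Collecting the $i$ diagonal and $i(i-1)$ off-diagonal contributions to $\|\sum_{j=1}^i X_{\hat{\pi}^{(t)}(j)}\|^2$ and dividing by $i^2$ yields
\begin{equation*}
\mathbb{E}\big[V_i^{(t)} \,\big|\, \mathcal{F}_{t-1}\big] = \tfrac{n-i}{i(n-1)}\cdot\tfrac{1}{n}\sum_{k=1}^n\|X_k\|^2 \leq \tfrac{\sigma_J^2}{i},
\end{equation*}
after using $\tfrac{n-i}{n-1}\leq 1$.

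To finish, I take the tower expectation of the displayed bound on $\mathcal{T}_{[i]}$: the coefficients in the first term are deterministic constants, so the expectation passes through the finite sum and produces $\sum_{j=1}^n \mathbb{E}[\|w_{j-1}^{(t)}-w_0^{(t)}\|^2]$, while the second term is bounded by $\tfrac{2C_2\sigma_J^2}{i}$ thanks to the display above, producing exactly \eqref{eq:grad_Phi_bound2_rand}. The only non-routine element is the sampling-without-replacement variance identity, and even that is classical; the rest is a direct transfer of the deterministic argument. Notably, no independence between $\pi^{(t)}$ and $\hat{\pi}^{(t)}$ is needed for this particular estimate since only $\hat{\pi}^{(t)}$ enters $V_i^{(t)}$.
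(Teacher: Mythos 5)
Your proposal is correct and follows essentially the same route as the paper: both start from the deterministic estimate \eqref{eq:main_est2a}, condition on the $\sigma$-algebra generated up to the start of epoch $t$, and replace the crude Cauchy--Schwarz bound on the permutation-average term by the sampling-without-replacement variance bound $\frac{n-i}{i(n-1)}\sigma_J^2 \leq \frac{\sigma_J^2}{i}$. The only difference is that you derive this variance identity inline from exchangeability and the zero-sum property of the $X_k$, whereas the paper simply invokes Lemma~1 of \cite{mishchenko2020random}; your added remark that independence of $\pi^{(t)}$ and $\hat{\pi}^{(t)}$ is not needed for this particular estimate is also accurate.
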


\begin{proof} 
In this proof, we will use  \citep{mishchenko2020random}[Lemma 1]  for sampling without replacement at random.
From \eqref{eq:main_est2a} in Lemma \ref{le:key_estimates_det} we have 
\begin{equation*} 
\hspace{-1ex}
\begin{array}{lcl}
\mathcal{T}_{[i]}
& \leq & \left( \frac{C_1}{n} + \frac{2C_2 L_{F}^2}{i} \right)\sum_{j=1}^{n}\Vert w_{j-1}^{(t)}  - w^{(t)}_0 \Vert^2 + 2C_2 \Vert \frac{1}{i}\sum_{j=1}^i \big[ \nabla{F}_{\pi^{(t)}(j)}(w_0^{(t)})  - \nabla{F}(w_0^{(t)}) \big] \Vert^2.
\end{array}
\hspace{-1ex}
\end{equation*}
For each epoch $t=1, \cdots, T$, we denote by  $\mathcal{F}_t := \sigma(w_0^{(1)},\cdots,w_0^{(t)})$ as the $\sigma$-algebra generated by the iterates of our algorithm (\textit{cf.} Algorithm~\ref{alg:SGM1}) up to the beginning of the epoch $t$. 
We observe that the permutation $\pi^{(t)}$ used at time $t$ is independent of the $\sigma$-algebra $\mathcal{F}_t$. We also denote by $\E_t[\cdot] := \E [\cdot \mid \mathcal{F}_t]$ as the conditional expectation on the $\sigma$-algebra $\mathcal{F}_t$.

Taking the expectation conditioned on $\mathcal{F}_t$, we get 
\begin{equation*} 
\begin{array}{lcl}
\Eb_t[\mathcal{T}_{[i]}]
& \leq & \left( \frac{C_1}{n} + \frac{2C_2 L_{F}^2}{i} \right)\sum_{j=1}^{n} \Eb\big[\Vert w_{j-1}^{(t)}  - w^{(t)}_0 \Vert^2 \big] \\
&& + {~} 2C_2 \Eb \Big[\Vert \frac{1}{i}\sum_{j=1}^i \big[ \nabla{F}_{\pi^{(t)}(j)}(w_0^{(t)})  - \nabla{F}(w_0^{(t)}) \big] \Vert^2 \Big]. 
\end{array}
\end{equation*}
By \citep{mishchenko2020random}[Lemma 1] and Assumption~\ref{as:A2}(c), we have 
\begin{equation*} 
\begin{array}{lcl}
\Eb_t[\mathcal{T}_{[i]}]
& \leq & \left( \frac{C_1}{n} + \frac{2C_2 L_{F}^2}{i} \right)\sum_{j=1}^{n} \Eb\big[\Vert w_{j-1}^{(t)}  - w^{(t)}_0 \Vert^2 \big] + 2C_2 \frac{n-i}{i(n-1)} \sigma_J^2.
\end{array}
\end{equation*}
Taking the total expectation and noting that $n-i \leq n-1$ as $i \geq 1$, we get the desired estimate. 
\end{proof}

\beforesubsec
\subsection{One-iteration analysis of Algorithm~\ref{alg:SGM1}: Key lemmas}\label{apdx:subsec:SGM1_one_iter_analysis}
\aftersubsec
The update of $w_i^{(t)}$ in Algorithm~\ref{alg:SGM1} can be written as 
\begin{equation}\label{eq:wt_i_update}
\arraycolsep=0.2em
\begin{array}{lcl}
w_{i}^{(t)} = w_0^{(t)} - \frac{\eta_t}{n}\sum_{j=1}^{i}  \widetilde{\nabla}{\Phi}_{\gamma}(w_{j-1}^{(t)}) = \widetilde{w}_{t-1} -  \frac{\eta_t}{n}\sum_{j=1}^{i}  \widetilde{\nabla}{\Phi}_{\gamma}(w_{j-1}^{(t)}),
\end{array}
\end{equation}
for $i \in [n]$, and $\widetilde{w}_t := \prox_{\eta_tf}(w_n^{(t)})$.

For simplicity of our proof, we also denote by $C_1 := \frac{2M_F^4\norms{K}^4}{(\mu_h + \gamma)^2}$ and $C_2 := 2M_h^2\norms{K}^2$.
Using the expression \eqref{eq:wt_i_update}, we can prove the following two lemmas.

\begin{lemma}\label{le:ShGD_main_est1}
Let $\{w_i^{(t)} \}$ be generated by Algorithm~\ref{alg:SGM1}.
If $\left( 2 C_1  +  4C_2L_F^2\right)\eta_t^2 \leq \frac{1}{2}$, then we have  
\begin{equation}\label{eq:ShGD_main_est1}
\arraycolsep=0.2em
\begin{array}{lcl}
\Delta_t := \frac{1}{n} \sum_{i=1}^n\norms{w_{i}^{(t)} - w_0^{(t)}}^2 \leq  4\eta_t^2 \big[   \Vert \nabla{\Phi_{\gamma}}(w^{(t)}_0) \Vert^2 + 2C_2\sigma_J^2 \big].
\end{array}
\end{equation}
If $\pi^{(t)} $ and $\hat{\pi}^{(t)}$ are two random permutations of $[n] := \sets{1, 2, \cdots, n}$, then 
\begin{equation}\label{eq:ShGD_main_est1_rand}
\arraycolsep=0.2em
\begin{array}{lcl}
\widetilde{\Delta}_t := \frac{1}{n} \sum_{i=1}^n\Eb \big[ \norms{w_{i}^{(t)} - w_0^{(t)}}^2 \big] \leq  4\eta_t^2 \big[  \Eb \big[\Vert \nabla{\Phi_{\gamma}}(w^{(t)}_0) \Vert^2 \big] + \frac{ 2C_2\sigma_J^2 }{ n }  \big].
\end{array}
\end{equation}
\end{lemma}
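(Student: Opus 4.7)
\textbf{Proof proposal for Lemma~\ref{le:ShGD_main_est1}.}
The plan is to bound $\|w_i^{(t)} - w_0^{(t)}\|^2$ using the explicit formula \eqref{eq:wt_i_update}, isolate the full gradient $\nabla{\Phi}_\gamma(w_0^{(t)})$ from the shuffling approximation error $\mathcal{T}_{[i]}$ via Young's inequality, then invoke Lemma~\ref{le:key_estimates_det} (or Lemma~\ref{le:key_estimates_rand} in the random case) to control $\mathcal{T}_{[i]}$ by a term involving $\tfrac{1}{n}\sum_{j=1}^{n}\|w_{j-1}^{(t)} - w_0^{(t)}\|^2$, i.e. $\Delta_t$ itself. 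Summing over $i$ produces a self-referential inequality that can be solved for $\Delta_t$ using the step-size condition $(2C_1 + 4C_2 L_F^2)\eta_t^2 \leq 1/2$.

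More concretely, from \eqref{eq:wt_i_update} I write
\begin{equation*}
\|w_i^{(t)} - w_0^{(t)}\|^2 = \tfrac{\eta_t^2 i^2}{n^2}\Bigl\| \tfrac{1}{i}\sum_{j=1}^{i} \widetilde{\nabla}{\Phi}_\gamma(w_{j-1}^{(t)}) \Bigr\|^2,
\end{equation*}
and then use $\|a+b\|^2\le 2\|a\|^2 + 2\|b\|^2$ with $a := \tfrac{1}{i}\sum_{j=1}^{i}\widetilde{\nabla}{\Phi}_\gamma(w_{j-1}^{(t)}) - \nabla{\Phi}_\gamma(w_0^{(t)})$ and $b := \nabla{\Phi}_\gamma(w_0^{(t)})$ to get $\|w_i^{(t)} - w_0^{(t)}\|^2 \leq \tfrac{2\eta_t^2 i^2}{n^2}\mathcal{T}_{[i]} + \tfrac{2\eta_t^2 i^2}{n^2}\|\nabla{\Phi}_\gamma(w_0^{(t)})\|^2$. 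Summing over $i \in [n]$, using the elementary bounds $\sum_{i=1}^{n} i^2 \leq n^3$ and $\sum_{i=1}^{n} i \leq n^2$, and plugging in the estimate from Lemma~\ref{le:key_estimates_det} yields
\begin{equation*}
\sum_{i=1}^{n}\|w_i^{(t)} - w_0^{(t)}\|^2 \leq 2\eta_t^2 (C_1 + 2C_2 L_F^2) \sum_{j=1}^{n}\|w_{j-1}^{(t)} - w_0^{(t)}\|^2 + 4n\eta_t^2 C_2\sigma_J^2 + 2n\eta_t^2 \|\nabla{\Phi}_\gamma(w_0^{(t)})\|^2.
\end{equation*}
Since $\sum_{j=1}^{n}\|w_{j-1}^{(t)} - w_0^{(t)}\|^2 = \sum_{j=0}^{n-1}\|w_j^{(t)} - w_0^{(t)}\|^2 \leq \sum_{i=1}^{n}\|w_i^{(t)} - w_0^{(t)}\|^2$, the step-size condition $(2C_1 + 4C_2L_F^2)\eta_t^2 \leq 1/2$ lets me move the self-referential term to the left-hand side, divide by $n/2$, and obtain \eqref{eq:ShGD_main_est1}.

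For the random-permutation bound \eqref{eq:ShGD_main_est1_rand}, the proof is identical except that I take conditional then total expectations and invoke Lemma~\ref{le:key_estimates_rand} instead; the crucial change is that the variance contribution $\tfrac{2nC_2\sigma_J^2}{i}$ is replaced by $\tfrac{2C_2\sigma_J^2}{i}$, which after the weighting by $i^2/n^2$ and summing gives a term of order $\sigma_J^2$ rather than $n\sigma_J^2$, producing the extra $1/n$ factor in the stated estimate.

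The only delicate step is the careful bookkeeping of the $i^2/n^2$ weights when combining the three components of the bound for $\mathcal{T}_{[i]}$: the $C_1/n$ piece contributes (after summing) roughly $C_1$, the $2C_2 L_F^2/i$ piece contributes roughly $2C_2 L_F^2$, and the $2nC_2\sigma_J^2/i$ piece contributes $2nC_2\sigma_J^2$. Getting the constants to line up exactly with the factor $4$ in the conclusion and verifying that the step-size threshold $(2C_1+4C_2L_F^2)\eta_t^2 \leq 1/2$ suffices to absorb the self-referential term are the only nontrivial accounting tasks; no new conceptual ideas beyond the two preceding shuffling lemmas are required.
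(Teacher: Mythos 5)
Your proposal is correct and follows essentially the same route as the paper's proof: expand $w_i^{(t)}-w_0^{(t)}$ via \eqref{eq:wt_i_update}, split off $\nabla\Phi_\gamma(w_0^{(t)})$ by Young's inequality, bound $\mathcal{T}_{[i]}$ via Lemma~\ref{le:key_estimates_det} (resp.\ Lemma~\ref{le:key_estimates_rand}), sum with $\sum_i i\le n^2$, $\sum_i i^2\le n^3$, and absorb the self-referential term using $(2C_1+4C_2L_F^2)\eta_t^2\le\tfrac12$. Your constant bookkeeping matches the paper's final bounds (and you handle the index shift $\sum_j\|w_{j-1}^{(t)}-w_0^{(t)}\|^2\le\sum_i\|w_i^{(t)}-w_0^{(t)}\|^2$ a bit more explicitly than the paper does), so no gaps remain.
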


\begin{proof}
Using \eqref{eq:wt_i_update} and then \eqref{eq:grad_Phi_bound2}, we can first derive that
\begin{equation}\label{eq:ShGD_proof_1}
\begin{array}{lcl}
\Vert w_{i}^{(t)} - w_0^{(t)}\Vert^2  & = & \frac{\eta_t^2\cdot i^2}{n^2} \Vert \frac{1}{i}\sum_{j=1}^{i}\widetilde{\nabla}\Phi_{\gamma}(w_{j-1}^{(t)}) \Vert^2 \vspace{1ex} \\
& \leq & \frac{2 \eta_t^2 \cdot i^2}{n^2} \Vert \frac{1}{i}\sum_{j=1}^{i} \big[ \widetilde{\nabla}\Phi_{\gamma}(w_{j-1}^{(t)}) - \nabla{\Phi_{\gamma}}(w^{(t)}_0)\big] \Vert^2 + \frac{2 \eta_t^2 \cdot i^2}{n^2}  \Vert \nabla{\Phi_{\gamma}}(w^{(t)}_0) \Vert^2 \\
& \leq & \frac{2 \eta_t^2 \cdot i^2}{n^2} \left(\frac{C_1}{n}+ \frac{2C_2L_F^2}{i}\right)\sum_{j=1}^{n}\Vert w_{j-1}^{(t)}  - w^{(t)}_0 \Vert^2 \\
&& +  {~} \frac{2 \eta_t^2 \cdot i^2}{n^2}  \frac{2n C_2 \sigma_J^2}{i} + \frac{2 \eta_t^2 \cdot i^2}{n^2}  \Vert \nabla{\Phi_{\gamma}}(w^{(t)}_0) \Vert^2 \\
&\leq & \eta_t^2 \left( \frac{2 C_1\cdot i^2}{n^3} + \frac{2C_2L_F^2 \cdot i}{n^2} \right)\sum_{j=1}^{n}\Vert w_{j-1}^{(t)}  - w^{(t)}_0 \Vert^2 \\
&& + {~}  \frac{4 C_2 \sigma_J^2\eta_t^2 \cdot i}{n}+ \frac{2 \eta_t^2 \cdot i^2}{n^2}  \Vert \nabla{\Phi_{\gamma}}(w^{(t)}_0) \Vert^2.
\end{array}
\end{equation}
Let us denote $\Delta_t := \frac{1}{n} \sum_{i=1}^n\Vert w_{i-1}^{(t)} - w_0^{(t)}\Vert^2$.
Then, from \eqref{eq:ShGD_proof_1}, we have
\begin{equation*} 
\arraycolsep=0.1em
\begin{array}{lcl}
\Delta_t &:= & \frac{1}{ n } \sum_{i=1}^n\Vert w_{i}^{(t)} - w_0^{(t)}\Vert^2 \\
& \leq & \frac{1}{ n } \sum_{i=1}^n\Big[ \eta_t^2 \left( \frac{2 C_1\cdot i^2}{n^3} + \frac{2C_2L_F^2 \cdot i}{n^2} \right)\sum_{j=1}^{n}\Vert w_{j-1}^{(t)}  - w^{(t)}_0 \Vert^2 +  \frac{4 C_2 \sigma_J^2\eta_t^2 \cdot i}{n}+ \frac{2 \eta_t^2 \cdot i^2}{n^2}  \Vert \nabla{\Phi_{\gamma}}(w^{(t)}_0) \Vert^2 \Big]  \\
& \leq &  \eta_t^2 \left( \frac{2 C_1\cdot \sum_{i=1}^n i^2}{n^3} + \frac{2C_2L_F^2 \cdot \sum_{i=1}^n i}{n^2} \right) \frac{1}{n} \sum_{j=1}^{n}\Vert w_{j-1}^{(t)}  - w^{(t)}_0 \Vert^2 \\
&& + {~}  \frac{4 C_2 \sigma_J^2\eta_t^2 \cdot  \sum_{i=1}^n i}{ n^2 }+ \frac{2 \eta_t^2 \cdot \sum_{i=1}^n i^2}{ n^3 }  \Vert \nabla{\Phi_{\gamma}}(w^{(t)}_0) \Vert^2 \\
& \leq &  \eta_t^2\left( 2C_1   +   4C_2 L_F^2   \right) \Delta_t  + 4  C_2\sigma_J^2 \eta_t^2 + 2  \eta_t^2   \Vert \nabla{\Phi_{\gamma}}(w^{(t)}_0) \Vert^2.
\end{array}
\end{equation*}
Here, we have used $\sum_{i=1}i^2 = \frac{n(n+1)(2n+1)}{6} \leq n^3$, $\sum_{i=1}^ni = \frac{n(n+1)}{2} \leq n^2$ in the last inequality.
Under the condition $\eta_t^2\left( 2 C_1  +  4C_2L_F^2\right) \leq \frac{1}{2}$, we obtain \eqref{eq:ShGD_main_est1} from the last inequality.

If $\pi^{(t)} $ and $\hat{\pi}^{(t)}$ are two random permutations of $[n] := \{1, 2, \cdots, n\}$ using similar argument with \eqref{eq:wt_i_update} and then, with \eqref{eq:grad_Phi_bound2_rand} we have: 
\begin{equation}\label{eq:ShGD_proof_1_rand}
\begin{array}{lcl}
\Eb \big[\Vert w_{i}^{(t)} - w_0^{(t)}\Vert^2 \big] & = & \frac{\eta_t^2\cdot i^2}{n^2} \Eb \big[\Vert \frac{1}{i}\sum_{j=1}^{i}\widetilde{\nabla}\Phi_{\gamma}(w_{j-1}^{(t)}) \Vert^2 \big] \\
& \leq & \frac{2 \eta_t^2 \cdot i^2}{n^2} \Eb \big[ \Vert \frac{1}{i}\sum_{j=1}^{i} [ \widetilde{\nabla}\Phi_{\gamma}(w_{j-1}^{(t)}) - \nabla{\Phi_{\gamma}}(w^{(t)}_0) ] \Vert^2 \big] \\
&& + {~} \frac{2 \eta_t^2 \cdot i^2}{n^2}  \Eb \big[ \Vert \nabla{\Phi_{\gamma}}(w^{(t)}_0) \Vert^2 \big] \\
& \leq & \frac{2 \eta_t^2 \cdot i^2}{n^2} \Big( \frac{C_1}{n}+ \frac{2C_2L_F^2}{i} \Big) \sum_{j=1}^{n} \Eb \big[\Vert w_{j-1}^{(t)}  - w^{(t)}_0 \Vert^2 \big]  \\
&& + {~}  \frac{2 \eta_t^2 \cdot i^2}{n^2}  \frac{2C_2 \sigma_J^2}{i} + \frac{2 \eta_t^2 \cdot i^2}{n^2} \Eb \big[ \Vert \nabla{\Phi_{\gamma}}(w^{(t)}_0) \Vert^2 \big] \\
&\leq & \eta_t^2 \Big( \frac{2 C_1\cdot i^2}{n^3} + \frac{2C_2L_F^2 \cdot i}{n^2} \Big) \sum_{j=1}^{n} \Eb \big[\Vert w_{j-1}^{(t)}  - w^{(t)}_0 \Vert^2 \big]  \\
&& + {~}  \frac{4 C_2 \sigma_J^2\eta_t^2 \cdot i}{n^2}+ \frac{2 \eta_t^2 \cdot i^2}{n^2}  \Eb \big[ \Vert \nabla{\Phi_{\gamma}}(w^{(t)}_0) \Vert^2 \big].
\end{array}
\end{equation}
Let us denote $\widetilde{\Delta}_t := \frac{1}{n} \sum_{i=1}^n \Eb \big[ \Vert w_{i-1}^{(t)} - w_0^{(t)}\Vert^2 \big]$.
Then, from \eqref{eq:ShGD_proof_1}, we have
\begin{equation*} 
\begin{array}{lcl}
\widetilde{\Delta}_t &:= & \frac{1}{ n } \sum_{i=1}^n \Eb\big[\Vert w_{i}^{(t)} - w_0^{(t)}\Vert^2 \big] \\
& \leq & \frac{1}{ n } \sum_{i=1}^n\Big[ \eta_t^2 \Big( \frac{2 C_1\cdot i^2}{n^3} + \frac{2C_2L_F^2 \cdot i}{n^2} \Big) \sum_{j=1}^{n} \Eb \big[\Vert w_{j-1}^{(t)}  - w^{(t)}_0 \Vert^2 \big]  \\
&& \qquad\qquad + {~}  \frac{4 C_2 \sigma_J^2\eta_t^2 \cdot i}{n^2}+ \frac{2 \eta_t^2 \cdot i^2}{n^2}  \Eb\big[\Vert \nabla{\Phi_{\gamma}}(w^{(t)}_0) \Vert^2 \big] \Big]  \\
& \leq &  \eta_t^2 \left( \frac{2 C_1\cdot \sum_{i=1}^n i^2}{n^3} + \frac{2C_2L_F^2 \cdot \sum_{i=1}^n i}{n^2} \right) \frac{1}{n} \sum_{j=1}^{n} \Eb \big[\Vert w_{j-1}^{(t)}  - w^{(t)}_0 \Vert^2 \big]  \\
&& + {~}  \frac{4 C_2 \sigma_J^2\eta_t^2 \cdot  \sum_{i=1}^n i}{n^3}+ \frac{2 \eta_t^2 \cdot \sum_{i=1}^n i^2}{n^3} \cdot \Eb \big[\Vert \nabla{\Phi_{\gamma}}(w^{(t)}_0) \Vert^2 \big] \\
& \leq &  \eta_t^2\left( 2C_1   +   4C_2 L_F^2   \right) \widetilde{\Delta}_t   +  2 \eta_t^2  \Eb\big[ \Vert \nabla{\Phi_{\gamma}}(w^{(t)}_0) \Vert^2 \big] + \frac{ 4 C_2\sigma_J^2 \eta_t^2 }{ n }.
\end{array}
\end{equation*}
Using similar arguments as before that $\sum_{i=1}i^2 = \frac{n(n+1)(2n+1)}{6} \leq n^3$, $\sum_{i=1}^ni = \frac{n(n+1)}{2} \leq n^2$ and 
 $\eta_t^2\left( 2 C_1  +  4C_2L_F^2\right) \leq \frac{1}{2}$, we obtain \eqref{eq:ShGD_main_est1_rand} from the last inequality.
\end{proof}

\begin{lemma}\label{la:phi_bound}
Let $\{(w_i^{(t)}, \widetilde{w}_t) \}$ be generated by Algorithm~\ref{alg:SGM1}.
Then, we have 
\begin{equation}\label{eq:ShGD_main_est2}
\begin{array}{lcl}
\Psi_{\gamma}(\widetilde{w}_t) &\leq & \Psi_{\gamma}(\widetilde{w}_{t-1})  - \frac{\eta_t(1 - 2L_{\Phi_{\gamma} }\eta_t)}{2}  \Vert {\Gc}_{\eta_t}(\widetilde{w}_{t-1})  \Vert^2 - \frac{(1-L_{\Phi_{\gamma}}\eta_t)}{2\eta_t} \norms{ \widetilde{w}_t  - \widetilde{w}_{t-1} }^2 \\
&& + {~}   \frac{L_{\Psi_{\gamma} } \cdot \eta_t }{2n}\sum_{i=1}^n\Vert w_{i-1}^{(t)} - w_0^{(t)} \Vert^2,
\end{array}
\end{equation}
where $L_{\Psi_{\gamma}} := \frac{2M_F^4\norms{K}^4 }{(\mu_h + \gamma)^2}  + 4M_h^2\norms{K}^2L_F^2$ and $L_{\Phi_{\gamma}}$ is given in Lemma~\ref{le:smoothness_of_phi}.
\end{lemma}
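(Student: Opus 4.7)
The plan is to combine the $L_{\Phi_\gamma}$-smoothness of $\Phi_\gamma$ (Lemma~\ref{le:smoothness_of_phi}) with two proximal three-point inequalities. Set $\bar g_t := \tfrac{1}{n}\sum_{j=1}^n\widetilde{\nabla}\Phi_\gamma(w_{j-1}^{(t)})$, so that $\widetilde{w}_t = \prox_{\eta_t f}(\widetilde{w}_{t-1} - \eta_t\bar g_t)$; introduce the virtual full-gradient iterate $\widehat{w}_t^{*} := \prox_{\eta_t f}(\widetilde{w}_{t-1} - \eta_t\nabla\Phi_\gamma(\widetilde{w}_{t-1}))$, which by definition satisfies $\widetilde{w}_{t-1} - \widehat{w}_t^{*} = \eta_t\Gc_{\eta_t}(\widetilde{w}_{t-1})$; and denote $e_t := \bar g_t - \nabla\Phi_\gamma(\widetilde{w}_{t-1})$. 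I will decompose $\Psi_\gamma(\widetilde{w}_t) - \Psi_\gamma(\widetilde{w}_{t-1}) = [\Phi_\gamma(\widetilde{w}_t) - \Phi_\gamma(\widetilde{w}_{t-1})] + [f(\widetilde{w}_t) - f(\widehat{w}_t^{*})] + [f(\widehat{w}_t^{*}) - f(\widetilde{w}_{t-1})]$, bounding the first bracket by smoothness, the second via the prox inequality at $\widetilde{w}_t$ with test point $z = \widehat{w}_t^{*}$, and the third via the prox inequality at $\widehat{w}_t^{*}$ with test point $z = \widetilde{w}_{t-1}$.

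After summing, the $\nabla\Phi_\gamma(\widetilde{w}_{t-1})$- and $\bar g_t$-linear terms collapse into a single inner product $\langle -e_t, \widetilde{w}_t - \widehat{w}_t^{*}\rangle$. The central algebraic step is the polarization identity applied to the residual cross term, namely $\tfrac{1}{\eta_t}\langle \widetilde{w}_t - \widetilde{w}_{t-1}, \widehat{w}_t^{*} - \widetilde{w}_t\rangle = \tfrac{1}{2\eta_t}\bigl(\|\widehat{w}_t^{*} - \widetilde{w}_{t-1}\|^2 - \|\widetilde{w}_t - \widetilde{w}_{t-1}\|^2 - \|\widehat{w}_t^{*} - \widetilde{w}_t\|^2\bigr)$. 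Using $\|\widehat{w}_t^{*} - \widetilde{w}_{t-1}\|^2 = \eta_t^2\|\Gc_{\eta_t}(\widetilde{w}_{t-1})\|^2$, the $+\tfrac{1}{2\eta_t}\|\widehat{w}_t^{*} - \widetilde{w}_{t-1}\|^2$ from polarization combines with the $-\tfrac{1}{\eta_t}\|\widehat{w}_t^{*} - \widetilde{w}_{t-1}\|^2$ coming out of the third bracket to yield a net $-\tfrac{\eta_t}{2}\|\Gc_{\eta_t}(\widetilde{w}_{t-1})\|^2$, while the $-\tfrac{1}{2\eta_t}\|\widetilde{w}_t - \widetilde{w}_{t-1}\|^2$ from polarization combines with the $\tfrac{L_{\Phi_\gamma}}{2}\|\widetilde{w}_t - \widetilde{w}_{t-1}\|^2$ from smoothness into exactly $-\tfrac{1 - L_{\Phi_\gamma}\eta_t}{2\eta_t}\|\widetilde{w}_t - \widetilde{w}_{t-1}\|^2$. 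Polarization also leaves a spare negative term $-\tfrac{1}{2\eta_t}\|\widetilde{w}_t - \widehat{w}_t^{*}\|^2$.

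I will then bound $\langle -e_t, \widetilde{w}_t - \widehat{w}_t^{*}\rangle$ by Young's inequality with the \emph{specifically tuned} weight $c = \eta_t$, giving $\langle -e_t, \widetilde{w}_t - \widehat{w}_t^{*}\rangle \leq \tfrac{\eta_t}{2}\|e_t\|^2 + \tfrac{1}{2\eta_t}\|\widetilde{w}_t - \widehat{w}_t^{*}\|^2$; the quadratic piece exactly cancels the spare negative term from polarization. What remains is $\Psi_\gamma(\widetilde{w}_t) \leq \Psi_\gamma(\widetilde{w}_{t-1}) - \tfrac{\eta_t}{2}\|\Gc_{\eta_t}(\widetilde{w}_{t-1})\|^2 - \tfrac{1 - L_{\Phi_\gamma}\eta_t}{2\eta_t}\|\widetilde{w}_t - \widetilde{w}_{t-1}\|^2 + \tfrac{\eta_t}{2}\|e_t\|^2$. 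Because $\widetilde{w}_{t-1} = w_0^{(t)}$, the quantity $\|e_t\|^2$ is exactly $\mathcal{T}_{[n]}$ from Lemma~\ref{le:key_estimates_det}(b), which at $i = n$ gives $\|e_t\|^2 \leq \tfrac{C_1 + 2C_2 L_F^2}{n}\sum_{j=1}^n\|w_{j-1}^{(t)} - w_0^{(t)}\|^2 = \tfrac{L_{\Psi_\gamma}}{n}\sum_{j=1}^n\|w_{j-1}^{(t)} - w_0^{(t)}\|^2$, and substituting yields the stated bound---indeed a slightly tighter one, since $-\tfrac{\eta_t}{2} \leq -\tfrac{\eta_t(1 - 2L_{\Phi_\gamma}\eta_t)}{2}$.

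The main obstacle will be the delicate bookkeeping in the middle step, i.e., choosing the polarization decomposition and the Young's weight simultaneously so that the stray $\|\widetilde{w}_t - \widehat{w}_t^{*}\|^2$ term cancels and the shuffling error enters only multiplicatively as $\tfrac{\eta_t}{2}\|e_t\|^2$; once the cancellation is verified, the remainder is a direct application of Lemma~\ref{le:key_estimates_det}(b) and the identification $L_{\Psi_\gamma} = C_1 + 2C_2 L_F^2$.
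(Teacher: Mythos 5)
Your proposal is correct, and it actually proves a slightly stronger inequality than the one stated. The verification goes through exactly as you describe: with $\xi^{+} := \eta_t^{-1}(\widetilde{w}_{t-1}-\widetilde{w}_t) - \bar g_t \in \partial f(\widetilde{w}_t)$ and $\hat{\xi} := \eta_t^{-1}(\widetilde{w}_{t-1}-\widehat{w}_t^{*}) - \nabla\Phi_\gamma(\widetilde{w}_{t-1}) \in \partial f(\widehat{w}_t^{*})$, the two convexity inequalities plus one use of $L_{\Phi_\gamma}$-smoothness between $\widetilde{w}_{t-1}$ and $\widetilde{w}_t$, followed by polarization and Young with weight $\eta_t$, give $\Psi_\gamma(\widetilde{w}_t) \le \Psi_\gamma(\widetilde{w}_{t-1}) - \tfrac{\eta_t}{2}\norms{\Gc_{\eta_t}(\widetilde{w}_{t-1})}^2 - \tfrac{1-L_{\Phi_\gamma}\eta_t}{2\eta_t}\norms{\widetilde{w}_t-\widetilde{w}_{t-1}}^2 + \tfrac{\eta_t}{2}\norms{e_t}^2$, and the identification $\norms{e_t}^2 = \mathcal{T}_{[n]}$ together with Lemma~\ref{le:key_estimates_det}(b) and $L_{\Psi_\gamma} = C_1 + 2C_2L_F^2$ finishes the argument.

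The route differs from the paper's in one genuine way. The paper (following Mishchenko et al.) splits $\Psi_\gamma(\widetilde{w}_t)-\Psi_\gamma(\widetilde{w}_{t-1})$ as $[\Psi_\gamma(\widehat{w}_t)-\Psi_\gamma(\widetilde{w}_{t-1})] + [\Psi_\gamma(\widetilde{w}_t)-\Psi_\gamma(\widehat{w}_t)]$, i.e.\ it routes \emph{both} $f$ and $\Phi_\gamma$ through the virtual full-gradient point $\widehat{w}_t$; this forces three applications of the smoothness bound (once to reach $\widehat{w}_t$, twice inside the second bracket), each costing an extra $\tfrac{L_{\Phi_\gamma}}{2}\norms{\widehat{w}_t-\widetilde{w}_{t-1}}^2$, which is where the coefficient $1-2L_{\Phi_\gamma}\eta_t$ in front of $\norms{\Gc_{\eta_t}(\widetilde{w}_{t-1})}^2$ comes from. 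You telescope only $f$ through $\widehat{w}_t^{*}$ and treat $\Phi_\gamma$ directly between consecutive outer iterates, so smoothness is used exactly once and the coefficient improves to $1$, which dominates $1-2L_{\Phi_\gamma}\eta_t$ and hence implies \eqref{eq:ShGD_main_est2}. The remaining ingredients (the subgradient characterization of the prox step, Young's inequality with weight $\eta_t$ so the stray $\norms{\widetilde{w}_t-\widehat{w}_t^{*}}^2$ cancels, and the shuffling-error bound at $i=n$) coincide with the paper's; your version is a mild simplification that would even let the downstream step-size conditions be marginally relaxed, though it changes nothing in the final complexity.
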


\begin{proof}
The proof of this lemma is adopted from the proof of  \cite[Theorem~3]{mishchenko2022proximal} with some modification. 
First, we denote $\widehat{w}_t := \prox_{\eta_tf}\big(\widetilde{w}_{t-1} - \eta_t \nabla{\Phi}_{\gamma}(\widetilde{w}_{t-1})\big)$.
Then, from \eqref{eq:upper_level_min_grad_mapping}, we have  $\Gc_{\eta_t}(\widetilde{w}_{t-1}) = \frac{1}{\eta_t}(\widetilde{w}_{t-1} - \widehat{w}_t)$.
Moreover, we also have $\nabla{f}(\widehat{w}_t) := \eta_t^{-1}\big( \widetilde{w}_{t-1} - \widehat{w}_t \big) - \nabla{\Phi}_{\gamma}(\widetilde{w}_{t-1}) \in \partial{f}(\widehat{w}_t)$.

Next, by the convexity of $f$, we can easily show that
\begin{equation*}
\begin{array}{lcl}
f(\widehat{w}_t) & \leq &  f(\widetilde{w}_{t-1}) +  \iprods{\nabla{f}(\widehat{w}_t), \widehat{w}_t - \widetilde{w}_{t-1}} \\
& = &  f(\widetilde{w}_{t-1}) -  \iprods{\nabla{\Phi}_{\gamma}(\widetilde{w}_{t-1}), \widehat{w}_t - \widetilde{w}_{t-1}} -  \frac{1}{\eta_t} \norms{  \widehat{w}_t -  \widetilde{w}_{t-1} }^2.
\end{array}
\end{equation*}
Next, by the $L_{\Phi_{\gamma}}$-smoothness of $\Phi_{\gamma}$ from \eqref{eq:Lsmooth_Phi_gamma} of Lemma~\ref{le:smoothness_of_phi}, we have
\begin{equation*} 
\begin{array}{lcl}
\Phi_{\gamma}(\widehat{w}_t ) & \leq & \Phi_{\gamma}(\widetilde{w}_{t-1}) + \langle \nabla{\Phi}_{\gamma}(\widetilde{w}_{t-1}),  \widehat{w}_t  - \widetilde{w}_{t-1} \rangle + \frac{L_{\Phi_{\gamma}}}{2}\Vert \widehat{w}_t  - \widetilde{w}_{t-1} \Vert^2.
\end{array}
\end{equation*}
Adding the last two inequalities together and using $\Psi_{\gamma}(w) = f(w) + \Phi_{\gamma}(w)$ and $\widehat{w}_t -  \widetilde{w}_{t-1} = - \eta_t {\Gc}_{\eta_t}(\widetilde{w}_{t-1})$, we have 
\begin{equation}\label{eq:lm2_proof1} 
\hspace{-0.5ex}
\begin{array}{lcl}
\Psi_{\gamma}(\widehat{w}_t ) & \leq & \Psi_{\gamma}(\widetilde{w}_{t-1})  - \frac{(2 - L_{\Phi_{\gamma}}\eta_t)}{2\eta_t}   \norms{  \widehat{w}_t -  \widetilde{w}_{t-1} }^2 =  \Psi_{\gamma}(\widetilde{w}_{t-1})  - \frac{\eta_t(2 - L_{\Phi_{\gamma}}\eta_t)}{2}   \norms{  {\Gc}_{\eta_t}(\widetilde{w}_{t-1}) }^2.
\end{array}
\hspace{-2ex}
\end{equation}
Now, let us denote $ g_t := \frac{1}{n}\sum_{i=0}^{n}\widetilde{\nabla}{\Phi}_{\gamma}(w_i^{(t)})$.
Then, from the update of $w_i^{(t)}$, we have
\begin{equation*}
\arraycolsep=0.2em
\begin{array}{lcl}
g_t =  \frac{1}{\eta_t}(\widetilde{w}_{t-1}- w_n^{(t)}) =  \frac{1}{ \eta_t }( w_0^{(t)} - w_n^{(t)}).
 \end{array}
 \end{equation*}
Since $\widetilde{w}_t = \prox_{\eta_tf}(w_n^{(t)})$, we have $\nabla{f}(\widetilde{w}_t) := \eta_t^{-1} \big( w_n^{(t)} - \widetilde{w}_t \big) = - g_t - \eta_t^{-1}(\widetilde{w}_t - \widetilde{w}_{t-1}) \in \partial{f}(\widetilde{w}_t)$.
Hence, by the convexity of $f$, we have
\begin{equation*}
\begin{array}{lcl}
f(\widetilde{w}_t) & \leq &  f(\widehat{w}^{t}) +  \iprods{\nabla{f}(\widetilde{w}_t), \widetilde{w}_t - \widehat{w}^{t}}  =  f(\widehat{w}^{t}) -  \iprods{ g_t, \widetilde{w}_t - \widehat{w}^{t}}  - \frac{1}{\eta_t}\iprods{\widetilde{w}_{t} - \widetilde{w}_{t-1}, \widetilde{w}_t - \widehat{w}^{t}} \\
& = & f(\widehat{w}^{t})  - \iprods{ g_t, \widetilde{w}_t - \widehat{w}^{t}} + \frac{1}{2\eta_t}\big[ \norms{\widehat{w}^{t} - \widetilde{w}_{t-1}}^2 - \norms{\widetilde{w}_t - \widetilde{w}_{t-1}}^2 - \norms{ \widetilde{w}_t - \widehat{w}^{t}}^2 \big].
\end{array}
\end{equation*}
Again, by the $L_{\Phi_{\gamma}}$-smoothness of $\Phi_{\gamma}$ from \eqref{eq:Lsmooth_Phi_gamma} of Lemma~\ref{le:smoothness_of_phi}, we also have
\begin{equation*} 
\begin{array}{lcl}
\Phi_{\gamma}(\widetilde{w}_t ) & \leq & \Phi_{\gamma}(\widetilde{w}_{t-1}) + \langle \nabla{\Phi}_{\gamma}(\widetilde{w}_{t-1}),  \widetilde{w}_t  - \widetilde{w}_{t-1} \rangle + \frac{L_{\Phi_{\gamma}}}{2}\Vert \widetilde{w}_t  - \widetilde{w}_{t-1} \Vert^2, \\
\Phi_{\gamma}(\widetilde{w}_{t-1} ) & \leq & \Phi_{\gamma}(\widehat{w}^{t}) + \langle \nabla{\Phi}_{\gamma}(\widetilde{w}_{t-1}),  \widetilde{w}_{t-1}  - \widehat{w}^{t} \rangle + \frac{L_{\Phi_{\gamma}}}{2} \Vert \widehat{w}_t  - \widetilde{w}_{t-1} \Vert^2.
\end{array}
\end{equation*}
Adding the last three inequalities together, and using $\Psi_{\gamma}(w) = f(w) + \Phi_{\gamma}(w)$ and $\widehat{w}_t -  \widetilde{w}_{t-1} = - \eta_t {\Gc}_{\eta_t}(\widetilde{w}_{t-1})$, we have 
\begin{equation}\label{eq:lm2_proof2} 
\begin{array}{lcl}
\Psi_{\gamma}(\widetilde{w}_t ) & \leq & \Psi_{\gamma}(\widehat{w}^{t}) +  \iprods{ \nabla{\Phi}_{\gamma}(\widetilde{w}_{t-1}) - g_t,  \widetilde{w}_{t}  - \widehat{w}^{t} } - \frac{(1-L_{\Phi_{\gamma}}\eta_t)}{2\eta_t} \norms{ \widetilde{w}_t  - \widetilde{w}_{t-1} }^2 \\
&& + {~}  \frac{(1 + L_{\Phi_{\gamma}}\eta_t)}{2\eta_t}  \Vert \widehat{w}_t  - \widetilde{w}_{t-1} \Vert^2  - \frac{1}{2\eta_t} \norms{ \widetilde{w}_t - \widehat{w}^{t}}^2 \\
& \leq & \Psi_{\gamma}(\widehat{w}^{t}) +  \frac{\eta_t}{2}\norms{ \nabla{\Phi}_{\gamma}(\widetilde{w}_{t-1}) - g_t }^2 - \frac{(1-L_{\Phi_{\gamma}}\eta_t)}{2\eta_t} \norms{ \widetilde{w}_t  - \widetilde{w}_{t-1} }^2 \\
&& + {~}  \frac{\eta_t(1 + L_{\Phi_{\gamma}}\eta_t)}{2}  \Vert {\Gc}_{\eta_t}(\widetilde{w}_{t-1})  \Vert^2,
\end{array}
\end{equation}
where we have used Young's inequality in the last line as $\iprods{ \nabla{\Phi}_{\gamma}(\widetilde{w}_{t-1}) - g_t,  \widetilde{w}_{t}  - \widehat{w}^{t} } \leq \frac{\eta_t}{2}\norms{\nabla{\Phi}_{\gamma}(\widetilde{w}_{t-1}) - g_t}^2 + \frac{1}{2\eta_t}\norms{\widetilde{w}_{t}  - \widehat{w}^{t}}^2$.

Summing up \eqref{eq:lm2_proof1} and \eqref{eq:lm2_proof2}, we get
\begin{equation}\label{eq:lm2_proof4} 
\begin{array}{lcl}
\Psi_{\gamma}(\widetilde{w}_t ) & \leq & \Psi_{\gamma}(\widetilde{w}_{t-1})   - \frac{(1-L_{\Phi_{\gamma}}\eta_t)}{2\eta_t} \norms{ \widetilde{w}_t  - \widetilde{w}_{t-1} }^2 -  \frac{\eta_t(1 - 2L_{\Phi_{\gamma}}\eta_t)}{2}  \Vert {\Gc}_{\eta_t}(\widetilde{w}_{t-1})  \Vert^2 \\
&&  + {~}  \frac{\eta_t}{2}\norms{ \nabla{\Phi}_{\gamma}(\widetilde{w}_{t-1}) - g_t }^2.
\end{array}
\end{equation}
Using \eqref{eq:grad_Phi_bound2} with $ g_t =  \frac{1}{n}\sum_{i=0}^{n}\widetilde{\nabla}{\Phi}_{\gamma}(w_i^{(t)})$,  we arrive at
\begin{equation*} 
\begin{array}{lcl}
\Psi_{\gamma}(\widetilde{w}_t ) & \leq & \Psi_{\gamma}(\widetilde{w}_{t-1})   - \frac{(1-L_{\Phi_{\gamma}}\eta_t)}{2\eta_t} \norms{ \widetilde{w}_t  - \widetilde{w}_{t-1} }^2 -  \frac{\eta_t(1 - 2L_{\Phi_{\gamma}}\eta_t)}{2}  \Vert {\Gc}_{\eta_t}(\widetilde{w}_{t-1})  \Vert^2  +  \frac{\eta_t}{2} \cdot \mathcal{T}_{[n]} \\
& \overset{\tiny\eqref{eq:grad_Phi_bound2}}{\leq} & \Psi_{\gamma}(\widetilde{w}_{t-1})   - \frac{(1-L_{\Phi_{\gamma}}\eta_t)}{2\eta_t} \norms{ \widetilde{w}_t  - \widetilde{w}_{t-1} }^2 -  \frac{\eta_t(1 - 2L_{\Phi_{\gamma}}\eta_t)}{2}  \Vert {\Gc}_{\eta_t}(\widetilde{w}_{t-1})  \Vert^2  \\ 
&& + {~}  \frac{\eta_t}{2} \frac{1}{n} \left( C_1 + 2C_2 L_{F}^2 \right)\sum_{j=1}^{n}\Vert w_{j-1}^{(t)}  - w^{(t)}_0 \Vert^2, 
\end{array}
\end{equation*}
which is \eqref{eq:ShGD_main_est2}, where $L_{\Psi_{\gamma} } := C_1 + 2C_2 L_{F}^2  = \frac{2M_F^4\norms{K}^4 }{(\mu_h+\gamma)^2}  + 4M_{h}^2\norms{K}^2L_F^2$.
\end{proof}

\beforesubsec
\subsection{The proof of Theorem~\ref{th:main_result1} and Corollary~\ref{co:app_KKT_for_nonconvex_linear_case} for Algorithm~\ref{alg:SGM1}}\label{apdx:subsec:SGM1_proof_of_main_results}
\aftersubsec
Let us recall that $C_1 := \frac{2M_F^4\norms{K}^4}{(\mu_h + \gamma)^2}$, $C_2 := 2M_h^2\norms{K}^2$, and $L_{\Phi_{\gamma}} := M_{h}\norms{K} L_F + \frac{M_F^2\norms{K}^2}{\mu_{h} + \gamma}$ from Lemma~\ref{le:smoothness_of_phi}.
To prove Theorem~\ref{th:main_result1}, we will need the following lemma.

\begin{lemma}\label{le:main_result1_detail}
Let $\{w_i^{(t)} \}$ be generated by Algorithm~\ref{alg:SGM1} using arbitrarily permutations $\pi^{(t)} = \hat{\pi}^{(t)}$, and $\eta_t = \eta > 0$ such that $\left( 2 C_1  +  4C_2L_F^2\right)\eta^2 \leq \frac{1}{2}$ and  $4L_{\Phi_{\gamma}}\eta +  8L_{\Psi}\Lambda_0\eta^2 \leq 1$.
Then 
\begin{equation}\label{eq:main_result1_detail}
\begin{array}{lcl}
\frac{1}{T+1}\sum_{t=0}^{T}\norms{{\Gc}_{\eta}(\widetilde{w}_{t})}^2 & \leq & \frac{4}{T\eta}\left[ \Psi_{\gamma}(\widetilde{w}^0) - \Psi^{\star}_{\gamma}\right] +  8L_{\Psi}(2C_2\sigma_J^2 + \Lambda_1) \cdot \eta^2.
\end{array}
\end{equation}
Alternatively, if $\pi^{(t)}$ and $\hat{\pi}^{(t)}$ are random permutations and generated independently, then, with a similar condition on $\eta$ as above, we have
\begin{equation}\label{eq:main_result1_rand_detail}
\begin{array}{lcl}
\frac{1}{T+1}\sum_{t=0}^{T}\Eb \left[\norms{{\Gc}_{\eta}(\widetilde{w}_{t})}^2 \right] & \leq & \frac{4}{T\eta}\left[ \Psi_{\gamma}(\widetilde{w}^0) - \Psi^{\star}_{\gamma}\right] +  8L_{\Psi}(2C_2\frac{\sigma_J^2}{n} + \Lambda_1) \cdot \eta^2.
\end{array}
\end{equation}
\end{lemma}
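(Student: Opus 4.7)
The plan is to chain the per-epoch descent estimate from Lemma~\ref{la:phi_bound} with the variance-style bound from Lemma~\ref{le:ShGD_main_est1}, then use Assumption~\ref{as:A3} to turn the resulting right-hand side into something controlled by $\norms{{\Gc}_\eta(\widetilde{w}_{t-1})}^2$, and finally telescope over $t$. All constants will line up because the two stepsize conditions are exactly what is needed for (i) applying Lemma~\ref{le:ShGD_main_est1} and (ii) making the net coefficient of $\norms{{\Gc}_\eta(\widetilde{w}_{t-1})}^2$ sufficiently negative.

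First, with $\eta_t = \eta$, the condition $4L_{\Phi_{\gamma}}\eta \leq 1$ (which follows from the second stepsize hypothesis) makes the term $-\frac{(1-L_{\Phi_\gamma}\eta)}{2\eta}\norms{\widetilde{w}_t - \widetilde{w}_{t-1}}^2$ in \eqref{eq:ShGD_main_est2} nonpositive, so I discard it. Using $w_0^{(t)} = \widetilde{w}_{t-1}$, the bound \eqref{eq:ShGD_main_est1} (whose hypothesis is exactly $(2C_1 + 4C_2L_F^2)\eta^2 \leq 1/2$) yields
\begin{equation*}
\tfrac{1}{n}\sum_{i=1}^n \norms{w_{i-1}^{(t)} - w_0^{(t)}}^2 \leq 4\eta^2\bigl[\norms{\nabla\Phi_\gamma(\widetilde{w}_{t-1})}^2 + 2C_2\sigma_J^2\bigr].
\end{equation*}
Substituting this into \eqref{eq:ShGD_main_est2} gives
\begin{equation*}
\Psi_\gamma(\widetilde{w}_t) \leq \Psi_\gamma(\widetilde{w}_{t-1}) - \tfrac{\eta(1-2L_{\Phi_\gamma}\eta)}{2}\norms{{\Gc}_\eta(\widetilde{w}_{t-1})}^2 + 2L_{\Psi_\gamma}\eta^3\bigl[\norms{\nabla\Phi_\gamma(\widetilde{w}_{t-1})}^2 + 2C_2\sigma_J^2\bigr].
\end{equation*}

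Next, Assumption~\ref{as:A3} replaces $\norms{\nabla\Phi_\gamma(\widetilde{w}_{t-1})}^2$ by $\Lambda_0 \norms{{\Gc}_\eta(\widetilde{w}_{t-1})}^2 + \Lambda_1$, which gives
\begin{equation*}
\Psi_\gamma(\widetilde{w}_t) \leq \Psi_\gamma(\widetilde{w}_{t-1}) - \tfrac{\eta}{2}\bigl[1 - 2L_{\Phi_\gamma}\eta - 4L_{\Psi_\gamma}\Lambda_0\eta^2\bigr]\norms{{\Gc}_\eta(\widetilde{w}_{t-1})}^2 + 2L_{\Psi_\gamma}\eta^3(2C_2\sigma_J^2 + \Lambda_1).
\end{equation*}
The second stepsize hypothesis $4L_{\Phi_\gamma}\eta + 8L_{\Psi_\gamma}\Lambda_0\eta^2 \leq 1$ makes the bracket at least $1/2$, so the coefficient of $\norms{{\Gc}_\eta(\widetilde{w}_{t-1})}^2$ is $\leq -\eta/4$. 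Telescoping from $t=1$ to $T+1$, bounding $\Psi_\gamma(\widetilde{w}_{T+1}) \geq \Psi_\gamma^\star$, and dividing by $\frac{(T+1)\eta}{4}$ (or, somewhat loosely, by $\frac{T\eta}{4}$, as in the stated inequality) produces \eqref{eq:main_result1_detail}. The expected-value version \eqref{eq:main_result1_rand_detail} follows by the same chain with Lemma~\ref{le:key_estimates_rand} and \eqref{eq:ShGD_main_est1_rand} in place of their deterministic counterparts, the only change being that $2C_2\sigma_J^2$ becomes $2C_2\sigma_J^2/n$ thanks to the without-replacement sampling gain.

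I do not expect a major obstacle here: all the heavy lifting (the variance bound and the approximate-descent inequality) has already been done in Lemmas~\ref{le:ShGD_main_est1}--\ref{la:phi_bound}, and Assumption~\ref{as:A3} is purpose-built to convert $\norms{\nabla\Phi_\gamma}^2$ into $\norms{{\Gc}_\eta}^2$. The only bookkeeping care is to verify that the two stepsize conditions in the hypothesis are precisely what is needed, first to invoke \eqref{eq:ShGD_main_est1} and second to guarantee $1 - 2L_{\Phi_\gamma}\eta - 4L_{\Psi_\gamma}\Lambda_0\eta^2 \geq 1/2$, and to match the factor of $4$ in the final bound.
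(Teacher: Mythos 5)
Your proposal is correct and follows essentially the same route as the paper: drop the nonpositive $\norms{\widetilde{w}_t - \widetilde{w}_{t-1}}^2$ term in \eqref{eq:ShGD_main_est2}, plug in \eqref{eq:ShGD_main_est1} (respectively \eqref{eq:ShGD_main_est1_rand}), convert $\norms{\nabla\Phi_{\gamma}}^2$ via Assumption~\ref{as:A3}, use the second stepsize condition to force the coefficient of $\norms{\Gc_{\eta}(\widetilde{w}_{t-1})}^2$ down to $-\eta/4$, and telescope. The only difference is cosmetic: you apply Assumption~\ref{as:A3} after substituting the bound on $\Delta_t$ into the descent inequality and spell out the final telescoping step, which the paper delegates to the argument of \cite[Theorem 3]{nguyen2020unified}.
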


\begin{proof}
From \eqref{eq:ShGD_main_est2}, and note that $L_{\Phi_0}\eta_t \leq 1$, we obtain
\begin{equation*} 
\arraycolsep=0.1em
\begin{array}{lcl}
\Psi_{\gamma}(\widetilde{w}_t) &\leq & \Psi_{\gamma}(\widetilde{w}_{t-1})  - \frac{\eta_t(1 - 2L_{\Phi_{\gamma}}\eta_t)}{2}  \Vert {\Gc}_{\eta_t}(\widetilde{w}_{t-1})  \Vert^2 +   \frac{L_{\Psi} \cdot \eta_t }{2n}\sum_{i=1}^n\Vert w_{i-1}^{(t)} - w_0^{(t)} \Vert^2.
\end{array}
\end{equation*}
Using \eqref{eq:ShGD_main_est1} with the condition $\left( 2 C_1  +  4C_2L_F^2\right)\eta_t^2 \leq \frac{1}{2}$ and \eqref{eq:grad_mapp_bound} of Assumption~\ref{as:A3}, and $w_0^{(t)} = \widetilde{w}_{t-1}$, we have
\begin{equation*} 
\begin{array}{lcl}
\frac{1}{n}\sum_{i=1}^n\norms{w_{i}^{(t)} - w_0^{(t)}}^2 & \leq &  4\eta_t^2 \big[   \Vert \nabla{\Phi_{\gamma}}(w^{(t)}_0) \Vert^2 + 2C_2\sigma_J^2 \big] \\
& \overset{\tiny\eqref{eq:grad_mapp_bound}}{\leq}  & 4\eta_t^2 \left[   \Lambda_0 \Vert {\Gc}_{\eta_t}(\widetilde{w}_{t-1}) \Vert^2 +  2C_2\sigma_J^2 +  \Lambda_1  \right].
\end{array}
\end{equation*}
Combining the two estimates, we obtain
\begin{equation*} 
\begin{array}{lcl}
\Psi_{\gamma}(\widetilde{w}_t) & \leq &  \Psi_{\gamma}(\widetilde{w}_{t-1})  - \frac{\eta_t(1 - 2L_{\Phi_{\gamma}}\eta_t)}{2}  \Vert {\Gc}_{\eta_t}(\widetilde{w}_{t-1})  \Vert^2  + \frac{L_{\Psi} \eta_t}{2} \cdot 4\eta_t^2 \left[    \Lambda_0 \Vert {\Gc}_{\eta_t}(\widetilde{w}_{t-1}) \Vert^2 +  2C_2\sigma_J^2 +  \Lambda_1  \right] \\
& = &  \Psi_{\gamma}(\widetilde{w}_{t-1})  - \frac{\eta_t}{2}\left(1 - 2L_{\Phi_{\gamma}}\eta_t -  4L_{\Psi}\Lambda_0\eta_t^2\right)\Vert  {\Gc}_{\eta_t}(\widetilde{w}_{t-1})  \Vert^2  + 2L_{\Psi}(2C_2\sigma_J^2 + \Lambda_1 \big) \cdot \eta_t^3  \\
& \leq & \Psi_{\gamma}(\widetilde{w}_{t-1})   - \frac{\eta_t}{4} \Vert  {\Gc}_{\eta_t}(\widetilde{w}_{t-1}) \Vert^2  + 2L_{\Psi}(2C_2\sigma_J^2 + \Lambda_1 \big) \cdot \eta_t^3,
\end{array}
\end{equation*}
provided that $4L_{\Phi_{\gamma}}\eta_t +  8L_{\Psi}\Lambda_0\eta_t^2 \leq 1$.
Following the same proof as in \cite[Theorem 3]{nguyen2020unified}, we obtain our bound in \eqref{eq:main_result1_detail}.

For the randomized bound, we take expectation and obtain
\begin{equation*} 
\begin{array}{lcl}
\Eb \left[\Psi_{\gamma}(\widetilde{w}_t)\right] &\leq & \Eb \left[\Psi_{\gamma}(\widetilde{w}_{t-1}) \right] - \frac{\eta_t(1 - 2L_{\Phi_{\gamma}}\eta_t)}{2} \Eb \left[ \Vert {\Gc}_{\eta_t}(\widetilde{w}_{t-1})  \Vert^2 \right] +   \frac{L_{\Psi} \cdot \eta_t }{2n}\sum_{i=1}^n\Eb \big[ \Vert w_{i-1}^{(t)} - w_0^{(t)} \Vert^2 \big].
\end{array}
\end{equation*}
Using \eqref{eq:ShGD_main_est1_rand} with similar argument as the deterministic case, we have
\begin{equation*} 
\begin{array}{lcl}
\frac{1}{n}\sum_{i=1}^n\Eb \big[ \Vert w_{i-1}^{(t)} - w_0^{(t)} \Vert^2 \big] & \leq &  4\eta_t^2 \Big[   \Eb \big[ \Vert \nabla{\Phi_{\gamma}}(w^{(t)}_0) \Vert^2 \big] + 2C_2\frac{\sigma_J^2}{n} \Big] \\
& \overset{\tiny\eqref{eq:grad_mapp_bound}}{\leq} &  4\eta_t^2 \left[   \Lambda_0 \Eb \big[ \Vert {\Gc}_{\eta_t}(\widetilde{w}_{t-1}) \Vert^2 \big] +  2C_2\frac{\sigma_J^2}{n}  +  \Lambda_1  \right].
\end{array}
\end{equation*}
Combining the last two estimates, we get
\begin{equation*} 
\begin{array}{lcl}
\Eb \left[\Psi_{\gamma}(\widetilde{w}_t)\right] &\leq & \Eb \left[\Psi_{\gamma}(\widetilde{w}_{t-1}) \right] - \frac{\eta_t(1 - 2L_{\Phi_{\gamma} }\eta_t)}{2} \Eb \left[ \Vert {\Gc}_{\eta_t}(\widetilde{w}_{t-1})  \Vert^2 \right]  \\
&& + {~} \frac{L_{\Psi} \eta_t}{2} \cdot 4\eta_t^2 \left[   \Lambda_0 \Eb \left[\Vert {\Gc}_{\eta_t}(\widetilde{w}_{t-1}) \Vert^2 \right] +  2C_2\frac{\sigma_J^2}{n}  +  \Lambda_1  \right] \\
& = &  \Eb \left[\Psi_{\gamma}(\widetilde{w}_{t-1}) \right] - \frac{\eta_t}{2}\left(1 - 2L_{\Phi_{\gamma}}\eta_t -  4L_{\Psi}\Lambda_0\eta_t^2\right)\Eb \left[ \Vert {\Gc}_{\eta_t}(\widetilde{w}_{t-1})  \Vert^2 \right]  \\
&& + {~} 2L_{\Psi}(2C_2\frac{\sigma_J^2}{n} + \Lambda_1 \big) \cdot \eta_t^3  \\
& \leq & \Eb \left[\Psi_{\gamma}(\widetilde{w}_{t-1}) \right]  - \frac{\eta_t}{4} \Eb \left[ \Vert {\Gc}_{\eta_t}(\widetilde{w}_{t-1})  \Vert^2 \right] + 2L_{\Psi}(2C_2\frac{\sigma_J^2}{n} + \Lambda_1 \big) \cdot \eta_t^3,
\end{array}
\end{equation*}
provided that $4L_{\Phi_{\gamma} }\eta_t +  8L_{\Psi}\Lambda_0\eta_t^2 \leq 1$.
Follow the same proof as in \cite[Theorem 3]{nguyen2020unified}, we can easily get \eqref{eq:main_result1_rand_detail}.
\end{proof}

The following theorem, Theorem~\ref{th:main_result1} is the full version of Theorem~\ref{th:main_result1_short} in the main text, where the learning rate $\eta$ and the number of epochs $T$ are given explicitly.

\begin{theorem}\label{th:main_result1}
Suppose that Assumptions~\ref{as:A0}, \ref{as:A2}, \ref{as:A1}, and \ref{as:A3} holds for the setting (NL) of \eqref{eq:minimax_prob} and 
\begin{equation}\label{eq:main_result1_Q_gamma_quantity} 
\begin{array}{lcl}
Q_{\gamma} :=   \frac{M_F^2\norms{K}^2 }{\mu_h + \gamma} + M_h L_F\norms{K}.
\end{array}
\end{equation}
Let $\sets{\widetilde{w}_t }$ be generated by Algorithm~\ref{alg:SGM1} after $T$ epochs using arbitrarily deterministic permutations $\pi^{(t)}$ and $\hat{\pi}^{(t)}$ and a learning rate $\eta_t = \eta > 0$ such that
\begin{equation}\label{eq:main_result1_lr1}
\begin{array}{lcl}
\eta :=  \frac{\epsilon}{\sqrt{2Q_{\gamma}(4M_h^2\norms{K}^2\sigma_J^2 + \Lambda_1)}} \quad \text{and} \quad 
T :=  \Big\lfloor \frac{16 \sqrt{Q_{\gamma}(4M_h^2\norms{K}^2\sigma_J^2 + \Lambda_1)} \cdot [ \Psi_0(\widetilde{w}_0) - \Psi_0^{\star} + \gamma B_{\phi_0} ] }{\epsilon^3}\Big\rfloor,
\end{array}
\end{equation}
for a given sufficiently small tolerance $\epsilon > 0$ such that $\eta \leq \frac{1}{8Q_{\gamma}}$.
Then, we have 
\begin{equation*}
\begin{array}{ll}
\frac{1}{T+1}\sum_{t=0}^{T}\norms{{\Gc}_{\eta_t}(\widetilde{w}_{t})}^2 \leq \epsilon^2.
\end{array}
\end{equation*}
Alternatively, if  $\sets{\widetilde{w}_t }$ is generated by Algorithm~\ref{alg:SGM1} after $T$ epochs using two random and independent permutations $\pi^{(t)}$ and $\hat{\pi}^{(t)}$ and a learning rate $\eta_t = \eta > 0$ such that
\begin{equation}\label{eq:main_result1_lr2}
\begin{array}{lcl}
\eta :=  \frac{\sqrt{n}\epsilon}{\sqrt{2Q_{\gamma}(4M_h^2\norms{K}^2\sigma_J^2 + n\Lambda_1)}} \quad \text{and} \quad 
T :=  \Big\lfloor \frac{16 \sqrt{Q_{\gamma}(4M_h^2\norms{K}^2\sigma_J^2 + n \Lambda_1)} \cdot [ \Psi_0(\widetilde{w}_0) - \Psi_0^{\star} + \gamma B_{\phi_0} ] }{\sqrt{n} \epsilon^3}\Big\rfloor,
\end{array}
\end{equation}
for a given sufficiently small tolerance $\epsilon > 0$ such that $\eta \leq \frac{1}{8Q_{\gamma}}$.
Then, we have 
\begin{equation*}
\begin{array}{ll}
\frac{1}{T+1}\sum_{t=0}^{T} \Eb[ \norms{{\Gc}_{\eta_t}(\widetilde{w}_{t})}^2 ] \leq \epsilon^2.
\end{array}
\end{equation*}
\end{theorem}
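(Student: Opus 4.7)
The proof of Theorem~\ref{th:main_result1} is essentially a corollary of the one-epoch recursion in Lemma~\ref{le:main_result1_detail}, which has already absorbed all the technical work on shuffling estimators (Lemmas~\ref{le:key_estimates_det}--\ref{le:key_estimates_rand}) and on bounding the descent plus inner iterate drift (Lemmas~\ref{le:ShGD_main_est1}--\ref{la:phi_bound}). My plan is therefore to (i) invoke Lemma~\ref{le:main_result1_detail} on the smoothed objective $\Psi_\gamma$, (ii) convert the initial gap $\Psi_\gamma(\widetilde{w}_0) - \Psi_\gamma^\star$ to one involving only $\Psi_0$, and (iii) balance the two terms in the resulting bound to read off $\eta$ and $T$.

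For step (i), I would observe that $L_{\Phi_\gamma}$ from Lemma~\ref{le:smoothness_of_phi} is exactly $Q_\gamma$ defined in \eqref{eq:main_result1_Q_gamma_quantity}. The hypothesis $\eta \leq 1/(8 Q_\gamma)$ stated in Theorem~\ref{th:main_result1} is then sufficient to guarantee the two preconditions of Lemma~\ref{le:main_result1_detail}: the descent condition $4 L_{\Phi_\gamma}\eta + 8 L_\Psi \Lambda_0 \eta^2 \le 1$ and the inner-drift condition $(2 C_1 + 4 C_2 L_F^2)\eta^2 \le 1/2$, because $\eta = \mathcal{O}(\epsilon)$ can be made as small as we like by taking $\epsilon$ small. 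For step (ii), Lemma~\ref{le:psi_eta_pro1}(c) gives $\phi_\gamma \le \phi_0 \le \phi_\gamma + \gamma B_{\phi_0}$, hence $\Psi_\gamma \le \Psi_0 \le \Psi_\gamma + \gamma B_{\phi_0}$, whence
\[
\Psi_\gamma(\widetilde{w}_0) - \Psi_\gamma^\star \;\le\; \Psi_0(\widetilde{w}_0) - \Psi_0^\star + \gamma B_{\phi_0},
\]
which is exactly the quantity appearing in the formula for $T$ in \eqref{eq:main_result1_lr1}. For step (iii), I would pick $\eta$ by forcing the stochastic term $8 L_\Psi (2C_2\sigma_J^2 + \Lambda_1)\eta^2$ to be at most $\epsilon^2/2$ (recalling $2C_2 = 4 M_h^2\norms{K}^2$), and then pick $T$ so that $4(\Psi_0(\widetilde{w}_0) - \Psi_0^\star + \gamma B_{\phi_0})/(T\eta) \le \epsilon^2/2$. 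These two requirements give exactly the values of $\eta$ and $T$ listed in \eqref{eq:main_result1_lr1}, and the averaged bound $\frac{1}{T+1}\sum_{t=0}^T\norms{\Gc_\eta(\widetilde{w}_t)}^2 \le \epsilon^2$ follows.

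For the randomized case I would repeat the same three steps but starting from \eqref{eq:main_result1_rand_detail}, where, thanks to the variance-reduction estimate of Lemma~\ref{le:key_estimates_rand} for sampling without replacement, the variance constant $2 C_2 \sigma_J^2$ is replaced by $2 C_2 \sigma_J^2 / n$. Re-balancing the two terms with this smaller variance allows $\eta$ to be enlarged by a factor $\sqrt{n}$ and $T$ to be shrunk by the same factor, producing exactly \eqref{eq:main_result1_lr2}. The main obstacle I anticipate is purely book-keeping: one must confirm that the enlarged step-size still obeys $\eta \le 1/(8 Q_\gamma)$ and the preconditions of Lemma~\ref{le:main_result1_detail}; this is why the statement qualifies $\epsilon$ as ``sufficiently small'' and allows $\Lambda_1 = \mathcal{O}(1/n)$ so that the $\sqrt{n}$ improvement is meaningful. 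With these verifications, Corollary~\ref{co:app_KKT_for_nonconvex_linear_case} follows immediately by combining Theorem~\ref{th:main_result1} with Lemma~\ref{le:kkt_point}: part~(a) sets $\gamma = 0$ (valid since $\mu_h > 0$), while part~(b) sets $\gamma = \mathcal{O}(\epsilon)$ so that $\gamma B_{\phi_0} = \mathcal{O}(\epsilon)$ and $Q_\gamma = \mathcal{O}(1/\epsilon)$, which accounts for the extra $\epsilon^{-1/2}$ factor in the complexity.
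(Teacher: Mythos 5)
Your proposal follows essentially the same route as the paper's own proof: invoke Lemma~\ref{le:main_result1_detail} (noting $L_{\Phi_\gamma}=Q_\gamma$ and $L_{\Psi}\le 2Q_\gamma$), replace $\Psi_\gamma(\widetilde{w}_0)-\Psi_\gamma^\star$ by $\Psi_0(\widetilde{w}_0)-\Psi_0^\star+\gamma B_{\phi_0}$ via Lemma~\ref{le:psi_eta_pro1}(c), and balance the $\frac{1}{T\eta}$ and $\eta^2$ terms, with the randomized case handled identically using the $\sigma_J^2/n$ variance from Lemma~\ref{le:key_estimates_rand}. The only soft spot is that you justify the preconditions of Lemma~\ref{le:main_result1_detail} by ``$\epsilon$ small enough'' rather than checking, as the paper does, that $\eta\le\frac{1}{8Q_\gamma}$ itself implies $(2C_1+4C_2L_F^2)\eta^2\le\frac12$ and $4L_{\Phi_\gamma}\eta+8L_{\Psi}\Lambda_0\eta^2\le 1$ (and your ``$\epsilon^2/2$ each'' balancing yields slightly different absolute constants than \eqref{eq:main_result1_lr1}), but these are bookkeeping points, not gaps in the argument.
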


\begin{proof}[\textbf{Proof of Theorem~\ref{th:main_result1}}]
Recall that $C_1 := \frac{2M_F^4\norms{K}^4}{(\mu_h + \gamma)^2}$ and $C_2 := 2M_h^2\norms{K}^2$, $L_{\Phi_{\gamma}} := M_{h}\norms{K} L_F + \frac{M_F^2\norms{K}^2}{\mu_{h} + \gamma}$,  and $L_{\Psi} := \frac{2M_F^4\norms{K}^4 }{(\mu_h + \gamma)^2}  + 4M_{h}^2\norms{K}^2L_F^2$.
Let us denote by $Q_{\gamma} := \frac{M_F^2\norms{K}^2 }{\mu_h + \gamma} + M_h L_F\norms{K} $ as in Theorem~\ref{th:main_result1}.

In this case, the first conditions $\left( 2 C_1  +  4C_2L_F^2\right)\eta^2 \leq \frac{1}{2}$ and  $4L_{\Phi_{\gamma}}\eta +  8L_{\Psi}\Lambda_0\eta^2 \leq 1$ of Lemma~\ref{le:main_result1_detail} respectively reduce to
\begin{equation*}
\begin{array}{lll}
&  \frac{M_F^4\norms{K}^4 + 2(\mu_h + \gamma)^2 M_h^2\norms{K}^2L_F^2 }{ (\mu_h + \gamma)^2} \cdot \eta^2 \leq \frac{1}{8} \quad \text{and} \quad \\
&   \frac{ M_F^2\norms{K}^2 + (\mu_h + \gamma) M_{h} \norms{ K } L_F }{\mu_h + \gamma } \cdot \eta +  \frac{4( M_F^4\norms{K}^4 + 2(\mu_h + \gamma)^2 M_h^2\norms{K}^2L_F^2 ) }{(\mu_h + \gamma)^2}  \cdot \eta^2 \leq \frac{1}{4}.
\end{array}
\end{equation*}
Since 
\begin{equation*}
\begin{array}{lcl}
2\big( M_F^2\norms{K}^2 + (\mu_h+\gamma) M_{h}\norms{K} L_F \big)^2 & = &  2M_F^4\norms{K}^4 + 2(\mu_h+\gamma)^2 M_{h}^2 \norms{K}^2 L_F^2 \\
&&  + {~} 4(\mu_h+\gamma) M_{h}\norms{ K}^3 L_FM_F^2 \\
& \geq &  M_F^4\norms{K}^4 + 2(\mu_h+\gamma)^2 M_{h}^2 \norms{K}^2 L_F^2, 
\end{array}
\end{equation*}
the last two conditions hold if $0 <  \eta \leq \frac{\mu_h + \gamma}{8(M_F^2\norms{K}^2 + (\mu_h + \gamma) M_h L_F\norms{K})} = \frac{1}{8Q_{\gamma}}$.
Moreover, we also have $L_{\Psi} \leq 2Q_{\gamma}$.

Now, from \eqref{eq:main_result1_detail}, to guarantee $\frac{1}{T+1}\sum_{t=0}^{T}\norms{{\Gc}_{\eta}(\widetilde{w}_{t})}^2 \leq \epsilon^2$, we impose
\begin{equation*} 
\begin{array}{lcl}
\frac{4}{T\eta}\left[ \Psi_{\gamma}(\widetilde{w}^0) - \Psi^{\star}_{\gamma}\right] +  8L_{\Psi}(2C_2\sigma_J^2 + \Lambda_1) \cdot \eta^2 \leq \epsilon^2.
\end{array}
\end{equation*}
Since $0 <  \eta \leq \frac{1}{8Q_{\gamma}}$ and $L_{\Psi} \leq 2Q_{\gamma}$, we can choose $\eta := \frac{1}{2} \min\Big\{ \frac{1}{4Q_{\gamma}}, \frac{\epsilon}{\sqrt{Q_{\gamma}(4M_h^2\norms{K}^2\sigma_J^2 + \Lambda_1)}} \Big\}$.
Hence, the last inequality holds if 
\begin{equation*} 
\begin{array}{lcl}
T \geq 16 \cdot \max\Big\{ \frac{ \sqrt{Q_{\gamma}(4M_h^2\norms{K}^2\sigma_J^2 + \Lambda_1)}}{\epsilon^3}, \frac{4Q_{\gamma}}{\epsilon^2} \Big\} \cdot \left[ \Psi_{\gamma}(\widetilde{w}^0) - \Psi^{\star}_{\gamma}\right] .
\end{array}
\end{equation*}
By Lemma~\ref{le:psi_eta_pro1}(c), we can easily show that  $\Psi_{\gamma}(w) \leq \Psi_0(w) \leq \Psi_{\gamma}(w) + \gamma B_{\phi_0}$ for any $w$, where $B_{\phi_0} := \sup\sets{b(u) : u \in\dom{h}}$.
Hence, we have $\Psi_{\gamma}(\widetilde{w}^0) - \Psi^{\star}_{\gamma} \leq \Psi_0(\widetilde{w}_0) - \Psi_0^{\star} + \gamma B_{\phi_0}$.
Using this condition, we obtain
\begin{equation*} 
\begin{array}{lcl}
T := \Big\lfloor 16 \cdot  \max\Big\{ \frac{\sqrt{Q_{\gamma}(4M_h^2\norms{K}^2\sigma_J^2 + \Lambda_1)}}{\epsilon^3}, \frac{4 Q_{\gamma}}{\epsilon^2} \Big\} \cdot \left[ \Psi_0(\widetilde{w}_0) - \Psi_0^{\star} + \gamma B_{\phi_0} \right] \Big\rfloor.
\end{array}
\end{equation*}
If we choose $\epsilon$ sufficiently small such that the $0 < \epsilon \leq \frac{\sqrt{Q_{\gamma}(4M_h^2\norms{K}^2\sigma_J^2 + \Lambda_1)}}{4Q_{\gamma}}$, then
\begin{equation*}
\begin{array}{lcl}
\eta :=  \frac{\epsilon}{\sqrt{2Q_{\gamma}(4M_h^2\norms{K}^2\sigma_J^2 + \Lambda_1)}} \quad \text{and} \quad 
T :=  \Big\lfloor \frac{16 \sqrt{Q_{\gamma}(4M_h^2\norms{K}^2\sigma_J^2 + \Lambda_1)} \cdot [ \Psi_0(\widetilde{w}_0) - \Psi_0^{\star} + \gamma B_{\phi_0} ] }{\epsilon^3}\Big\rfloor,
\end{array}
\end{equation*}
as shown in \eqref{eq:main_result1_lr1} of Theorem~\ref{th:main_result1}.

If a random shuffling strategy is used, then to guarantee $\frac{1}{T+1}\sum_{t=0}^{T} \Eb[ \norms{{\Gc}_{\eta}(\widetilde{w}_{t})}^2 ] \leq \epsilon^2$, from \eqref{eq:main_result1_rand_detail}, we can impose the following condition 
\begin{equation*} 
\begin{array}{lcl}
\frac{4}{T\eta}\left[ \Psi_{\gamma}(\widetilde{w}^0) - \Psi^{\star}_{\gamma}\right] +  8L_{\Psi}(2C_2\frac{\sigma_J^2}{n} + \Lambda_1) \cdot \eta^2 \leq \epsilon^2.
\end{array}
\end{equation*}
Reasoning the same way as above, we can choose  $\eta := \frac{1}{2} \min\Big\{ \frac{1}{4Q_{\gamma}}, \frac{\sqrt{n} \epsilon}{\sqrt{Q_{\gamma}(4M_h^2\norms{K}^2\sigma_J^2 + n \Lambda_1)}} \Big\}$.
This leads to the choice of $T$ as
\begin{equation*} 
\begin{array}{lcl}
T := \Big\lfloor 16 \cdot \max\Big\{ \frac{ \sqrt{Q_{\gamma}(4M_h^2\norms{K}^2\sigma_J^2 + n \Lambda_1)}}{\sqrt{n} \epsilon^3}, \frac{4Q_{\gamma}}{\epsilon^2} \Big\} \cdot \left[ \Psi_0(\widetilde{w}_0) - \Psi_0^{\star} + \gamma B_{\phi_0} \right] \Big\rfloor.
\end{array}
\end{equation*}
If we choose $\epsilon$ sufficiently small such that the $0 < \epsilon \leq \frac{\sqrt{Q_{\gamma}(4M_h^2\norms{K}^2\sigma_J^2 + n\Lambda_1)}}{4Q_{\gamma}\sqrt{n}}$, then
\begin{equation*}
\begin{array}{lcl}
\eta :=  \frac{\sqrt{n}\epsilon}{\sqrt{2Q_{\gamma}(4M_h^2\norms{K}^2\sigma_J^2 + n\Lambda_1)}} \quad \text{and} \quad 
T :=  \Big\lfloor \frac{16 \sqrt{Q_{\gamma}(4M_h^2\norms{K}^2\sigma_J^2 + n \Lambda_1)} \cdot [ \Psi_0(\widetilde{w}_0) - \Psi_0^{\star} + \gamma B_{\phi_0} ] }{\sqrt{n} \epsilon^3}\Big\rfloor,
\end{array}
\end{equation*}
as shown in \eqref{eq:main_result1_lr2} of Theorem~\ref{th:main_result1}.
\end{proof}

\begin{proof}[\textbf{Proof of Corollary~\ref{co:app_KKT_for_nonconvex_linear_case}}]
(a)~If $h$ is $\mu_h$-strongly convex with $\mu_h > 0$, then we can set $\gamma = 0$, i.e. without using smoothing technique.
Then, we have $Q_{\gamma} := \frac{M_F^2\norms{K}^2}{\mu_h + \gamma} +  M_h L_F\norms{K}$ reduces to $Q_0 := \frac{ M_F^2\norms{K}^2 }{\mu_h} +  M_h L_F\norms{K}$.

If arbitrary permutations $\pi^{(t)}$ and $\hat{\pi}^{(t)}$ are used, then $T$ from  \eqref{eq:main_result1_lr1} reduces to 
\begin{equation*}
\begin{array}{lcl}
T :=  \Big\lfloor \frac{16 \sqrt{Q_0(4M_h^2\norms{K}^2\sigma_J^2 + \Lambda_1)} \cdot [ \Psi_0(\widetilde{w}_0) - \Psi_0^{\star} ] }{\epsilon^3}\Big\rfloor.
\end{array}
\end{equation*}
Note that, each epoch $t \in [T]$ requires either $2n$ (for \textbf{Option 1}) or $n$ (for \textbf{Option 2}) evaluations of $F_i$ and $n$ evaluations of $\nabla{F}_i$.
Hence, Algorithm~\ref{alg:SGM1} requires $\Oc(n\epsilon^{-3})$ evaluations of $F_i$ and $\Oc(n\epsilon^{-3})$ evaluations of $\nabla{F_i}$ to achieve an $\epsilon$-stationary point of \eqref{eq:upper_level_min}.

Alternatively, if $\pi^{(t)}$ and $\hat{\pi}^{(t)}$ are random and independent permutations, then $T$ from  \eqref{eq:main_result1_lr2} reduces to 
\begin{equation*}
\begin{array}{lcl}
T :=  \Big\lfloor \frac{16 \sqrt{Q_0(4M_h^2\norms{K}^2\sigma_J^2 + n \Lambda_1)} \cdot [ \Psi_0(\widetilde{w}_0) - \Psi_0^{\star} ] }{\sqrt{n} \epsilon^3}\Big\rfloor.
\end{array}
\end{equation*}
Clearly, if $\Lambda_1 = \frac{\Gamma}{n}$ for some constant $\Gamma > 0$, then plugging this $\Lambda_1$ into the right-hand side of $T$ above, we can conclude that Algorithm~\ref{alg:SGM1} requires $\Oc(\sqrt{n}\epsilon^{-3})$ evaluations of $F_i$ and $\Oc(\sqrt{n}\epsilon^{-3})$ evaluations of $\nabla{F_i}$ to achieve an $\epsilon$-stationary point of \eqref{eq:upper_level_min}.

(b)~If $h$ is only merely convex, i.e. $\mu_h = 0$, then we have $Q_{\gamma} = \frac{M_F^2\norms{K}^2 }{\gamma} +  M_h L_F\norms{K} = \Oc(\gamma^{-1})$.
Moreover, to obtain an $\epsilon$-stationary point of \eqref{eq:upper_level_min} from a stationary point of its smoothed problem \eqref{eq:smoothed_com_opt_prob}, with a similar proof as of Lemma~\ref{le:kkt_point}, we need to choose $\gamma := \epsilon$.
In this case, we get $Q_{\epsilon} = \Oc(\epsilon^{-1})$.

If arbitrary permutations $\pi^{(t)}$ and $\hat{\pi}^{(t)}$ are used, then $T$ from  \eqref{eq:main_result1_lr1} reduces to 
\begin{equation*}
\begin{array}{lcl}
T :=  \Big\lfloor \frac{16 \sqrt{Q_{\epsilon}(4M_h^2\norms{K}^2\sigma_J^2 + \Lambda_1)} \cdot [ \Psi_0(\widetilde{w}_0) - \Psi_0^{\star} + \epsilon B_{\phi_0} ] }{\epsilon^3}\Big\rfloor = \Oc\Big(\frac{1}{\epsilon^{7/2}} \Big).
\end{array}
\end{equation*}
Hence, Algorithm~\ref{alg:SGM1} requires $\Oc(n\epsilon^{-7/2})$ evaluations of $F_i$ and $\Oc(n\epsilon^{-7/2})$ evaluations of $\nabla{F_i}$ to achieve an $\epsilon$-stationary point of \eqref{eq:upper_level_min}.

Alternatively, if $\pi^{(t)}$ and $\hat{\pi}^{(t)}$ are random and independent permutations, then $T$ from  \eqref{eq:main_result1_lr2} reduces to 
\begin{equation*}
\begin{array}{lcl}
T :=  \Big\lfloor \frac{16 \sqrt{Q_{\epsilon}(4M_h^2\norms{K}^2\sigma_J^2 + n \Lambda_1)} \cdot [ \Psi_0(\widetilde{w}_0) - \Psi_0^{\star} + \epsilon B_{\phi_0} ] }{\sqrt{n} \epsilon^3}\Big\rfloor.
\end{array}
\end{equation*}
Clearly, if  $\Lambda_1 = \frac{\Gamma}{n}$ for some constant $\Gamma > 0$, then plugging this $\Lambda_1$ into the right-hand side of $T$ above, we can conclude that  Algorithm~\ref{alg:SGM1} requires $\Oc(\sqrt{n}\epsilon^{-7/2})$ evaluations of $F_i$ and $\Oc(\sqrt{n}\epsilon^{-7/2})$ evaluations of $\nabla{F_i}$ to achieve an $\epsilon$-stationary point of \eqref{eq:upper_level_min}.
\end{proof}

\begin{remark}\label{re:total_prox_evaluations}
We note that since each epoch $t$ of Algorithm~\ref{alg:SGM1} requires one evaluation of $\prox_{\eta_ff}$, the total number of $\prox_{\eta_tf}$ evaluations is $T$.
\end{remark}

\beforesec
\section{Convergence Analysis of Algorithm~\ref{alg:SGM2} -- The NC Setting}\label{apdx:sec:SGM2_convergence}
\aftersec
In this section, we present the full convergence analysis of Algorithm~\ref{alg:SGM2} for both the \textbf{semi-shuffling}  and the \textbf{full-shuffling} variants.

For our notational convenience, we introduce the following function:
\begin{equation}\label{eq:SGM3_psi_func}
\arraycolsep=0.2em
\begin{array}{lcl}
\psi(w, u) & := & - \Hc(w, u) + h(u).
\end{array}
\end{equation}
By Assumption~\ref{as:A3_SGM2}, $\psi(w, \cdot)$ is $\mu_{\psi}$-strongly convex with the strong convexity parameter $\mu_{\psi} := \mu_h + \mu_H > 0$ for any $w$ such that $(w, u) \in \dom{\Lc}$.
Moreover, the Lipschitz constant  $\kappa$ of $u_0^{*}(\cdot)$ in Lemma~\ref{le:property_of_Phi0} becomes $\kappa := \frac{L_u}{\mu_h + \mu_H} = \frac{L_u}{\mu_{\psi}} > 0$.

Furthermore, $\Phi_0$ and $\Psi_0$ defined by \eqref{eq:lower_level_max} and \eqref{eq:upper_level_min}, respectively  can be expressed as 
\begin{equation}\label{eq:SGM3_psi_property1}
\arraycolsep=0.2em
\begin{array}{lcl}
\Phi_0(w) & := & {\displaystyle\max_{u \in \R^q}} \big\{ \Hc(w, u) - h(u) \big\} = - {\displaystyle \min_{u \in \R^q}} \psi(w, u), \\
\Psi_0(w) & := & f(w) + \Phi_0(w) = f(w) + \Hc(w, u_0^{*}(w)) - h(u_0^{*}(w)),
\end{array}
\end{equation}
where $u_0^{*}(w) := \argmin_{u \in\R^q}\psi(w, u)$ is computed by \eqref{eq:lower_level_max}.

\beforesubsec
\subsection{One-epoch analysis: Key lemmas}\label{apdx:subsec:convergence_of_SGM2}
\aftersubsec
We separate the technical lemmas for two variants: the \textit{semi-shuffling variant} using \eqref{eq:GD4lower_prob}, and the \textit{full-shuffling variant} using \eqref{eq:SGM4lower_prob} into two subsections, respectively.

\textbf{(a)~Key bound for the gradient-ascent scheme \eqref{eq:GD4lower_prob}.}
If we apply \eqref{eq:GD4lower_prob} to approximate $u_0^{*}(\widetilde{w}_{t-1})$, then we have the following result.

\begin{lemma}\label{le:SGM2_bound_u1_dist1}
Suppose that Assumption~\ref{as:A3_SGM2} holds.
Let $\sets{\widehat{u}^{(t)}_s}$ be updated by \eqref{eq:GD4lower_prob} such that $0 < \hat{\eta}_t \leq \frac{2}{L_u + \mu_H}$.
Then, we have 
\begin{equation}\label{SGM2_bound_u1_dist1}
\begin{array}{lcl}
\norms{\widetilde{u}_t  - u_0^{*}(\widetilde{w}_{t-1})}^2 \leq \frac{1}{(1 + 2\mu_h\hat{\eta}_t)^S}\big(1 - \frac{2L_u\mu_H\hat{\eta}_t}{L_u + \mu_H}\big)^{S} \norms{\widetilde{u}_{t-1}  - u_0^{*}(\widetilde{w}_{t-1})}^2.
\end{array}
\end{equation}
\end{lemma}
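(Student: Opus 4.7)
The plan is to treat \eqref{eq:GD4lower_prob} as a proximal gradient ascent iteration for the concave problem $\max_u \{\mathcal{H}(\widetilde{w}_{t-1}, u) - h(u)\}$, and to establish a one-step contraction that is then iterated $S$ times. Since $\mathcal{H}(\widetilde{w}_{t-1}, \cdot)$ is $\mu_H$-strongly concave with $L_u$-Lipschitz gradient in $u$ (by Assumption~\ref{as:A3_SGM2}(a)--(b), the second bound following from $\Vert \nabla{\mathcal{H}}_i(w,u) - \nabla{\mathcal{H}}_i(w,\hat{u})\Vert \leq L_u\norms{u-\hat{u}}$ and averaging), and $h$ is $\mu_h$-strongly convex, both operators involved are contractive, and the composition is contractive with precisely the constant that appears in \eqref{SGM2_bound_u1_dist1}.

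First I would write the fixed-point characterization of $u_0^{*}(\widetilde{w}_{t-1})$ from the optimality condition \eqref{eq:lower_level_max_opt_cond} in the form
\begin{equation*}
u_0^{*}(\widetilde{w}_{t-1}) = \prox_{\hat{\eta}_t h}\bigl(u_0^{*}(\widetilde{w}_{t-1}) + \hat{\eta}_t\,\nabla_u\mathcal{H}(\widetilde{w}_{t-1}, u_0^{*}(\widetilde{w}_{t-1}))\bigr),
\end{equation*}
valid for any $\hat{\eta}_t > 0$. Subtracting this from the update rule \eqref{eq:GD4lower_prob} and applying the contraction property $[F_1]$ of $\prox_{\hat{\eta}_t h}$ (which exploits $\mu_h$-strong convexity of $h$) yields
\begin{equation*}
\norms{\widehat{u}^{(t)}_{s} - u_0^{*}(\widetilde{w}_{t-1})}^2 \leq \tfrac{1}{1+2\mu_h\hat{\eta}_t}\bigl\Vert \bigl(\widehat{u}^{(t)}_{s-1} - u_0^{*}(\widetilde{w}_{t-1})\bigr) + \hat{\eta}_t\bigl[\nabla_u\mathcal{H}(\widetilde{w}_{t-1},\widehat{u}^{(t)}_{s-1}) - \nabla_u\mathcal{H}(\widetilde{w}_{t-1},u_0^{*}(\widetilde{w}_{t-1}))\bigr]\bigr\Vert^2.
\end{equation*}

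Next I would bound the right-hand norm by the standard contraction lemma for gradient descent on an $L$-smooth, $\mu$-strongly convex function: for any $0 < \hat{\eta}_t \leq \frac{2}{L_u+\mu_H}$,
\begin{equation*}
\bigl\Vert (x - \hat{\eta}_t \nabla g(x)) - (y - \hat{\eta}_t \nabla g(y))\bigr\Vert^2 \leq \bigl(1 - \tfrac{2L_u\mu_H\hat{\eta}_t}{L_u+\mu_H}\bigr)\norms{x-y}^2,
\end{equation*}
applied with $g(\cdot) := -\mathcal{H}(\widetilde{w}_{t-1},\cdot)$, which is $\mu_H$-strongly convex and $L_u$-smooth. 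This is the classical one-step bound for GD and follows from co-coercivity of $\nabla g$ plus strong convexity (see, e.g., \cite{Nesterov2004}); I would simply invoke it. Combining the two contractions yields the one-step estimate
\begin{equation*}
\norms{\widehat{u}^{(t)}_{s} - u_0^{*}(\widetilde{w}_{t-1})}^2 \leq \tfrac{1}{1+2\mu_h\hat{\eta}_t}\bigl(1 - \tfrac{2L_u\mu_H\hat{\eta}_t}{L_u+\mu_H}\bigr)\norms{\widehat{u}^{(t)}_{s-1} - u_0^{*}(\widetilde{w}_{t-1})}^2.
\end{equation*}
Finally, iterating this bound from $s=1$ to $s=S$ and recalling that $\widehat{u}^{(t)}_0 = \widetilde{u}_{t-1}$ and $\widetilde{u}_t := \widehat{u}^{(t)}_S$ gives \eqref{SGM2_bound_u1_dist1}.

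There is no real obstacle: the lemma is essentially a bookkeeping exercise combining two well-known contraction facts. The only point worth being careful about is the range of $\hat{\eta}_t$, where the upper bound $\hat{\eta}_t \leq \frac{2}{L_u+\mu_H}$ is exactly what is needed so that the gradient-step contraction constant $1 - \frac{2L_u\mu_H\hat{\eta}_t}{L_u+\mu_H}$ remains in $[0,1)$. (Also note that the lemma's stated bound is valid whether $\mu_h = 0$ or $\mu_h > 0$; in the former case the prox contraction factor is just $1$, and only the gradient-step contraction drives the decrease.)
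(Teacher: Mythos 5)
Your proposal is correct and follows essentially the same route as the paper's proof: write $u_0^{*}(\widetilde{w}_{t-1})$ as a fixed point of the prox-gradient map, apply the prox contraction of Fact $[F_1]$ together with the gradient-step contraction for the $\mu_H$-strongly convex, $L_u$-smooth function $-\Hc(\widetilde{w}_{t-1},\cdot)$ under $\hat{\eta}_t \leq \frac{2}{L_u+\mu_H}$, and iterate over $s=1,\dots,S$. The only difference is cosmetic: you invoke the classical gradient-descent contraction as a known fact, whereas the paper derives it in place from \cite[Theorem 2.1.12]{Nesterov2004}.
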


\begin{proof}
The proof of Lemma~\ref{le:SGM2_bound_u1_dist1} is certainly classical and not new. It can be found in the literature, including \cite{Nesterov2004}.
However, it may be inconvenient to find a unified proof for the strong convexity of $\Hc_i$ and $h$ altogether.
Therefore, we present it here for completeness.

For simplicity of our presentation, we denote $\varphi(u) := -\Hc(\widetilde{w}_{t-1}, u) = - \frac{1}{n}\sum_{i=1}^n\Hc_i(\widetilde{w}_{t-1}, u)$ and $u^{*}_t := u_0^{*}(\widetilde{w}_{t-1})$ computed by \eqref{eq:lower_level_max}.

By Assumption~\ref{as:A3_SGM2}, $\varphi$ is $\mu_H$-strongly convex and $L_u$-smooth.
The scheme \eqref{eq:GD4lower_prob} is exactly a proximal gradient method to solve $\min_u\sets{Q(u) := \varphi(u) + h(u)}$, where $h$ is also $\mu_h$-strongly convex.
Moreover, by the definition of $\varphi$ and of $u^{*}_t$, and  \eqref{eq:GD4lower_prob},  it is obvious to show that 
\begin{equation*}
\left\{\begin{array}{lcl}
u^{*}_t & = & \prox_{\hat{\eta}_th}\big(u^{*}_t - \hat{\eta}_t\nabla{\varphi}(u^{*}_t)\big), \\
\widehat{u}^{(t)}_{s} & = & \prox_{\hat{\eta}_th}\big( \widehat{u}^{(t)}_{s-1} - \hat{\eta}_t\nabla{\varphi}(\widehat{u}^{(t)}_{s-1}) \big).
\end{array}\right.
\end{equation*}
Hence, by \eqref{eq:prox_contraction} from Fact [$F_1$], we have
\begin{equation*}
\begin{array}{lcl}
\norms{\widehat{u}^{(t)}_{s} - u^{*}_t}^2 & = & \norms{ \prox_{\hat{\eta}_th}\big( \widehat{u}^{(t)}_{s-1} - \hat{\eta}_t\nabla{\varphi}(\widehat{u}^{(t)}_{s-1}) \big) - \prox_{\hat{\eta}_th}\big(u^{*}_t - \hat{\eta}_t\nabla{\varphi}(u^{*}_t)\big)}^2 \\
& \leq & \frac{1}{1 + 2\mu_h\hat{\eta}_t}\norms{\widehat{u}^{(t)}_{s-1} - u^{*}_t - \hat{\eta}_t[\nabla{\varphi}(\widehat{u}^{(t)}_{s-1}) - \nabla{\varphi}(u^{*}_t)] }^2.
\end{array}
\end{equation*}
Expanding the right-hand side of the last estimate, we get
\begin{equation*}
\begin{array}{lcl}
\norms{\widehat{u}^{(t)}_{s-1} - u^{*}_t - \hat{\eta}_t[\nabla{\varphi}(\widehat{u}^{(t)}_{s-1}) - \nabla{\varphi}(u^{*}_t)] }^2 &= & \norms{\widehat{u}^{(t)}_{s-1} - u^{*}_t}^2 + \hat{\eta}_t^2\norms{\nabla{\varphi}(\widehat{u}^{(t)}_{s-1}) - \nabla{\varphi}(u^{*}_t)}^2 \\
&& - {~} 2\hat{\eta}_t\iprods{ \nabla{\varphi}(\widehat{u}^{(t)}_{s-1}) - \nabla{\varphi}(u^{*}_t), \widehat{u}^{(t)}_{s-1} - u^{*}_t}.
\end{array}
\end{equation*}
Using  \cite[Theorem 2.1.12]{Nesterov2004}, we can show that
\begin{equation*}
\begin{array}{lcl}
\iprods{ \nabla{\varphi}(\widehat{u}^{(t)}_{s-1}) - \nabla{\varphi}(u^{*}_t), \widehat{u}^{(t)}_{s-1} - u^{*}_t} &\geq & \frac{L_u\mu_H}{L_u+\mu_H}\norms{\widehat{u}^{(t)}_{s-1} - u^{*}_t}^2 + \frac{1}{L_u + \mu_H}\norms{\nabla{\varphi}(\widehat{u}^{(t)}_{s-1}) - \nabla{\varphi}(u^{*}_t)}^2.
\end{array}
\end{equation*}
Combining the last three inequalities, we obtain
\begin{equation*}
\begin{array}{lcl}
\norms{\widehat{u}^{(t)}_{s} - u^{*}_t}^2 &\leq & \frac{1}{1 + 2\mu_h\hat{\eta}_t}\big(1 - \frac{2L_u\mu_H\hat{\eta}_t}{L_u + \mu_H}\big)\norms{\widehat{u}^{(t)}_{s-1} - u^{*}_t}^2 \\
&& - {~} \frac{\hat{\eta}_t}{1+2\mu_h\hat{\eta}_t}\big(\frac{2}{L_u+\mu_H} - \hat{\eta}_t\big) \norms{\nabla{\varphi}(\widehat{u}^{(t)}_{s-1}) - \nabla{\varphi}(u^{*}_t)}^2.
\end{array}
\end{equation*}
Therefore, if $0 < \hat{\eta}_t \leq \frac{2}{L_u + \mu_H}$, then the last inequality reduces to
\begin{equation*}
\begin{array}{lcl}
\norms{\widehat{u}^{(t)}_{s} - u^{*}_t}^2 &\leq & \frac{1}{1 + 2\mu_h\hat{\eta}_t}\big(1 - \frac{2L_u\mu_H\hat{\eta}_t}{L_u + \mu_H}\big)\norms{\widehat{u}^{(t)}_{s-1} - u^{*}_t}^2.
\end{array}
\end{equation*}
By induction, and noting that $\widehat{u}^{(t)}_{0} := \widetilde{u}_{t-1}$ and $\widetilde{u}_t := \widehat{u}^{(t)}_{S}$, this inequality implies \eqref{SGM2_bound_u1_dist1}.
\end{proof}

\textbf{(b)~Key bound for the shuffling gradient-ascent scheme \eqref{eq:SGM4lower_prob}.}
Alternatively, if the \textit{full-shuffling variant} \eqref{eq:SGM4lower_prob} is used in Algorithm~\ref{alg:SGM2}, then we can bound $\norms{\widetilde{u}_t - u_0^{*}(\widetilde{w}_{t-1})}^2$ for \eqref{eq:SGM4lower_prob} as follows.

First, let us define $u_0^{s*} := u_0^{*}(\widetilde{w}_{t-1})$ and for all $i \in [n]$: 
\begin{equation}\label{eq:SGM2_ui^star_quantity}
\begin{array}{lcl}
u_i^{s*} & := & u_0^{*}(\widetilde{w}_{t-1}) + \frac{\hat{\eta}_t}{n} \sum_{j=1}^i\nabla_u\mathcal{H}_{\pi^{(s,t)}(j)}(\widetilde{w}_{t-1}, u_0^{*}(\widetilde{w}_{t-1})). 
\end{array}
\end{equation}
Here, $\nabla_u{\Hc}_i$ is the partial derivative (or the gradient) of $\Hc_i$ w.r.t. $u$.

Next, we prove the following lemma.

\begin{lemma}\label{le:SGM2_bound_ut_dist2}
Suppose that Assumption~\ref{as:A3_SGM2} holds, and $u_i^{s*}$ is defined by \eqref{eq:SGM2_ui^star_quantity} for all $i = 0, \cdots, n$.
Then
\begin{equation}\label{eq:SGM2_bound_ut_dist2}
\begin{array}{lcl}
\norms{u^{s*}_i - u_0^{*}(\widetilde{w}_{t-1})}^2   & \leq & \frac{ 2  \hat{\eta}_t^2 \cdot i }{ n }   \cdot \big(  \Theta_u  \norms{\nabla{\Phi_0}(\widetilde{w}_{t-1})}^2  +  \sigma_{u}^2 \big) +  \frac{ 2\hat{\eta}_t^2 \cdot i^2 }{ n^2 } \cdot  \norms{\nabla{\Phi_0}(\widetilde{w}_{t-1})}^2 \\
& \leq & 2 \hat{\eta}_t^2\big[ ( \Theta_u + 1)  \norms{\nabla{\Phi_0}(\widetilde{w}_{t-1})}^2 + \sigma_{u}^2 \big].
\end{array}
\end{equation}
\end{lemma}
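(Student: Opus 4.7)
The starting point is the explicit form \eqref{eq:SGM2_ui^star_quantity}, which gives
\begin{equation*}
u_i^{s*} - u_0^{*}(\widetilde{w}_{t-1}) \;=\; \frac{\hat{\eta}_t}{n}\sum_{j=1}^{i}\nabla_u\mathcal{H}_{\pi^{(s,t)}(j)}\bigl(\widetilde{w}_{t-1},\, u_0^{*}(\widetilde{w}_{t-1})\bigr),
\end{equation*}
so $\norms{u_i^{s*} - u_0^{*}(\widetilde{w}_{t-1})}^2$ equals $\hat{\eta}_t^2/n^2$ times the squared norm of a partial shuffled sum. The standard trick is to center each summand against the full-batch mean. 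Writing, with $w := \widetilde{w}_{t-1}$ and $u^{*} := u_0^{*}(w)$,
\begin{equation*}
\sum_{j=1}^{i}\nabla_u\mathcal{H}_{\pi^{(s,t)}(j)}(w, u^{*}) \;=\; \underbrace{\sum_{j=1}^{i}\bigl[\nabla_u\mathcal{H}_{\pi^{(s,t)}(j)}(w, u^{*}) - \nabla_u\mathcal{H}(w, u^{*})\bigr]}_{=:\, A_i} \;+\; i\,\nabla_u\mathcal{H}(w, u^{*}),
\end{equation*}
and applying Young's inequality $\norms{x+y}^2 \le 2\norms{x}^2 + 2\norms{y}^2$ splits the task into bounding $\norms{A_i}^2$ and $i^2\norms{\nabla_u\mathcal{H}(w, u^{*})}^2$ separately.

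For the deviation term $A_i$, the Cauchy--Schwarz inequality yields $\norms{A_i}^2 \le i\sum_{j=1}^{i}\norms{\nabla_u\mathcal{H}_{\pi^{(s,t)}(j)}(w, u^{*}) - \nabla_u\mathcal{H}(w, u^{*})}^2$. Since $\pi^{(s,t)}$ is a permutation of $[n]$, extending the inner sum from $i$ to $n$ only adds nonnegative terms, after which the bounded-variance condition \eqref{eq:bounded_variance2b} produces $\norms{A_i}^2 \le i n\bigl(\Theta_u\norms{\nabla_u\mathcal{H}(w, u^{*})}^2 + \sigma_u^2\bigr)$. Combining the two Young pieces and multiplying through by $\hat{\eta}_t^2/n^2$ yields exactly the first line of \eqref{eq:SGM2_bound_ut_dist2}; the second (coarser) line then follows from $i/n \le 1$ and $i^2/n^2 \le 1$.

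The one subtle step is identifying $\norms{\nabla_u\mathcal{H}(\widetilde{w}_{t-1}, u_0^{*}(\widetilde{w}_{t-1}))}^2$ with $\norms{\nabla\Phi_0(\widetilde{w}_{t-1})}^2$ as stated. By Danskin's theorem applied to \eqref{eq:lower_level_max} one has $\nabla\Phi_0(w) = \nabla_w\mathcal{H}(w, u_0^{*}(w))$, so these are a priori different objects; I would read the right-hand side of the claim as the saddle-point first-order residual and invoke \eqref{eq:bounded_variance2b} evaluated at $(w, u_0^{*}(w))$ to justify absorbing $\norms{\nabla_u\mathcal{H}(w, u^{*})}^2$ into the advertised bound. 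Navigating this identification is the main obstacle; once accepted, the remainder is the standard centering-plus-Young bookkeeping already used in Lemma~\ref{le:key_estimates_det}.
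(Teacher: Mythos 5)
Your proposal is correct and follows essentially the same route as the paper's proof: write $u_i^{s*} - u_0^{*}(\widetilde{w}_{t-1})$ as $\tfrac{\hat{\eta}_t}{n}$ times the partial shuffled sum of $\nabla_u\mathcal{H}_{\pi^{(s,t)}(j)}(\widetilde{w}_{t-1}, u_0^{*}(\widetilde{w}_{t-1}))$, center against the full-batch mean, apply Young plus Cauchy--Schwarz, enlarge the inner sum from $i$ to $n$, and invoke the variance condition \eqref{eq:bounded_variance2b}. The ``subtle step'' you flag is treated no more rigorously in the paper than in your sketch: the paper simply cites \eqref{eq:exact_grad_phi} to replace $\norms{\nabla_u\mathcal{H}(\widetilde{w}_{t-1}, u_0^{*}(\widetilde{w}_{t-1}))}$ by $\norms{\nabla{\Phi_0}(\widetilde{w}_{t-1})}$, even though \eqref{eq:exact_grad_phi} concerns the $w$-partial gradient, so your hesitation points at the weakest link of the paper's own argument rather than at a gap specific to your proof.
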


\begin{proof}
For simplicity, we denote $u^{*}_t := u_0^{*}(\widetilde{w}_{t-1})$.
For $i=0$, we obviously have $\norms{u_0^{s*} - u^{*}_t}^2 = 0$, showing that \eqref{eq:SGM2_bound_ut_dist2} trivially holds.

Next, for $i \in [n]$, using $u_i^{s*}$ from \eqref{eq:SGM2_ui^star_quantity} and Young's inequality twice in $\myeqc{1}$ and $\myeqc{2}$, we can derive that
\begin{equation*} 
\arraycolsep=0.1em
\begin{array}{lcl}
\norms{u_i^{s*} - u^{*}_t }^2 & = &  \frac{\hat{\eta}_t^2}{n^2} \Vert \sum_{j=1}^i\nabla_u\mathcal{H}_{ \pi^{(s)}(j)}(\widetilde{w}_{t-1}, u^{*}_t ) \Vert^2 \\
& \overset{\myeqc{1}}{ \leq } &  \frac{ 2\hat{\eta}_t^2 }{ n^2 }  \cdot i^2 \cdot \Vert \frac{1}{i} \sum_{j=1}^i \big[ \nabla_u\mathcal{H}_{ \pi^{(s)}(j)}(\widetilde{w}_{t-1}, u^{*}_t ) -  \nabla_u\mathcal{H}(\widetilde{w}_{t-1}, u^{*}_t ) \big] \Vert^2 \\
&& + {~}   \frac{ 2\hat{\eta}_t^2 }{ n^2 }  \cdot i^2  \Vert   \nabla_u\mathcal{H}(\widetilde{w}_{t-1}, u^{*}_t) \Vert^2 \\
& \overset{\myeqc{2}}{ \leq } &   \frac{ 2 i \hat{\eta}_t^2  }{ n^2 } \sum_{j=1}^i \big\Vert \nabla_u\mathcal{H}_{\pi^{(s)}(j)}(\widetilde{w}_{t-1},  u^{*}_t ) -  \nabla_u\mathcal{H}(\widetilde{w}_{t-1}, u^{*}_t ) \Vert^2  +    \frac{ 2i^2 \hat{\eta}_t^2 }{ n^2 } \norms{ \nabla_u\mathcal{H}(\widetilde{w}_{t-1}, u^{*}_t)  }^2 \\
& \overset{\tiny\eqref{eq:exact_grad_phi}}{ \leq } &  \frac{ 2 i \hat{\eta}_t^2 }{ n^2 }  \sum_{j=1}^n \big\Vert \nabla_u\mathcal{H}_{\pi^{(s)}(j)}(\widetilde{w}_{t-1}, u^{*}_t ) -  \nabla_u\mathcal{H}(\widetilde{w}_{t-1}, u^{*}_t ) \Vert^2 +  \frac{ 2 i^2 \hat{\eta}_t^2 }{ n^2 } \norms{\nabla{\Phi_0}(\widetilde{w}_{t-1})}^2.
\end{array}
\end{equation*}
By \eqref{eq:bounded_variance2} from Assumption~\ref{as:A3_SGM2} and \eqref{eq:exact_grad_phi}, we have
\begin{equation*}
\begin{array}{lcl}
\frac{1}{n} \sum_{j=1}^n \big\Vert \nabla_u\mathcal{H}_{\pi^{(s)}(j)}(\widetilde{w}_{t-1}, u^{*}_t ) -  \nabla_u\mathcal{H}(\widetilde{w}_{t-1}, u^{*}_t ) \Vert^2  & \overset{\tiny\eqref{eq:bounded_variance2}}{\leq} & \Theta_u \norms{ \nabla_u\mathcal{H}(\widetilde{w}_{t-1}, u^{*}_t )}^2 + \sigma_{u}^2 \\
& \overset{\tiny\eqref{eq:exact_grad_phi}}{ = } & \Theta_u \norms{\nabla{\Phi_0}(\widetilde{w}_{t-1})}^2 + \sigma_{u}^2. 
\end{array}
\end{equation*}
Combining the last two inequalities, and noting that $0 \leq i \leq n$, we obtain \eqref{eq:SGM2_bound_ut_dist2}.
\end{proof}

Finally, we can prove the necessary bound for $\norms{\widetilde{u}_t - u_0^{*}(\widetilde{w}_{t-1})}^2$.
For simplicity of our proof, let us denote $g^{s,t}_{i-1}(\cdot) := -\mathcal{H}_{\pi^{(s)}(i)}(\widetilde{w}_{t-1}, \cdot)$ and again $u^{*}_t := u_0^{*}(\widetilde{w}_{t-1})$.
By Assumption~\ref{as:A3_SGM2}(a) and (b), it is clear that $g^{s,t}_{i-1}(\cdot)$ is $\mu_H$-strongly convex and $L_u$-smooth.
Let us consider the following the Bregman distance constructed from $g_{i-1}^{s,t}$:
\begin{equation}\label{eq:SGM2_key_bounds_for_ut2_breg_dist}
D^{s,t}_{i-1}(u, \hat{u}) = g^{s,t}_{i-1}(u) - g^{s,t}_{i-1}(\hat{u}) - \iprods{ \nabla_u{g^{s,t}_{i-1}}(\hat{u}), u - \hat{u} }.
\end{equation}
The following lemma is adapted from Theorems 2 and 3 in \cite{mishchenko2022proximal} with some modification.

\begin{lemma}\label{le:SGM2_key_bounds_for_ut2}
Suppose that Assumption~\ref{as:A3_SGM2} holds.
Let $u_i^{s*}$ be defined by \eqref{eq:SGM2_ui^star_quantity}, $\sets{u_i^{(s,t)}}$ be updated by \eqref{eq:SGM4lower_prob} at the $s$-th epoch for all $i \in [n]$, and $D^{s,t}_{i-1}$ be defined by \eqref{eq:SGM2_key_bounds_for_ut2_breg_dist}.
Then, it holds that
\begin{equation}\label{eq:SGM2_key_est_for_ut2}
\hspace{-0.0ex}
\begin{array}{lcl}
\norms{u_i^{(s,t)} - u_i^{s*}}^2 & \leq & \big(1 - \frac{ \mu_H\hat{\eta}_t }{ n }  \big)\norms{u^{(s,t)}_{i-1} - u_{i-1}^{s*}}^2  + \frac{ 2  L_u  \hat{\eta}_t^3}{ n }   \big[  (\Theta_u + 1) \norms{ \nabla{\Phi_0}(\widetilde{w}_{t-1} ) }^2 + \sigma_{u}^2 \big] \\
&& - {~}  \frac{ 2\hat{\eta}_t }{ n } \big(1 -  \frac{ L_u\hat{\eta}_t }{ n }  \big) D^{s,t}_{i-1}( u_{i-1}^{(s,t)}, u_0^{*}(\widetilde{w}_{t-1}) ).
\end{array}
\hspace{-2.0ex}
\end{equation}
Consequently, at each epoch $s$, the following bound holds:
\begin{equation}\label{eq:SGM2_key_est_for_ut2b}
\hspace{-1.0ex}
\begin{array}{lcl}
\norms{\widehat{u}_{s}^{(t)}  - u_0^{*}(\widetilde{w}_{t-1})}^2  & \leq & \frac{1}{1 + 2 \mu_h\hat{\eta}_t } \big(1 - \frac{\mu_H\hat{\eta}_t }{ n } \big)^n  \norms{\widehat{u}_{s-1}^{(t)} - u_0^{*}(\widetilde{w}_{t-1}) }^2 \\
&& + {~} \frac{2 L_u \cdot \hat{\eta}_t^3 }{  n( 1 + 2 \mu_h\hat{\eta}_t) }  \big[ \sum_{j=0}^{n-1}  \big(1 -  \frac{ \mu_H\hat{\eta}_t }{ n }   \big)^{j} \big] \big[ (\Theta_u + 1 ) \norms{\nabla{\Phi_0}(\widetilde{w}_{t-1} )}^2  + \sigma_{u}^2 \big].
\end{array}
\hspace{-5.0ex}
\end{equation}
If we update \eqref{eq:SGM4lower_prob} by $S$ epochs starting from $\widehat{u}_0^{(t)}  := \widetilde{u}_{t-1}$ and output $\widetilde{u}_t := \widehat{u}_S^{(t)} $, then
\begin{equation}\label{eq:SGM2_key_est_for_ut2c}
\hspace{-0.0ex}
\begin{array}{lcl}
\norms{\widetilde{u}_t - u_0^{*}(\widetilde{w}_{t-1})}^2 
& \leq & \frac{1}{(1 + 2 \mu_h\hat{\eta}_t)^{S} } \big(1 - \frac{ \mu_H\hat{\eta}_t }{ n } \big)^{nS} \norms{\widetilde{u}_{t-1} - u_0^{*}(\widetilde{w}_{t-1}) }^2 \\
&& + {~} \frac{ 2 L_u}{ n } C_S \hat{\eta}_t^3  \cdot \big[ (\Theta_u + 1 ) \norms{\nabla{\Phi_0}(\widetilde{w}_{t-1} )}^2  + \sigma_{u}^2 \big],
\end{array}
\hspace{-3.0ex}
\end{equation}
where $C_S :=  \big[ \sum_{j=0}^{n-1}  \frac{ 1 }{(1 + 2\mu_h\hat{\eta}_t)} \big( 1 - \frac{ \mu_H\hat{\eta}_t }{ n }  \big)^j  \big] \sum_{s=0}^{S-1} \frac{1}{(1 + 2 \mu_h\hat{\eta}_t)^s} \big(1 - \frac{ \mu_H\hat{\eta}_t }{ n }  \big)^{ns}$.
\end{lemma}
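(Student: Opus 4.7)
\textbf{Proof proposal for Lemma~\ref{le:SGM2_key_bounds_for_ut2}.}
The plan is a standard three-stage unrolling: first a one-step descent estimate towards the ``shifted'' reference $u_i^{s*}$, then one full epoch (sum of $n$ inner steps plus the prox step), then aggregation across the $S$ outer epochs. Throughout, abbreviate $u_{\star} := u_0^{*}(\widetilde{w}_{t-1})$, $g_{i-1} := g_{i-1}^{s,t}$ and $D_{i-1} := D_{i-1}^{s,t}$. Note that $g_{i-1}$ is $\mu_H$-strongly convex and $L_u$-smooth by Assumption~\ref{as:A3_SGM2}, and that the updates in \eqref{eq:SGM4lower_prob} and the definition \eqref{eq:SGM2_ui^star_quantity} rewrite as
\begin{equation*}
u_i^{(s,t)} = u_{i-1}^{(s,t)} - \tfrac{\hat{\eta}_t}{n}\nabla g_{i-1}(u_{i-1}^{(s,t)}), \qquad u_i^{s*} = u_{i-1}^{s*} - \tfrac{\hat{\eta}_t}{n}\nabla g_{i-1}(u_{\star}).
\end{equation*}

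First, I would subtract these two identities, square, and expand to obtain
\begin{equation*}
\norms{u_i^{(s,t)} - u_i^{s*}}^2 = \norms{u_{i-1}^{(s,t)} - u_{i-1}^{s*}}^2 - \tfrac{2\hat{\eta}_t}{n}\iprods{\nabla g_{i-1}(u_{i-1}^{(s,t)}) - \nabla g_{i-1}(u_{\star}), u_{i-1}^{(s,t)} - u_{i-1}^{s*}} + \tfrac{\hat{\eta}_t^2}{n^2}\norms{\nabla g_{i-1}(u_{i-1}^{(s,t)}) - \nabla g_{i-1}(u_{\star})}^2.
\end{equation*}
I would then decompose $u_{i-1}^{(s,t)} - u_{i-1}^{s*} = (u_{i-1}^{(s,t)} - u_{\star}) + (u_{\star} - u_{i-1}^{s*})$ in the cross term. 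The piece paired with $u_{i-1}^{(s,t)} - u_{\star}$ is handled by the three-point identity $\iprods{\nabla g_{i-1}(u_{i-1}^{(s,t)}) - \nabla g_{i-1}(u_{\star}), u_{i-1}^{(s,t)} - u_{\star}} = D_{i-1}(u_{i-1}^{(s,t)}, u_{\star}) + D_{i-1}(u_{\star}, u_{i-1}^{(s,t)})$ together with the strong-convexity lower bound $D_{i-1}(u_{\star}, u_{i-1}^{(s,t)}) \geq \tfrac{\mu_H}{2}\norms{u_{i-1}^{(s,t)} - u_{\star}}^2$ (which I would then convert into a $\mu_H\hat{\eta}_t/n$ contraction on $\norms{u_{i-1}^{(s,t)} - u_{i-1}^{s*}}^2$ via a triangle/Young step, absorbing the remainder into $\norms{u_{i-1}^{s*} - u_{\star}}^2$). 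The piece paired with $u_{\star} - u_{i-1}^{s*}$ is controlled by Young's inequality, and the squared-gradient term is dominated via the co-coercivity estimate $\norms{\nabla g_{i-1}(u_{i-1}^{(s,t)}) - \nabla g_{i-1}(u_{\star})}^2 \leq 2L_u D_{i-1}(u_{i-1}^{(s,t)}, u_{\star})$. Collecting terms yields
\begin{equation*}
\norms{u_i^{(s,t)} - u_i^{s*}}^2 \leq \bigl(1 - \tfrac{\mu_H\hat{\eta}_t}{n}\bigr)\norms{u_{i-1}^{(s,t)} - u_{i-1}^{s*}}^2 - \tfrac{2\hat{\eta}_t}{n}\bigl(1 - \tfrac{L_u\hat{\eta}_t}{n}\bigr) D_{i-1}(u_{i-1}^{(s,t)}, u_{\star}) + (\text{residual on } \norms{u_{i-1}^{s*} - u_{\star}}^2).
\end{equation*}
Invoking Lemma~\ref{le:SGM2_bound_ut_dist2} to bound the residual by $\tfrac{2L_u\hat{\eta}_t^3}{n}[(\Theta_u+1)\norms{\nabla\Phi_0(\widetilde{w}_{t-1})}^2 + \sigma_u^2]$ (after absorbing constants into the $L_u$ factor, which matches the target coefficient) gives exactly \eqref{eq:SGM2_key_est_for_ut2}.

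Second, to prove \eqref{eq:SGM2_key_est_for_ut2b}, I would unroll \eqref{eq:SGM2_key_est_for_ut2} from $i = 1$ to $i = n$, starting at $u_0^{(s,t)} = \widehat{u}_{s-1}^{(t)}$ and $u_0^{s*} = u_{\star}$, which yields
\begin{equation*}
\norms{u_n^{(s,t)} - u_n^{s*}}^2 \leq \bigl(1 - \tfrac{\mu_H\hat{\eta}_t}{n}\bigr)^n\norms{\widehat{u}_{s-1}^{(t)} - u_{\star}}^2 + \tfrac{2L_u\hat{\eta}_t^3}{n}\Bigl(\sum_{j=0}^{n-1}\bigl(1 - \tfrac{\mu_H\hat{\eta}_t}{n}\bigr)^{j}\Bigr)\bigl[(\Theta_u+1)\norms{\nabla\Phi_0(\widetilde{w}_{t-1})}^2 + \sigma_u^2\bigr].
\end{equation*}
The key observation to bridge from $u_n^{s*}$ to $u_{\star}$ is that the optimality condition \eqref{eq:lower_level_max_opt_cond} gives $\nabla_u\Hc(\widetilde{w}_{t-1}, u_{\star}) \in \partial h(u_{\star})$, so by definition \eqref{eq:SGM2_ui^star_quantity} we have $u_n^{s*} - u_{\star} = \hat{\eta}_t \nabla_u\Hc(\widetilde{w}_{t-1}, u_{\star}) \in \hat{\eta}_t\partial h(u_{\star})$, i.e.\ $u_{\star} = \prox_{\hat{\eta}_t h}(u_n^{s*})$. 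Together with $\widehat{u}_s^{(t)} = \prox_{\hat{\eta}_t h}(u_n^{(s,t)})$ and the contraction \eqref{eq:prox_contraction} with modulus $(1+2\mu_h\hat{\eta}_t)^{-1}$, this yields \eqref{eq:SGM2_key_est_for_ut2b}. Finally, \eqref{eq:SGM2_key_est_for_ut2c} follows by unrolling \eqref{eq:SGM2_key_est_for_ut2b} over $s = 1, \dots, S$ with $\widehat{u}_0^{(t)} = \widetilde{u}_{t-1}$ and $\widetilde{u}_t = \widehat{u}_S^{(t)}$, and summing the geometric noise series with common ratio $(1+2\mu_h\hat{\eta}_t)^{-1}(1 - \mu_H\hat{\eta}_t/n)^n$ to produce the explicit constant $C_S$.

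The main obstacle is the per-step estimate. Specifically, executing the Young splits in such a way that (i) the coefficient of the contraction term is exactly $(1 - \mu_H\hat{\eta}_t/n)$ rather than something weaker, (ii) the coefficient of the Bregman-distance term retains the clean factor $(1 - L_u\hat{\eta}_t/n)$, and (iii) the residual from the cross term, once combined with Lemma~\ref{le:SGM2_bound_ut_dist2}, produces precisely the $\tfrac{2L_u\hat{\eta}_t^3}{n}$ prefactor on the noise. Getting all three simultaneously will require a carefully chosen Young weight tied to the relative sizes of $\mu_H\hat{\eta}_t/n$ and $L_u\hat{\eta}_t/n$; everything else (the prox contraction and the two geometric sums) is essentially bookkeeping.
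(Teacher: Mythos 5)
Your second and third stages coincide with the paper's proof: rewriting \eqref{eq:SGM4lower_prob} and \eqref{eq:SGM2_ui^star_quantity} as parallel gradient steps, the smoothness bound $\norms{\nabla_u g^{s,t}_{i-1}(u_{i-1}^{(s,t)}) - \nabla_u g^{s,t}_{i-1}(u_0^{*}(\widetilde{w}_{t-1}))}^2 \leq 2L_u D^{s,t}_{i-1}(u_{i-1}^{(s,t)}, u_0^{*}(\widetilde{w}_{t-1}))$, the observation that \eqref{eq:lower_level_max_opt_cond} gives $u_0^{*}(\widetilde{w}_{t-1}) = \prox_{\hat{\eta}_t h}(u_n^{s*})$ so that \eqref{eq:prox_contraction} supplies the $(1+2\mu_h\hat{\eta}_t)^{-1}$ factor, and the two geometric unrollings producing $C_S$ are exactly what the paper does. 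The genuine gap is in the per-step estimate \eqref{eq:SGM2_key_est_for_ut2}, and you have flagged it yourself without closing it. Your decomposition of the cross term via $u_{i-1}^{(s,t)} - u_{i-1}^{s*} = (u_{i-1}^{(s,t)} - u_0^{*}(\widetilde{w}_{t-1})) + (u_0^{*}(\widetilde{w}_{t-1}) - u_{i-1}^{s*})$ forces two extra Young-type steps: one to transfer the strong-convexity contraction from $\norms{u_{i-1}^{(s,t)} - u_0^{*}(\widetilde{w}_{t-1})}^2$ onto the tracked quantity $\norms{u_{i-1}^{(s,t)} - u_{i-1}^{s*}}^2$, and one to handle the inner product against $u_0^{*}(\widetilde{w}_{t-1}) - u_{i-1}^{s*}$. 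Each introduces a weight that degrades one of the three target coefficients: the first yields a contraction of the form $1 - \frac{\mu_H\hat{\eta}_t}{n(1+\nu)}$ rather than $1 - \frac{\mu_H\hat{\eta}_t}{n}$, and the second either eats into the $\big(1 - \frac{L_u\hat{\eta}_t}{n}\big)$ coefficient of the Bregman term or inflates the residual beyond $\frac{L_u\hat{\eta}_t}{n}\norms{u_{i-1}^{s*} - u_0^{*}(\widetilde{w}_{t-1})}^2$. There is no slack to ``absorb constants into the $L_u$ factor'': after inserting Lemma~\ref{le:SGM2_bound_ut_dist2} the coefficient $\frac{2L_u\hat{\eta}_t^3}{n}$ arises with equality, so no choice of weights in your scheme recovers all three constants simultaneously.

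The paper avoids this entirely by never splitting the cross term. It applies the three-point identity to the inner product as it stands,
\begin{equation*}
\iprods{ \nabla_u g^{s,t}_{i-1}(u_{i-1}^{(s,t)}) - \nabla_u g^{s,t}_{i-1}(u_0^{*}(\widetilde{w}_{t-1})),\, u_{i-1}^{(s,t)} - u_{i-1}^{s*}} = D^{s,t}_{i-1}(u_{i-1}^{(s,t)}, u_{i-1}^{s*}) + D^{s,t}_{i-1}(u_{i-1}^{(s,t)}, u_0^{*}(\widetilde{w}_{t-1})) - D^{s,t}_{i-1}(u_{i-1}^{s*}, u_0^{*}(\widetilde{w}_{t-1})),
\end{equation*}
then lower-bounds the first term by $\frac{\mu_H}{2}\norms{u_{i-1}^{(s,t)} - u_{i-1}^{s*}}^2$ (giving the exact contraction), upper-bounds the third by $\frac{L_u}{2}\norms{u_{i-1}^{s*} - u_0^{*}(\widetilde{w}_{t-1})}^2$ (giving the exact residual, which combined with Lemma~\ref{le:SGM2_bound_ut_dist2} is exactly $\frac{2L_u\hat{\eta}_t^3}{n}\big[(\Theta_u+1)\norms{\nabla\Phi_0(\widetilde{w}_{t-1})}^2 + \sigma_u^2\big]$), and pairs the middle term with your smoothness bound on the squared gradient difference to retain the clean factor $\frac{2\hat{\eta}_t}{n}\big(1 - \frac{L_u\hat{\eta}_t}{n}\big)$. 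No tuning is needed. If you replace your cross-term treatment by this single identity, the rest of your argument goes through verbatim.
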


\begin{proof}
By \eqref{eq:SGM4lower_prob}, using the definition of $g_{i-1}^{s,t}(\cdot)$ above, and $u_i^{s*}$ defined by \eqref{eq:SGM2_ui^star_quantity}, we have 
\begin{equation*}
\arraycolsep=0.2em
\begin{array}{lcl}
u_i^{(s,t)} & = & u_0^{(s,t)} +  \frac{ \hat{\eta}_t }{ n}  \sum_{j=1}^i\nabla_u\mathcal{H}_{\pi^{(s)}(j)}(\widetilde{w}_{t-1}, u_{j-1}^{(s,t)}) = u_0^{(s,t)} -  \frac{ \hat{\eta}_t }{ n} \sum_{j=1}^i \nabla_u{g^{s,t}_{j-1}}(u_{j-1}^{(s,t)}) \\
& = & u_{i - 1} ^{(s,t)} - \frac{\hat{\eta}_t }{ n } \nabla_u{g^{s,t}_{i-1}}(u_{i-1}^{(s,t)}) \\
u_i^{s*} &= & u^{*}_0(\widetilde{w}_{t-1}) -  \frac{\hat{\eta}_t }{ n } \sum_{j=1}^{i} \nabla_u{g^{s,t}_{j-1}}(u_t^{*})  =  u_{i-1}^{s*} - \frac{\hat{\eta}_t }{ n } \nabla_u{g^{s,t}_{i-1}}(u_t^{*}).
\end{array}
\end{equation*}
Using these expressions, for any $i \in [n]$, we can show that
\begin{equation}\label{eq:lm31_proof1}
\begin{array}{lcl}
\norms{u_i^{(s,t)} - u_i^{s*}}^2 &= & \norms{u^{(s,t)}_{i-1} - u_{i-1}^{s*}}^2 - \frac{ 2\hat{\eta}_t }{ n}    \iprods{ \nabla_u{g^{s,t}_{i-1}}(u_{i-1}^{(s,t)}) - \nabla_u{g^{s,t}_{i-1}}(u^{*}_t),  u_{i-1}^{(s,t)} - u_{i-1}^{s*}} \\
&& + {~}  \frac{ \hat{\eta}_t^2 }{ n^2 } \Vert \nabla_u{g^{s,t}_{i-1}}(u_{i-1}^{(s,t)}) - \nabla_u{g^{s,t}_{i-1}}(u^{*}_t) \Vert^2.
\end{array}
\end{equation}
By the $L$-smoothness condition \eqref{eq:Lsmooth2} of $\mathcal{H}_i$ from Assumption~\ref{as:A3_SGM2}, we have 
\begin{equation}\label{eq:lm31_proof2}
\begin{array}{lcl}
\Vert \nabla_u{g^{s,t}_{i-1}}(u_{i-1}^{(s,t)}) - \nabla_u{g^{s,t}_{i-1}}(u^{*}_t) \Vert^2 & \leq & 2L_u D^{s,t}_{i-1}( u_{i-1}^{(s,t)}, u^{*}_t).
\end{array}
\end{equation}
By the well-known three-point identity, see, e.g., \cite{Chen1994}, we have
\begin{equation*}
\begin{array}{lcl}
\iprods{ \nabla_u{g^{s,t}_{i-1}}(u_{i-1}^{(s,t)}) - \nabla_u{g^{s,t}_{i-1}}(u^{*}_t),  u_{i-1}^{(s,t)} - u_{i-1}^{s*}} & = &  D^{s,t}_{i-1}( u_{i-1}^{(s,t)}, u^{s*}_{i-1}) + D^{s,t}_{i-1}( u_{i-1}^{(s,t)}, u^{*}_t) \\
&& - {~} D^{s,t}_{i-1}( u_{i-1}^{s*}, u^{*}_t).
\end{array}
\end{equation*}
Substituting this inequality and \eqref{eq:lm31_proof2} into \eqref{eq:lm31_proof1}, we can show that
\begin{equation*}
\begin{array}{lcl}
\norms{u_i^{(s.t)} - u_i^{s*}}^2 & \leq & \norms{u^{(s,t)}_{i-1} - u_{i-1}^{s*}}^2 - \frac{ 2\hat{\eta}_t }{ n }  D^{s,t}_{i-1}( u_{i-1}^{(s,t)}, u^{s*}_{i-1}) + \frac{ 2\hat{\eta}_t  }{ n }  D^{s,t}_{i-1}( u_{i-1}^{s*}, u^{*}_t) \\
&& - {~} \frac{ 2\hat{\eta}_t }{ n }  \big( 1 - \frac{ L_u\hat{\eta}_t }{ n } \big) D^{s,t}_{i-1}( u_{i-1}^{(s,t)}, u^{*}_t).
\end{array}
\end{equation*}
On the one hand, by the $\mu_H$-strong-convexity of $g_{i-1}^{s,t}$, we have $D^{s,t}_{i-1}( u_{i-1}^{(s,t)}, u^{s*}_{i-1})  \geq \frac{\mu_H}{2}\norms{u^{(s,t)}_{i-1} - u_{i-1}^{s*}}^2$.
On the other hand, by the $L_u$-smoothness of $g_{i-1}^{s,t}$, we also have $D^{s,t}_{i-1}( u_{i-1}^{s*}, u^{*}_t) \leq \frac{L_u}{2}\norms{u_{i-1}^{s*} -  u^{*}_t}^2$.
Using these bounds into the last inequality, we can show that
\begin{equation*}
\begin{array}{lcl}
\norms{u_i^{(s,t)} - u_i^{s*}}^2 & \leq & \big( 1 -  \frac{ \mu_H\hat{\eta}_t }{ n }  \big)\norms{u^{(s,t)}_{i-1} - u_{i-1}^{s*}}^2  +  \frac{ L_u\hat{\eta}_t }{ n }   \norms{u_{i-1}^{s*} -  u^{*}_t}^2 \\
&&  - {~}  \frac{ 2\hat{\eta}_t }{ n }  \big( 1 -   \frac{ L_u\hat{\eta}_t }{ n } \big) D^{s,t}_{i-1}( u_{i-1}^{(s,t)}, u^{*}_t).
\end{array}
\end{equation*}
Combining this inequality and \eqref{eq:SGM2_bound_ut_dist2}, we obtain \eqref{eq:SGM2_key_est_for_ut2}.

Next, since $1 - \frac{ L_u\hat{\eta}_t }{ n }  \geq 0$ and $D^{s,t}_{i-1}( u_{i-1}^{(s,t)}, u^{*}_t) \geq 0$, we obtain from \eqref{eq:SGM2_key_est_for_ut2} that
\begin{equation*} 
\begin{array}{lcl}
\norms{u_i^{(s,t)} - u_i^{s*}}^2 & \leq & \big( 1 - \frac{ \mu_H\hat{\eta}_t }{ n }  \big) \norms{u^{(s,t)}_{i-1} - u_{i-1}^{s*}}^2  + 2n^2 L_u  \big[ ( \Theta_u + 1) \norms{ \nabla{\Phi_0}(\widetilde{w}_{t-1}) }^2  + \sigma_{u}^2 \big] \cdot \hat{\eta}_t^3.
\end{array}
\end{equation*}
By induction, rolling this inequality from $i=1$ to $n$, we have
\begin{equation}\label{eq:lm31_proof5} 
\begin{array}{lcl}
\norms{u_n^{(s,t)} - u_n^{s*}}^2 & \leq & \big(1 - \frac{ \mu_H\hat{\eta}_t }{ n }  \big)^n \norms{u^{(s,t)}_{0} - u_{0}^{s*}}^2  \\
&& + {~} \frac{ 2  L_u }{ n } [ ( \Theta_u + 1)  \norms{ \nabla{\Phi_0}(\widetilde{w}_{t-1}) }^2  + \sigma_{u}^2  ] \cdot \hat{\eta}_t^3 \cdot \sum_{j=0}^{n-1}  \big(1 - \frac{ \mu_H\hat{\eta}_t}{ n }  \big)^{j}.
\end{array}
\end{equation}
Next, from \eqref{eq:SGM2_ui^star_quantity} and \eqref{eq:lower_level_max_opt_cond}, it is not hard to show that $u^{*}_t = u_0^{*}(\widetilde{w}_{t-1}) = \prox_{\hat{\eta}_th}(u_n^{s*})$.
Furthermore, by the second line of \eqref{eq:SGM4lower_prob}, we also have $\widehat{u}_{s}^{(t)} = \prox_{ \hat{\eta}_th}(u_n^{(s,t)})$.
Since $h$ is $\mu_h$-strongly convex, by \eqref{eq:prox_contraction} from Fact [$F_1$], we can show that
\begin{equation*} 
\begin{array}{lcl}
\norms{\widehat{u}_{s}^{(t)} - u_0^{*}(\widetilde{w}_{t-1})}^2 = \norms{ \prox_{ \hat{\eta}_th}(u_n^{(s,t)}) -  \prox_{ \hat{\eta}_th}(u_n^{s*}) }^2 \leq \frac{1}{1 + 2\mu_h\hat{\eta}_t}\norms{u_n^{(s,t)} - u_n^{s*}}^2.
\end{array}
\end{equation*}
Using this inequality, $u_0^{(s,t)} = \widehat{u}^{(t)}_{s-1}$, and $u_0^{s*} = u_t^{*} = u_0^{*}(\widetilde{w}_{t-1})$, it follows from \eqref{eq:lm31_proof5} that
\begin{equation*} 
\begin{array}{lcl}
\norms{\widehat{u}_{s}^{(t)}  - u_0^{*}(\widetilde{w}_{t-1})}^2 & \leq & \frac{1}{1 + 2 \mu_h\hat{\eta}_t} \big(1 - \frac{ \mu_H\hat{\eta}_t}{ n } \big)^n \norms{\widehat{u}_{s-1}^{(t)} - u_0^{*}(\widetilde{w}_{t-1}) }^2  \\
&& + {~} \frac{2 L_u \hat{\eta}_t^3 }{n (1 + 2\mu_h\hat{\eta}_t)} \cdot  \big[ \sum_{j=0}^{n-1}  \big(1 - \frac{ \mu_H\hat{\eta}_t }{ n } \big)^{j} \big] \cdot \big[ ( \Theta_u + 1)  \norms{ \nabla{\Phi_0}(\widetilde{w}_{t-1}) }^2  + \sigma_{u}^2 \big],
\end{array}
\end{equation*}
which proves \eqref{eq:SGM2_key_est_for_ut2b}.

Next, rolling \eqref{eq:SGM2_key_est_for_ut2b} from $s=1$ to $S$, we have 
\begin{equation*} 
\begin{array}{lcl}
\norms{\widehat{u}_{s}^{(t)}  - u_0^{*}(\widetilde{w}_{t-1})}^2 & \leq & \frac{1}{(1 + 2\mu_h\hat{\eta}_t)^{S}}\big(1 - \frac{\mu_H\hat{\eta}_t}{n } \big)^{nS} \norms{\widehat{u}_0^{(t)}  - u_0^{*}(\widetilde{w}_{t-1}) }^2  \\
&& + {~} \frac{ 2 L_u}{ n}   C_S \hat{\eta}_t^3  \cdot \big[ ( \Theta_u + 1)  \norms{ \nabla{\Phi_0}(\widetilde{w}_{t-1}) }^2  + \sigma_{u}^2 \big],
\end{array}
\end{equation*}
where $C_S :=  \big[ \sum_{j=0}^{n-1}  \frac{1}{(1 + 2\mu_h\hat{\eta}_t)} \big(1 - \frac{ \mu_H\hat{\eta}_t}{ n }  \big)^j  \big] \sum_{s=0}^{S-1} \frac{1}{(1 + 2 \mu_h\hat{\eta}_t)^s} \big(1 - \frac{ \mu_H\hat{\eta}_t}{ n } \big)^{ns} $.
Substituting $\widehat{u}_0^{(t)}  := \widetilde{u}_{t-1}$ and $\widetilde{u}_t := \widehat{u}_S^{(t)} $ into the last inequality, it proves  \eqref{eq:SGM2_key_est_for_ut2c}.
\end{proof}

\textbf{(c)~Key bounds for the shuffling gradient descent scheme \eqref{eq:SGM4upper_prob}.}
We define the following quantity:
\begin{equation}\label{eq:SGM2_gt_quantity}
\arraycolsep=0.2em
\begin{array}{lcl}
g_t := \frac{1}{n}\sum_{j=1}^n \nabla_w{\Hc}_{\hat{\pi}^{(t)}(j)}(w^{(t)}_{j-1}, \widetilde{u}_t).
\end{array}
\end{equation}
From the update of $w_i^{(t)}$ in \eqref{eq:SGM4upper_prob}, for any $i \in [n]$, we have  
\begin{equation}\label{eq:SGM2_w_update_upto_i}
\arraycolsep=0.2em
\begin{array}{lcl}
w_{i}^{(t)} & = & w_{0}^{(t)} - \frac{\eta_t}{n} \sum_{j=1}^{i} \nabla_w{\mathcal{H}}_{\hat{\pi}^{(t)}(j)} (w_{j-1}^{(t)}, \widetilde{u}_t ). 
\end{array}
\end{equation}
Then, it is obvious that $w_n^{(t)} = w_0^{(t)} - \eta_tg_t$.

First, we bound  $\Delta_t := \frac{1}{ n } \sum_{i=0}^{n-1}\norms{w_i^{(t)} - w_0^{(t)}}^2$ for  \eqref{eq:SGM4upper_prob} to handle the upper-level problem \eqref{eq:upper_level_min}.

\begin{lemma}\label{le:SGM2_key_bounds_for_wt_2}
Suppose that Assumption~\ref{as:A3_SGM2} holds.
Let $\sets{w_i^{(t)}}$ be generated by \eqref{eq:SGM4upper_prob} such that $w_0^{(t)} := \widetilde{w}_{t-1}$.
Then, if we choose $\eta_t > 0$ such that $1 - 3L_w^2\eta_t^2  \geq  0$, then 
\begin{equation}\label{eq:SGM2_key_bounds_for_wt_2}
\begin{array}{lcl}
\Delta_t := \frac{1}{n} \sum_{i=0}^{n-1}\norms{w_i^{(t)} - w_0^{(t)}}^2  & \leq &  2 (3 \Theta_w + 1) \eta_t^2 \norms{ \nabla{\Phi_0}(\widetilde{w}_{t-1}) }^2 +  6 \eta_t^2 \sigma_{w}^2 \vspace{0.5ex} \\
&& + {~} 4 L_u^2\eta_t^2 \norms{ \widetilde{u}_t - u_0^{*}(\widetilde{w}_{t-1})}^2.
\end{array}
\end{equation}
Let $g_t$ be defined by \eqref{eq:SGM2_gt_quantity} and $\Phi_0$ be defined by \eqref{eq:lower_level_max}.
Then, we have
\begin{equation}\label{eq:SGM2_key_bounds_for_gt_2}
\begin{array}{lcl}
\norms{ g_t - \nabla_w{\Hc}(\widetilde{w}_{t-1}, \widetilde{u}_t)}^2 & \leq & \frac{L_w^2}{n}\sum_{i=0}^{n-1}\norms{w_i^{(t)} - w_0^{(t)}}^2 \equiv  L_w^2 \Delta_t, \vspace{0.5ex} \\
\norms{\nabla_w{\Hc}(\widetilde{w}_{t-1}, \widetilde{u}_t) - \nabla{\Phi}_0(\widetilde{w}_{t-1}) }^2 & \leq & L_u^2\norms{ \widetilde{u}_t - u_0^{*}(\widetilde{w}_{t-1}) }^2, \vspace{0.5ex}\\
\norms{ g_t -   \nabla{\Phi}_0(\widetilde{w}_{t-1})  }^2 & \leq &  L_w^2 \Delta_t + L_u^2\norms{ \widetilde{u}_t - u_0^{*}(\widetilde{w}_{t-1}) }^2.
\end{array}
\end{equation}
\end{lemma}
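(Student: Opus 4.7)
\textbf{Proof plan for Lemma~\ref{le:SGM2_key_bounds_for_wt_2}.}

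The three bounds in \eqref{eq:SGM2_key_bounds_for_gt_2} are the easier part, and I would dispose of them first. Since $w_0^{(t)} = \widetilde{w}_{t-1}$ and $\nabla_w\Hc(\widetilde{w}_{t-1},\widetilde{u}_t) = \frac{1}{n}\sum_{j=1}^n \nabla_w\Hc_{\hat{\pi}^{(t)}(j)}(w_0^{(t)},\widetilde{u}_t)$, we may write $g_t - \nabla_w\Hc(\widetilde{w}_{t-1},\widetilde{u}_t)$ as a mean of per-sample differences that only move in $w$; applying Jensen's inequality to $\|\cdot\|^2$ and then the Lipschitz bound \eqref{eq:Lsmooth2} with $\hat{u}=u$ produces the first inequality. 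For the second, because $\nabla\Phi_0(\widetilde{w}_{t-1}) = \nabla_w\Hc(\widetilde{w}_{t-1}, u_0^*(\widetilde{w}_{t-1}))$ by \eqref{eq:exact_grad_phi}, Jensen's inequality applied to the finite sum defining $\nabla_w\Hc$ and \eqref{eq:Lsmooth2} with $w=\hat w$ give $L_u^2 \|\widetilde{u}_t - u_0^*(\widetilde{w}_{t-1})\|^2$. For the third, I combine the decomposition used above in one pass: write the difference $g_t - \nabla\Phi_0(\widetilde{w}_{t-1})$ as the average of $\nabla_w\Hc_{\hat{\pi}^{(t)}(j)}(w_{j-1}^{(t)},\widetilde u_t) - \nabla_w\Hc_{\hat{\pi}^{(t)}(j)}(w_0^{(t)}, u_0^*(\widetilde{w}_{t-1}))$, apply Jensen and then the joint bound \eqref{eq:Lsmooth2}; the two quadratic terms fall out without any cross terms and no Young's inequality is needed.

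For the main bound \eqref{eq:SGM2_key_bounds_for_wt_2}, the key is a three-way decomposition of the per-step gradient, mirroring the structure used in the proofs of Lemmas~\ref{le:key_estimates_det} and~\ref{le:ShGD_main_est1}. Starting from \eqref{eq:SGM2_w_update_upto_i}, I would write
\[
w_i^{(t)} - w_0^{(t)} = -\frac{\eta_t}{n}\sum_{j=1}^i \bigl[ P_j + R_j + S\bigr],
\]
where $P_j := \nabla_w\Hc_{\hat\pi^{(t)}(j)}(w_{j-1}^{(t)},\widetilde u_t) - \nabla_w\Hc_{\hat\pi^{(t)}(j)}(w_0^{(t)},\widetilde u_t)$ captures the smoothness error along the inner trajectory, $R_j := \nabla_w\Hc_{\hat\pi^{(t)}(j)}(w_0^{(t)},\widetilde u_t) - \nabla_w\Hc(w_0^{(t)},\widetilde u_t)$ is the sample-variance term, and $S := \nabla_w\Hc(w_0^{(t)},\widetilde u_t)$ is the full gradient at the epoch anchor. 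Applying Young's inequality with factor $3$ and Cauchy--Schwarz, then controlling $\sum\|P_j\|^2$ by $L_w^2\sum\|w_{j-1}^{(t)}-w_0^{(t)}\|^2$ via \eqref{eq:Lsmooth2}, $\sum_{j=1}^n \|R_j\|^2$ by $n(\Theta_w\|S\|^2 + \sigma_w^2)$ via \eqref{eq:bounded_variance2}, and $\|\sum_{j=1}^i S\|^2$ by $i^2\|S\|^2$, yields an upper bound on $\|w_i^{(t)}-w_0^{(t)}\|^2$ that is linear in $i$, $i^2$. Averaging over $i=0,\dots,n-1$ and using $\sum_{i=0}^{n-1} i \le n^2/2$, $\sum_{i=0}^{n-1} i^2 \le n^3/3$ gives a recursive inequality of the form $\Delta_t \le \alpha L_w^2\eta_t^2 \Delta_t + \beta \eta_t^2 \|S\|^2 + \gamma \eta_t^2\sigma_w^2$ for explicit constants.

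The final move is to absorb the self-referential term using the hypothesis $1 - 3L_w^2\eta_t^2 \ge 0$, which makes $\alpha L_w^2\eta_t^2 \le \tfrac12$, and then to replace $\|S\|^2 = \|\nabla_w\Hc(\widetilde{w}_{t-1},\widetilde u_t)\|^2$ by its relation to $\|\nabla\Phi_0(\widetilde{w}_{t-1})\|^2$. For this last step I would use Young's inequality with factor $2$ together with the bound $\|\nabla_w\Hc(\widetilde{w}_{t-1},\widetilde u_t) - \nabla\Phi_0(\widetilde w_{t-1})\|^2 \le L_u^2\|\widetilde u_t - u_0^*(\widetilde w_{t-1})\|^2$, which was already established as the second display of \eqref{eq:SGM2_key_bounds_for_gt_2}. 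This yields $\|S\|^2 \le 2\|\nabla\Phi_0(\widetilde w_{t-1})\|^2 + 2L_u^2\|\widetilde u_t - u_0^*(\widetilde w_{t-1})\|^2$ and produces \eqref{eq:SGM2_key_bounds_for_wt_2} after collecting constants.

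The main obstacles are purely bookkeeping: choosing the Young's factors so that the absorbed self-referential coefficient matches the stated condition $1-3L_w^2\eta_t^2\ge 0$, and tracking the numeric constants $2(3\Theta_w+1)$, $6$, $4L_u^2$ through the chain of inequalities. Nothing beyond \eqref{eq:Lsmooth2}, \eqref{eq:bounded_variance2}, the identity $\nabla\Phi_0(\widetilde w_{t-1}) = \nabla_w\Hc(\widetilde w_{t-1}, u_0^*(\widetilde w_{t-1}))$, and elementary Young/Jensen arguments is needed.
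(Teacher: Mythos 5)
Your treatment of the three bounds in \eqref{eq:SGM2_key_bounds_for_gt_2} matches the paper's proof: Jensen plus \eqref{eq:Lsmooth2} for the first two, and the joint Lipschitz bound applied to the single decomposition $\nabla_w{\Hc}_{\hat{\pi}^{(t)}(j)}(w_{j-1}^{(t)},\widetilde{u}_t)-\nabla_w{\Hc}_{\hat{\pi}^{(t)}(j)}(\widetilde{w}_{t-1},u_0^{*}(\widetilde{w}_{t-1}))$ for the third, exactly as in the paper.

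For \eqref{eq:SGM2_key_bounds_for_wt_2}, however, your decomposition does not deliver the stated constants, and the final claim that it ``produces \eqref{eq:SGM2_key_bounds_for_wt_2} after collecting constants'' fails. You anchor the variance term at $(w_0^{(t)},\widetilde{u}_t)$, i.e.\ $R_j$ is controlled by \eqref{eq:bounded_variance2} with $S:=\nabla_w{\Hc}(w_0^{(t)},\widetilde{u}_t)$, and only at the very end convert $\norms{S}^2\le 2\norms{\nabla\Phi_0(\widetilde{w}_{t-1})}^2+2L_u^2\norms{\widetilde{u}_t-u_0^{*}(\widetilde{w}_{t-1})}^2$. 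Carrying out your own bookkeeping (Young with factor $3$, Cauchy--Schwarz, $\sum_{i=0}^{n-1}i\le n^2/2$, $\sum_{i=0}^{n-1}i^2\le n^3/3$, then absorbing $\tfrac{3}{2}L_w^2\eta_t^2\Delta_t$ using $3L_w^2\eta_t^2\le 1$) gives $\Delta_t\le(3\Theta_w+2)\eta_t^2\norms{S}^2+3\eta_t^2\sigma_w^2$, and the final Young step turns this into $\Delta_t\le 2(3\Theta_w+2)\eta_t^2\norms{\nabla\Phi_0(\widetilde{w}_{t-1})}^2+2(3\Theta_w+2)L_u^2\eta_t^2\norms{\widetilde{u}_t-u_0^{*}(\widetilde{w}_{t-1})}^2+3\eta_t^2\sigma_w^2$. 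The coefficient on the $L_u^2$ term is now $\Theta_w$-dependent and exceeds the stated $4L_u^2\eta_t^2$ whenever $\Theta_w>0$, so the inequality as stated in the lemma is not obtained. The paper avoids this by splitting the anchor term around $\nabla\Phi_0(\widetilde{w}_{t-1})$ rather than around $\nabla_w{\Hc}(w_0^{(t)},\widetilde{u}_t)$: inside the same factor-$3$ Young split, the deviation term $\frac{1}{i}\sum_j[\nabla_w{\Hc}_{\hat{\pi}^{(t)}(j)}(w_0^{(t)},\widetilde{u}_t)-\nabla\Phi_0(\widetilde{w}_{t-1})]$ is split once more (factor $2$) into a per-sample Lipschitz step in $u$, giving $2L_u^2\norms{\widetilde{u}_t-u_0^{*}(\widetilde{w}_{t-1})}^2$, and a variance term evaluated at $(w_0^{(t)},u_0^{*}(w_0^{(t)}))$, where $\nabla_w{\Hc}$ equals $\nabla\Phi_0(\widetilde{w}_{t-1})$ exactly, so \eqref{eq:bounded_variance2} contributes $\Theta_w\norms{\nabla\Phi_0(\widetilde{w}_{t-1})}^2+\sigma_w^2$ with no conversion at the end. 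This is what keeps the $L_u^2$ coefficient at $4$ (independent of $\Theta_w$) and yields $2(3\Theta_w+1)$ and $6\sigma_w^2$ as stated. Your skeleton is otherwise sound; the fix is simply to relocate where \eqref{eq:bounded_variance2} is applied.
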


\begin{proof}
Utilizing \eqref{eq:SGM2_w_update_upto_i} and Young's inequality in $\myeqc{1}$ and $\myeqc{2}$ below, we can show that
\begin{equation*}
\begin{array}{lcl}
\norms{w_{i}^{(t)} - w_{0}^{(t)} }^2 & \overset{\tiny\eqref{eq:SGM2_w_update_upto_i}}{=} & \frac{i^2 \cdot \eta_t^2}{n^2} \Vert \frac{1}{ i  } \sum_{j=1}^{i} \nabla_w{\mathcal{H}}_{\hat{\pi}^{(t)}(j)} (w_{j-1}^{(t)}, \widetilde{u}_t ) \Vert^2 \\
& \overset{\myeqc{1}}{\leq} & \frac{3 i^2 \cdot \eta_t^2}{n^2} \big\Vert \frac{1}{i} \sum_{j=1}^{i} \big[ \nabla_w{\mathcal{H}}_{\hat{\pi}^{(t)}(j)} (w_{0}^{(t)}, \widetilde{u}_t )  - \nabla{\Phi_0}(\widetilde{w}_{t-1}) \big] \big\Vert^2 
+  \frac{3 i^2 \cdot \eta_t^2}{n^2} \norms{ \nabla{\Phi_0}(\widetilde{w}_{t-1})  }^2 \\
&& + {~}  \frac{3 i^2 \cdot \eta_t^2}{n^2} \big\Vert \frac{1}{i} \sum_{j=1}^{i} \big[ \nabla_w{\mathcal{H}}_{\hat{\pi}^{(t)}(j)} (w_{j-1}^{(t)}, \widetilde{u}_t )  - \nabla_w{\mathcal{H}}_{\hat{\pi}^{(t)}(j)} (w_{0}^{(t)}, \widetilde{u}_t ) \big] \big\Vert^2 \\
& \overset{\myeqc{2}}{\leq} & \frac{3 i^2 \cdot \eta_t^2}{n^2} \big\Vert \frac{1}{i} \sum_{j=1}^{i} \big[ \nabla_w{\mathcal{H}}_{\hat{\pi}^{(t)}(j)} (w_{0}^{(t)}, \widetilde{u}_t )  - \nabla{\Phi_0}(\widetilde{w}_{t-1}) \big] \big\Vert^2 
+  \frac{3 i^2 \cdot \eta_t^2}{n^2} \norms{ \nabla{\Phi_0}(\widetilde{w}_{t-1})  }^2 \\
&& + {~}  \frac{3 i \cdot \eta_t^2}{n^2}  \sum_{j=1}^{i} \big\Vert \nabla_w{\mathcal{H}}_{\hat{\pi}^{(t)}(j)} (w_{j-1}^{(t)}, \widetilde{u}_t )  - \nabla_w{\mathcal{H}}_{\hat{\pi}^{(t)}(j)} (w_{0}^{(t)}, \widetilde{u}_t )  \big\Vert^2.
\end{array}
\end{equation*}
Let us denote $\Delta_t := \frac{1}{ n } \sum_{j=0}^{n-1}\norms{w_j^{(t)} - w_0^{(t)}}^2 = \frac{1}{ n } \sum_{j=0}^{n-1}\norms{w_j^{(t)} - \widetilde{w}_{t-1} }^2$.
Then, by \eqref{eq:Lsmooth2} of Assumption~\ref{as:A3_SGM2}, we have
\begin{equation*}
\begin{array}{lcl}
\frac{1}{n} \sum_{j=1}^{i} \big\Vert \nabla_w{\mathcal{H}}_{\hat{\pi}^{(t)}(j)} (w_{j-1}^{(t)}, \widetilde{u}_t )  - \nabla_w{\mathcal{H}}_{\hat{\pi}^{(t)}(j)} (w_{0}^{(t)}, \widetilde{u}_t )  \big\Vert^2 & \overset{\tiny\eqref{eq:Lsmooth2}}{ \leq } & \frac{ L_{w}^2 }{ n } \sum_{j=1}^i\norms{w^{(t)}_{j-1} - w_0^{(t)}}^2 \leq L_w^2\Delta_t.
\end{array}
\end{equation*}
Next, by Young's inequality again in $\myeqc{1}$, $w_0^{(t)} = \widetilde{w}_{t-1}$, and \eqref{eq:Lsmooth2} and \eqref{eq:bounded_variance2} from Assumption~\ref{as:A3_SGM2}, and the fact that $\nabla_w\mathcal{H}(\widetilde{w}_{t-1}, u_0^{*}(\widetilde{w}_{t-1})) = \nabla{\Phi_0}(\widetilde{w}_{t-1})$ from \eqref{eq:exact_grad_phi}, we can show that
\begin{equation*}
\arraycolsep=-0.1em
\begin{array}{lcl}
\mathcal{T}_{[2]} &:= & \Vert \frac{1}{i} \sum_{j=1}^i \big[ \nabla_w{\mathcal{H}}_{\hat{\pi}^{(t)}(j)} (w_{0}^{(t)}, \widetilde{u}_t )  - \nabla{\Phi_0}(\widetilde{w}_{t-1}) \big] \big\Vert^2 \\
& \overset{\tiny\textcircled{1}}{\leq} & \frac{2}{i} \sum_{j=1}^i \big\Vert \nabla_w{\mathcal{H}}_{\hat{\pi}^{(t)}(j)} (w_{0}^{(t)}, \widetilde{u}_t )  -   \nabla_w\mathcal{H}_{\hat{\pi}^{(t)}(j)}(w^{(t)}_{0}, u_0^{*}(w^{(t)}_{0}) )  \big\Vert^2 \\
&& + {~} \frac{2}{i}   \sum_{j=1}^i \big\Vert \nabla_w{\mathcal{H}}_{\hat{\pi}^{(t)}(j)} (w_{0}^{(t)}, u_0^{*}(w^{(t)}_{0}) )  -   \nabla_w{\mathcal{H}} (w_{0}^{(t)}, u_0^{*}(w^{(t)}_{0}) ) \big\Vert^2 \\
& \overset{\tiny\eqref{eq:Lsmooth2}}{ \leq } &  \frac{2 }{i}  \sum_{j=1}^n \Vert \nabla_w{\mathcal{H}}_i ( \widetilde{w}_{t-1} , u_0^{*}(\widetilde{w}_{t-1}) )  -   \nabla_w{\mathcal{H}} (\widetilde{w}_{t-1}, u_0^{*}(\widetilde{w}_{t-1}) ) \Vert^2 \\
&& + {~}  \frac{2L_u^2}{i} \sum_{j=1}^i \norms{ \widetilde{u}_t - u_0^{*}(\widetilde{w}_{t-1})}^2 \\
& \overset{\tiny\eqref{eq:bounded_variance2}, \eqref{eq:exact_grad_phi}}{ \leq } &  2L_u^2 \norms{ \widetilde{u}_t - u_0^{*}(\widetilde{w}_{t-1})}^2 +  \frac{2 n }{i} \big[ \Theta_w \Vert  \nabla{\Phi_0}(\widetilde{w}_{t-1}) \Vert^2 + \sigma_{w}^2 \big].
\end{array}
\end{equation*}
Combining three inequalities above, we arrive at
\begin{equation*}
\begin{array}{lcl}
\norms{w_{i}^{(t)} - w_{0}^{(t)} }^2 & \leq & \frac{6 i^2 \cdot L_u^2 \eta_t^2 }{n^2} \norms{ \widetilde{u}_t - u_0^{*}(\widetilde{w}_{t-1})}^2 +  \frac{ 6 i \cdot \eta_t^2}{n}  \big[ \Theta_w \Vert  \nabla{\Phi_0}(\widetilde{w}_{t-1}) \Vert^2 + \sigma_{w}^2 \big] \\
&& +  {~} \frac{3 i^2 \cdot \eta_t^2}{n^2} \norms{ \nabla{\Phi_0}(\widetilde{w}_{t-1})  }^2 + \frac{ 3 i \cdot L_w^2 \eta_t^2}{n } \Delta_t \\
& = & \frac{ 6i^2 \cdot L_u^2 \eta_t^2 }{n^2} \norms{ \widetilde{u}_t - u_0^{*}(\widetilde{w}_{t-1})}^2 + \frac{3i \eta_t^2}{n^2}(2n \Theta_w + i) \norms{ \nabla{\Phi_0}(\widetilde{w}_{t-1}) }^2 \\
&& + {~} \frac{6 i \cdot \eta_t^2}{n} \sigma_{w}^2  + \frac{ 3 i \cdot L_w^2 \eta_t^2}{n } \Delta_t.
\end{array}
\end{equation*}
Averaging this inequality from $i=0$ to $n-1$, we get
\begin{equation*}
\begin{array}{lcl}
\Delta_t &:= & \frac{1}{n} \sum_{i=0}^{n-1} \norms{w_{i}^{(t)} - w_{0}^{(t)} }^2 \\
& \leq & \frac{1}{n} \sum_{i=0}^{n-1} \big[ \frac{ 6i^2 \cdot L_u^2 \eta_t^2}{n^2} \norms{ \widetilde{u}_t - u_0^{*}(\widetilde{w}_{t-1})}^2 + \frac{3i \eta_t^2}{n^2}(2n \Theta_w + i) \norms{ \nabla{\Phi_0}(\widetilde{w}_{t-1}) }^2 \big] \\
&& + {~} \frac{1}{n} \sum_{i=0}^{n-1} \big[ \frac{ 6 i \cdot \eta_t^2}{ n } \sigma_{w}^2  + \frac{3 i \cdot L_w^2 \eta_t^2}{n} \Delta_t \big] \\
& \leq & 2  L_u^2\eta_t^2 \norms{ \widetilde{u}_t - u_0^{*}(\widetilde{w}_{t-1})}^2 + (3 \Theta_w + 1) \eta_t^2 \norms{ \nabla{\Phi_0}(\widetilde{w}_{t-1}) }^2 + 3 \eta_t^2 \sigma_{w}^2 + \frac{3 L_w^2\eta_t^2}{2} \Delta_t.
\end{array}
\end{equation*}
Here, we have used the facts that $\sum_{i=0}^{n-1}i = \frac{n(n-1)}{2} \leq \frac{n^2}{2}$ and $\sum_{i=0}^{n-1}i^2 = \frac{n(n-1)(2n-1)}{6} \leq \frac{n^3}{3}$.
Rearranging the last inequality, we obtain \eqref{eq:SGM2_key_bounds_for_wt_2}.

Finally, to prove \eqref{eq:SGM2_key_bounds_for_gt_2}, we proceed as follows.
Using \eqref{eq:SGM2_gt_quantity} and \eqref{eq:Lsmooth2} from Assumption~\ref{as:A3_SGM2}, we have 
\begin{equation*} 
\arraycolsep=0.05em
\begin{array}{lcl}
\norms{ g_t -   \nabla_w{\Hc}(\widetilde{w}_{t-1}, \widetilde{u}_t)  }^2 & \overset{\tiny\eqref{eq:SGM2_gt_quantity} }{ = } & \big\Vert \frac{1}{n} \sum_{j=1}^{n} \big[ \nabla_w{\mathcal{H}}_{\hat{\pi}^{(t)}(j)} (w_{j-1}^{(t)}, \widetilde{u}_t ) - \nabla_w{\mathcal{H}}_{\hat{\pi}^{(t)}(j)}(\widetilde{w}_{t-1}, \widetilde{u}_t ) \big] \big\Vert^2 \\
&\leq & \frac{1}{n} \sum_{j = 1}^{n} \big\Vert \nabla_w{\mathcal{H}}_{\hat{\pi}^{(t)}(j)} (w_{j-1}^{(t)}, \widetilde{u}_t ) - \nabla_w{\mathcal{H}}_{\hat{\pi}^{(t)}(j)}( w_0^{(t)},  \widetilde{u}_t ) \big\Vert^2 \\
& \overset{\tiny\eqref{eq:Lsmooth2}}{\leq} & \frac{L_w^2}{n}\sum_{j=1}^n\norms{w_{j-1}^{(t)} - w^{(t)}_0 }^2,
\end{array}
\end{equation*}
which proves the first line of \eqref{eq:SGM2_key_bounds_for_gt_2}.

We also note that $\nabla{\Phi}_0(\widetilde{w}_{t-1}) = \sum_{j=1}^n \nabla_w{\Hc}_{\hat{\pi}^{(t)}(j)}(\widetilde{w}_{t-1}, u_0^{*}(\widetilde{w}_{t-1}))$ due to \eqref{eq:exact_grad_phi}.
Using this expression, and \eqref{eq:Lsmooth2} from Assumption~\ref{as:A3_SGM2}, we can show that
\begin{equation*} 
\arraycolsep=0.05em
\begin{array}{lcl}
\norms{\nabla_w{\Hc}(\widetilde{w}_{t-1}, \widetilde{u}_t) - \nabla{\Phi}_0(\widetilde{w}_{t-1}) }^2 & = &  \big\Vert \frac{1}{n} \sum_{i=1}^{n} \big[ \nabla_w{\mathcal{H}}_i(\widetilde{w}_{t-1}, \widetilde{u}_t ) -  \nabla_w{\Hc}_i(\widetilde{w}_{t-1}, u_0^{*}(\widetilde{w}_{t-1})) \big] \big\Vert^2 \\
&\leq & \frac{1}{n} \sum_{i=1}^{n} \big\Vert \nabla_w{\mathcal{H}}_i(\widetilde{w}_{t-1}, \widetilde{u}_t ) -  \nabla_w{\Hc}_i(\widetilde{w}_{t-1}, u_0^{*}(\widetilde{w}_{t-1})) \big\Vert^2 \\
& \overset{\tiny\eqref{eq:Lsmooth2}}{\leq} & \frac{L_u^2}{n}\sum_{i=1}^n\norms{ \widetilde{u}_t - u_0^{*}(\widetilde{w}_{t-1}) }^2 = L_u^2 \norms{ \widetilde{u}_t - u_0^{*}(\widetilde{w}_{t-1})}^2,
\end{array}
\end{equation*}
which proves the second line of  \eqref{eq:SGM2_key_bounds_for_gt_2}.

Similarly, combining  \eqref{eq:SGM2_gt_quantity}, \eqref{eq:exact_grad_phi}, and \eqref{eq:Lsmooth2} from Assumption~\ref{as:A3_SGM2}, we can show that
\begin{equation*} 
\arraycolsep=0.05em
\begin{array}{lcl}
\norms{ g_t -   \nabla{\Phi}_0(\widetilde{w}_{t-1})  }^2 & \overset{\tiny\eqref{eq:SGM2_gt_quantity}, \eqref{eq:exact_grad_phi} }{ = } & \big\Vert \frac{1}{n} \sum_{j=1}^{n} \big[ \nabla_w{\mathcal{H}}_{\hat{\pi}^{(t)}(j)} (w_{j-1}^{(t)}, \widetilde{u}_t ) - \nabla_w{\mathcal{H}}_{\hat{\pi}^{(t)}(j)}(\widetilde{w}_{t-1}, u_0^{*}(\widetilde{w}_{t-1}) ) \big] \big\Vert^2 \\
&\leq & \frac{1}{n} \sum_{j=1}^{n} \big\Vert \nabla_w{\mathcal{H}}_{\hat{\pi}^{(t)}(j)} (w_{j-1}^{(t)}, \widetilde{u}_t ) - \nabla_w{\mathcal{H}}_{\hat{\pi}^{(t)}(j)}(\widetilde{w}_{t-1},  u_0^{*}(\widetilde{w}_{t-1}) ) \big\Vert^2 \\
& \overset{\tiny\eqref{eq:Lsmooth2}}{\leq} & \frac{1}{n}\sum_{j=1}^n \big[ L_w^2 \norms{w_{j-1}^{(t)} - \widetilde{w}_{t-1}}^2 + L_u^2 \norms{ \widetilde{u}_t - u_0^{*}(\widetilde{w}_{t-1}) }^2 \big],
\end{array}
\end{equation*}
which proves the third line of \eqref{eq:SGM2_key_bounds_for_gt_2}.
\end{proof}

\begin{lemma}\label{le:SGM2_key_bounds_for_Phi_2}
Suppose that Assumption~\ref{as:A3_SGM2} holds.
Let $\sets{w_i^{(t)}}$ be generated by \eqref{eq:SGM4upper_prob}, $g_t$ be defined by \eqref{eq:SGM2_gt_quantity}, $\Psi$ be defined by \eqref{eq:upper_level_min}, and ${\Gc}_{\eta}$ be defined by \eqref{eq:upper_level_min_grad_mapping}.
Then, we have 
\begin{equation}\label{eq:SGM2_key_bounds_for_Phi_2}
\begin{array}{lcl}
\Psi_0(\widetilde{w}_t ) & \leq & \Psi_0(\widetilde{w}_{t-1})   - \frac{(1-L_{\Phi_0}\eta_t)}{2\eta_t} \norms{ \widetilde{w}_t  - \widetilde{w}_{t-1} }^2 -  \frac{\eta_t(1 - 2L_{\Phi_0}\eta_t)}{2}  \Vert {\Gc}_{\eta_t}(\widetilde{w}_{t-1})  \Vert^2  \\
&& + {~}  \frac{\eta_t}{2}\norms{ g_t - \nabla{\Phi_0}(\widetilde{w}_{t-1}) }^2.
\end{array}
\end{equation}
\end{lemma}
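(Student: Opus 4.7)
The plan is to mimic exactly the proof of Lemma~\ref{la:phi_bound}, replacing the smoothed potential $\Phi_{\gamma}$ by $\Phi_0$ and the shuffling estimator $\widetilde{\nabla}\Phi_{\gamma}$ by the estimator $g_t$ defined in \eqref{eq:SGM2_gt_quantity}. The key enabling facts are already available: $\Phi_0$ is $L_{\Phi_0}$-smooth by Lemma~\ref{le:property_of_Phi0_details}, and from \eqref{eq:SGM4upper_prob} we have the telescoping identity $w_n^{(t)} = \widetilde{w}_{t-1} - \eta_t g_t$, so that $\widetilde{w}_t = \prox_{\eta_t f}(\widetilde{w}_{t-1} - \eta_t g_t)$.

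First, introduce the auxiliary ``ideal'' iterate $\widehat{w}_t := \prox_{\eta_t f}(\widetilde{w}_{t-1} - \eta_t \nabla\Phi_0(\widetilde{w}_{t-1}))$, so that $\Gc_{\eta_t}(\widetilde{w}_{t-1}) = \eta_t^{-1}(\widetilde{w}_{t-1} - \widehat{w}_t)$. Using the subgradient characterization of the prox (Fact $[F_2]$), convexity of $f$ gives $f(\widehat{w}_t) \le f(\widetilde{w}_{t-1}) - \iprods{\nabla\Phi_0(\widetilde{w}_{t-1}), \widehat{w}_t - \widetilde{w}_{t-1}} - \eta_t^{-1}\norms{\widehat{w}_t - \widetilde{w}_{t-1}}^2$. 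Adding the upper descent bound $\Phi_0(\widehat{w}_t) \le \Phi_0(\widetilde{w}_{t-1}) + \iprods{\nabla\Phi_0(\widetilde{w}_{t-1}), \widehat{w}_t - \widetilde{w}_{t-1}} + \tfrac{L_{\Phi_0}}{2}\norms{\widehat{w}_t - \widetilde{w}_{t-1}}^2$ from \eqref{eq:Lsmooth_Phi2}, and substituting $\widehat{w}_t - \widetilde{w}_{t-1} = -\eta_t \Gc_{\eta_t}(\widetilde{w}_{t-1})$, yields
\begin{equation*}
\Psi_0(\widehat{w}_t) \le \Psi_0(\widetilde{w}_{t-1}) - \tfrac{\eta_t(2 - L_{\Phi_0}\eta_t)}{2} \norms{\Gc_{\eta_t}(\widetilde{w}_{t-1})}^2.
\end{equation*}

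Next, compare $\widetilde{w}_t$ to $\widehat{w}_t$. From $\widetilde{w}_t = \prox_{\eta_t f}(\widetilde{w}_{t-1} - \eta_t g_t)$, Fact $[F_2]$ gives $-g_t - \eta_t^{-1}(\widetilde{w}_t - \widetilde{w}_{t-1}) \in \partial f(\widetilde{w}_t)$. Convexity of $f$ at $\widetilde{w}_t$ against $\widehat{w}_t$, together with the three-point identity $\iprods{\widetilde{w}_t - \widetilde{w}_{t-1}, \widetilde{w}_t - \widehat{w}_t} = \tfrac{1}{2}(\norms{\widetilde{w}_t - \widetilde{w}_{t-1}}^2 + \norms{\widetilde{w}_t - \widehat{w}_t}^2 - \norms{\widehat{w}_t - \widetilde{w}_{t-1}}^2)$, bounds $f(\widetilde{w}_t) - f(\widehat{w}_t)$. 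Combining with two applications of the $L_{\Phi_0}$-smoothness bound (one giving $\Phi_0(\widetilde{w}_t) - \Phi_0(\widetilde{w}_{t-1})$ and one giving $\Phi_0(\widetilde{w}_{t-1}) - \Phi_0(\widehat{w}_t)$), the cross term $\iprods{\nabla\Phi_0(\widetilde{w}_{t-1}) - g_t, \widetilde{w}_t - \widehat{w}_t}$ appears and is controlled by Young's inequality as $\tfrac{\eta_t}{2}\norms{\nabla\Phi_0(\widetilde{w}_{t-1}) - g_t}^2 + \tfrac{1}{2\eta_t}\norms{\widetilde{w}_t - \widehat{w}_t}^2$, whereby the $\norms{\widetilde{w}_t - \widehat{w}_t}^2$ term cancels. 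This yields
\begin{equation*}
\Psi_0(\widetilde{w}_t) \le \Psi_0(\widehat{w}_t) + \tfrac{\eta_t(1 + L_{\Phi_0}\eta_t)}{2}\norms{\Gc_{\eta_t}(\widetilde{w}_{t-1})}^2 - \tfrac{1 - L_{\Phi_0}\eta_t}{2\eta_t}\norms{\widetilde{w}_t - \widetilde{w}_{t-1}}^2 + \tfrac{\eta_t}{2}\norms{g_t - \nabla\Phi_0(\widetilde{w}_{t-1})}^2.
\end{equation*}

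Adding the two displayed inequalities gives exactly \eqref{eq:SGM2_key_bounds_for_Phi_2}. There is no genuine obstacle here — the bookkeeping is identical to the proof of Lemma~\ref{la:phi_bound}, so I would write the proof by direct reference to that argument and only record the two main display equations above. The only minor subtlety is making sure the cross-term Young's inequality is tuned so that the $\norms{\widetilde{w}_t - \widehat{w}_t}^2$ terms coming from the three-point identity and from Young's bound cancel exactly, producing the clean factor $\tfrac{\eta_t}{2}\norms{g_t - \nabla\Phi_0(\widetilde{w}_{t-1})}^2$; picking the Young's parameter equal to $\eta_t$ achieves this. Note that the estimator bias $\norms{g_t - \nabla\Phi_0(\widetilde{w}_{t-1})}^2$ is left un-expanded here; it will be controlled later via \eqref{eq:SGM2_key_bounds_for_gt_2} once we substitute the explicit bounds on $\Delta_t$ and on $\norms{\widetilde{u}_t - u_0^{*}(\widetilde{w}_{t-1})}^2$.
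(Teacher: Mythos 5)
Your proposal is correct and follows essentially the same route as the paper's proof: the same auxiliary point $\widehat{w}_t := \prox_{\eta_t f}(\widetilde{w}_{t-1} - \eta_t\nabla\Phi_0(\widetilde{w}_{t-1}))$, the same pair of convexity/smoothness bounds yielding the two displayed inequalities, and the same three-point identity plus Young's inequality with parameter $\eta_t$ to cancel the $\norms{\widetilde{w}_t - \widehat{w}_t}^2$ term. The paper's argument is indeed a direct adaptation of the proof of Lemma~\ref{la:phi_bound} with $\Phi_\gamma$ replaced by $\Phi_0$ and the estimator replaced by $g_t$, exactly as you describe.
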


\begin{proof}
Let us denote  $\widehat{w}_t :=  \prox_{\eta_tf}\big(\widetilde{w}_{t-1} - \eta_t \nabla{\Phi_0}(\widetilde{w}_{t-1})\big)$.
Then, from  \eqref{eq:upper_level_min_grad_mapping}, we can easily show that $\Gc_{\eta_t}(\widetilde{w}_{t-1}) = \frac{1}{\eta_t}(\widetilde{w}_{t-1} - \widehat{w}_t)$.
Therefore, we have $\nabla{f}(\widehat{w}_t) := \eta_t^{-1}\big( \widetilde{w}_{t-1} - \widehat{w}_t \big) - \nabla{\Phi_0}(\widetilde{w}_{t-1}) \in \partial{f}(\widehat{w}_t)$.
By the convexity of $f$, we have
\begin{equation*}
\begin{array}{lcl}
f(\widehat{w}_t) & \leq &  f(\widetilde{w}_{t-1}) +  \iprods{\nabla{f}(\widehat{w}_t), \widehat{w}_t - \widetilde{w}_{t-1}} \\
& = &  f(\widetilde{w}_{t-1}) -  \iprods{\nabla{\Phi_0}(\widetilde{w}_{t-1}), \widehat{w}_t - \widetilde{w}_{t-1}} -  \frac{1}{\eta_t} \norms{  \widehat{w}_t -  \widetilde{w}_{t-1} }^2.
\end{array}
\end{equation*}
Next, by the $L_{\Phi_0}$-smoothness of $\Phi$ from \eqref{eq:Lsmooth_Phi2}, we have
\begin{equation*} 
\begin{array}{lcl}
\Phi_0(\widehat{w}_t ) & \leq & \Phi_0(\widetilde{w}_{t-1}) + \langle \nabla{\Phi_0}(\widetilde{w}_{t-1}),  \widehat{w}_t  - \widetilde{w}_{t-1} \rangle + \frac{L_{\Phi_0}}{2}\Vert \widehat{w}_t  - \widetilde{w}_{t-1} \Vert^2.
\end{array}
\end{equation*}
Adding the last two inequalities together and using $\Psi_0(w) = f(w) + \Phi_0(w)$ from \eqref{eq:upper_level_min} and $\widehat{w}_t -  \widetilde{w}_{t-1} = - \eta_t {\Gc}_{\eta_t}(\widetilde{w}_{t-1})$, we can derive
\begin{equation}\label{eq:SGM2_lm2_proof1} 
\hspace{-0.0ex}
\begin{array}{lcl}
\Psi_0(\widehat{w}_t ) & \leq & \Psi_0(\widetilde{w}_{t-1})  - \frac{(2 - L_{\Phi_0}\eta_t)}{2\eta_t}   \norms{  \widehat{w}_t -  \widetilde{w}_{t-1} }^2 =  \Psi_0(\widetilde{w}_{t-1})  - \frac{\eta_t(2 - L_{\Phi_0}\eta_t)}{2}   \norms{  {\Gc}_{\eta_t}(\widetilde{w}_{t-1}) }^2.
\end{array}
\hspace{-2ex}
\end{equation}
Now, from \eqref{eq:SGM2_w_update_upto_i}, we have
\begin{equation}\label{eq:SGM2_lm2_gt}
\begin{array}{lcl}
g_t := \frac{1}{\eta_t}(\widetilde{w}_{t-1}- w_n^{(t)}) =  \frac{1}{ \eta_t }( w_0^{(t)} - w_n^{(t)}) = \frac{1}{n} \sum_{j=1}^{n} \nabla_w{\mathcal{H}}_{\hat{\pi}^{(t)}(j)} (w_{j-1}^{(t)}, \widetilde{u}_t ).
\end{array}
\end{equation}
Since $\widetilde{w}_t = \prox_{\eta_tf}(w_n^{(t)})$ from the second line of \eqref{eq:SGM4upper_prob}, we get $\nabla{f}(\widetilde{w}_t) := \eta_t^{-1} \big( w_n^{(t)} - \widetilde{w}_t \big) = -g_t - \eta_t^{-1}(\widetilde{w}_t - \widetilde{w}_{t-1}) \in \partial{f}(\widetilde{w}_t)$.
Hence, again by the convexity of $f$, we can deduce that 
\begin{equation*}
\begin{array}{lcl}
f(\widetilde{w}_t) & \leq &  f(\widehat{w}^{t}) +  \iprods{\nabla{f}(\widetilde{w}_t), \widetilde{w}_t - \widehat{w}^{t}}  =  f(\widehat{w}^{t}) -  \iprods{g_t, \widetilde{w}_t - \widehat{w}^{t}}  - \frac{1}{\eta_t}\iprods{\widetilde{w}_{t} - \widetilde{w}_{t-1}, \widetilde{w}_t - \widehat{w}^{t}} \\
& = & f(\widehat{w}^{t})  - \iprods{g_t, \widetilde{w}_t - \widehat{w}^{t}} - \frac{1}{2\eta_t}\big[ \norms{\widetilde{w}_t - \widetilde{w}_{t-1}}^2 + \norms{ \widetilde{w}_t - \widehat{w}^{t}}^2 -  \norms{\widehat{w}^{t} - \widetilde{w}_{t-1}}^2 \big].
\end{array}
\end{equation*}
Again, by the $L_{\Phi_0}$-smoothness of $\Phi$ from \eqref{eq:Lsmooth_Phi2}, we also have
\begin{equation*} 
\begin{array}{lcl}
\Phi_0(\widetilde{w}_t ) & \leq & \Phi_0(\widetilde{w}_{t-1}) + \langle \nabla{\Phi_0}(\widetilde{w}_{t-1}),  \widetilde{w}_t  - \widetilde{w}_{t-1} \rangle + \frac{L_{\Phi_0}}{2}\Vert \widetilde{w}_t  - \widetilde{w}_{t-1} \Vert^2, \\
\Phi_0(\widetilde{w}_{t-1} ) & \leq & \Phi_0(\widehat{w}^{t}) + \langle \nabla{\Phi_0}(\widetilde{w}_{t-1}),  \widetilde{w}_{t-1}  - \widehat{w}^{t} \rangle + \frac{L_{\Phi_0}}{2} \Vert \widehat{w}_t  - \widetilde{w}_{t-1} \Vert^2.
\end{array}
\end{equation*}
Adding the last three inequalities together, and using $\Psi_0(w) = f(w) + \Phi_0(w)$ from \eqref{eq:upper_level_min} and $\widehat{w}_t -  \widetilde{w}_{t-1} = - \eta_t {\Gc}_{\eta_t}(\widetilde{w}_{t-1})$, we can prove that
\begin{equation}\label{eq:SGM2_lm2_proof2} 
\begin{array}{lcl}
\Psi_0(\widetilde{w}_t ) & \leq & \Psi_0(\widehat{w}^{t}) +  \iprods{ \nabla{\Phi_0}(\widetilde{w}_{t-1}) - g_t,  \widetilde{w}_{t}  - \widehat{w}^{t} } - \frac{(1-L_{\Phi_0}\eta_t)}{2\eta_t} \norms{ \widetilde{w}_t  - \widetilde{w}_{t-1} }^2 \\
&& + {~}  \frac{(1 + L_{\Phi_0}\eta_t)}{2\eta_t}  \Vert \widehat{w}_t  - \widetilde{w}_{t-1} \Vert^2  - \frac{1}{2\eta_t} \norms{ \widetilde{w}_t - \widehat{w}^{t}}^2 \\
& \overset{\myeqc{1}}{\leq} & \Psi_0(\widehat{w}^{t}) +  \frac{\eta_t}{2}\norms{ \nabla{\Phi_0}(\widetilde{w}_{t-1}) - g_t }^2 - \frac{(1-L_{\Phi_0}\eta_t)}{2\eta_t} \norms{ \widetilde{w}_t  - \widetilde{w}_{t-1} }^2  \\
&& + {~}  \frac{\eta_t(1 + L_{\Phi_0}\eta_t)}{2}  \Vert {\Gc}_{\eta_t}(\widetilde{w}_{t-1})  \Vert^2,
\end{array}
\end{equation}
where we have used Young's inequality in the last line $\myeqc{1}$ as $\iprods{ \nabla{\Phi_0}(\widetilde{w}_{t-1}) - g_t,  \widetilde{w}_{t}  - \widehat{w}^{t} } \leq \frac{\eta_t}{2}\norms{g_t - \nabla{\Phi_0}(\widetilde{w}_{t-1}) }^2 + \frac{1}{2\eta_t}\norms{\widetilde{w}_{t}  - \widehat{w}^{t}}^2$.

Finally, summing up \eqref{eq:SGM2_lm2_proof1} and \eqref{eq:SGM2_lm2_proof2} we arrive at
\begin{equation*} 
\begin{array}{lcl}
\Psi_0(\widetilde{w}_t ) & \leq & \Psi_0(\widetilde{w}_{t-1})   - \frac{(1-L_{\Phi_0}\eta_t)}{2\eta_t} \norms{ \widetilde{w}_t  - \widetilde{w}_{t-1} }^2 -  \frac{\eta_t(1 - 2L_{\Phi_0}\eta_t)}{2}  \Vert {\Gc}_{\eta_t}(\widetilde{w}_{t-1})  \Vert^2  \\
&& + {~}  \frac{\eta_t}{2}\norms{ g_t - \nabla{\Phi_0}(\widetilde{w}_{t-1}) }^2,
\end{array}
\end{equation*}
which proves \eqref{eq:SGM2_key_bounds_for_Phi_2}.
\end{proof}

\beforesubsec
\subsection{Convergence of the semi-shuffling variant of Algorithm~\ref{alg:SGM2}}\label{apdx:subsec:SGM2_full_shuffling_convergence}
\aftersubsec
We now prove the convergence of the semi-shuffling variant of Algorithm~\ref{alg:SGM2} using \eqref{eq:GD4lower_prob}.

\begin{lemma}\label{le:SGM2_keybound_for_semi_shuffling}
Suppose that Assumptions~\ref{as:A3_SGM2} and ~\ref{as:A3} hold for \eqref{eq:minimax_prob}.
Let $\Psi$ be defined by \eqref{eq:upper_level_min} and ${\Gc}_{\eta}$ be defined by \eqref{eq:upper_level_min_grad_mapping}.
Let $\sets{(\widetilde{w}_t, \widetilde{u}_t)}$ be generated by the \textbf{semi-shuffling variant} of Algorithm~\ref{alg:SGM2} using \eqref{eq:GD4lower_prob}.
For a fixed $\omega > 0$, suppose that we choose $\eta_t$ and $\hat{\eta}_t$ such that $1 - 3L_w^2\eta_t^2 \geq 0$ and $0 < \hat{\eta}_t \leq \frac{2}{L_u + \mu_H}$, and the following conditions hold:
\begin{equation}\label{eq:SGM2_semi_shuffling_para_cond1}
\left\{ \begin{array}{lll}
&2L_{\Phi_0}\eta_t + 2\omega L_u^2\kappa^2 \eta_t^2 \leq 1, \\
& \frac{1}{(1 + 2\mu_h\hat{\eta}_t)^S}\big(1 - \frac{2L_u\mu_H\hat{\eta}_t}{L_u + \mu_H}\big)^S \big( 1 + \omega +  \omega^2 L_u^2\kappa^2 \eta_t^2 + 2L_w^2 \eta_t^2 \big) \leq \omega.
\end{array}\right.
\end{equation}
Then, the following bound holds:
\begin{equation}\label{eq:SGM2_semi_shuffling_key_bound1}
\hspace{-0.0ex}
\begin{array}{lcl}
\Psi_0(\widetilde{w}_t)  & + &  \frac{\omega L_u^2\eta_t}{2} \norms{\widetilde{u}_t  - u_0^{*}(\widetilde{w}_{t})}^2  \leq   \Psi_0(\widetilde{w}_{t-1}) +  \frac{\omega L_u^2\eta_t}{2} \norms{\widetilde{u}_{t-1}  - u_0^{*}(\widetilde{w}_{t-1})}^2 \\
&& - {~} \frac{\eta_t B_t}{2} \norms{\Gc_{\eta_t}(\widetilde{w}_{t-1})}^2 +  \big[ 3L_w^2\sigma_{w}^2 + L_w^2 (3 \Theta_w + 1)\Lambda_1 \big] \cdot \eta_t^3,
\end{array}
\hspace{-3.0ex}
\end{equation}
where $B_t := 1 - 2L_{\Phi_0}\eta_t - 2L_w^2 (3 \Theta_w + 1)\Lambda_0 \eta_t^2$.
\end{lemma}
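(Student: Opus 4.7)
The plan is to construct the Lyapunov function $\mathcal{V}_t := \Psi_0(\widetilde{w}_t) + \tfrac{\omega L_u^2\eta_t}{2}\|\widetilde{u}_t - u_0^{*}(\widetilde{w}_t)\|^2$ and show that it descends by at least $\tfrac{\eta_t B_t}{2}\|{\Gc}_{\eta_t}(\widetilde{w}_{t-1})\|^2$, up to an $\Oc(\eta_t^3)$ residual, which is precisely the content of \eqref{eq:SGM2_semi_shuffling_key_bound1}.

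First I would apply Lemma~\ref{le:SGM2_key_bounds_for_Phi_2} to obtain the basic $L_{\Phi_0}$-smooth descent for $\Psi_0$, where the loss term is $\tfrac{\eta_t}{2}\|g_t - \nabla{\Phi_0}(\widetilde{w}_{t-1})\|^2$. I would split this loss using the third inequality of \eqref{eq:SGM2_key_bounds_for_gt_2}, giving $L_w^2\Delta_t + L_u^2\|\widetilde{u}_t - u_0^{*}(\widetilde{w}_{t-1})\|^2$. Then I would substitute \eqref{eq:SGM2_key_bounds_for_wt_2} (which applies since $1-3L_w^2\eta_t^2\geq 0$) for $\Delta_t$ and apply Assumption~\ref{as:A3} to replace $\|\nabla{\Phi_0}(\widetilde{w}_{t-1})\|^2$ by $\Lambda_0\|{\Gc}_{\eta_t}(\widetilde{w}_{t-1})\|^2 + \Lambda_1$. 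Collecting the $\|{\Gc}_{\eta_t}\|^2$ contributions produces exactly the coefficient $B_t = 1 - 2L_{\Phi_0}\eta_t - 2L_w^2(3\Theta_w+1)\Lambda_0\eta_t^2$, while the $\Lambda_1$ and $\sigma_w^2$ terms coalesce into the stated $[3L_w^2\sigma_w^2 + L_w^2(3\Theta_w+1)\Lambda_1]\eta_t^3$ error. What remains on the right-hand side is a positive multiple of $\|\widetilde{u}_t - u_0^{*}(\widetilde{w}_{t-1})\|^2$ with coefficient $\tfrac{L_u^2\eta_t}{2}(1+4L_w^2\eta_t^2)$, together with the negative quadratic $-\tfrac{1-L_{\Phi_0}\eta_t}{2\eta_t}\|\widetilde{w}_t - \widetilde{w}_{t-1}\|^2$.

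To bring in the Lyapunov term on the left, I would add $\tfrac{\omega L_u^2\eta_t}{2}\|\widetilde{u}_t - u_0^{*}(\widetilde{w}_t)\|^2$ to both sides and then decompose it via Young's inequality combined with the $\kappa$-Lipschitz continuity of $u_0^{*}(\cdot)$ from Lemma~\ref{le:property_of_Phi0}, namely
\[
\|\widetilde{u}_t - u_0^{*}(\widetilde{w}_t)\|^2 \leq (1+\nu)\|\widetilde{u}_t - u_0^{*}(\widetilde{w}_{t-1})\|^2 + (1+1/\nu)\kappa^2\|\widetilde{w}_t - \widetilde{w}_{t-1}\|^2
\]
for a suitable $\nu > 0$. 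The $(1+1/\nu)\kappa^2\omega L_u^2\eta_t$ contribution on $\|\widetilde{w}_t-\widetilde{w}_{t-1}\|^2$ is then absorbed into the negative quadratic produced in the previous step, and the first bullet of \eqref{eq:SGM2_semi_shuffling_para_cond1} is exactly what guarantees a non-positive coefficient. I would finally invoke the one-epoch contraction \eqref{SGM2_bound_u1_dist1} of Lemma~\ref{le:SGM2_bound_u1_dist1} (applicable because $\hat{\eta}_t \leq 2/(L_u+\mu_H)$) to replace every remaining $\|\widetilde{u}_t - u_0^{*}(\widetilde{w}_{t-1})\|^2$ by $\rho^S\|\widetilde{u}_{t-1} - u_0^{*}(\widetilde{w}_{t-1})\|^2$, where $\rho := (1+2\mu_h\hat{\eta}_t)^{-1}\bigl(1 - \tfrac{2L_u\mu_H\hat{\eta}_t}{L_u+\mu_H}\bigr)$. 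The second bullet of \eqref{eq:SGM2_semi_shuffling_para_cond1} is precisely the statement that the total coefficient of $\|\widetilde{u}_{t-1} - u_0^{*}(\widetilde{w}_{t-1})\|^2$ on the right does not exceed $\tfrac{\omega L_u^2\eta_t}{2}$, which closes the Lyapunov recursion and delivers \eqref{eq:SGM2_semi_shuffling_key_bound1}.

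The main obstacle is choosing the Young parameter $\nu$ so that both algebraic conditions in \eqref{eq:SGM2_semi_shuffling_para_cond1} drop out cleanly in the compact form stated. The natural strategy is to tie $\nu$ to $\omega$ and the step sizes, so that $\omega(1+\nu)$ absorbs the $\omega^2 L_u^2\kappa^2\eta_t^2$ contribution inside the second bullet, while $\omega(1+1/\nu)\kappa^2 L_u^2\eta_t^2$ is dominated by the margin $1 - 2L_{\Phi_0}\eta_t - 2\omega L_u^2\kappa^2\eta_t^2 \geq 0$ in the first bullet. Once $\nu$ is fixed, the remaining algebra is a routine substitution.
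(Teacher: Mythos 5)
Your proposal is correct and follows essentially the same route as the paper's proof: descent via Lemma~\ref{le:SGM2_key_bounds_for_Phi_2} combined with \eqref{eq:SGM2_key_bounds_for_gt_2} and \eqref{eq:SGM2_key_bounds_for_wt_2}, Assumption~\ref{as:A3} to convert $\norms{\nabla\Phi_0}^2$ into $\Lambda_0\norms{\Gc_{\eta_t}}^2+\Lambda_1$, Young's inequality with the $\kappa$-Lipschitz continuity of $u_0^{*}$, the contraction \eqref{SGM2_bound_u1_dist1}, and the two conditions in \eqref{eq:SGM2_semi_shuffling_para_cond1} to absorb the cross terms. The Young parameter you leave unspecified is fixed in the paper exactly as you suggest, namely $s_t := 2\omega L_u^2\kappa^2\eta_t^2$, which is admissible precisely under the first condition $2L_{\Phi_0}\eta_t + 2\omega L_u^2\kappa^2\eta_t^2 \leq 1$.
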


\begin{proof}
First, combining \eqref{eq:SGM2_key_bounds_for_Phi_2} and the last line of \eqref{eq:SGM2_key_bounds_for_gt_2}, we can derive
\begin{equation}\label{eq:SGM2_key_bounds_for_Phi_2c} 
\arraycolsep=0.2em
\begin{array}{lcl}
\Psi_0(\widetilde{w}_t ) & \leq & \Psi_0(\widetilde{w}_{t-1})   -  \frac{\eta_t(1 - 2L_{\Phi_0}\eta_t)}{2}  \Vert {\Gc}_{\eta_t}(\widetilde{w}_{t-1})  \Vert^2   - \frac{(1-L_{\Phi_0}\eta_t)}{2\eta_t} \norms{ \widetilde{w}_t  - \widetilde{w}_{t-1} }^2 \\
&& + {~}  \frac{L_w^2\eta_t}{2n} \sum_{j=1}^n \norms{w_{j-1}^{(t)} - \widetilde{w}_{t-1} }^2 + \frac{L_u^2\eta_t}{2}\norms{ \widetilde{u}_t - u_0^{*}(\widetilde{w}_{t-1} )}^2.
\end{array}
\end{equation}
Next, substituting \eqref{eq:SGM2_key_bounds_for_wt_2} into \eqref{eq:SGM2_key_bounds_for_Phi_2c}, we can show that
\begin{equation*}
\arraycolsep=0.2em 
\begin{array}{lcl}
\Psi_0(\widetilde{w}_t) &\leq & 
\Psi_0(\widetilde{w}_{t-1})  - \frac{\eta_t(1 - 2L_{\Phi_0}\eta_t)}{2}  \Vert {\Gc}_{\eta_t}(\widetilde{w}_{t-1})  \Vert^2 - \frac{(1-L_{\Phi_0}\eta_t)}{2\eta_t} \norms{ \widetilde{w}_t  - \widetilde{w}_{t-1} }^2  + 3L_w^2 \eta_t^3 \sigma_{w}^2 \\
&& + {~} \frac{L_u^2 \eta_t}{2}\big( 1 + 4L_w^2 \eta_t^2 \big) \norms{\widetilde{u}_t - u_0^{*}(\widetilde{w}_{t-1})}^2 +  L_w^2 (3 \Theta_w + 1) \eta_t^3 \norms{ \nabla{\Phi_0}(\widetilde{w}_{t-1}) }^2.
\end{array}
\end{equation*}
By \eqref{eq:u_star_smoothness} and Young's inequality in $\myeqc{1}$, for any $s_t > 0$, we have
\begin{equation*} 
\arraycolsep=0.2em
\begin{array}{lcl}
\norms{\widetilde{u}_t  - u_0^{*}(\widetilde{w}_{t})}^2 & \overset{\myeqc{1}}{\leq} & (1+s_t)\norms{\widetilde{u}_t  - u_0^{*}(\widetilde{w}_{t-1})}^2 + \frac{(1+s_t)}{s_t}\norms{ u_0^{*}(\widetilde{w}_{t})   - u_0^{*}(\widetilde{w}_{t-1})}^2\\
& \overset{\tiny\eqref{eq:u_star_smoothness}}{\leq} & (1+s_t)\norms{\widetilde{u}_t  - u_0^{*}(\widetilde{w}_{t-1})}^2 + \frac{(1+s_t)\kappa^2}{s_t}\norms{ \widetilde{w}_{t}    -  \widetilde{w}_{t-1}}^2.
\end{array}
\end{equation*}
Multiplying this inequality by $\frac{ \omega  L_u^2\eta_t}{2}$ for some $\omega  > 0$ and adding the result to the last estimate yields
\begin{equation*} 
\arraycolsep=0.2em
\begin{array}{lcl}
\Tc_{[1]} &:= & \Psi_0(\widetilde{w}_t) + \frac{\omega L_u^2\eta_t}{2} \norms{\widetilde{u}_t  - u_0^{*}(\widetilde{w}_{t})}^2 \\
&\leq & \Psi_0(\widetilde{w}_{t-1}) + \frac{L_u^2\eta_t}{2} \big[  1 + \omega(1+s_t) + 4L_w^2 \eta_t^2 \big]  \norms{\widetilde{u}_t  - u_0^{*}(\widetilde{w}_{t-1})}^2 \\ 
&& - {~} \frac{\eta_t(1 - 2L_{\Phi_0}\eta_t)}{2}  \Vert {\Gc}_{\eta_t}(\widetilde{w}_{t-1})  \Vert^2 - \big[ \frac{1-L_{\Phi_0}\eta_t}{2\eta_t} - \frac{\omega L_u^2\kappa^2 \eta_t(1+s_t)}{2s_t} \big] \norms{ \widetilde{w}_t  - \widetilde{w}_{t-1} }^2 \\
&& + {~}  L_w^2 (3 \Theta_w + 1) \eta_t^3 \norms{ \nabla{\Phi_0}(\widetilde{w}_{t-1}) }^2 + 3L_w^2 \eta_t^3 \sigma_{w}^2 \\
& \overset{\tiny\eqref{SGM2_bound_u1_dist1}}{ \leq } & \Psi_0(\widetilde{w}_{t-1}) + \frac{L_u^2\eta_t}{2(1 + 2\mu_h\hat{\eta}_t)^S}\big(1 - \frac{2L_u\mu_H\hat{\eta}_t}{L_u + \mu_H}\big)^S \big[  1 + \omega(1 + s_t) + 4L_w^2 \eta_t^2 \big] \norms{\widetilde{u}_{t-1}  - u_0^{*}(\widetilde{w}_{t-1})}^2 \\ 
&& - {~} \frac{\eta_t(1 - 2L_{\Phi_0}\eta_t)}{2}  \Vert {\Gc}_{\eta_t}(\widetilde{w}_{t-1})  \Vert^2 - \big[ \frac{1-L_{\Phi_0}\eta_t}{2\eta_t} - \frac{\omega L_u^2\kappa^2 \eta_t(1+s_t)}{2s_t} \big] \norms{ \widetilde{w}_t  - \widetilde{w}_{t-1} }^2 \\
&& + {~}  L_w^2 (3 \Theta_w + 1) \eta_t^3 \norms{ \nabla{\Phi_0}(\widetilde{w}_{t-1}) }^2 + 3L_w^2 \eta_t^3 \sigma_{w}^2.
\end{array}
\end{equation*}
We need to choose the parameters $\eta_t$, $\hat{\eta}_t$, and $s_t$ such that
\begin{equation*} 
\left\{ \begin{array}{ll}
& \frac{1}{(1 + 2\mu_h\hat{\eta}_t)^S }\big(1 - \frac{2L_u\mu_H\hat{\eta}_t}{L_u + \mu_H}\big)^S \big[ 1 + \omega(1+s_t) + 4L_w^2 \eta_t^2 \big] \leq \omega,  \\
&  \frac{1-L_{\Phi_0}\eta_t}{\eta_t} - \frac{\omega L_u^2\kappa^2 \eta_t(1+s_t)}{s_t} \geq 0.
\end{array}\right.
\end{equation*}
The second condition leads to $\frac{ 1 - L_{\Phi_0}\eta_t - \omega L_u^2\kappa^2 \eta_t^2}{\omega L_u^2\kappa^2 \eta_t^2} \geq \frac{ 1}{s_t}$, or equivalently $0 < s_t \leq  \frac{\omega L_u^2\kappa^2 \eta_t^2}{ 1 - L_{\Phi_0}\eta_t - \omega L_u^2\kappa^2 \eta_t^2}$.
If $2L_{\Phi_0}\eta_t + 2\omega L_u^2\kappa^2 \eta_t^2 \leq 1$ as stated in the first line of \eqref{eq:SGM2_semi_shuffling_para_cond1}, then we can choose $s_t := 2\omega L_u^2\kappa^2 \eta_t^2$.
In this case, the second condition is satisfied, while the first condition becomes
\begin{equation*} 
\begin{array}{ll}
& \frac{1}{(1 + 2\mu_h\hat{\eta}_t)^S }\big(1 - \frac{2L_u\mu_H\hat{\eta}_t}{L_u + \mu_H}\big)^S \big( 1 + \omega + 2\omega^2 L_u^2\kappa^2 \eta_t^2 + 4L_w^2 \eta_t^2 \big) \leq \omega,
\end{array}
\end{equation*}
which is exactly the second condition of \eqref{eq:SGM2_semi_shuffling_para_cond1}.

By \eqref{eq:grad_mapp_bound} from Assumption~\ref{as:A3}, we have
\begin{equation*} 
\begin{array}{lcl}
\Tc_{[1]} &:= & \Psi_0(\widetilde{w}_t) + \frac{\omega L_u^2\eta_t}{2} \norms{\widetilde{u}_t  - u_0^{*}(\widetilde{w}_{t})}^2 \\
& \leq &  \Psi_0(\widetilde{w}_{t-1}) +  \frac{ \omega L_u^2\eta_t}{2} \norms{\widetilde{u}_{t-1}  - u_0^{*}(\widetilde{w}_{t-1})}^2  -  \frac{\eta_t(1 - 2L_{\Phi_0}\eta_t)}{2}  \norms{  \Gc_{\eta_t}(\widetilde{w}_{t-1}) }^2 \\
&& + {~}  L_w^2 (3 \Theta_w + 1)\Lambda_0 \eta_t^3  \norms{  \Gc_{\eta_t}(\widetilde{w}_{t-1}) }^2 +  \big[ 3L_w^2\sigma_{w}^2 + L_w^2 (3 \Theta_w + 1)\Lambda_1 \big] \eta_t^3.
\end{array}
\end{equation*}
Rearranging this inequality, we  prove \eqref{eq:SGM2_semi_shuffling_key_bound1}.
\end{proof}

The following theorem, Theorem~\ref{th:SGM2_semi_shuffling_variant_convergence_supp}, is the full version of Theorem~\ref{th:SGM2_semi_shuffling_variant_convergence} in the main text, where the learning rates $\eta_t$ and  $\hat{\eta}_t$, and the numbers of epochs $S$ and $T$ are given explicitly.

\begin{theorem}\label{th:SGM2_semi_shuffling_variant_convergence_supp}
Suppose that Assumptions~\ref{as:A0},~\ref{as:A2},~\ref{as:A3_SGM2}, and ~\ref{as:A3} hold for \eqref{eq:minimax_prob}.
Let $\Psi_0$ be defined by \eqref{eq:upper_level_min}, and ${\Gc}_{\eta}$ be defined by \eqref{eq:upper_level_min_grad_mapping}.
Let $C_0$ and $C_w$ be two constants given as follows:
\begin{equation}\label{eq:SGM2_semi_shuffling_constants}
\begin{array}{lcl}
C_0 &:= & 2 \Lambda_0 L_w^2 (3 \Theta_w + 1)\quad \text{and} \quad C_w :=   L_w^2 (3 \Theta_w + 1)\Lambda_1 + 3L_w^2\sigma_{w}^2.
\end{array}
\end{equation}
Let $\sets{(\widetilde{w}_t, \widetilde{u}_t)}$ be generated by Algorithm~\ref{alg:SGM2} using  the \textbf{gradient ascent scheme \eqref{eq:GD4lower_prob}}, and fixed learning rates $\eta_t := \eta > 0$ and $\hat{\eta}_t := \hat{\eta} \in \big(0, \frac{2}{L_u + \mu_H}\big] $ such that for a fixed $\omega > 0$:
\begin{equation}\label{eq:SGM2_semi_shuffling_para_cond1_s}
\begin{array}{lll}
S := \big\lfloor  \frac{ M_{\omega}(\eta) }{2\hat{\eta} }\big( \mu_h + \frac{4\mu_HL_u}{L_u + \mu_H}\big)^{-1} \big\rfloor \quad \text{and} \quad  0 < \eta \leq \min\left\{ \frac{1}{2\sqrt{C_0}},  \frac{1}{4L_{\Phi_0}},  \frac{1}{2\omega L_u\kappa} \right\},
\end{array}
\end{equation}
where $M_{\omega}(\eta) := \frac{1}{\omega} + \big( \omega L_u^2\kappa^2 +  \frac{2L_w^2}{\omega} \big)\eta^2$.
Then, the following estimate holds:
\begin{equation}\label{eq:SGM2_semi_shuffling_bound1}
\begin{array}{lcl}
\frac{1}{T+1}\sum_{t=0}^T \norms{ {\Gc}_{\eta}(\widetilde{w}_{t}) }^2 & \leq & \frac{4 \big[ 2( \Psi_0(\widetilde{w}_0) - \Psi_0^{\star}) + \omega L_u^2\eta\norms{\widetilde{u}^0 - u_0^{*}(\widetilde{w}_0)}^2 \big] }{\eta (T+1)} + 8C_w \eta^2.
\end{array}
\end{equation}
For a given $\epsilon > 0$, if we choose $\eta := \frac{s\epsilon}{4\sqrt{C_w}}$ for a fixed $s \in (0, 1)$ satisfying \eqref{eq:SGM2_semi_shuffling_para_cond1}, $\hat{\eta} \in \big(0,  \frac{2}{L_u+\mu_h}\big]$, and $T := \mathcal{O}\big( \frac{1}{\epsilon^3} \big)$, then $\frac{1}{T+1}\sum_{t=0}^T \norms{ {\Gc}_{\eta}(\widetilde{w}_{t}) }^2 \leq \epsilon^2$.

For a given $\hat{\eta} \in \big(0,  \frac{2}{L_u+\mu_h}\big]$, we denote $B_0 := \hat{\eta}\big(\mu_h + \frac{4\mu_HL_u}{L_u + \mu_H}\big)$.
If we choose $\omega := \frac{1}{B_0}$ and 
\begin{equation}\label{eq:SGM2_semi_shuffling_para_cond1_one_epoch} 
\begin{array}{lll}
0 < \eta \leq \min\left\{ \frac{1}{2\sqrt{C_0}},  \frac{1}{4L_{\Phi_0}}, \frac{B_0}{\sqrt{L_u^2\kappa^2  + 2B_0^2L_w^2}} \right\},
\end{array}
\end{equation}
then we have $S = 1$, i.e. we only need to perform one iteration of  the \textbf{gradient ascent scheme \eqref{eq:GD4lower_prob}}.

Consequently, Algorithm~\ref{alg:SGM2} requires $\mathcal{O}\big( \frac{n}{\epsilon^3} \big)$ evaluations of $\nabla_w{\Hc}_i$  and  of $\nabla_u{\Hc}_i$, and $\Oc(\epsilon^{-3})$ evaluations of $\prox_{\eta_tf}$ and of $\prox_{\hat{\eta}_th}$ to achieve an $\epsilon$-stationary point $\widehat{w}_T$ of \eqref{eq:minimax_prob} computed by \eqref{eq:approx_stationary_point}.
\end{theorem}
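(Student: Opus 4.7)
The bulk of the work has already been absorbed into Lemma~\ref{le:SGM2_keybound_for_semi_shuffling}, whose one-step inequality \eqref{eq:SGM2_semi_shuffling_key_bound1} is the Lyapunov-type descent
\[
E_t := \Psi_0(\widetilde{w}_t) + \tfrac{\omega L_u^2\eta}{2}\norms{\widetilde{u}_t - u_0^{*}(\widetilde{w}_t)}^2 \leq E_{t-1} - \tfrac{\eta B_t}{2}\norms{\Gc_{\eta}(\widetilde{w}_{t-1})}^2 + C_w\eta^3,
\]
with $B_t := 1 - 2L_{\Phi_0}\eta - C_0\eta^2$ and $C_0, C_w$ as in \eqref{eq:SGM2_semi_shuffling_constants}. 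The plan is therefore: (i) verify that the two parameter conditions of \eqref{eq:SGM2_semi_shuffling_para_cond1} are implied by the choices in \eqref{eq:SGM2_semi_shuffling_para_cond1_s}; (ii) telescope $E_t$ to produce \eqref{eq:SGM2_semi_shuffling_bound1}; (iii) tune $\eta$ to give an $\epsilon$-stationary point; (iv) re-examine the single-epoch regime $S=1$; and (v) count oracle calls.

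\textbf{Telescoping and the main bound.} Under $\eta \leq \tfrac{1}{4L_{\Phi_0}}$ and $\eta \leq \tfrac{1}{2\sqrt{C_0}}$ we get $2L_{\Phi_0}\eta \leq \tfrac{1}{2}$ and $C_0\eta^2 \leq \tfrac{1}{4}$, hence $B_t \geq \tfrac{1}{4}$, so the descent term dominates as $\tfrac{\eta}{8}\norms{\Gc_\eta(\widetilde{w}_{t-1})}^2$. Summing from $t=1$ to $T+1$ and using $E_{T+1} \geq \Psi_0^{\star}$ yields
\[
\tfrac{\eta}{8}\sum_{t=0}^{T}\norms{\Gc_\eta(\widetilde{w}_t)}^2 \leq E_0 - \Psi_0^{\star} + (T+1) C_w\eta^3,
\]
from which \eqref{eq:SGM2_semi_shuffling_bound1} is immediate after dividing by $T+1$. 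Choosing $\eta := \tfrac{s\epsilon}{4\sqrt{C_w}}$ forces $8C_w\eta^2 \leq \tfrac{\epsilon^2}{2}$, and then setting $T+1 \geq \tfrac{16[\Psi_0(\widetilde{w}_0)-\Psi_0^{\star}] + 8\omega L_u^2\eta\norms{\widetilde{u}_0 - u_0^{*}(\widetilde{w}_0)}^2}{\eta\epsilon^2/2} = \mathcal{O}(\epsilon^{-3})$ absorbs the remaining term. One still must check that this $\eta$ satisfies \eqref{eq:SGM2_semi_shuffling_para_cond1_s}, which is automatic for $\epsilon$ small enough.

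\textbf{Verifying the contraction and the one-epoch case.} The key technical point is the second line of \eqref{eq:SGM2_semi_shuffling_para_cond1}. Using $\tfrac{1-a}{1+b}\leq 1 - \tfrac{a+b}{2}$ for $b\in(0,1]$ with $a = \tfrac{2L_u\mu_H\hat{\eta}}{L_u+\mu_H}$ and $b = 2\mu_h\hat{\eta}$, and then $(1-x)^S \leq e^{-xS}$, I would bound
\[
\frac{1}{(1+2\mu_h\hat{\eta})^S}\Bigl(1 - \tfrac{2L_u\mu_H\hat{\eta}}{L_u+\mu_H}\Bigr)^S \leq \exp\!\bigl(-\tfrac{S\hat{\eta}}{2}(\mu_h + \tfrac{4L_u\mu_H}{L_u+\mu_H})\bigr),
\]
and require that this be at most $\omega / M_\omega(\eta)$, where $M_\omega(\eta) := \tfrac{1}{\omega} + (\omega L_u^2\kappa^2 + \tfrac{2L_w^2}{\omega})\eta^2$ is the bracketed factor (after factoring $\omega$ out of $1+\omega+\omega^2L_u^2\kappa^2\eta^2 + 2L_w^2\eta^2$). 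Taking logarithms yields the lower bound for $S$ in \eqref{eq:SGM2_semi_shuffling_para_cond1_s}. For the special $S=1$ choice $\omega := 1/B_0$ with $B_0 = \hat{\eta}(\mu_h + \tfrac{4\mu_HL_u}{L_u+\mu_H})$, the requirement becomes $M_\omega(\eta) \leq 2B_0/\hat{\eta} \cdot \omega = 2B_0^2$ (essentially), which reduces to $L_u^2\kappa^2\eta^2 + 2B_0^2 L_w^2\eta^2 \leq B_0^2$, i.e., the last bound in \eqref{eq:SGM2_semi_shuffling_para_cond1_one_epoch}. The first condition $2L_{\Phi_0}\eta + 2\omega L_u^2\kappa^2\eta^2 \leq 1$ follows from $\eta \leq \tfrac{1}{4L_{\Phi_0}}$ combined with $\eta \leq \tfrac{1}{2\omega L_u\kappa}$, which is also guaranteed by \eqref{eq:SGM2_semi_shuffling_para_cond1_one_epoch} through the $B_0/\sqrt{L_u^2\kappa^2 + 2B_0^2L_w^2}$ term. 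This verification step is where I expect the most bookkeeping.

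\textbf{Oracle counting.} Each outer epoch $t$ invokes $S$ gradient-ascent steps of \eqref{eq:GD4lower_prob}, each requiring $n$ evaluations of $\nabla_u\mathcal{H}_i$ and one $\prox_{\hat{\eta}h}$, plus one shuffling sweep for $w$ requiring $n$ evaluations of $\nabla_w\mathcal{H}_i$ and one $\prox_{\eta f}$. Since $S = \mathcal{O}(1)$ (by construction in \eqref{eq:SGM2_semi_shuffling_para_cond1_s}, or $S=1$ under \eqref{eq:SGM2_semi_shuffling_para_cond1_one_epoch}) and $T = \mathcal{O}(\epsilon^{-3})$, the totals are $\mathcal{O}(n\epsilon^{-3})$ for $\nabla_w\mathcal{H}_i$ and $\nabla_u\mathcal{H}_i$ and $\mathcal{O}(\epsilon^{-3})$ for $\prox_{\eta f}$ and $\prox_{\hat{\eta}h}$. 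Finally, the $\epsilon$-stationarity of $\widehat{w}_T$ selected by \eqref{eq:approx_stationary_point} follows from $\mathbb{E}[\norms{\Gc_\eta(\widehat{w}_T)}^2] \leq \tfrac{1}{T+1}\sum_{t=0}^T\norms{\Gc_\eta(\widetilde{w}_t)}^2 \leq \epsilon^2$.
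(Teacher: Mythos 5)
Your proposal follows essentially the same route as the paper's proof: it is built on Lemma~\ref{le:SGM2_keybound_for_semi_shuffling}, verifies the two parameter conditions of \eqref{eq:SGM2_semi_shuffling_para_cond1} via elementary $-\ln(1-\tau)\geq\tau$, $\ln(1+\tau)\leq\tau$ type bounds to justify the stated $S$ (and the choice $\omega=1/B_0$ for the $S=1$ case), telescopes the Lyapunov quantity to obtain \eqref{eq:SGM2_semi_shuffling_bound1}, and then tunes $\eta=\Theta(\epsilon)$, $T=\Oc(\epsilon^{-3})$ and counts oracle calls exactly as the paper does. The only deviations are constant-level bookkeeping in the contraction-verification step (the exponent you claim does not quite follow from the $\tfrac{1-a}{1+b}\leq 1-\tfrac{a+b}{2}$ chain, and the intermediate expression ``$2B_0/\hat{\eta}\cdot\omega$'' is garbled), but these end at the same reductions the paper uses and are no looser than the paper's own derivation.
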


\begin{proof}[\textbf{Proof of Theorem~\ref{th:SGM2_semi_shuffling_variant_convergence_supp}}]
Let us choose $\eta_t := \eta$ such that $\eta$ satisfies \eqref{eq:SGM2_semi_shuffling_para_cond1_s}.
Then, it is obvious to verify that $1 - 3L_w^2\eta_t^2 \geq 0$ and $2L_{\Phi_0}\eta_t + 2\omega L_u^2\kappa^2 \eta_t^2 \leq 1$.
Moreover,  we have $\eta_t = \eta \leq \frac{1}{4L_{\Phi_0}}$, $\eta_t = \eta\leq \frac{1}{2L_{w}} \leq \frac{1}{\sqrt{3}L_w}$, and $\eta_t = \eta \leq \frac{1}{2\omega L_u\kappa}$.
Using these bounds, we can further lower bound $B_t := 1 - 2L_{\Phi_0}\eta_t - 2L_w^2 (3 \Theta_w + 1)\Lambda_0 \eta_t^2$ from Lemma~\ref{le:SGM2_keybound_for_semi_shuffling} as
\begin{equation*}
\begin{array}{lcl}
B_t  & \geq & \frac{1}{2} - 2\Lambda_0 L_w^2 (3 \Theta_w + 1)  \eta^2.
\end{array}
\end{equation*}
Now, we need to choose $0 < \eta \leq \frac{1}{2\sqrt{2 \Lambda_0 L_w^2 (3 \Theta_w + 1) }} = \frac{1}{2\sqrt{C_0}}$ so that $B_t \geq \frac{1}{4}$, where $C_0$ is given in \eqref{eq:SGM2_semi_shuffling_constants}.
Moreover, the second condition of \eqref{eq:SGM2_semi_shuffling_para_cond1} holds if 
\begin{equation*}
\begin{array}{lclcl}
& & \frac{1}{(1 + \mu_h\hat{\eta} )^S}\big(1 - \frac{2L_u\mu_H\hat{\eta}}{L_u+\mu_H}\big)^{S} & \leq &  \frac{\omega}{1 + \omega +  (\omega^2 L_u^2\kappa^2 + 2L_w^2) \eta^2 },  \vspace{0.5ex} \\
& \Leftrightarrow & S \ln(1 + \mu_h\hat{\eta} ) - S\ln\big(1 - \frac{2L_u\mu_H\hat{\eta}}{L_u+\mu_H}\big) & \geq &  \ln\big( 1 + \frac{1}{\omega} +  (\omega L_u^2\kappa^2 + \frac{2L_w^2}{\omega}) \eta^2 \big).
\end{array}
\end{equation*}
Using the elementary facts $-\ln(1 - \tau ) \geq \tau$ and $\tau  \geq \ln(1 + \tau) \geq \frac{e}{2}$ for all $\tau \in (0, 1/2]$, we can show that the last inequality holds if 
\begin{equation*}
\begin{array}{lcl}
S \hat{\eta} \big(  \frac{\mu_h}{2} + \frac{2L_u\mu_H}{L_u+\mu_H} \big) \geq \frac{1}{\omega} +  (\omega L_u^2\kappa^2 + \frac{2L_w^2}{\omega}) \eta^2.
\end{array}
\end{equation*}
Simplifying this condition, we get
\begin{equation*}
\begin{array}{lcl}
S \geq \frac{ M_{\omega}(\eta) }{2\hat{\eta} }\big( \mu_h + \frac{4\mu_HL_u}{L_u + \mu_H}\big)^{-1}, \quad \text{where} \quad M_{\omega}(\eta) := \frac{1}{\omega} + \big( \omega L_u^2\kappa^2 +  \frac{2L_w^2}{\omega} \big)\eta^2.
\end{array}
\end{equation*}
Clearly, this leads to the choice of $S$ as in \eqref{eq:SGM2_semi_shuffling_para_cond1_s}.

Let us define $C_w := 3L_w^2\sigma_{w}^2 + L_w^2 (3 \Theta_w + 1)\Lambda_1$ as in \eqref{eq:SGM2_semi_shuffling_constants}.
Then, \eqref{eq:SGM2_semi_shuffling_key_bound1} reduces to
\begin{equation*} 
\begin{array}{lcl}
\Psi_0(\widetilde{w}_t)  +  \frac{\omega L_u^2\eta}{2} \norms{\widetilde{u}_t  - u_0^{*}(\widetilde{w}_{t})}^2  & \leq &   \Psi_0(\widetilde{w}_{t-1}) +  \frac{\omega L_u^2\eta}{2} \norms{\widetilde{u}_{t-1}  - u_0^{*}(\widetilde{w}_{t-1})}^2 \\
&& - {~} \frac{\eta}{8} \norms{\Gc_{\eta}(\widetilde{w}_{t-1})}^2 +  C_w \cdot \eta^3.
\end{array}
\end{equation*}
Subtracting $\Psi_0^{\star}$ from both sides of this inequality, and averaging the result from $t=0$ to $t = T$, and noting that $\Psi_0(\widetilde{w}_T) - \Psi_0^{\star} \geq 0$, we obtain \eqref{eq:SGM2_semi_shuffling_bound1}.

Without loss of generality, let us choose $\omega := 1$.
We also choose $\hat{\eta} \in \big(0,  \frac{2}{L_u + \mu_h}\big]$.
Then, we have $M_{\omega} = 1 + (L_u^2\kappa^2 +  2L_w^2)\eta^2 \leq 2$.
Moreover, from \eqref{eq:SGM2_semi_shuffling_para_cond1_s}, we also have 
\begin{equation*} 
\begin{array}{lcl}
S = \Big\lfloor \frac{ M_{\omega}(\eta) }{2\hat{\eta} }\big[\mu_h + \frac{4\mu_HL_u}{L_u + \mu_H}\big]^{-1}\Big\rfloor  \leq \bar{S} := \Big\lfloor \frac{ 1 }{\hat{\eta} }\big[\mu_h + \frac{4 \mu_HL_u}{L_u + \mu_H}\big]^{-1} \Big\rfloor = \Oc(1).
\end{array}
\end{equation*}
To achieve an $\epsilon$-stationary point of \eqref{eq:upper_level_min}, from \eqref{eq:SGM2_semi_shuffling_bound1} we need to impose the following condition:
\begin{equation*}
\begin{array}{lcl}
\frac{2[\Psi_0(\widetilde{w}_0) - \Psi_0^{\star}] }{\eta (T+1)} + \frac{L_u^2\norms{\widetilde{u}^0 - u_0^{*}(\widetilde{w}_0)}^2}{T+1} + 4C_w \eta^2 \leq \frac{\epsilon^2}{4}.
\end{array}
\end{equation*}
If we choose $\eta := \frac{s \epsilon }{4\sqrt{C_w}}$ for some $s \in (0, 1)$ satisfying \eqref{eq:SGM2_semi_shuffling_para_cond1_s}, then the last inequality leads to
\begin{equation*}
\begin{array}{lcl}
T  \geq \bar{T} := \Big\lfloor \frac{32\sqrt{C_w}[\Psi_0(\widetilde{w}_0) - \Psi_0^{\star}]}{s(1-s^2)\epsilon^3} + \frac{4L_u^2\norms{\widetilde{u}^0 - u_0^{*}(\widetilde{w}_0)}^2}{(1-s^2)\epsilon^2} \Big\rfloor = \Oc\big(\frac{1}{\epsilon^3}\big).
\end{array}
\end{equation*}
Therefore, we can choose $T := \bar{T} = \Oc(\epsilon^{-3})$.
Since each iteration $t$, we run $S$ epochs of the shuffling scheme \eqref{eq:GD4lower_prob}, the total number of evaluations of $\nabla_u{\Hc}_i$ is $\Tc_u := T\times S\times n$.
However, since $1 \leq S \leq \bar{S} = \Oc(1)$, we get $\Tc_u := \Oc(n\epsilon^{-3})$.
The total number of evaluations of $\nabla_w{\Hc}_i$ is $\Tc_w := Tn = \Oc(n\epsilon^{-3})$ as stated.

Since each epoch $t$, Algorithm~\ref{alg:SGM2} requires one evaluation of $\prox_{\eta_tf}$, and $S$ evaluations of $\prox_{\hat{\eta}_th}$, but since $S = \Oc(1)$, the total number of  $\prox_{\eta_tf}$ evaluations is $T = \Oc(\epsilon^{-3})$, while the total number of $\prox_{\hat{\eta}_th}$ evaluations is $TS = \Oc(\epsilon^{-3})$.
Overall, Algorithm~\ref{alg:SGM2} needs $\Oc(\epsilon^{-3})$ evaluations of both $\prox_{\eta_tf}$ and $\prox_{\hat{\eta}_th}$.

Finally, to perform only one iteration  the \textbf{gradient ascent scheme \eqref{eq:GD4lower_prob}} at each epoch $t$, we need to choose $\omega$ such that
\begin{equation*}
\begin{array}{lcl}
\frac{ M_{\omega}(\eta) }{2\hat{\eta} }\big(\mu_h + \frac{4\mu_HL_u}{L_u + \mu_H}\big)^{-1} \leq S = 1.
\end{array}
\end{equation*}
This condition leads to 
\begin{equation*}
\begin{array}{lcl}
M_{\omega}(\eta) = \frac{1}{\omega} + \big( \omega L_u^2\kappa^2 +  \frac{2L_w^2}{\omega} \big)\eta^2 \leq 2\hat{\eta}\big(\mu_h + \frac{4\mu_HL_u}{L_u + \mu_H}\big).
\end{array}
\end{equation*}
For a given $\hat{\eta} \in \big(0,  \frac{2}{L_u + \mu_h}\big]$, let us choose $\frac{1}{\omega} := \hat{\eta}\big(\mu_h + \frac{4\mu_HL_u}{L_u + \mu_H}\big) := B_0$.
Then, the last condition becomes $(L_u^2\kappa^2  + 2B_0^2L_w^2) \eta^2 \leq B_0^2$, or equivalently $\eta \leq \frac{B_0}{\sqrt{L_u^2\kappa^2  + 2B_0^2L_w^2}}$.
Combining this condition and \eqref{eq:SGM2_semi_shuffling_para_cond1_s}, we get \eqref{eq:SGM2_semi_shuffling_para_cond1_one_epoch}, i.e.:
\begin{equation*} 
\begin{array}{lll}
0 < \eta \leq \min\left\{ \frac{1}{2\sqrt{C_0}},  \frac{1}{4L_{\Phi_0}},   \frac{B_0}{\sqrt{L_u^2\kappa^2  + 2B_0^2L_w^2}} \right\}, \quad \text{where} \quad B_0 := \hat{\eta}\big(\mu_h + \frac{4\mu_HL_u}{L_u + \mu_H}\big).
\end{array}
\end{equation*}
Thus we have $S = 1$, i.e. we need to perform only one iteration of  \eqref{eq:GD4lower_prob} per epoch $t$.
\end{proof}

\beforesubsec
\subsection{Convergence of the full-shuffling variant of Algorithm~\ref{alg:SGM2} -- The case $S > 1$}\label{apdx:subsec:SGM2_full_shuffling_convergence}
\aftersubsec
We can combine the results above to obtain the following lemma.

\begin{lemma}\label{le:SGM2_keybound_for_full_shuffling}
Suppose that Assumptions~\ref{as:A3_SGM2} and ~\ref{as:A3} hold for \eqref{eq:minimax_prob}, $\Psi$ be defined by \eqref{eq:upper_level_min}, and ${\Gc}_{\eta}$ be defined by \eqref{eq:upper_level_min_grad_mapping}.
Let $\sets{(\widetilde{w}_t, \widetilde{u}_t)}$ be generated by the full-shuffling variant of Algorithm~\ref{alg:SGM2} using \eqref{eq:SGM4lower_prob}.
For a fixed $\omega > 0$, assume that $\eta_t$ and $\hat{\eta}_t$ are chosen such that $1 - 3L_w^2\eta_t^2 \geq 0$ and 
\begin{equation}\label{eq:SGM2_full_shuffling_para_cond1}
\left\{ \begin{array}{lll}
&2L_{\Phi_0}\eta_t + 2\omega L_u^2\kappa^2 \eta_t^2 \leq 1, \\
& \frac{1}{(1 + 2\mu_h\hat{\eta}_t)^S}\big(1 - \frac{\mu_H\hat{\eta}_t}{n} \big)^{nS} \big( 1 + \omega + 2\omega^2 L_u^2\kappa^2 \eta_t^2 + 4L_w^2 \eta_t^2 \big) \leq \omega.
\end{array}\right.
\end{equation}
Then, the following bound holds:
\begin{equation}\label{eq:SGM2_full_shuffling_key_bound1}
\hspace{-0.0ex}
\begin{array}{lcl}
\Psi_0(\widetilde{w}_t)  & + &  \frac{\omega L_u^2\eta_t}{2} \norms{\widetilde{u}_t  - u_0^{*}(\widetilde{w}_{t})}^2  \leq   \Psi_0(\widetilde{w}_{t-1}) +  \frac{ \omega L_u^2\eta_t}{2} \norms{\widetilde{u}_{t-1}  - u_0^{*}(\widetilde{w}_{t-1})}^2 \\
&& - {~} \frac{\eta_t B_t}{2} \norms{\Gc_{\eta_t}(\widetilde{w}_{t-1})}^2 +  \big[ 3L_w^2\sigma_{w}^2 + L_w^2 (3 \Theta_w + 1)\Lambda_1 \big] \cdot \eta_t^3  \\
&& + {~} \frac{  L_u^3}{n}(  1 +  \omega +  2\omega^2 L_u^2\kappa^2 \eta_t^2 + 4L_w^2 \eta_t^2 ) \big[ \Lambda_1 (\Theta_u + 1 ) + \sigma_u^2 \big] \cdot C_S \eta_t \hat{\eta}_t^3,
\end{array}
\hspace{-3.0ex}
\end{equation}
where $B_t := 1 - 2L_{\Phi_0}\eta_t - 2L_w^2 (3 \Theta_w + 1)\Lambda_0 \eta_t^2 - \frac{2L_u^3}{n}(  1 +  \omega + 2\omega^2 L_u^2\kappa^2 \eta_t^2 + 4L_w^2 \eta_t^2) \hat{\eta}_t^3 \cdot C_S \cdot \Lambda_0(\Theta_u + 1 )$ for $C_S$ given in Lemma~\ref{le:SGM2_key_bounds_for_ut2}.
\end{lemma}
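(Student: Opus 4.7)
The plan is to mirror the proof of Lemma~\ref{le:SGM2_keybound_for_semi_shuffling}, but substitute the contraction bound \eqref{SGM2_bound_u1_dist1} with the shuffling-based bound \eqref{eq:SGM2_key_est_for_ut2c}. First, I would combine \eqref{eq:SGM2_key_bounds_for_Phi_2} from Lemma~\ref{le:SGM2_key_bounds_for_Phi_2_2} together with the third line of \eqref{eq:SGM2_key_bounds_for_gt_2} to obtain the one-epoch descent inequality
\begin{equation*}
\Psi_0(\widetilde{w}_t) \leq \Psi_0(\widetilde{w}_{t-1}) - \tfrac{\eta_t(1-2L_{\Phi_0}\eta_t)}{2}\norms{\Gc_{\eta_t}(\widetilde{w}_{t-1})}^2 - \tfrac{1-L_{\Phi_0}\eta_t}{2\eta_t}\norms{\widetilde{w}_t - \widetilde{w}_{t-1}}^2 + \tfrac{L_w^2\eta_t}{2}\Delta_t + \tfrac{L_u^2\eta_t}{2}\norms{\widetilde{u}_t - u_0^{*}(\widetilde{w}_{t-1})}^2,
\end{equation*}
and then use \eqref{eq:SGM2_key_bounds_for_wt_2} to eliminate $\Delta_t$, paying the cost of an additive term of order $\eta_t^3\big[(3\Theta_w+1)\norms{\nabla\Phi_0(\widetilde{w}_{t-1})}^2 + 3\sigma_w^2\big]$ and a contribution $2L_w^2\eta_t^3\norms{\widetilde{u}_t - u_0^{*}(\widetilde{w}_{t-1})}^2$ to the $\widetilde{u}_t$-term.

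Next, to construct the Lyapunov function, I would introduce the auxiliary term $\tfrac{\omega L_u^2\eta_t}{2}\norms{\widetilde{u}_t - u_0^{*}(\widetilde{w}_t)}^2$ and push it across epochs by Young's inequality together with the $\kappa$-Lipschitz property \eqref{eq:u_star_smoothness}, which gives $\norms{\widetilde{u}_t - u_0^{*}(\widetilde{w}_t)}^2 \leq (1+s_t)\norms{\widetilde{u}_t - u_0^{*}(\widetilde{w}_{t-1})}^2 + \tfrac{(1+s_t)\kappa^2}{s_t}\norms{\widetilde{w}_t - \widetilde{w}_{t-1}}^2$. Choosing $s_t := 2\omega L_u^2\kappa^2\eta_t^2$ (exactly as in Lemma~\ref{le:SGM2_keybound_for_semi_shuffling}) under the assumption $2L_{\Phi_0}\eta_t + 2\omega L_u^2\kappa^2\eta_t^2 \leq 1$ cancels the $\norms{\widetilde{w}_t - \widetilde{w}_{t-1}}^2$ term and produces the coefficient $1 + \omega + 2\omega^2 L_u^2\kappa^2\eta_t^2 + 4L_w^2\eta_t^2$ in front of $\norms{\widetilde{u}_t - u_0^{*}(\widetilde{w}_{t-1})}^2$.

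The key new step is to now invoke \eqref{eq:SGM2_key_est_for_ut2c} instead of \eqref{SGM2_bound_u1_dist1}: under the second condition of \eqref{eq:SGM2_full_shuffling_para_cond1}, the prefactor $\tfrac{1}{(1+2\mu_h\hat{\eta}_t)^S}(1-\tfrac{\mu_H\hat{\eta}_t}{n})^{nS}$ times $(1+\omega+2\omega^2 L_u^2\kappa^2\eta_t^2+4L_w^2\eta_t^2)$ is controlled by $\omega$, so the term carrying $\norms{\widetilde{u}_{t-1} - u_0^{*}(\widetilde{w}_{t-1})}^2$ telescopes with the desired coefficient $\tfrac{\omega L_u^2\eta_t}{2}$. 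The residual term $\tfrac{2L_u}{n}C_S\hat{\eta}_t^3\big[(\Theta_u+1)\norms{\nabla\Phi_0(\widetilde{w}_{t-1})}^2 + \sigma_u^2\big]$ appearing in \eqref{eq:SGM2_key_est_for_ut2c} is precisely the source of the new $C_S\eta_t\hat{\eta}_t^3$ term in the claim, once multiplied by $\tfrac{L_u^2\eta_t}{2}(1+\omega+\cdots)$.

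The final step is to apply Assumption~\ref{as:A3}, i.e.\ $\norms{\nabla\Phi_0(\widetilde{w}_{t-1})}^2 \leq \Lambda_0\norms{\Gc_{\eta_t}(\widetilde{w}_{t-1})}^2 + \Lambda_1$, to both the $L_w^2(3\Theta_w+1)\eta_t^3$-term and the newly introduced $C_S\eta_t\hat{\eta}_t^3$-term; gathering coefficients in front of $\norms{\Gc_{\eta_t}(\widetilde{w}_{t-1})}^2$ and rearranging yields exactly \eqref{eq:SGM2_full_shuffling_key_bound1} with $B_t$ as stated. The main obstacle, compared with the semi-shuffling case, is book-keeping: the $\norms{\nabla\Phi_0}^2$-factor from \eqref{eq:SGM2_key_est_for_ut2c} leaks into the coefficient $B_t$ of $\norms{\Gc_{\eta_t}}^2$, so one must carefully verify that this perturbation remains $O(\hat{\eta}_t^3/n)$ and can later be absorbed by a suitable choice of $\hat{\eta}_t = O(\epsilon)$ and $S$, which Theorems~\ref{th:SGM2_full_shuffling_main_result1}--\ref{th:SGM2_full_shuffling_main_result2} then exploit.
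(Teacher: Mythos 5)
Your proposal is correct and follows essentially the same route as the paper: combine the descent inequality \eqref{eq:SGM2_key_bounds_for_Phi_2} with \eqref{eq:SGM2_key_bounds_for_gt_2} and \eqref{eq:SGM2_key_bounds_for_wt_2}, add the weighted term $\tfrac{\omega L_u^2\eta_t}{2}\norms{\widetilde{u}_t - u_0^{*}(\widetilde{w}_t)}^2$ via Young's inequality and \eqref{eq:u_star_smoothness} with $s_t := 2\omega L_u^2\kappa^2\eta_t^2$, replace the gradient-ascent contraction \eqref{SGM2_bound_u1_dist1} by the shuffling bound \eqref{eq:SGM2_key_est_for_ut2c} under the second condition of \eqref{eq:SGM2_full_shuffling_para_cond1}, and apply Assumption~\ref{as:A3} to both $\norms{\nabla\Phi_0}$-terms so the $\Lambda_0$-part of the $C_S\eta_t\hat{\eta}_t^3$ residual is absorbed into $B_t$. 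This is exactly the paper's argument, including the observation that the extra leakage into $B_t$ is of order $\hat{\eta}_t^3 C_S/n$ and is handled later in Theorems~\ref{th:SGM2_full_shuffling_main_result1}--\ref{th:SGM2_full_shuffling_main_result2}.
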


\begin{proof}
First, combining \eqref{eq:SGM2_key_bounds_for_Phi_2c} and \eqref{eq:SGM2_key_bounds_for_wt_2}, we get
\begin{equation*} 
\begin{array}{lcl}
\Psi_0(\widetilde{w}_t) &\leq & \Psi_0(\widetilde{w}_{t-1})  - \frac{\eta_t(1 - 2L_{\Phi_0}\eta_t)}{2}  \Vert {\Gc}_{\eta_t}(\widetilde{w}_{t-1})  \Vert^2 - \frac{(1-L_{\Phi_0}\eta_t)}{2\eta_t} \norms{ \widetilde{w}_t  - \widetilde{w}_{t-1} }^2  + 3L_w^2 \sigma_{w}^2  \eta_t^3 \\
&& + {~} \frac{L_u^2 \eta_t}{2}\big( 1 + 4L_w^2 \eta_t^2 \big) \norms{\widetilde{u}_t - u_0^{*}(\widetilde{w}_{t-1})}^2 +  L_w^2 (3 \Theta_w + 1) \eta_t^3 \norms{ \nabla{\Phi_0}(\widetilde{w}_{t-1}) }^2.
\end{array}
\end{equation*}
By \eqref{eq:u_star_smoothness} and Young's inequality in $\myeqc{1}$, for any $s_t > 0$, we have
\begin{equation*} 
\begin{array}{lcl}
\norms{\widetilde{u}_t  - u_0^{*}(\widetilde{w}_{t})}^2 & \overset{\myeqc{1}}{\leq} & (1+s_t)\norms{\widetilde{u}_t  - u_0^{*}(\widetilde{w}_{t-1})}^2 + \frac{(1+s_t)}{s_t}\norms{ u_0^{*}(\widetilde{w}_{t})   - u_0^{*}(\widetilde{w}_{t-1})}^2\\
& \overset{\tiny\eqref{eq:u_star_smoothness}}{\leq} & (1+s_t)\norms{\widetilde{u}_t  - u_0^{*}(\widetilde{w}_{t-1})}^2 + \frac{(1+s_t)\kappa^2}{s_t}\norms{ \widetilde{w}_{t}    -  \widetilde{w}_{t-1}}^2.
\end{array}
\end{equation*}
Multiplying this inequality by $\frac{\omega L_u^2\eta_t}{2}$ for some $\omega > 0$ and adding the result to the last estimate, we can show that
\begin{equation*} 
\begin{array}{lcl}
\Tc_{[1]} &:= & \Psi_0(\widetilde{w}_t) + \frac{\omega L_u^2\eta_t}{2} \norms{\widetilde{u}_t  - u_0^{*}(\widetilde{w}_{t})}^2 \\
&\leq & \Psi_0(\widetilde{w}_{t-1}) + \frac{L_u^2\eta_t}{2} \big[ 1 + \omega(1+s_t) + 4L_w^2 \eta_t^2 \big) \norms{\widetilde{u}_t  - u_0^{*}(\widetilde{w}_{t-1})}^2 \\ 
&& - {~} \frac{\eta_t(1 - 2L_{\Phi_0}\eta_t)}{2}  \Vert {\Gc}_{\eta_t}(\widetilde{w}_{t-1})  \Vert^2 - \big[ \frac{1-L_{\Phi_0}\eta_t}{2\eta_t} - \frac{\omega L_u^2\kappa^2 \eta_t(1+s_t)}{2s_t} \big] \norms{ \widetilde{w}_t  - \widetilde{w}_{t-1} }^2 \\
&& + {~}  L_w^2 (3 \Theta_w + 1) \eta_t^3 \norms{ \nabla{\Phi_0}(\widetilde{w}_{t-1}) }^2 + 3L_w^2 \eta_t^3 \sigma_{w}^2 \\
& \overset{\tiny\eqref{eq:SGM2_key_est_for_ut2c}}{ \leq } & \Psi_0(\widetilde{w}_{t-1}) + \frac{L_u^2\eta_t }{2(1 + 2\mu_h\hat{\eta}_t)^S} \big(1 - \frac{\mu_H\hat{\eta}_t}{n} \big)^{nS} \big[  1 + \omega (1+s_t) + 4L_w^2 \eta_t^2 \big] \norms{\widetilde{u}_{t-1}  - u_0^{*}(\widetilde{w}_{t-1})}^2 \\ 
&& - {~} \frac{\eta_t(1 - 2L_{\Phi_0}\eta_t)}{2}  \Vert {\Gc}_{\eta_t}(\widetilde{w}_{t-1})  \Vert^2 - \big[ \frac{1-L_{\Phi_0}\eta_t}{2\eta_t} - \frac{\omega L_u^2\kappa^2 \eta_t(1+s_t)}{2s_t} \big] \norms{ \widetilde{w}_t  - \widetilde{w}_{t-1} }^2 \\
&& + {~}  L_w^2 (3 \Theta_w + 1) \eta_t^3 \norms{ \nabla{\Phi_0}(\widetilde{w}_{t-1}) }^2 + 3L_w^2 \eta_t^3 \sigma_{w}^2 \\
&& + {~}  \frac{L_u^3}{n}\big[  1 + \omega(1 + s_t) + 4L_w^2 \eta_t^2 \big] \eta_t \hat{\eta}_t^3 \cdot C_S \cdot \big[ (\Theta_u + 1 ) \norms{\nabla{\Phi_0}(\widetilde{w}_{t-1} )}^2  + \sigma_{u}^2 \big] .
\end{array}
\end{equation*}
We need to choose the parameters $\eta_t$, $\hat{\eta}_t$, and $s_t$ such that
\begin{equation*} 
\begin{array}{ll}
& \frac{1}{(1 + 2\mu_h\hat{\eta}_t)^S }\big(1 - \frac{\mu_H\hat{\eta}_t}{n} \big)^{Sn} \big[ 1 + \omega(1 + s_t) + 4L_w^2 \eta_t^2 \big] \leq \omega \\
&  \frac{1-L_{\Phi_0}\eta_t}{2\eta_t} - \frac{\omega L_u^2\kappa^2 \eta_t(1+s_t)}{2s_t} \geq 0.
\end{array}
\end{equation*}
The second one leads to $\frac{ 1 - L_{\Phi_0}\eta_t - \omega L_u^2\kappa^2 \eta_t^2}{\omega L_u^2\kappa^2 \eta_t^2} \geq \frac{ 1}{s_t}$, or equivalently $0 < s_t \leq  \frac{\omega L_u^2\kappa^2 \eta_t^2}{ 1 - L_{\Phi_0}\eta_t - \omega L_u^2\kappa^2 \eta_t^2}$.
If $2L_{\Phi_0}\eta_t + 2\omega L_u^2\kappa^2 \eta_t^2 \leq 1$ as stated in the first line of \eqref{eq:SGM2_full_shuffling_para_cond1}, then we can choose $s_t := 2\omega L_u^2\kappa^2 \eta_t^2$.
In this case, the second condition above holds, and the first condition becomes
\begin{equation*} 
\begin{array}{ll}
& \frac{1}{(1 + 2\mu_h\hat{\eta}_t)^S}\big(1 - \frac{\mu_H\hat{\eta}_t}{n} \big)^{nS} \big( 1 + \omega + 2\omega^2 L_u^2\kappa^2 \eta_t^2 + 4L_w^2 \eta_t^2 \big) \leq \omega,
\end{array}
\end{equation*}
which is exactly the second line of \eqref{eq:SGM2_full_shuffling_para_cond1}.

By \eqref{eq:grad_mapp_bound} from Assumption~\ref{as:A3}, we have
\begin{equation*} 
\begin{array}{lcl}
\Tc_{[1]} &:= & \Psi_0(\widetilde{w}_t) + \frac{\omega L_u^2\eta_t}{2} \norms{\widetilde{u}_t  - u^{*}(\widetilde{w}_{t})}^2 \\
& \leq &  \Psi_0(\widetilde{w}_{t-1}) +  \frac{\omega L_u^2\eta_t}{2} \norms{\widetilde{u}_{t-1}  - u^{*}(\widetilde{w}_{t-1})}^2  -  \frac{\eta_t(1 - 2L_{\Phi_0}\eta_t)}{2}  \norms{  \Gc_{\eta_t}(\widetilde{w}_{t-1}) }^2 \\
&& + {~}  L_w^2 (3 \Theta_w + 1)\Lambda_0 \eta_t^3  \norms{  \Gc_{\eta_t}(\widetilde{w}_{t-1}) }^2 +  \big[ 3L_w^2\sigma_{w}^2 + L_w^2 (3 \Theta_w + 1)\Lambda_1 \big] \eta_t^3  \\
&& + {~}  \frac{L_u^3}{n}(  1 + \omega +  2\omega^2 L_u^2\kappa^2 \eta_t^2 + 4L_w^2 \eta_t^2) \eta_t \hat{\eta}_t^3 \cdot C_S \cdot \Lambda_0(\Theta_u + 1 ) \norms{  \Gc_{\eta_t}(\widetilde{w}_{t-1}) }^2 \\
&& + {~} \frac{L_u^3}{n} (  1 + \omega +  2\omega^2 L_u^2\kappa^2 \eta_t^2 + 4L_w^2 \eta_t^2) \eta_t \hat{\eta}_t^3 \cdot C_S \big[ \Lambda_1 (\Theta_u + 1 ) + \sigma_u^2 \big]. 
\end{array}
\end{equation*}
Rearranging this inequality, we  prove \eqref{eq:SGM2_full_shuffling_key_bound1}.
\end{proof}

The following theorem, Theorem~\ref{th:SGM2_full_shuffling_main_result1_supp}, is the full version of Theorem~\ref{th:SGM2_full_shuffling_main_result1} in the main text, where the learning rates $\eta_t$ and  $\hat{\eta}_t$, and the numbers of epochs $S$ and $T$ are given explicitly.

\begin{theorem}[Strong convexity of $\Hc_i$]\label{th:SGM2_full_shuffling_main_result1_supp}
Suppose that Assumptions~\ref{as:A0},~\ref{as:A2},~\ref{as:A3_SGM2}, and ~\ref{as:A3} hold for \eqref{eq:minimax_prob}, and $\Hc_i$ is $\mu_H$-strongly concave with $\mu_H > 0$ for all $i\in [n]$, but $h$ is only merely convex.
Let $\Psi_0$ be defined by \eqref{eq:upper_level_min}, and ${\Gc}_{\eta}$ be defined by \eqref{eq:upper_level_min_grad_mapping}.
We define $C_w$ and $C_u$ respectively as
\begin{equation}\label{eq:SGM2_full_shuffling_constants}
\begin{array}{lcl}
C_w & := &   L_w^2\big[ (3 \Theta_w + 1)\Lambda_1 + 3\sigma_{w}^2 \big] \quad\text{and} \quad  C_u :=  \frac{7L_u^3}{2\mu_H} \big[ \Lambda_1 (\Theta_u + 1 ) + \sigma_u^2 \big].
\end{array}
\end{equation}
Let $\sets{(\widetilde{w}_t, \widetilde{u}_t)}$ be generated by Algorithm~\ref{alg:SGM2} using  $S$ epochs of \textbf{shuffling routine \eqref{eq:SGM4lower_prob}}, and fixed learning rates $\eta_t := \eta > 0$ and $\hat{\eta}_t := \hat{\eta}$ such that
\begin{equation}\label{eq:SGM2_full_shuffling_para_cond1_s}
\begin{array}{lll}
S  := \big\lfloor \frac{\ln(7/2)}{\mu_H \hat{\eta} } \big\rfloor, \qquad  0 < \eta \leq \bar{\eta},  \quad \text{and}\quad 0 < \hat{\eta} \leq \bar{\hat{\eta}},
\end{array}
\end{equation}
where 
\begin{equation}\label{eq:SGM2_full_shuffling_para_cond1_s2b}
\begin{array}{lll}
\bar{\eta} := \min\Big\{ \frac{1}{4L_w\sqrt{2\Lambda_0(\Theta_w+1)}}, \frac{1}{4L_{\Phi_0}}, \frac{1}{2L_{w}},    \frac{1}{2L_u\kappa} \Big\} \quad \text{and} \quad \bar{\hat{\eta}} := \frac{\sqrt{\mu_H}}{2\sqrt{14 \Lambda_0 L_u^3 (\Theta_u + 1 )} }.
\end{array}
\end{equation}

Then, the following bounds hold:
\begin{equation}\label{eq:SGM2_full_shuffling_bound1}
\begin{array}{lcl}
\frac{1}{T+1}\sum_{t=0}^T \norms{ {\Gc}_{\eta}(\widetilde{w}_{t}) }^2 & \leq & \frac{4 [ 2\Psi_0(\widetilde{w}_0) - 2\Psi_0^{\star} + L_u^2\eta\norms{\widetilde{u}^0 - u^{*}(\widetilde{w}_0)}^2]}{\eta (T+1)} + 8(C_w + C_u) \eta^2.
\end{array}
\end{equation}
For a given $\epsilon > 0$, if we choose both $\eta := \Oc(\epsilon)$ and $\hat{\eta} := \Oc(\epsilon)$ satisfying \eqref{eq:SGM2_full_shuffling_para_cond1} and $T := \mathcal{O}\big( \frac{1}{\epsilon^3} \big)$, then $\frac{1}{T+1}\sum_{t=0}^T \norms{ {\Gc}_{\eta}(\widetilde{w}_{t}) }^2 \leq \epsilon^2$.

Consequently, Algorithm~\ref{alg:SGM2} requires $\mathcal{O}\big( \frac{n}{\epsilon^4} \big)$ evaluations of $\nabla_u{\Hc_i}$ and $\mathcal{O}\big( \frac{n}{\epsilon^3} \big)$ evaluations of $\nabla_w{\Hc}_i$  to achieve an $\epsilon$-stationary point $\widehat{w}_T$ of \eqref{eq:minimax_prob} computed by \eqref{eq:approx_stationary_point}.
This algorithm also requires $\Oc(\epsilon^{-3})$ evaluations of $\prox_{\eta_tf}$ and $\Oc(\epsilon^{-4})$ evaluations of $\prox_{\hat{\eta}_th}$.
\end{theorem}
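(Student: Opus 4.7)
The plan is to use Lemma~\ref{le:SGM2_keybound_for_full_shuffling} as the one-step Lyapunov descent and telescope it. Specifically, I would track the Lyapunov function
\begin{equation*}
V_t \;:=\; \Psi_0(\widetilde{w}_t) + \tfrac{\omega L_u^2\eta}{2}\norms{\widetilde{u}_t - u_0^{*}(\widetilde{w}_t)}^2,
\end{equation*}
for a fixed $\omega > 0$ (e.g.\ $\omega = 1$). Lemma~\ref{le:SGM2_keybound_for_full_shuffling} gives, under its two structural conditions \eqref{eq:SGM2_full_shuffling_para_cond1},
\begin{equation*}
V_t \leq V_{t-1} - \tfrac{\eta B_t}{2}\norms{\Gc_{\eta}(\widetilde{w}_{t-1})}^2 + \Big(C_w + \tfrac{L_u^3 K_\omega}{n}C_S\hat{\eta}^3\big[\Lambda_1(\Theta_u+1)+\sigma_u^2\big]\Big)\eta^3/\eta^{\,2}\cdot\eta,
\end{equation*}
where $K_\omega := 1+\omega + 2\omega^2 L_u^2\kappa^2\eta^2 + 4L_w^2\eta^2$. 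So the entire proof reduces to (i) checking the two parameter conditions, (ii) showing $B_t \geq 1/4$, (iii) showing the residual term is genuinely $\Oc(\eta^3)$, and (iv) telescoping and using $V_{T+1}\geq \Psi_0^\star$.

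For (i) and (ii), the first condition $2L_{\Phi_0}\eta + 2\omega L_u^2\kappa^2\eta^2 \leq 1$ is immediate from $\eta\leq\bar{\eta}$ in \eqref{eq:SGM2_full_shuffling_para_cond1_s2b}. For the second condition, since $h$ is only merely convex we set $\mu_h=0$, so $(1+2\mu_h\hat{\eta})^S=1$. Using the elementary inequality $(1-\mu_H\hat{\eta}/n)^n \leq e^{-\mu_H\hat{\eta}}$ I get $(1-\mu_H\hat{\eta}/n)^{nS} \leq e^{-\mu_H\hat{\eta} S}$; the choice $S=\lfloor \ln(7/2)/(\mu_H\hat{\eta})\rfloor$ is then calibrated precisely so $e^{-\mu_H\hat{\eta} S}\leq 2/7$, and for $\eta$ small the bracketed factor $K_\omega$ is at most $7/2$, giving the required $\leq \omega=1$.

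The main obstacle — and the core of the argument — is estimating the combinatorial constant $C_S$ and showing the coupled error term is controllable. I would bound the two geometric sums separately: $\sum_{j=0}^{n-1}(1-\mu_H\hat{\eta}/n)^j \leq n$ trivially, and $\sum_{s=0}^{S-1}(1-\mu_H\hat{\eta}/n)^{ns}\leq \frac{1-(2/7)}{1-e^{-\mu_H\hat{\eta}}} \leq \frac{10/7}{\mu_H\hat{\eta}}$ using $1-e^{-x}\geq x/2$ for $x\leq 1$. This yields $C_S \leq \tfrac{10n}{7\mu_H\hat{\eta}}$, so the contamination in \eqref{eq:SGM2_full_shuffling_key_bound1} becomes $\tfrac{L_u^3 K_\omega}{n}\cdot C_S\hat{\eta}^3\cdot\eta \leq \tfrac{10 K_\omega L_u^3}{7\mu_H}\cdot\eta\hat{\eta}^2$. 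Choosing $\hat{\eta} = \Oc(\eta)$ makes this $\Oc(\eta^3)$, and choosing $\hat{\eta}\leq\bar{\hat{\eta}}= \sqrt{\mu_H}/(2\sqrt{14\Lambda_0 L_u^3(\Theta_u+1)})$ ensures the matching subtracted piece in $B_t$ is at most $1/4$; combined with $2L_{\Phi_0}\eta + 2L_w^2(3\Theta_w+1)\Lambda_0\eta^2 \leq 1/4$ from $\eta\leq\bar{\eta}$, we get $B_t\geq 1/4$. The constant $C_u = \tfrac{7L_u^3}{2\mu_H}[\Lambda_1(\Theta_u+1)+\sigma_u^2]$ in the theorem simply absorbs the $10K_\omega/7$ bookkeeping.

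Finally, summing $V_t - V_{t-1}\leq -\tfrac{\eta}{8}\norms{\Gc_\eta(\widetilde{w}_{t-1})}^2 + (C_w+C_u)\eta^3$ from $t=1$ to $T+1$, dropping $V_{T+1}-\Psi_0^\star\geq 0$, and dividing by $(T+1)$ yields \eqref{eq:SGM2_full_shuffling_bound1}. To drive the right-hand side below $\epsilon^2$, balance the two terms by $\eta = \Theta(\epsilon/\sqrt{C_w+C_u})$ and then $T = \Theta(\epsilon^{-3})$; the complexity counts follow: $Tn=\Oc(n\epsilon^{-3})$ evaluations of $\nabla_w\Hc_i$, and $TSn = T\cdot\Oc(1/\hat{\eta})\cdot n = \Oc(n\epsilon^{-4})$ evaluations of $\nabla_u\Hc_i$ (with $\hat{\eta}=\Theta(\epsilon)$), and analogously for the $\prox$ calls.
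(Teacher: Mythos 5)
Your proposal is correct and follows essentially the same route as the paper's proof: the same one-step bound from Lemma~\ref{le:SGM2_keybound_for_full_shuffling} with $\omega=1$, the same bound $C_S=\Oc\big(\tfrac{n}{\mu_H\hat{\eta}}\big)$ in the $\mu_h=0$ case, the same verification of \eqref{eq:SGM2_full_shuffling_para_cond1} via $S\approx\ln(7/2)/(\mu_H\hat{\eta})$ and $M_1\eta^2\le 3/2$, and the same telescoping and complexity count. Your slightly looser geometric-sum bookkeeping ($C_S\le\tfrac{10n}{7\mu_H\hat{\eta}}$ versus the paper's collapsed double sum giving $C_S\le\tfrac{n}{\mu_H\hat{\eta}}$) only perturbs the constant in $C_u$ and does not affect the argument.
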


\begin{proof}[\textbf{Proof of Theorem~\ref{th:SGM2_full_shuffling_main_result1_supp}}]
Since $\mu_H > 0$ and $\mu_h = 0$, for $C_S$ given in Lemma~\ref{le:SGM2_key_bounds_for_ut2}, it reduces to 
\begin{equation*}
\begin{array}{lcl}
C_S & := &  \big[ \sum_{j=0}^{n-1}  \frac{ 1 }{(1 + 2\mu_h\hat{\eta}_t)} \big( 1 -  \frac{\mu_H\hat{\eta}_t}{ n}  \big)^j  \big] \cdot \big[ \sum_{s=0}^{S-1} \frac{1}{(1 + 2 \mu_h\hat{\eta}_t)^s} \big(1 - \frac{\mu_H\hat{\eta}_t}{ n} \big)^{ns} \big]  \\
& = & \sum_{s=0}^{S-1}\sum_{j=0}^{n-1}\big(1 - \frac{\mu_H\hat{\eta}_t}{n} \big)^{ns+j} \\
& \leq & \frac{n}{\mu_H\hat{\eta}_t}.
\end{array}
\end{equation*}
In this case, we can lower bound $B_t$ from Lemma~\ref{le:SGM2_keybound_for_full_shuffling} as
\begin{equation*}
\begin{array}{lcl}
B_t  & := &  1 - 2L_{\Phi_0}\eta_t - 2L_w^2 (3 \Theta_w + 1)\Lambda_0 \eta_t^2  \\
&& - {~} \frac{2 L_u^3}{n}(  1 + \omega + 2\omega^2 L_u^2\kappa^2 \eta_t^2 + 4L_w^2 \eta_t^2) \hat{\eta}_t^3 \cdot C_S \cdot \Lambda_0 (\Theta_u + 1 ) \\
& \geq & 1 - 2L_{\Phi_0}\eta_t - 2L_w^2 (3 \Theta_w + 1)\Lambda_0 \eta_t^2 - \frac{2L_u^3}{\mu_H}(  1 + \omega + 2\omega^2 L_u^2\kappa^2 \eta_t^2 + 4L_w^2 \eta_t^2)  \Lambda_0(\Theta_u + 1 ) \cdot \hat{\eta}_t^2.
\end{array}
\end{equation*}
Since $\eta_t := \eta \in (0, \bar{\eta}]$   for $\bar{\eta}$  satisfying \eqref{eq:SGM2_full_shuffling_para_cond1_s2b}, we have $\eta \leq \frac{1}{4L_{\Phi_0}}$, $\eta\leq \frac{1}{2L_{w}}$, and $\eta \leq \frac{1}{2L_u\kappa}$.
Moreover, we choose $\omega := 1$ and  $\hat{\eta}_t := \hat{\eta} \in (0, \bar{\hat{\eta}}]$.
Hence, we can further lower bound $B_t$ as
\begin{equation}\label{eq:SGM2_full_shuffling_bound1_proof1}
\begin{array}{lcl}
B_t  & \geq & \frac{1}{2} - 2 \Lambda_0 L_w^2 (3 \Theta_w + 1)\eta^2 - \frac{2  \Lambda_0 L_u^3(1 + \omega + \omega M_{\omega}\eta^2)}{\mu_H}(\Theta_u + 1 ) \hat{\eta}^2 \\
& = & \frac{1}{2} - 2 \Lambda_0 L_w^2 (3 \Theta_w + 1)\eta^2 - \frac{2  \Lambda_0 L_u^3(2 +  M_1 \eta^2)}{\mu_H}(\Theta_u + 1 ) \hat{\eta}^2,
\end{array}
\end{equation}
where $M_{\omega} := 2\omega L_u^2\kappa^2 + \frac{4L_w^2}{\omega} = M_1 = 2L_u^2\kappa^2 + 4L_w^2$.

We can see that the second condition of \eqref{eq:SGM2_full_shuffling_para_cond1} holds if 
\begin{equation*}
\begin{array}{lll}
\quad & \big(1 - \frac{\mu_H\hat{\eta} }{n} \big)^{nS} \big( 2  + M_1 \eta^2 \big) \leq 1,\\
\Leftrightarrow \quad & -nS \ln \big(1 - \frac{\mu_H\hat{\eta} }{n} \big) \geq \ln\big( 2 + M_1 \eta^2 \big).
\end{array}
\end{equation*}
Since $\eta \leq \frac{1}{4L_{\Phi_0}}$ and $\eta\leq \frac{1}{2L_{w}}$, we have $M_1\eta^2 = (2L_u^2\kappa^2 + 4L_w^2)\eta^2 \leq \frac{3}{2}$.
Using this relation, and  $-\ln(1 - \tau) \geq \tau$  for $\tau \in (0, 1)$, the last condition holds if
\begin{equation*}
\begin{array}{lcl}
\mu_H\hat{\eta} S \geq \ln(7/2) \quad  \Leftrightarrow \quad S \geq \frac{\ln(7/2)}{\mu_H\hat{\eta}}.
\end{array}
\end{equation*}
Hence, we can choose $S := \lfloor   \frac{\ln(7/2)}{\mu_H \hat{\eta}} \rfloor$ as stated in \eqref{eq:SGM2_full_shuffling_para_cond1_s}.

The condition \eqref{eq:SGM2_full_shuffling_bound1_proof1} holds if
\begin{equation*} 
\begin{array}{lcl}
B_t  & \geq & \frac{1}{2} - 2 \Lambda_0 L_w^2 (3 \Theta_w + 1)\eta^2 - \frac{7 \Lambda_0 L_u^3 (\Theta_u + 1 ) }{\mu_H} \hat{\eta}^2.
\end{array}
\end{equation*}
This condition shows that if we choose
\begin{equation*} 
\begin{array}{lcl}
0 < \eta  \leq   \frac{1}{4L_w\sqrt{2\Lambda_0(\Theta_w+1)}} \quad \text{and} \quad 0 < \hat{\eta} \leq \bar{\hat{\eta}} := \frac{\sqrt{\mu_H}}{2\sqrt{14 \Lambda_0 L_u^3 (\Theta_u + 1 )} },
\end{array}
\end{equation*}
then, from \eqref{eq:SGM2_full_shuffling_bound1_proof1}, we have $B_t \geq \frac{1}{4}$.
Due to \eqref{eq:SGM2_full_shuffling_para_cond1_s}, both conditions here are satisfied. 

Next, let us define $C_w$ and $C_u$ as in \eqref{eq:SGM2_full_shuffling_constants}, respectively, i.e.:
\begin{equation*}
\begin{array}{lcl}
C_w & := &  L_w^2 \big[  (3 \Theta_w + 1)\Lambda_1 + 3\sigma_{w}^2 \big], \quad \text{and} \quad C_u :=  \frac{7L_u^3}{\mu_H} \big[ \Lambda_1 (\Theta_u + 1 ) + \sigma_u^2 \big].
\end{array}
\end{equation*}
In this case, \eqref{eq:SGM2_full_shuffling_key_bound1} reduces to
\begin{equation*} 
\begin{array}{lcl}
\Psi_0(\widetilde{w}_t)  +  \frac{L_u^2\eta}{2} \norms{\widetilde{u}_t  - u_0^{*}(\widetilde{w}_{t})}^2  & \leq &   \Psi_0(\widetilde{w}_{t-1}) +  \frac{L_u^2\eta}{2} \norms{\widetilde{u}_{t-1}  - u_0^{*}(\widetilde{w}_{t-1})}^2 \\
&& - {~} \frac{\eta}{8} \norms{\Gc_{\eta}(\widetilde{w}_{t-1})}^2 +   C_w \eta^3  + C_u  \hat{\eta}^2 \eta.
\end{array}
\end{equation*}
Subtracting $\Psi_0^{\star}$ from both sides of this inequality, and averaging the result from $t=0$ to $T$, and noting that $\Psi_0(\widetilde{w}_T) - \Psi_0^{\star} \geq 0$, we obtain \eqref{eq:SGM2_full_shuffling_bound1}.

To achieve an $\epsilon$-stationary point of \eqref{eq:upper_level_min}, from \eqref{eq:SGM2_full_shuffling_bound1}, we need to impose the following condition:
\begin{equation*}
\begin{array}{lcl}
\frac{2 [ \Psi_0(\widetilde{w}_0) - \Psi_0^{\star}]}{\eta (T+1)} + \frac{L_u^2\norms{\widetilde{u}^0 - u_0^{*}(\widetilde{w}_0)}^2}{T+1} + 2C_w\eta^2 + 2C_u \hat{\eta}^2 \leq \frac{\epsilon^2}{4}.
\end{array}
\end{equation*}
Since other terms are constant, if we choose $\eta := \Oc(\epsilon)$ and $\hat{\eta} := \Oc(\epsilon)$ such that they still satisfies \eqref{eq:SGM2_full_shuffling_para_cond1_s}, then we can choose $T := \Oc(\epsilon^{-3})$ to guarantee the last condition.
Since each iteration $t$, we run $S$ epochs of the shuffling scheme \eqref{eq:SGM4lower_prob}, the total number of evaluations of $\nabla_u{\Hc}_i$ is $\Tc_u := T\times S\times n$.
However, we have $S = \lfloor \frac{\ln(7/2)}{\mu_H \hat{\eta} } \rfloor = \Oc(\epsilon^{-1})$, we get $\Tc_u := \Oc(n\epsilon^{-4})$.
The total number of evaluations of $\nabla_w{\Hc}_i$ is $\Tc_w := Tn = \Oc(n\epsilon^{-3})$ as stated.

Finally, since each epoch $t$, Algorithm~\ref{alg:SGM2} requires one evaluation of $\prox_{\eta_tf}$, and $S$ evaluations of $\prox_{\hat{\eta}_th}$, but since $S = \Oc(\epsilon^{-1})$, the total number of  $\prox_{\eta_tf}$ evaluations is $T = \Oc(\epsilon^{-3})$, while the total number of $\prox_{\hat{\eta}_th}$ evaluations is $TS = \Oc(\epsilon^{-4})$.
Overall, Algorithm~\ref{alg:SGM2} needs $\Oc(\epsilon^{-3})$ evaluations of  $\prox_{\eta_tf}$ and $\Oc(\epsilon^{-4})$ evaluations of $\prox_{\hat{\eta}_th}$.
\end{proof}

\begin{proof}[\textbf{Proof of Theorem~\ref{th:SGM2_full_shuffling_main_result2}}]
Since $\mu_h > 0$ and $\mu_H = 0$, for $C_S$ given by Lemma~\ref{le:SGM2_key_bounds_for_ut2}, it reduces to 
\begin{equation*}
\begin{array}{lcl}
C_S & := &  \big[ \sum_{j=0}^{n-1}  \frac{ 1 }{(1 + 2 \mu_h\hat{\eta}_t)}  \big(1 - \frac{\mu_H\hat{\eta}_t}{n}  \big)^j \big]  \cdot \big[ \sum_{s=0}^{S-1} \frac{1}{(1 + 2 \mu_h\hat{\eta}_t)^s} \big(1 - \frac{\mu_H\hat{\eta}_t }{ n }\big)^{ns} \big] \\
& = &   \sum_{s=0}^{S-1} \frac{1}{(1 + 2\mu_h\hat{\eta}_t)^{s+1}} \leq \frac{1}{2\mu_h\hat{\eta}_t}.
\end{array}
\end{equation*}
In this case, we can lower bound $B_t$ from Lemma~\ref{le:SGM2_keybound_for_full_shuffling} as in Theorem~\ref{th:SGM2_full_shuffling_main_result1}, i.e.:
\begin{equation*}
\begin{array}{lcl}
B_t   & \geq & 1 - 2L_{\Phi_0}\eta_t - 2L_w^2 (3 \Theta_w + 1)\Lambda_0 \eta_t^2 - \frac{2 L_u^3}{ n \mu_H}(  1 +  L_u^2\kappa^2 \eta_t^2 + 2L_w^2 \eta_t^2)  \Lambda_0 (\Theta_u + 1 ) \cdot \hat{\eta}_t^2.
\end{array}
\end{equation*}
We need to choose $\eta$ as in Theorem~\ref{th:SGM2_full_shuffling_main_result1}.
Since $\eta_t = \eta \in (0, \bar{\eta}]$, we have $\eta \leq \frac{1}{4L_{\Phi_0}}$, $\eta\leq \frac{1}{2L_{w}}$, and $\eta \leq \frac{1}{2L_u\kappa}$.
Alternatively, we also have  $\hat{\eta}_t :=  \hat{\eta} \in (0, \bar{\hat{\eta}}]$.
Therefore, we can further lower bound $B_t$ as $B_t \geq \frac{1}{4}$ as in Theorem~\ref{th:SGM2_full_shuffling_main_result1}.

Now, the second condition of \eqref{eq:SGM2_full_shuffling_para_cond1} holds if 
\begin{equation*} 
\begin{array}{lcl}
(1 + 2\mu_h \hat{\eta} \big)^S \geq 2 + (2 L_u^2\kappa^2 + 4L_w^2) \eta^2.
\end{array}
\end{equation*}
Using the fact that $\ln(1 + e) \geq \frac{e}{2}$ for $e \in (0, 1/2)$ and $(2 L_u^2\kappa^2 + 4L_w^2) \eta^2 \leq \frac{3}{2}$,  the last inequality holds if $\mu_h \hat{\eta} S \geq \ln(7/2)$.
Hence, we can choose $S := \big\lfloor \frac{\ln(7/2)}{\mu_h \hat{\eta}} \big\rfloor$ as stated in Theorem~\ref{th:SGM2_full_shuffling_main_result2}.
The remaining proof follows from Theorem~\ref{th:SGM2_full_shuffling_main_result1}.
\end{proof}

\beforesubsec
\subsection{Convergence of the full-shuffling variant of Algorithm~\ref{alg:SGM2} -- The case $S=1$}\label{apdx:subsec:SGM2_convergence}
\aftersubsec
In this subsection, we analyze the convergence of Algorithm~\ref{alg:SGM2} using only one epoch of the \textbf{\textit{shuffling gradient ascent}} scheme \eqref{eq:SGM4lower_prob}.
In this case, by dropping the superscript $s$, the scheme \eqref{eq:SGM4lower_prob} can be simplified as follows:
\begin{equation}\label{eq:SGM3_SGM4lower_prob}
\left\{\begin{array}{lll}
&u_0^{(t)} := \widetilde{u}_{t-1}, \\ 
& \textrm{For $i=1,2, \cdots, n$, update}\\
&\qquad u_{i}^{(t)} :=  u_{i-1}^{(t)} + \frac{\hat{\eta}_t}{n}  \nabla_u\mathcal{H}_{\pi^{(t)}(i)}(\widetilde{w}_{t-1}, u_{i-1}^{(t)}), \\
& \widetilde{u}_t :=   \prox_{\hat{\eta}_t h}(u_n^{(t)}),
\end{array}\right.
\end{equation}
where $\hat{\eta}_t > 0$ is a given learning rate.

We divide our analysis into different tasks as follows.

\beforesubsubsec
\subsubsection{Potential function and a technical lemma}\label{apdx:subsubsec:convergence_of_SGM3_p1}
\aftersubsubsec
One key step of our analysis is to construct an appropriate potential function.
We exploit the ideas from \cite{cho2022sgda} to construct this function as follows.

For $\Psi_0$ defined by \eqref{eq:upper_level_min} and $\Lc$ given in \eqref{eq:minimax_prob}, we consider the following \textbf{potential function}:
\begin{equation}\label{eq:SGM3_potential_func}
\arraycolsep=0.2em
\begin{array}{lcl}
\Vc_{\lambda}(w, u) := \lambda \big[ \Psi_0(w) - \Psi_0^{\star} \big] + \Psi_0(w) - \Lc(w, u), 
\end{array}
\end{equation}
where $\lambda > 0$ is a given parameter determined later.
Since $\Psi_0(w) \geq  \Psi_0^{\star} := \inf_{w}\Psi_0(w)$ and $\Psi_0(w) = \sup_{u}\Lc(w, u) \geq \Lc(w, u)$, it is obvious to see that $\Vc_{\lambda}(w, u) \geq 0$ for all $(w, u) \in \dom{\Lc}$.

Similar to \eqref{eq:upper_level_min_grad_mapping}, we consider the following gradient mappings for both \eqref{eq:lower_level_max} and \eqref{eq:upper_level_min}, respectively:
\begin{equation}\label{eq:SGM3_gradient_mappings}
\arraycolsep=0.2em
\begin{array}{lcllcl}
\hat{\Gc}_{\hat{\eta}_t }( \widetilde{u}_{t-1} ) := \frac{1}{\hat{\eta}_t }\big( \widetilde{u}_{t-1} -  \widehat{u}_t \big), \qquad & \text{where} & \quad \widehat{u}_t := \prox_{\hat{\eta}_t h}\big( \widetilde{u}_{t-1} + \hat{\eta}_t \nabla_u\Hc(\widetilde{w}_{t-1},  \widetilde{u}_{t-1} ) \big), \\
\Gc_{ \eta_t }(\widetilde{w}_{t-1} ) := \frac{1}{\eta_t}\big( \widetilde{w}_{t-1} -  \widehat{w}_t \big), \qquad & \text{where} & \quad \widehat{w}_t := \prox_{\eta_t f }\big( \widetilde{w}_{t-1} - \eta_t \nabla{\Phi}_0(\widetilde{w}_{t-1})   \big).
\end{array}
\end{equation}
We need the following result.

\begin{lemma}\label{le:SGM3_key_properties_of_psi}
Let $\widehat{u}_t$ and $\hat{\Gc}_{\hat{\eta}_t }( \widetilde{u}_{t-1} )$ be defined by \eqref{eq:SGM3_gradient_mappings}, and $\psi$ be defined by \eqref{eq:SGM3_psi_func}.
Then, we have
\begin{equation}\label{eq:SGM3_key_properties_of_psi_p1}
\arraycolsep=0.2em
\begin{array}{lcl}
2\mu_{\psi} \norms{\widehat{u}_t - u_0^{*}(\widetilde{w}_{t-1} )}^2 & \leq &  \psi( \widetilde{w}_{t-1}, \widehat{u}_t ) - \psi( \widetilde{w}_{t-1}, u_0^{*}( \widetilde{w}_{t-1} )).
\end{array}
\end{equation}
If  $0 < \hat{\eta}_t \leq \frac{2}{L_u + \mu_H}$, then we have
\begin{equation}\label{eq:SGM3_key_properties_of_psi_p2}
\arraycolsep=0.2em
\begin{array}{lcl}
2\mu_{\psi}[ \psi( \widetilde{w}_{t-1}, \widehat{u}_t ) - \psi( \widetilde{w}_{t-1}, u_0^{*}( \widetilde{w}_{t-1} )) ] \leq   \left(1 - \frac{2L_u\mu_H\hat{\eta}_t}{L_u + \mu_H}\right) \norms{ \hat{\Gc}_{\hat{\eta}_t }( \widetilde{u}_{t-1} )  }^2.
\end{array}
\end{equation}
\end{lemma}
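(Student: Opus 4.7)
The plan is to treat the two parts of the lemma as two applications of the strong convexity of $\psi(\widetilde{w}_{t-1},\cdot)$, interfaced through the proximal-gradient update that defines $\widehat{u}_t$ in \eqref{eq:SGM3_gradient_mappings}. Throughout, I would exploit Assumption~\ref{as:A3_SGM2}, which makes $\psi(\widetilde{w}_{t-1},u) = -\Hc(\widetilde{w}_{t-1},u) + h(u)$ a $\mu_\psi$-strongly convex, $L_u$-smooth-plus-proper-convex composite function with global minimum attained at $u_0^{*}(\widetilde{w}_{t-1})$.

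For part (a), I would use the quadratic-growth form of strong convexity evaluated at the minimizer. Since $u_0^{*}(\widetilde{w}_{t-1})$ minimizes $\psi(\widetilde{w}_{t-1},\cdot)$, the subdifferential inclusion $0 \in \partial_u\psi(\widetilde{w}_{t-1},u_0^{*}(\widetilde{w}_{t-1}))$ from Fact $[F_3]$ together with $\mu_\psi$-strong convexity gives
\begin{equation*}
\psi(\widetilde{w}_{t-1},u) - \psi(\widetilde{w}_{t-1},u_0^{*}(\widetilde{w}_{t-1})) \;\geq\; \tfrac{\mu_\psi}{2}\,\|u - u_0^{*}(\widetilde{w}_{t-1})\|^2,
\end{equation*}
for every $u$; specializing this to $u=\widehat{u}_t$ yields \eqref{eq:SGM3_key_properties_of_psi_p1} (up to a multiplicative constant that merely reflects our choice of scaling for $\mu_\psi$).

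For part (b), the target bound is exactly the gain of one composite proximal-gradient step from $\widetilde{u}_{t-1}$ on the maximization problem \eqref{eq:lower_level_max}. I would first use the $L_u$-smoothness of $-\Hc(\widetilde{w}_{t-1},\cdot)$ together with the three-point identity induced by the prox characterization of $\widehat{u}_t$ (and Fact $[F_2]$ to extract a subgradient of $h$ at $\widehat{u}_t$) to obtain the standard proximal descent lemma
\begin{equation*}
\psi(\widetilde{w}_{t-1},\widehat{u}_t) \;\leq\; \psi(\widetilde{w}_{t-1},\widetilde{u}_{t-1}) - \hat{\eta}_t\,\bigl\langle \hat{\Gc}_{\hat{\eta}_t}(\widetilde{u}_{t-1}),\, \nabla_u\psi(\widetilde{w}_{t-1},\widetilde{u}_{t-1})\bigr\rangle + \tfrac{L_u\hat{\eta}_t^2}{2}\|\hat{\Gc}_{\hat{\eta}_t}(\widetilde{u}_{t-1})\|^2.
\end{equation*}
Then I would invoke Nesterov's cocoercivity inequality for $\mu_H$-strongly convex, $L_u$-smooth functions (Theorem 2.1.12 in \cite{Nesterov2004}, already used in the proof of Lemma~\ref{le:SGM2_bound_u1_dist1}) to convert the inner product into a combination of $\|\widetilde{u}_{t-1}-u_0^{*}(\widetilde{w}_{t-1})\|^2$ and $\|\nabla_u\psi(\widetilde{w}_{t-1},\widetilde{u}_{t-1})\|^2$, which is the mechanism that produces the characteristic coefficient $\tfrac{2L_u\mu_H}{L_u+\mu_H}$. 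Finally, combining with the strong-convexity gap $\psi(\widetilde{w}_{t-1},\widetilde{u}_{t-1}) - \psi(\widetilde{w}_{t-1},u_0^{*}(\widetilde{w}_{t-1})) \leq \tfrac{1}{2\mu_\psi}\|\nabla_u\psi(\widetilde{w}_{t-1},\widetilde{u}_{t-1})\|^2$ and the prox fixed-point identity $u_0^{*}(\widetilde{w}_{t-1}) = \prox_{\hat{\eta}_t h}(u_0^{*}(\widetilde{w}_{t-1}) + \hat{\eta}_t \nabla_u\Hc(\widetilde{w}_{t-1},u_0^{*}(\widetilde{w}_{t-1})))$, one can upper-bound the left-hand side of \eqref{eq:SGM3_key_properties_of_psi_p2} by $\|\hat{\Gc}_{\hat{\eta}_t}(\widetilde{u}_{t-1})\|^2$ with the advertised contraction factor.

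The main obstacle is the bookkeeping in part (b): the prox operator hides the subgradient of $h$ inside $\widehat{u}_t$, so one must carefully convert between $\nabla_u\Hc(\widetilde{w}_{t-1},\widetilde{u}_{t-1})$ (which enters the definition of $\hat{\Gc}_{\hat{\eta}_t}$) and $\nabla_u\psi(\widetilde{w}_{t-1},\widetilde{u}_{t-1})$ (against which strong convexity and cocoercivity are naturally stated), while arranging the terms so that the step-size range $0<\hat{\eta}_t\leq \tfrac{2}{L_u+\mu_H}$ absorbs the second-order residuals. Done correctly, the residual quadratic in $\hat{\eta}_t$ is exactly what produces the factor $1 - \tfrac{2L_u\mu_H\hat{\eta}_t}{L_u+\mu_H}$, matching the contraction rate already derived in \eqref{SGM2_bound_u1_dist1}.
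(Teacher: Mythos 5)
Your treatment of \eqref{eq:SGM3_key_properties_of_psi_p1} is essentially the paper's (quadratic growth of the $\mu_{\psi}$-strongly convex function $\psi(\widetilde{w}_{t-1},\cdot)$ at its minimizer $u_0^{*}(\widetilde{w}_{t-1})$, specialized to $u=\widehat{u}_t$), so that part is fine. The gap is in part (b): your plan pays the distance-to-optimum through the strong-convexity gap \emph{at the old point}, $\psi(\widetilde{w}_{t-1},\widetilde{u}_{t-1})-\psi(\widetilde{w}_{t-1},u_0^{*}(\widetilde{w}_{t-1}))\le\tfrac{1}{2\mu_{\psi}}\norms{\nabla_u\psi(\widetilde{w}_{t-1},\widetilde{u}_{t-1})}^2$, combined with a descent lemma from $\widetilde{u}_{t-1}$ to $\widehat{u}_t$. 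This cannot close: the quantity $\nabla_u\psi(\widetilde{w}_{t-1},\widetilde{u}_{t-1})$ contains an arbitrary subgradient of the nonsmooth $h$ at $\widetilde{u}_{t-1}$, and there is no inequality bounding it by $\norms{\hat{\Gc}_{\hat{\eta}_t}(\widetilde{u}_{t-1})}$ (the prox step only pins down a subgradient of $h$ at $\widehat{u}_t$, not at $\widetilde{u}_{t-1}$; the general inequality goes the other way, gradient-mapping norm $\le$ subgradient norm). Moreover, even in the smooth case $h=0$, where the two coincide, the ``descent lemma $+$ PL at $\widetilde{u}_{t-1}$'' bookkeeping yields the factor $1-2\mu_{\psi}\hat{\eta}_t+\mu_{\psi}L_u\hat{\eta}_t^2$, which for $\hat{\eta}_t$ up to $\tfrac{2}{L_u+\mu_H}$ is strictly weaker than the claimed $1-\tfrac{2L_u\mu_H\hat{\eta}_t}{L_u+\mu_H}$ (at $\hat{\eta}_t=\tfrac{2}{L_u+\mu_H}$ you get $1-\tfrac{4\mu_H^2}{(L_u+\mu_H)^2}$ versus the target $1-\tfrac{4L_u\mu_H}{(L_u+\mu_H)^2}$); it only matches for step sizes of order $\mu_H/L_u^2$, so the advertised coefficient is unreachable by this route over the stated range.

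The paper's argument avoids both issues by applying the gradient-dominance inequality \emph{at the new point}: by Nesterov's Theorem~2.1.10, $2\mu_{\psi}[\psi(\widetilde{w}_{t-1},\widehat{u}_t)-\psi(\widetilde{w}_{t-1},u_0^{*}(\widetilde{w}_{t-1}))]\le\norms{\xi_t}^2$ for the specific subgradient $\xi_t:=\tfrac{1}{\hat{\eta}_t}(\widetilde{u}_{t-1}-\widehat{u}_t)+\nabla_u\Hc(\widetilde{w}_{t-1},\widetilde{u}_{t-1})-\nabla_u\Hc(\widetilde{w}_{t-1},\widehat{u}_t)\in\partial_u\psi(\widetilde{w}_{t-1},\widehat{u}_t)$ extracted from the optimality condition of the prox step defining $\widehat{u}_t$. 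Expanding $\norms{\xi_t}^2$ and applying Theorem~2.1.12 (strong convexity plus smoothness of $-\Hc(\widetilde{w}_{t-1},\cdot)$) \emph{between the two points $\widetilde{u}_{t-1}$ and $\widehat{u}_t$} — not involving $u_0^{*}$ — gives $\norms{\xi_t}^2\le\big(1-\tfrac{2L_u\mu_H\hat{\eta}_t}{L_u+\mu_H}\big)\tfrac{1}{\hat{\eta}_t^2}\norms{\widetilde{u}_{t-1}-\widehat{u}_t}^2-\big(\tfrac{2}{\hat{\eta}_t(L_u+\mu_H)}-1\big)\norms{\nabla_u\Hc(\widetilde{w}_{t-1},\widetilde{u}_{t-1})-\nabla_u\Hc(\widetilde{w}_{t-1},\widehat{u}_t)}^2$, and the condition $\hat{\eta}_t\le\tfrac{2}{L_u+\mu_H}$ makes the last term nonpositive, producing exactly \eqref{eq:SGM3_key_properties_of_psi_p2}. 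No descent lemma, no PL at $\widetilde{u}_{t-1}$, and no prox fixed-point identity at $u_0^{*}$ are needed; you should restructure part (b) along these lines.
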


\begin{proof} 
Since $\psi(\widetilde{w}_{t-1}, \cdot)$ is $\mu_{\psi}$-strongly convex and $u_0^{*}(\widetilde{w}_{t-1}) := \argmin_u \psi(\widetilde{w}_{t-1}, u)$, for $\widehat{u}_t$ given in \eqref{eq:SGM3_gradient_mappings},  we easily obtain \eqref{eq:SGM3_key_properties_of_psi_p1}.

Next, using again the $\mu_{\psi}$-strong convexity of  $\psi(\widetilde{w}_{t-1}, \cdot)$ and $u_0^{*}(\widetilde{w}_{t-1}) := \argmin_u \psi(\widetilde{w}_{t-1}, u)$, for $\widehat{u}_t$ given in \eqref{eq:SGM3_gradient_mappings},  by  \cite[Theorem 2.1.10]{Nesterov2004}, we have 
\begin{equation}\label{eq:SGM3_key_properties_of_psi_proof1}
\arraycolsep=0.2em
\begin{array}{lcl}
2\mu_{\psi}[ \psi( \widetilde{w}_{t-1}, \widehat{u}_t ) - \psi( \widetilde{w}_{t-1}, u_0^{*}( \widetilde{w}_{t-1} )) ] \leq \norms{\nabla_u{\psi}( \widetilde{w}_{t-1}, \widehat{u}_t )}^2.
\end{array}
\end{equation}
where $\nabla_u{\psi}( \widetilde{w}_{t-1}, \widehat{u}_t ) = -\nabla_u{\Hc}( \widetilde{w}_{t-1}, \widehat{u}_t ) + \nabla{h}( \widehat{u}_t ) \in \partial{\psi}(\widehat{w}_t) := -\nabla_u{\Hc}(\widetilde{w}_{t-1}, \widehat{u}_t) + \partial{h}( \widehat{u}_t )$.

Now, for $\widehat{u}_t$ defined by \eqref{eq:SGM3_gradient_mappings}, we have $\frac{1}{\hat{\eta}_t}(\widetilde{u}_{t-1} - \widehat{u}_t) + \nabla_u\Hc(\widetilde{w}_{t-1},  \widetilde{u}_{t-1} ) \in \partial{h}(\widehat{u}_t)$, leading to 
\begin{equation}\label{eq:SGM3_key_properties_of_psi_proof2}
\arraycolsep=0.2em
\begin{array}{lcl}
\nabla_u{\psi}(\widetilde{w}_{t-1}, \widehat{u}_t) := \frac{1}{\hat{\eta}_t}(\widetilde{u}_{t-1} - \widehat{u}_t) + \nabla_u\Hc(\widetilde{w}_{t-1},  \widetilde{u}_{t-1} ) - \nabla_u\Hc(\widetilde{w}_{t-1},  \widehat{u}_t )  \in \partial{\psi}(\widehat{u}_t).
\end{array}
\end{equation}
Since $-\Hc(\widetilde{w}_{t-1},  \cdot )$ is  $L_u$-smooth and  $\mu_H$-strongly convex, by \cite[Theorem 2.1.12]{Nesterov2004}, we have
\begin{equation}\label{eq:SGM3_key_properties_of_psi_proof3}
\arraycolsep=0.2em
\begin{array}{ll}
& -\iprods{\nabla_u\Hc(\widetilde{w}_{t-1},  \widetilde{u}_{t-1} ) - \nabla_u\Hc(\widetilde{w}_{t-1},  \widehat{u}_t ), \widetilde{u}_{t-1} - \widehat{u}_t }  \geq    \frac{L_u\mu_H}{L_u + \mu_H}\norms{ \widetilde{u}_{t-1} - \widehat{u}_t }^2 \\
& \qquad\qquad + {~} \frac{1}{L_u + \mu_H}\norms{\nabla_u\Hc(\widetilde{w}_{t-1},  \widetilde{u}_{t-1} ) - \nabla_u\Hc(\widetilde{w}_{t-1},  \widehat{u}_t )}^2.
\end{array}
\end{equation}
Utilizing \eqref{eq:SGM3_key_properties_of_psi_proof2} and \eqref{eq:SGM3_key_properties_of_psi_proof3}, we can show that
\begin{equation*} 
\arraycolsep=0.2em
\begin{array}{lcl}
\norms{ \nabla_u{\psi}(\widetilde{w}_{t-1}, \widehat{u}_t)}^2 &= & \frac{1}{\hat{\eta}_t^2}\norms{ \widetilde{u}_{t-1} - \widehat{u}_t  }^2 + \frac{2}{\hat{\eta}_t} \iprods{\nabla_u\Hc(\widetilde{w}_{t-1},  \widetilde{u}_{t-1} ) - \nabla_u\Hc(\widetilde{w}_{t-1},  \widehat{u}_t ), \widetilde{u}_{t-1} - \widehat{u}_t } \\
&& + {~} \norms{ \nabla_u\Hc(\widetilde{w}_{t-1},  \widetilde{u}_{t-1} ) - \nabla_u\Hc(\widetilde{w}_{t-1},  \widehat{u}_t )}^2 \\
& \leq &  \frac{1}{\hat{\eta}_t^2}\left(1 - \frac{2L_u\mu_H \hat{\eta}_t }{L_u + \mu_H}\right) \norms{ \widetilde{u}_{t-1} - \widehat{u}_t  }^2 \\
&& - {~} \left( \frac{2}{\hat{\eta}_t(L_u + \mu_H) }  - 1\right) \norms{ \nabla_u\Hc(\widetilde{w}_{t-1},  \widetilde{u}_{t-1} ) - \nabla_u\Hc(\widetilde{w}_{t-1},  \widehat{u}_t )}^2.
\end{array}
\end{equation*}
Substituting this inequality into \eqref{eq:SGM3_key_properties_of_psi_proof1} and noting that $0 < \hat{\eta}_t \leq \frac{2}{L_u + \mu_H}$ and $\hat{\Gc}_{\hat{\eta}_t }( \widetilde{u}_{t-1} ) := \frac{1}{\hat{\eta}_t }\big( \widetilde{u}_{t-1} -  \widehat{u}_t \big)$, we obtain \eqref{eq:SGM3_key_properties_of_psi_p2}.
\end{proof}
\beforesubsubsec
\subsubsection{A key bound for the shuffling gradient descent scheme \eqref{eq:SGM4upper_prob}}\label{apdx:subsubsec:SGM3_GD_scheme}
\aftersubsubsec
The following lemma bounds the difference $\Lc(\widetilde{w}_{t-1}, \widetilde{u}_t ) -  \Lc( \widetilde{w}_t, \widetilde{u}_t)$.

\begin{lemma}\label{le:SGM3_key_bounds_for_Phi_2b}
Suppose that Assumptions~\ref{as:A3_SGM2} and \ref{as:A5} hold.
Let $\Lc$ be defined by \eqref{eq:minimax_prob} and $g_t$ be defined by \eqref{eq:SGM2_gt_quantity}.
Then, we have 
\begin{equation}\label{eq:SGM3_key_bounds_for_Phi_2b}
\begin{array}{lcl}
\Lc(\widetilde{w}_{t-1}, \widetilde{u}_t ) & \leq &  \Lc( \widetilde{w}_t, \widetilde{u}_t) + \frac{\eta_t}{2}\norms{ g_t - \nabla_w{\Hc}(\widetilde{w}_{t-1}, \widetilde{u}_t ) }^2 +   \frac{ 3 + (L_f+L_w)\eta_t }{2\eta_t}   \norms{  \widetilde{w}_t -  \widetilde{w}_{t-1} }^2.
\end{array}
\end{equation}
\end{lemma}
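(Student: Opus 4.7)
\textbf{Proof proposal for Lemma~\ref{le:SGM3_key_bounds_for_Phi_2b}.} The plan is to split the target quantity $\Lc(\widetilde{w}_{t-1}, \widetilde{u}_t) - \Lc(\widetilde{w}_t, \widetilde{u}_t)$ along the decomposition $\Lc(w,u) = f(w) + \Hc(w,u) - h(u)$. Since both $\Lc$-values share the same $u = \widetilde{u}_t$, the $-h(\widetilde{u}_t)$ contributions cancel exactly, leaving $[f(\widetilde{w}_{t-1}) - f(\widetilde{w}_t)] + [\Hc(\widetilde{w}_{t-1},\widetilde{u}_t) - \Hc(\widetilde{w}_t,\widetilde{u}_t)]$, which I will upper bound term by term.

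First, I will extract a useful subgradient of $f$ at $\widetilde{w}_t$ from the proximal step. Recalling $w_0^{(t)} = \widetilde{w}_{t-1}$ and $w_n^{(t)} = \widetilde{w}_{t-1} - \eta_t g_t$ from \eqref{eq:SGM2_w_update_upto_i}--\eqref{eq:SGM2_gt_quantity}, together with $\widetilde{w}_t = \prox_{\eta_t f}(w_n^{(t)})$, Fact $[F_2]$ yields $f'(\widetilde{w}_t) := \tfrac{1}{\eta_t}(\widetilde{w}_{t-1} - \widetilde{w}_t) - g_t \in \partial f(\widetilde{w}_t)$. Applying Assumption~\ref{as:A5} with $x = \widetilde{w}_t$ and $y = \widetilde{w}_{t-1}$ gives
\begin{equation*}
f(\widetilde{w}_{t-1}) - f(\widetilde{w}_t) \leq \tfrac{1}{\eta_t}\norms{\widetilde{w}_{t-1} - \widetilde{w}_t}^2 - \iprods{g_t, \widetilde{w}_{t-1} - \widetilde{w}_t} + \tfrac{L_f}{2}\norms{\widetilde{w}_{t-1} - \widetilde{w}_t}^2.
\end{equation*}
This is exactly where Assumption~\ref{as:A5} is needed, since $f$ is not assumed smooth nor convex in a way that would otherwise give a quadratic upper bound at $\widetilde{w}_t$.

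Next, by Assumption~\ref{as:A3_SGM2}(b), the map $\Hc(\cdot, \widetilde{u}_t)$ is $L_w$-smooth, so the standard descent inequality applied at the base point $\widetilde{w}_{t-1}$ gives
\begin{equation*}
\Hc(\widetilde{w}_{t-1}, \widetilde{u}_t) - \Hc(\widetilde{w}_t, \widetilde{u}_t) \leq \iprods{\nabla_w\Hc(\widetilde{w}_{t-1}, \widetilde{u}_t), \widetilde{w}_{t-1} - \widetilde{w}_t} + \tfrac{L_w}{2}\norms{\widetilde{w}_{t-1} - \widetilde{w}_t}^2.
\end{equation*}
Summing the two bounds above, the linear terms combine into the single inner product $\iprods{\nabla_w\Hc(\widetilde{w}_{t-1},\widetilde{u}_t) - g_t, \widetilde{w}_{t-1} - \widetilde{w}_t}$, which I dispose of by Young's inequality with parameter $\eta_t$:
\begin{equation*}
\iprods{\nabla_w\Hc(\widetilde{w}_{t-1},\widetilde{u}_t) - g_t, \widetilde{w}_{t-1} - \widetilde{w}_t} \leq \tfrac{\eta_t}{2}\norms{g_t - \nabla_w\Hc(\widetilde{w}_{t-1},\widetilde{u}_t)}^2 + \tfrac{1}{2\eta_t}\norms{\widetilde{w}_{t-1} - \widetilde{w}_t}^2.
\end{equation*}

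Collecting coefficients of $\norms{\widetilde{w}_{t-1} - \widetilde{w}_t}^2$ produces $\tfrac{1}{\eta_t} + \tfrac{1}{2\eta_t} + \tfrac{L_f + L_w}{2} = \tfrac{3 + (L_f + L_w)\eta_t}{2\eta_t}$, and the variance-type term $\tfrac{\eta_t}{2}\norms{g_t - \nabla_w\Hc(\widetilde{w}_{t-1},\widetilde{u}_t)}^2$ appears as stated. Rearranging delivers \eqref{eq:SGM3_key_bounds_for_Phi_2b}. The only conceptually delicate point is the choice of linearization base in the $\Hc$ descent inequality: using $\widetilde{w}_{t-1}$ rather than $\widetilde{w}_t$ is what pairs with $g_t$ (an approximation of $\nabla_w\Hc(\widetilde{w}_{t-1},\widetilde{u}_t)$) and produces the clean residual $g_t - \nabla_w\Hc(\widetilde{w}_{t-1},\widetilde{u}_t)$ needed for subsequent bounds via Lemma~\ref{le:SGM2_key_bounds_for_wt_2}.
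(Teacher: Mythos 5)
Your proposal is correct and follows essentially the same route as the paper's proof: you extract the subgradient $f'(\widetilde{w}_t)$ from the proximal step, apply Assumption~\ref{as:A5} at $\widetilde{w}_t$, use the $L_w$-descent inequality for $\Hc(\cdot,\widetilde{u}_t)$ linearized at $\widetilde{w}_{t-1}$, and combine the residual inner product via Young's inequality with parameter $\eta_t$, which yields exactly the coefficient $\frac{3+(L_f+L_w)\eta_t}{2\eta_t}$ as in the paper.
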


\begin{proof}
From  \eqref{eq:SGM2_gt_quantity} and \eqref{eq:SGM2_w_update_upto_i}, we have
\begin{equation*}
\begin{array}{lcl}
g_t & := & \frac{1}{n} \sum_{j=1}^{n} \nabla_w{\mathcal{H}}_{\hat{\pi}^{(t)}(j)} (w_{j-1}^{(t)}, \widetilde{u}_t ) \overset{\tiny \eqref{eq:SGM2_w_update_upto_i} }{ = }  \frac{1}{\eta_t}(\widetilde{w}_{t-1}- w_n^{(t)}) =  \frac{1}{ \eta_t }( w_0^{(t)} - w_n^{(t)}).
\end{array}
\end{equation*}
Since $\widetilde{w}_t = \prox_{\eta_tf}(w_n^{(t)})$ from the second line of \eqref{eq:SGM4upper_prob}, we have $f'(\widetilde{w}_t) := \eta_t^{-1} \big( w_n^{(t)} - \widetilde{w}_t \big) = -g_t - \eta_t^{-1}(\widetilde{w}_t - \widetilde{w}_{t-1}) \in \partial{f}(\widetilde{w}_t)$.
Hence, by \eqref{eq:SGM2_f_smoothness} from Assumption~\ref{as:A5}, we have
\begin{equation*}
\begin{array}{lcl}
f( \widetilde{w}_{t-1} ) & \leq & f(\widetilde{w}_t)  +  \iprods{ f'(\widetilde{w}_t), \widetilde{w}_{t-1} - \widetilde{w}_t } + \frac{L_f}{2}\norms{ \widetilde{w}_t - \widetilde{w}_{t-1} }^2  \\
& = &  f(\widetilde{w}_t ) +  \iprods{g_t, \widetilde{w}_t - \widetilde{w}_{t-1} }  + \frac{2 + L_f\eta_t}{2\eta_t}\norms{\widetilde{w}_t - \widetilde{w}_{t-1} }^2.
\end{array}
\end{equation*}
Next, by the $L$-smoothness of $\Hc$ from \eqref{eq:Lsmooth2} of Assumption~\ref{as:A3_SGM2}, we also have
\begin{equation*} 
\begin{array}{lcl}
\Hc(\widetilde{w}_{t-1}, \widetilde{u}_t ) & \leq & \Hc( \widetilde{w}_t, \widetilde{u}_t) - \iprods{ \nabla_w{\Hc}(\widetilde{w}_{t-1}, \widetilde{u}_t ),  \widetilde{w}_t  - \widetilde{w}_{t-1} }  + \frac{L_w }{2} \norms{ \widetilde{w}_t  - \widetilde{w}_{t-1} }^2.
\end{array}
\end{equation*}
Summing up the last two inequalities and using $\Lc(w, \widetilde{u}_t) = f(w) + \Hc(w, \widetilde{u}_t) - h(\widetilde{u}_t)$ from \eqref{eq:minimax_prob} and Young's inequality in $\myeqc{1}$, we can show that 
\begin{equation*}
\hspace{-0.0ex}
\begin{array}{lcl}
 \Lc(\widetilde{w}_{t-1}, \widetilde{u}_t) & \leq & \Lc( \widetilde{w}_t, \widetilde{u}_t) +  \iprods{g_t - \nabla_w{\Hc}(\widetilde{w}_{t-1}, \widetilde{u}_t ) , \widetilde{w}_t - \widetilde{w}_{t-1} }   + \frac{ 2 + (L_f+L_w)\eta_t }{2\eta_t}   \norms{  \widetilde{w}_t -  \widetilde{w}_{t-1} }^2 \\
 & \overset{\tiny\myeqc{1}}{\leq } &   \Lc( \widetilde{w}_t, \widetilde{u}_t) + \frac{\eta_t}{2}\norms{ g_t - \nabla_w{\Hc}(\widetilde{w}_{t-1}, \widetilde{u}_t ) }^2 +   \frac{ 3 + (L_f+L_w)\eta_t }{2\eta_t}   \norms{  \widetilde{w}_t -  \widetilde{w}_{t-1} }^2,
\end{array}
\hspace{-2ex}
\end{equation*}
which proves \eqref{eq:SGM3_key_bounds_for_Phi_2b}.
\end{proof}

\beforesubsubsec
\subsubsection{Key bounds for the shuffling gradient ascent scheme \eqref{eq:SGM4lower_prob}}\label{apdx:subsubsec:SGM3_SGA_scheme}
\aftersubsubsec
We also derive necessary bounds to analyze Algorithm~\ref{alg:SGM2} using the simplified version \eqref{eq:SGM3_SGM4lower_prob} of the \textbf{\textit{shuffling gradient ascent}} scheme \eqref{eq:SGM4lower_prob}.
Let us define
\begin{equation}\label{eq:SGM3_vt_quantity}
\arraycolsep=0.2em
\begin{array}{lcl}
v_t := \frac{1}{n}\sum_{ j = 1}^n\nabla_u{\Hc}_{\pi^{(t)}(j) }( \widetilde{w}_{t-1}, u_{j-1}^{(t)}).
\end{array}
\end{equation}
We establish the following two lemmas.

\begin{lemma}\label{le:SGM3_key_bounds_for_ut_2}
Suppose that Assumption~\ref{as:A3_SGM2} holds for \eqref{eq:minimax_prob}.
Let $\sets{u_i^{(t)}}_{i=1}^n$ be generated by the simplified version \eqref{eq:SGM3_SGM4lower_prob} of \eqref{eq:SGM4lower_prob}.
Then, if we choose $\hat{\eta}_t > 0$ such that $1 - 3L_u^2 \hat{\eta}_t^2  \geq  0$, then 
\begin{equation}\label{eq:SGM3_key_bounds_for_ut_2}
\begin{array}{lcl}
\hat{\Delta}_t := \frac{1}{n} \sum_{i=0}^{n-1}\norms{u_i^{(t)} - u_0^{(t)}}^2 &\leq &  (3 \Theta_u + 2) \hat{\eta}_t^2 \cdot \norms{  \nabla_u{\Hc}(\widetilde{w}_{t-1}, \widetilde{u}_{t-1})  }^2 + 3 \hat{\eta}_t^2  \sigma_u^2.
\end{array}
\end{equation}
Let $v_t$ be defined by \eqref{eq:SGM3_vt_quantity}.
Then, we also have
\begin{equation}\label{eq:SGM3_key_bounds_for_ut_2b}
\begin{array}{lcl}
\norms{ v_t -   \nabla_u{\Hc}(\widetilde{w}_{t-1}, \widetilde{u}_{t-1}) }^2 &\leq & \frac{L_u^2}{n}\sum_{i=1}^n\norms{u_{i-1}^{(t)} - \widetilde{u}_{t-1}}^2 = L_u^2 \hat{\Delta}_t.
\end{array}
\end{equation}
\end{lemma}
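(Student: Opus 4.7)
The plan is to mirror the proof of Lemma~\ref{le:SGM2_key_bounds_for_wt_2}, which established the analogous bounds for the primal shuffling scheme \eqref{eq:SGM4upper_prob}, since \eqref{eq:SGM3_SGM4lower_prob} is structurally identical up to signs and the roles of $u$ and $w$. The main differences are: (i) the iteration is ``gradient ascent'' in $u$ rather than descent in $w$, (ii) the smoothness parameter that controls drift along the $u$-coordinate is $L_u$ rather than $L_w$, and (iii) the relevant variance bound is \eqref{eq:bounded_variance2b} instead of \eqref{eq:bounded_variance2}.

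For the first claim, I would start from the telescoped update
\begin{equation*}
u_i^{(t)} - u_0^{(t)} = \frac{\hat{\eta}_t}{n}\sum_{j=1}^{i}\nabla_u\Hc_{\pi^{(t)}(j)}(\widetilde{w}_{t-1}, u_{j-1}^{(t)}),
\end{equation*}
and split the right-hand side, via a three-way Young's inequality, into (a) the deterministic full gradient $\nabla_u\Hc(\widetilde{w}_{t-1},\widetilde{u}_{t-1})$, (b) the sampling variance at the reference point $\widetilde{u}_{t-1}$, and (c) the drift term $\nabla_u\Hc_{\pi^{(t)}(j)}(\widetilde{w}_{t-1}, u_{j-1}^{(t)}) - \nabla_u\Hc_{\pi^{(t)}(j)}(\widetilde{w}_{t-1}, \widetilde{u}_{t-1})$. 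Term (b) is controlled by \eqref{eq:bounded_variance2b} of Assumption~\ref{as:A3_SGM2}(c), giving $\Theta_u\norms{\nabla_u\Hc(\widetilde{w}_{t-1},\widetilde{u}_{t-1})}^2 + \sigma_u^2$ per index, and term (c) is controlled by the smoothness condition \eqref{eq:Lsmooth2}, which yields $L_u^2\norms{u_{j-1}^{(t)} - \widetilde{u}_{t-1}}^2$. Averaging over $i = 0,\dots,n-1$ and using the standard bounds $\sum_{i=0}^{n-1} i \le n^2/2$ and $\sum_{i=0}^{n-1} i^2 \le n^3/3$ produces an inequality of the form
\begin{equation*}
\hat{\Delta}_t \le (3\Theta_u + 1)\hat{\eta}_t^2\,\norms{\nabla_u\Hc(\widetilde{w}_{t-1},\widetilde{u}_{t-1})}^2 + 3\hat{\eta}_t^2\sigma_u^2 + \tfrac{3}{2}L_u^2\hat{\eta}_t^2\,\hat{\Delta}_t.
\end{equation*}
Under the hypothesis $1 - 3L_u^2\hat{\eta}_t^2 \ge 0$ (in fact $\tfrac{3}{2}L_u^2\hat{\eta}_t^2 \le \tfrac{1}{2}$ suffices), rearranging and absorbing $\hat{\Delta}_t$ on the right into the left gives the desired bound \eqref{eq:SGM3_key_bounds_for_ut_2}, up to the constant $(3\Theta_u+2)$ displayed in the statement, which we recover by inflating the coefficient when moving the self-referential term to the left.

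For the second claim, I would write $v_t - \nabla_u\Hc(\widetilde{w}_{t-1},\widetilde{u}_{t-1})$ as a telescoping average, insert $\pm\nabla_u\Hc_{\pi^{(t)}(j)}(\widetilde{w}_{t-1}, \widetilde{u}_{t-1})$, and note that $\tfrac{1}{n}\sum_j \nabla_u\Hc_{\pi^{(t)}(j)}(\widetilde{w}_{t-1}, \widetilde{u}_{t-1}) = \nabla_u\Hc(\widetilde{w}_{t-1}, \widetilde{u}_{t-1})$ exactly (the permutation covers $[n]$), so the sampling-variance piece vanishes and only the drift piece remains. Applying Jensen's (or Cauchy–Schwarz) to pull the norm inside the average and then the $L_u$-smoothness of each $\nabla_u\Hc_i$ gives
\begin{equation*}
\norms{v_t - \nabla_u\Hc(\widetilde{w}_{t-1}, \widetilde{u}_{t-1})}^2 \le \frac{1}{n}\sum_{j=1}^n L_u^2 \norms{u_{j-1}^{(t)} - \widetilde{u}_{t-1}}^2 = L_u^2 \hat{\Delta}_t,
\end{equation*}
which is \eqref{eq:SGM3_key_bounds_for_ut_2b}. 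No real obstacle is anticipated; the only delicate point is bookkeeping the constants in the Young splittings so that the self-referential term has coefficient strictly below $1$ under the stated step-size constraint, and choosing the Young weights so that the final constants $(3\Theta_u+2)$ and $3$ match those in the lemma statement.
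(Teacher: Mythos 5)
Your proposal follows essentially the same route as the paper's proof: the same telescoped update, the same three-way Young split into full gradient, sampling variance at $\widetilde{u}_{t-1}$ (handled by \eqref{eq:bounded_variance2b}), and drift (handled by \eqref{eq:Lsmooth2}), followed by averaging over $i$ and absorbing the self-referential $\hat{\Delta}_t$ term under $1-3L_u^2\hat{\eta}_t^2\ge 0$; the second claim is argued exactly as in the paper (the permutation covers $[n]$, so only the drift remains, then Jensen and $L_u$-smoothness). The only caveat is your intermediate constants: the careful averaging (using $\sum_{i=0}^{n-1} i \le n^2/2$ and $\sum_{i=0}^{n-1} i^2 \le n^3/3$, and noting that here no extra two-way split is needed for the variance piece, unlike the $w$-analogue you mirrored) gives $\tfrac{(3\Theta_u+2)}{2}\hat{\eta}_t^2\norms{\nabla_u\Hc(\widetilde{w}_{t-1},\widetilde{u}_{t-1})}^2 + \tfrac{3}{2}\hat{\eta}_t^2\sigma_u^2 + \tfrac{3}{2}L_u^2\hat{\eta}_t^2\hat{\Delta}_t$, which after dividing by $1-\tfrac{3}{2}L_u^2\hat{\eta}_t^2\ge\tfrac12$ yields exactly the stated $(3\Theta_u+2)$ and $3\sigma_u^2$, whereas your claimed intermediate $(3\Theta_u+1)\hat{\eta}_t^2$ and $3\hat{\eta}_t^2\sigma_u^2$ would, after the same absorption, only give the weaker constants $(6\Theta_u+2)$ and $6\sigma_u^2$.
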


\begin{proof}
First, for simplicity of notation, we denote $\nabla_u{\Hc}_{t-1} := \nabla_u{\Hc}(\widetilde{w}_{t-1}, \widetilde{u}_{t-1})$.
Then, from the update of $u_i^{(t)}$ in \eqref{eq:SGM3_SGM4lower_prob}, for any $i \in [n]$, we have  
\begin{equation}\label{eq:ut_update_upto_i}
u_{i}^{(t)} = u_{0}^{(t)} + \frac{\hat{\eta}_t}{n} \sum_{j=1}^{i} \nabla_u{\mathcal{H}}_{ \pi^{(t)}(j)} (\widetilde{w}_{t-1}, u_{j-1}^{(t)} ). 
\end{equation}
Using this expression and  Young's inequality in $\myeqc{1}$ and $\myeqc{2}$ below, we can show that
\begin{equation*}
\begin{array}{lcl}
\norms{u_{i}^{(t)} - u_{0}^{(t)} }^2 & \overset{\tiny\eqref{eq:ut_update_upto_i}}{=} & \frac{i^2 \cdot \hat{\eta}_t^2}{n^2} \Vert \frac{1}{ i  } \sum_{j=1}^{i} \nabla_u{\mathcal{H}}_{ \pi^{(t)}(j)} (\widetilde{w}_{t-1}, u_{j-1}^{(t)} ) \Vert^2 \\
& \overset{\myeqc{1}}{\leq} & \frac{3 i^2 \cdot \hat{\eta}_t^2}{n^2} \big\Vert \frac{1}{i} \sum_{j=1}^{i} \big[ \nabla_u{\mathcal{H}}_{\pi^{(t)}(j)} (\widetilde{w}_{t-1}, u_{0}^{(t)}  )  - \nabla_u{\Hc}_{t-1} \big] \big\Vert^2 
+  \frac{3 i^2 \cdot \hat{\eta}_t^2}{n^2} \norms{ \nabla_u{\Hc}_{t-1} }^2 \\
&& + {~}  \frac{3 i^2 \cdot \hat{\eta}_t^2}{n^2} \big\Vert \frac{1}{i} \sum_{j=1}^{i} \big[ \nabla_u{\mathcal{H}}_{ \pi^{(t)}(j)} (\widetilde{w}_{t-1}, u_{j-1}^{(t)})  - \nabla_u{\mathcal{H}}_{ \pi^{(t)}(j)} (\widetilde{w}_{t-1}, u_{0}^{(t)} ) \big] \big\Vert^2 \\
& \overset{\myeqc{2}}{\leq} & \frac{3 i^2 \cdot \hat{\eta}_t^2}{n^2} \big\Vert \frac{1}{i} \sum_{j=1}^{i} \big[ \nabla_u{\mathcal{H}}_{ \pi^{(t)}(j)} (\widetilde{w}_{t-1}, u_{0}^{(t)} )  - \nabla_u{\Hc}_{t-1}  \big] \big\Vert^2 
+  \frac{3 i^2 \cdot \hat{\eta}_t^2}{n^2} \norms{ \nabla_u{\Hc}_{t-1}   }^2 \\
&& + {~}  \frac{3 i \cdot \hat{\eta}_t^2}{n^2}  \sum_{j=1}^{i} \big\Vert \nabla_u{\mathcal{H}}_{ \pi^{(t)}(j)} (\widetilde{w}_{t-1}, u_{j-1}^{(t)} )  - \nabla_u{\mathcal{H}}_{ \pi^{(t)}(j)} (\widetilde{w}_{t-1}, u_{0}^{(t)} )  \big\Vert^2.
\end{array}
\end{equation*}
Now, we denote $\hat{\Delta}_t := \frac{1}{n} \sum_{j=0}^{n-1}\norms{u_j^{(t)} - u_0^{(t)}}^2 = \frac{1}{n} \sum_{j=0}^{n-1}\norms{u_j^{(t)} - \widetilde{u}_{t-1} }^2$.
Then, by \eqref{eq:Lsmooth2} from Assumption~\ref{as:A3_SGM2}, we have
\begin{equation*}
\begin{array}{lcl}
\frac{1}{n}\sum_{j=1}^{i} \big\Vert \nabla_u{\mathcal{H}}_{ \pi^{(t)}(j)} (\widetilde{w}_{t-1}, u_{j-1}^{(t)} )  - \nabla_u{\mathcal{H}}_{ \pi^{(t)}(j)} (\widetilde{w}_{t-1}, u_{0}^{(t)} )  \big\Vert^2 & \overset{\tiny\eqref{eq:Lsmooth2}}{ \leq } & \frac{L_u^2}{n} \sum_{j=1}^i\norms{ u^{(t)}_{j-1} - u_0^{(t)}}^2 \\
& \leq & L_u^2 \hat{\Delta}_t.
\end{array}
\end{equation*}
Next, by Young's inequality again in $\myeqc{1}$, $u_0^{(t)} = \widetilde{u}_{t-1}$, and \eqref{eq:bounded_variance2b} from Assumption~\ref{as:A3_SGM2}, and the fact that $\nabla_u{\Hc}_{t-1} := \nabla_u{\Hc}(\widetilde{w}_{t-1}, \widetilde{u}_{t-1})$, we can show that
\begin{equation*}
\arraycolsep=0.2em
\begin{array}{lcl}
\mathcal{T}_{[2]} &:= & \Vert \frac{1}{i} \sum_{j=1}^i \big[ \nabla_u{\mathcal{H}}_{  \pi^{(t)}(j)} (\widetilde{w}_{t-1}, u_{0}^{(t)}  )  -  \nabla_u{\Hc}_{t-1}  \big] \big\Vert^2 \\
& \overset{\tiny\textcircled{1}}{\leq} & \frac{1}{i} \sum_{j=1}^i \big\Vert \nabla_u{\mathcal{H}}_{ \pi^{(t)}(j)} ( \widetilde{w}_{t-1}, \widetilde{u}_{t-1}  )  -   \nabla_u{\Hc}(\widetilde{w}_{t-1}, \widetilde{u}_{t-1}) )  \big\Vert^2 \\
& \overset{\tiny\eqref{eq:bounded_variance2b} }{ \leq } &    \frac{n }{i} \big[ \Theta_u \Vert  \nabla_u{\Hc}(\widetilde{w}_{t-1}, \widetilde{u}_{t-1})   \Vert^2 + \sigma_u^2 \big].
\end{array}
\end{equation*}
Combining the last three inequalities above, we can show that
\begin{equation*}
\begin{array}{lcl}
\norms{u_{i}^{(t)} - u_{0}^{(t)} }^2 & \leq & \frac{ 3 i \cdot \hat{\eta}_t^2}{n}  \big[ \Theta_u \Vert   \nabla_u{\Hc}(\widetilde{w}_{t-1}, \widetilde{u}_{t-1})  \Vert^2 + \sigma_u^2 \big] \\
&& +  {~} \frac{3 i^2 \cdot \hat{\eta}_t^2}{n^2} \norms{  \nabla_u{\Hc}(\widetilde{w}_{t-1}, \widetilde{u}_{t-1})   }^2 + \frac{ 3 i \cdot L_u^2 \hat{\eta}_t^2}{n } \hat{\Delta}_t \\
& = & \frac{3i \hat{\eta}_t^2}{n^2}( n \Theta_u + i) \norms{   \nabla_u{\Hc}(\widetilde{w}_{t-1}, \widetilde{u}_{t-1})  }^2 +  \frac{3 i \cdot \hat{\eta}_t^2}{n} \sigma_u^2  + \frac{ 3 i \cdot L_u^2 \hat{\eta}_t^2}{n } \hat{\Delta}_t.
\end{array}
\end{equation*}
Averaging this inequality from $i=0$ to $ i = n-1$, we get
\begin{equation*}
\begin{array}{lcl}
\hat{\Delta}_t &:= & \frac{1}{n} \sum_{i=0}^{n-1} \norms{ u_{i}^{(t)} - u_{0}^{(t)} }^2 \\
& \leq & \frac{1}{n} \sum_{i=0}^{n-1} \big[  \frac{3i \hat{\eta}_t^2}{n^2}( n \Theta_u + i) \norms{   \nabla_u{\Hc}(\widetilde{w}_{t-1}, \widetilde{u}_{t-1})  }^2 + \frac{ 3 i \cdot \hat{\eta}_t^2}{ n } \sigma_u^2  + \frac{3 i \cdot L_u^2 \hat{\eta}_t^2}{n} \hat{\Delta}_t \big] \\
& \leq &  \frac{ (3 \Theta_u + 2) \hat{\eta}_t^2 }{2} \cdot \norms{  \nabla_u{\Hc}(\widetilde{w}_{t-1}, \widetilde{u}_{t-1})  }^2 + \frac{ 3 \hat{\eta}_t^2}{2} \sigma_u^2 + \frac{3 L_u^2 \hat{\eta}_t^2}{2} \hat{\Delta}_t.
\end{array}
\end{equation*}
Here, we have used the facts that $\sum_{i=0}^{n-1}i = \frac{n(n-1)}{2} \leq \frac{n^2}{2}$ and $\sum_{i=0}^{n-1}i^2 = \frac{n(n-1)(2n-1)}{6} \leq \frac{n^3}{3}$.
Rearranging the last inequality and noting that $1 - \frac{3 L_u^2 \hat{\eta}_t^2}{2} \geq \frac{1}{2}$, we obtain \eqref{eq:SGM3_key_bounds_for_ut_2}.

Finally, using \eqref{eq:SGM3_vt_quantity} and \eqref{eq:Lsmooth2} from Assumption~\ref{as:A3_SGM2}, we have 
\begin{equation*} 
\arraycolsep=0.05em
\begin{array}{lcl}
\norms{ v_t -   \nabla_u{\Hc}(\widetilde{w}_{t-1}, \widetilde{u}_{t-1})  }^2 & \overset{\tiny\eqref{eq:SGM3_vt_quantity} }{ = } & \big\Vert \frac{1}{n} \sum_{i=1}^{n} \big[ \nabla_u {\mathcal{H}}_{ \pi^{(t)}(i)} (\widetilde{w}_{t-1}, u_{i-1}^{(t)} ) -  \nabla_u{\Hc}_{\pi^{(t)}(i) }(\widetilde{w}_{t-1}, \widetilde{u}_{t-1}) \big] \big\Vert^2 \\
&\leq & \frac{1}{n} \sum_{i=1}^{n} \big\Vert \nabla_u{\mathcal{H}}_{ \pi^{(t)}(i)} ( \widetilde{w}_{t-1}, u_{i-1}^{(t)}  ) - \nabla_u {\mathcal{H}}_{ \pi^{(t)}(i)}(\widetilde{w}_{t-1},  \widetilde{u}_{t-1} ) \big\Vert^2 \\
& \overset{\tiny\eqref{eq:Lsmooth2}}{\leq} & \frac{L_u^2}{n}\sum_{i=1}^n\norms{ u_{i-1}^{(t)} - \widetilde{u}_{t-1}}^2,
\end{array}
\end{equation*}
which proves \eqref{eq:SGM3_key_bounds_for_ut_2b}.
\end{proof}


\begin{lemma}\label{le:GA_ut_shuffling}
Suppose that Assumption~\ref{as:A3_SGM2} holds for \eqref{eq:minimax_prob}.
Let $\sets{u_i^{(t)}}_{i=1}^n$ be generated by the simplified version \eqref{eq:SGM3_SGM4lower_prob} of \eqref{eq:SGM4lower_prob}, $\psi$ be defined by \eqref{eq:SGM3_psi_func}, $v_t$ be defined by \eqref{eq:SGM3_vt_quantity}, and $\hat{\Gc}_{\hat{\eta}}$ be defined as in \eqref{eq:SGM3_gradient_mappings}.
Then 
\begin{equation}\label{eq:GA_ut_shuffling_bound1}
\begin{array}{lcl}
\psi(\widetilde{w}_{t-1},  \widetilde{u}_t ) & \leq & \psi(\widetilde{w}_{t-1}, \widetilde{u}_{t-1})  -  \frac{ \hat{\eta}_t [ 1 + (\mu_h + \mu_H) \hat{\eta}_t - L_u\hat{\eta}_t ] }{2}  \Vert \hat{\Gc}_{ \hat{\eta}_t}(\widetilde{u}_{t-1})  \Vert^2  \\
&& - {~}  \frac{(1-L_u \hat{\eta}_t)}{2\hat{\eta}_t} \norms{ \widetilde{u}_t  - \widetilde{u}_{t-1} }^2  +  \frac{\hat{\eta}_t}{2(1 + \mu_h\hat{\eta}_t)}\norms{ v_t  - \nabla_u{\Hc}(\widetilde{w}_{t-1}, \widetilde{u}_{t-1}) }^2.
\end{array}
\end{equation}
\end{lemma}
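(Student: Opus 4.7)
The plan is to mirror the structure of Lemma~\ref{le:SGM2_key_bounds_for_Phi_2} (which handled the descent step in $w$) for the ascent step in $u$, viewing the scheme \eqref{eq:SGM3_SGM4lower_prob} as a shuffled proximal gradient step for minimizing $\psi(\widetilde{w}_{t-1},\cdot) = -\Hc(\widetilde{w}_{t-1},\cdot) + h(\cdot)$. The only new ingredient is that here we can exploit strong convexity from both $h$ (modulus $\mu_h$) and $-\Hc(\widetilde{w}_{t-1},\cdot)$ (modulus $\mu_H$), whose sum $\mu_{\psi}=\mu_h+\mu_H$ is exactly the coefficient appearing in the extra $(\mu_h+\mu_H)\hat{\eta}_t$ gain inside the bracket.

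First, I introduce the \emph{exact} prox-gradient iterate $\widehat{u}_t:=\prox_{\hat{\eta}_t h}(\widetilde{u}_{t-1}+\hat{\eta}_t\nabla_u\Hc(\widetilde{w}_{t-1},\widetilde{u}_{t-1}))$ so that $\hat{\Gc}_{\hat{\eta}_t}(\widetilde{u}_{t-1})=\hat{\eta}_t^{-1}(\widetilde{u}_{t-1}-\widehat{u}_t)$. From Fact~$[F_2]$ we get the subgradient identity $h'(\widehat{u}_t)=\hat{\Gc}_{\hat{\eta}_t}(\widetilde{u}_{t-1})+\nabla_u\Hc(\widetilde{w}_{t-1},\widetilde{u}_{t-1})\in\partial h(\widehat{u}_t)$. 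Combining the $\mu_h$-strong-convexity inequality of $h$ at $(\widehat{u}_t,\widetilde{u}_{t-1})$ with the $L_u$-smoothness (upper) bound on $-\Hc(\widetilde{w}_{t-1},\cdot)$ from \eqref{eq:Lsmooth2} and the $\mu_H$-strong-convexity (lower) bound on $-\Hc(\widetilde{w}_{t-1},\cdot)$, and telescoping the cross terms via $\widehat{u}_t-\widetilde{u}_{t-1}=-\hat{\eta}_t\hat{\Gc}_{\hat{\eta}_t}(\widetilde{u}_{t-1})$, yields an analogue of \eqref{eq:SGM2_lm2_proof1}:
\begin{equation*}
\psi(\widetilde{w}_{t-1},\widehat{u}_t)\le\psi(\widetilde{w}_{t-1},\widetilde{u}_{t-1})-\tfrac{\hat{\eta}_t(2-L_u\hat{\eta}_t)}{2}\|\hat{\Gc}_{\hat{\eta}_t}(\widetilde{u}_{t-1})\|^2-\tfrac{(\mu_h+\mu_H)\hat{\eta}_t^2}{2}\|\hat{\Gc}_{\hat{\eta}_t}(\widetilde{u}_{t-1})\|^2.
\end{equation*}

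Second, I relate the actual iterate $\widetilde{u}_t=\prox_{\hat{\eta}_t h}(\widetilde{u}_{t-1}+\hat{\eta}_t v_t)$ to $\widehat{u}_t$. Again from Fact~$[F_2]$, $h'(\widetilde{u}_t)=-v_t+\hat{\eta}_t^{-1}(\widetilde{u}_{t-1}-\widetilde{u}_t)\in\partial h(\widetilde{u}_t)$. Writing the $\mu_h$-strong-convexity inequality of $h$ to compare $h(\widetilde{u}_t)$ with $h(\widehat{u}_t)$, adding the $L_u$-smoothness and $\mu_H$-strong-convexity bounds for $-\Hc(\widetilde{w}_{t-1},\cdot)$ at the three points $(\widetilde{u}_{t-1},\widehat{u}_t,\widetilde{u}_t)$, and collecting terms gives an inequality of the form of \eqref{eq:SGM2_lm2_proof2} with an extra strong-convexity square term. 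The crucial step is to bound the cross term $\langle v_t-\nabla_u\Hc(\widetilde{w}_{t-1},\widetilde{u}_{t-1}),\widetilde{u}_t-\widehat{u}_t\rangle$ by Young's inequality with parameter tuned to $(1+\mu_h\hat{\eta}_t)/\hat{\eta}_t$: this produces the coefficient $\frac{\hat{\eta}_t}{2(1+\mu_h\hat{\eta}_t)}$ on the noise $\|v_t-\nabla_u\Hc(\widetilde{w}_{t-1},\widetilde{u}_{t-1})\|^2$, while the matching quadratic $\frac{1+\mu_h\hat{\eta}_t}{2\hat{\eta}_t}\|\widetilde{u}_t-\widehat{u}_t\|^2$ is absorbed by the $1$-coercive term from the prox identity plus $\frac{\mu_h}{2}\|\widetilde{u}_t-\widehat{u}_t\|^2$ coming from the strong convexity of $h$.

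Finally, summing the two displays (analogously to adding \eqref{eq:SGM2_lm2_proof1} and \eqref{eq:SGM2_lm2_proof2}), the cross $\|\widetilde{u}_t-\widehat{u}_t\|^2$ cancels, the $\|\widehat{u}_t-\widetilde{u}_{t-1}\|^2$ contributions combine into the stated $\hat{\eta}_t[1+(\mu_h+\mu_H)\hat{\eta}_t-L_u\hat{\eta}_t]\|\hat{\Gc}_{\hat{\eta}_t}(\widetilde{u}_{t-1})\|^2$ term, and the residual $\|\widetilde{u}_t-\widetilde{u}_{t-1}\|^2$ appears with coefficient $\frac{1-L_u\hat{\eta}_t}{2\hat{\eta}_t}$, proving \eqref{eq:GA_ut_shuffling_bound1}. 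The main obstacle is the bookkeeping around the $\frac{1}{1+\mu_h\hat{\eta}_t}$ factor: one must choose the Young parameter so that the $\mu_h$-strong-convexity term from $h$ is exactly spent cancelling the excess quadratic, rather than being wasted, and simultaneously keep enough of the $\mu_H$-strong-convexity of $-\Hc$ to produce the $(\mu_h+\mu_H)\hat{\eta}_t$ boost in the gradient-mapping coefficient.
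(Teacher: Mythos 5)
Your overall architecture is the same as the paper's: introduce the exact prox--gradient point $\widehat{u}_t$ from \eqref{eq:SGM3_gradient_mappings}, prove a descent-type bound for $\psi(\widetilde{w}_{t-1},\widehat{u}_t)$, then compare the shuffled iterate $\widetilde{u}_t=\prox_{\hat{\eta}_th}(u_n^{(t)})$ with $\widehat{u}_t$ using the convexity of $h$, Young's inequality with parameter $(1+\mu_h\hat{\eta}_t)/\hat{\eta}_t$ (which is exactly how the paper produces the $\tfrac{\hat{\eta}_t}{2(1+\mu_h\hat{\eta}_t)}$ noise coefficient), and sum the two displays. However, your first intermediate display is false as stated. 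You claim
$\psi(\widetilde{w}_{t-1},\widehat{u}_t)\le\psi(\widetilde{w}_{t-1},\widetilde{u}_{t-1})-\tfrac{\hat{\eta}_t(2-L_u\hat{\eta}_t)}{2}\|\hat{\Gc}_{\hat{\eta}_t}(\widetilde{u}_{t-1})\|^2-\tfrac{(\mu_h+\mu_H)\hat{\eta}_t^2}{2}\|\hat{\Gc}_{\hat{\eta}_t}(\widetilde{u}_{t-1})\|^2$,
i.e.\ you harvest the $\mu_H$-strong convexity of $-\Hc(\widetilde{w}_{t-1},\cdot)$ already in this step. But in this step the only bound available on $-\Hc(\widetilde{w}_{t-1},\widehat{u}_t)$ in terms of the gradient at $\widetilde{u}_{t-1}$ is the descent lemma with constant $L_u/2$, and that constant cannot be improved to $(L_u-\mu_H)/2$; the $\mu_H$-strong-convexity lower bound points the wrong way (it would require $\nabla_u\Hc(\widetilde{w}_{t-1},\widehat{u}_t)$ and extra error terms). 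Concretely, take $h\equiv 0$ ($\mu_h=0$) and $\Hc(\widetilde{w}_{t-1},u)=-\tfrac{L_u}{2}\|u\|^2$, so $\mu_H=L_u$ and $\psi(\widetilde{w}_{t-1},u)=\tfrac{L_u}{2}\|u\|^2$; then $\widehat{u}_t=(1-L_u\hat{\eta}_t)\widetilde{u}_{t-1}$, the left-hand side equals $\tfrac{L_u}{2}(1-L_u\hat{\eta}_t)^2\|\widetilde{u}_{t-1}\|^2$, while your right-hand side equals $\tfrac{L_u}{2}(1-2L_u\hat{\eta}_t)\|\widetilde{u}_{t-1}\|^2$, so the claimed inequality would force $L_u^2\hat{\eta}_t^2\le 0$. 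The correct bound for this step is the paper's weaker one, with bracket $2+\mu_h\hat{\eta}_t-L_u\hat{\eta}_t$ (only $\mu_h$ appears).

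The gap is repairable and the repair is exactly what the paper does: the $\mu_H$ term must be harvested in the \emph{second} stage, by writing the $\mu_H$-strong-convexity inequality of $-\Hc(\widetilde{w}_{t-1},\cdot)$ at base point $\widetilde{u}_{t-1}$ to compare $-\Hc(\widetilde{w}_{t-1},\widetilde{u}_{t-1})$ with $-\Hc(\widetilde{w}_{t-1},\widehat{u}_t)$, which turns the coefficient of $\|\widehat{u}_t-\widetilde{u}_{t-1}\|^2$ there into $\tfrac{1-\mu_H\hat{\eta}_t}{2\hat{\eta}_t}$; adding this to the first stage's $-\tfrac{2+\mu_h\hat{\eta}_t-L_u\hat{\eta}_t}{2\hat{\eta}_t}$ yields precisely the bracket $1+(\mu_h+\mu_H)\hat{\eta}_t-L_u\hat{\eta}_t$ in \eqref{eq:GA_ut_shuffling_bound1}. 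Your global bookkeeping of the constants is consistent with this (your weaker second-stage claim exactly compensates your too-strong first-stage claim), but as written the proof routes the strong-concavity gain through a step where it cannot be obtained, so the argument as proposed does not go through without this relocation.
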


\begin{proof}
Let us denote  $\widehat{u}_t :=  \prox_{\hat{\eta}_th}\big(\widetilde{u}_{t-1} + \hat{\eta}_t \nabla_u{\Hc}(\widetilde{w}_{t-1}, \widetilde{u}_{t-1})\big)$ as in \eqref{eq:SGM3_gradient_mappings}.
Then, from \eqref{eq:SGM3_gradient_mappings}, we have $\hat{\eta}_t {\hat{\Gc}}_{\hat{\eta}_t}(\widetilde{u}_{t-1}) = \widetilde{u}_{t-1} - \widehat{u}_t$.
Moreover, we can show that $\nabla{h}(\widehat{u}_t) := \frac{1}{\hat{\eta}_t }\big( \widetilde{u}_{t-1} - \widehat{u}_t \big)  +  \nabla{\Hc}(\widetilde{w}_{t-1}, \widetilde{u}_{t-1}) \in \partial{h}(\widehat{u}_t)$.

By the $\mu_h$-strong convexity of $h$, we have
\begin{equation*}
\begin{array}{lcl}
h(\widehat{u}_t) & \leq &  h(\widetilde{u}_{t-1}) +  \iprods{\nabla{h}(\widehat{u}_t), \widehat{u}_t - \widetilde{u}_{t-1}} - \frac{\mu_h}{2}\norms{ \widehat{u}_t - \widetilde{u}_{t-1} }^2 \\
& = &  h(\widetilde{w}_{t-1}) +  \iprods{\nabla{\Hc}(\widetilde{w}_{t-1}, \widetilde{u}_{t-1}), \widehat{u}_t - \widetilde{u}_{t-1}} -  \frac{2 + \mu_h\hat{\eta}_t }{ 2\hat{\eta}_t}   \norms{  \widehat{u}_t -  \widetilde{u}_{t-1} }^2.
\end{array}
\end{equation*}
Next, by the $L_u$-smoothness of $\Hc(\widetilde{w}_{t-1}, \cdot)$ from Assumption~\ref{as:A3_SGM2}, we have
\begin{equation*} 
\begin{array}{lcl}
-\Hc(\widetilde{w}_{t-1}, \widehat{u}_t ) & \leq &-\Hc(\widetilde{w}_{t-1}, \widetilde{u}_{t-1}) - \iprods{ \nabla_u{\Hc}(\widetilde{w}_{t-1}, \widetilde{u}_{t-1}),  \widehat{u}_t  - \widetilde{u}_{t-1} } + \frac{L_u }{2}\norms{ \widehat{u}_t  - \widetilde{u}_{t-1} }^2.
\end{array}
\end{equation*}
Summing up the last two inequalities and using both $\psi(\widetilde{w}_{t-1}, u) := - \Hc(\widetilde{w}_{t-1}, u) + h(u)$ from \eqref{eq:SGM3_psi_func}  and $\widehat{u}_t -  \widetilde{u}_{t-1} = - \hat{\eta}_t {\hat{\Gc}}_{ \hat{\eta}_t}(\widetilde{u}_{t-1})$, we can show that
\begin{equation}\label{eq:GA_lm2_proof1} 
\hspace{-0.0ex}
\begin{array}{lcl}
\psi(\widetilde{w}_{t-1}, \widehat{u}_t ) & \leq & \psi(\widetilde{w}_{t-1}, \widetilde{u}_{t-1})  - \frac{(2 + \mu_h\hat{\eta}_t - L_u\hat{\eta}_t)}{2 \hat{\eta}_t}   \norms{  \widehat{u}_t -  \widetilde{u}_{t-1} }^2 \\
& = &  \psi(\widetilde{w}_{t-1}, \widetilde{u}_{t-1})  - \frac{\hat{\eta}_t(2 + \mu_h\hat{\eta}_t - L_u \hat{\eta}_t)}{2}   \norms{  \hat{\Gc}_{\hat{\eta}_t}(\widetilde{u}_{t-1}) }^2.
\end{array}
\hspace{-2ex}
\end{equation}
Next, from \eqref{eq:SGM3_vt_quantity} and  \eqref{eq:SGM3_SGM4lower_prob}, one can derive that
\begin{equation}\label{eq:GA_lm2_ht}
\begin{array}{lcl}
v_t & \overset{\tiny \eqref{eq:SGM3_vt_quantity} }{ := } & \frac{1}{n} \sum_{j=1}^{n} \nabla_u{\mathcal{H}}_{\pi^{(t)}(j)} (\widetilde{w}_{t-1}, u_{j-1}^{(t)}) \overset{\tiny \eqref{eq:SGM3_SGM4lower_prob} }{=}  \frac{1}{\hat{\eta}_t}(u_n^{(t)} - \widetilde{u}_{t-1}) =  \frac{1}{ \hat{\eta}_t }( u_n^{(t)} - u_0^{(t)}).
\end{array}
\end{equation}
Since $\widetilde{u}_t = \prox_{\hat{\eta}_th}(u_n^{(t)})$ from \eqref{eq:SGM3_SGM4lower_prob}, we have $\nabla{h}(\widetilde{u}_t) := \frac{1}{\hat{\eta}_t } \big( u_n^{(t)} - \widetilde{u}_t \big) = v_t - \frac{1}{\hat{\eta}_t }(\widetilde{u}_t - \widetilde{u}_{t-1}) \in \partial{h}(\widetilde{u}_t)$.
Hence, again by the $\mu_h$-strong convexity of $h$, we have
\begin{equation*}
\begin{array}{lcl}
h(\widetilde{u}_t) & \leq &  h(\widehat{u}_t) +  \iprods{\nabla{h}(\widetilde{u}_t), \widetilde{u}_t - \widehat{u}_t }  - \frac{\mu_h}{2}\norms{ \widetilde{u}_t - \widehat{u}_t }^2 \\
& = &  h(\widehat{u}_t ) + \iprods{v_t, \widetilde{u}_t - \widehat{u}_t }  - \frac{1}{ \hat{\eta}_t}\iprods{\widetilde{u}_t - \widetilde{u}_{t-1}, \widetilde{u}_t - \widehat{u}_t} -  \frac{\mu_h}{2}\norms{ \widetilde{u}_t - \widehat{u}_t }^2  \\
& = & h(\widehat{u}_t)  + \iprods{v_t, \widetilde{u}_t - \widehat{u}_t } - \frac{1}{2 \hat{\eta}_t} \norms{\widetilde{u}_t - \widetilde{u}_{t-1}}^2 - \frac{(1 + \mu_h\hat{\eta}_t)}{2 \hat{\eta}_t} \norms{ \widetilde{u}_t - \widehat{u}_t }^2 +  \frac{1}{2 \hat{\eta}_t}\norms{\widehat{u}_t  - \widetilde{u}_{t-1}}^2.
\end{array}
\end{equation*}
Again, by the $L_u$-smoothness and $\mu_H$-strong concavity of $\Hc(\widetilde{w}_{t-1}, \cdot)$ from Assumption~\ref{as:A3_SGM2}, we have
\begin{equation*} 
\begin{array}{lcl}
-\Hc(\widetilde{w}_{t-1}, \widetilde{u}_t ) & \leq & -\Hc(\widetilde{w}_{t-1}, \widetilde{u}_{t-1}) - \iprods{ \nabla_u{\Hc}(\widetilde{w}_{t-1}, \widetilde{u}_{t-1}),  \widetilde{u}_t  - \widetilde{u}_{t-1} } + \frac{L_u}{2}\Vert \widetilde{u}_t  - \widetilde{u}_{t-1} \Vert^2, \\
-\Hc(\widetilde{w}_{t-1}, \widetilde{u}_{t-1} ) & \leq & -\Hc(\widetilde{w}_{t-1}, \widehat{u}_t) - \iprods{ \nabla_u{\Hc}(\widetilde{w}_{t-1}, \widetilde{u}_{t-1}),  \widetilde{u}_{t-1}  - \widehat{u}_t } - \frac{\mu_H}{2}\norms{\widehat{u}_t  - \widetilde{u}_{t-1} }^2.
\end{array}
\end{equation*}
Adding the last three inequalities together, and using $\psi(\widetilde{w}_{t-1}, u) = h(u) - \Hc(\widetilde{w}_{t-1}, u)$ from \eqref{eq:SGM3_psi_func} and $\widehat{u}_t -  \widetilde{u}_{t-1} = - \hat{\eta}_t \hat{\Gc}_{\hat{\eta}_t}(\widetilde{u}_{t-1})$, we have 
\begin{equation}\label{eq:GA_lm2_proof2} 
\hspace{-0ex}
\begin{array}{lcl}
\psi(\widetilde{w}_{t-1}, \widetilde{u}_t ) & \leq & \psi(\widetilde{w}_{t-1}, \widehat{u}_t) -  \iprods{ \nabla_u{\Hc}(\widetilde{w}_{t-1}, \widetilde{u}_{t-1}) - v_t,  \widetilde{u}_t  - \widehat{u}_t } - \frac{(1-L_u\hat{\eta}_t)}{2 \hat{\eta}_t} \norms{ \widetilde{u}_t  - \widetilde{u}_{t-1} }^2 \\
&& + {~}  \frac{1 - \mu_H\hat{\eta}_t}{2\hat{\eta}_t}  \Vert \widehat{u}_t  - \widetilde{u}_{t-1} \Vert^2  - \frac{(1 + \mu_h\hat{\eta}_t)}{2\eta_t} \norms{ \widetilde{u}_t - \widehat{u}_t }^2 \\
& \overset{\myeqc{1}}{\leq} & \varphi_t(\widehat{u}_t) +  \frac{\hat{\eta}_t}{2(1 + \mu_h\hat{\eta}_t)}\norms{ \nabla_u{\Hc}(\widetilde{w}_{t-1}, \widetilde{u}_{t-1}) - v_t }^2 - \frac{(1-L_u\hat{\eta}_t)}{2 \hat{\eta}_t} \norms{ \widetilde{u}_t  - \widetilde{u}_{t-1} }^2  \\
&& + {~}  \frac{ \hat{\eta}_t(1 - \mu_H\hat{\eta}_t) }{2}  \Vert \hat{\Gc}_{\hat{\eta}_t}(\widetilde{u}_{t-1})  \Vert^2,
\end{array}
\hspace{-2ex}
\end{equation}
where we have used Young's inequality in the last line $\myeqc{1}$ as $ \iprods{ \nabla_u{\Hc}(\widetilde{w}_{t-1}, \widetilde{u}_{t-1}) - v_t,  \widetilde{u}_t  - \widehat{u}_t } \leq \frac{\hat{\eta}_t}{2(1+\mu_h\hat{\eta}_t)}\norms{ \nabla_u{\Hc}(\widetilde{w}_{t-1}, \widetilde{u}_{t-1}) - v_t  }^2 + \frac{1 + \mu_h\hat{\eta}_t}{2\hat{\eta}_t}\norms{\widetilde{u}_t  - \widehat{u}_t }^2$.

Finally, summing up \eqref{eq:GA_lm2_proof1} and \eqref{eq:GA_lm2_proof2}, we get
\begin{equation*} 
\begin{array}{lcl}
\psi(\widetilde{w}_{t-1},  \widetilde{u}_t ) & \leq & \psi(\widetilde{w}_{t-1},  \widetilde{u}_{t-1})   - \frac{(1-L_u\hat{\eta}_t)}{2 \hat{\eta}_t} \norms{ \widetilde{u}_t  - \widetilde{u}_{t-1} }^2  
-  \frac{\hat{\eta}_t [ 1 + (\mu_h + \mu_H)\hat{\eta}_t - L_u \hat{\eta}_t ]}{2}  \Vert \hat{\Gc}_{\hat{\eta}_t}(\widetilde{u}_{t-1})  \Vert^2  \\
&& + {~}  \frac{\hat{\eta}_t}{2(1 + \mu_h\hat{\eta}_t)}\norms{ v_t  - \nabla_u{\Hc}(\widetilde{w}_{t-1}, \widetilde{u}_{t-1})  }^2,
\end{array}
\end{equation*}
which proves \eqref{eq:GA_ut_shuffling_bound1}.
\end{proof}

\beforesubsubsec
\subsubsection{Convergence analysis of the full-shuffling variant of Algorithm~\ref{alg:SGM2} -- The case $S=1$}\label{apdx:subsubsec:SGM3_full_shuffling_variant}
\aftersubsubsec
To analyze the convergence of the full-shuffling variant of Algorithm~\ref{alg:SGM2}, we need the following lemma.

\begin{lemma}\label{le:SGM3_bounding_V}
Let $\Vc_{\lambda}$ be defined by \eqref{eq:SGM3_potential_func}, $\Vc_t := \Vc_{\lambda}( \widetilde{w}_t, \widetilde{u}_t )$, and $g_t$ and $v_t$ be defined by \eqref{eq:SGM2_gt_quantity} and \eqref{eq:SGM3_vt_quantity}, respectively.
Suppose that $f$ satisfies \eqref{eq:SGM2_f_smoothness} of Assumption~\ref{as:A5}.
Then, the following bound holds: 
\begin{equation}\label{eq:SGM3_bounding_V} 
\hspace{-0.25ex}
\begin{array}{lcl}
\Vc_t - \Vc_{t-1} & \leq & - \frac{\lambda - 2 - [(1+\lambda)L_{\Phi_0}+ L_w + L_f ] \eta_t }{2\eta_t} \norms{ \widetilde{w}_t  - \widetilde{w}_{t-1} }^2 -  \frac{(1-L_u \hat{\eta}_t)}{2\hat{\eta}_t} \norms{ \widetilde{u}_t  - \widetilde{u}_{t-1} }^2 \\
&& - {~} \frac{ (\lambda + 1) \eta_t(1 - 2L_{\Phi_0}\eta_t)}{2}  \Vert {\Gc}_{\eta_t}(\widetilde{w}_{t-1})  \Vert^2   -  \frac{ \hat{\eta}_t(1 + (\mu_h + \mu_H)\hat{\eta}_t - L_u\hat{\eta}_t)}{2}  \Vert \hat{\Gc}_{ \hat{\eta}_t}(\widetilde{u}_{t-1})  \Vert^2 \\
&& + {~}  \frac{(\lambda + 1) \eta_t}{2}\norms{ g_t - \nabla{\Phi_0}(\widetilde{w}_{t-1}) }^2 +  \frac{\eta_t}{2}\norms{ g_t - \nabla_w{\Hc}(\widetilde{w}_{t-1}, \widetilde{u}_t ) }^2  \\
&& + {~}  \frac{\hat{\eta}_t}{2(1 + \mu_h\hat{\eta}_t)}\norms{ v_t  - \nabla_u{\Hc}(\widetilde{w}_{t-1}, \widetilde{u}_{t-1}) }^2.
\end{array}
\hspace{-4ex}
\end{equation}
\end{lemma}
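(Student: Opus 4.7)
The plan is to decompose the one-step change $\Vc_t - \Vc_{t-1}$ into three pieces that exactly match the three one-step estimates we have already established: Lemma~\ref{le:SGM2_key_bounds_for_Phi_2} (descent on $\Psi_0$ along the shuffling $w$-step), Lemma~\ref{le:SGM3_key_bounds_for_Phi_2b} (controlling $\Lc(\widetilde{w}_{t-1},\widetilde{u}_t) - \Lc(\widetilde{w}_t,\widetilde{u}_t)$ via $f$'s ``upper quadratic'' property from Assumption~\ref{as:A5}), and Lemma~\ref{le:GA_ut_shuffling} (descent on $\psi(\widetilde{w}_{t-1},\cdot)$ along the shuffling $u$-step). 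Once assembled, only a bookkeeping of the coefficient of $\norms{\widetilde{w}_t - \widetilde{w}_{t-1}}^2$ requires care; everything else carries through verbatim.

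Concretely, I first rewrite
\begin{equation*}
\Vc_t - \Vc_{t-1} \;=\; (\lambda+1)\bigl[\Psi_0(\widetilde{w}_t) - \Psi_0(\widetilde{w}_{t-1})\bigr] \;-\; \bigl[\Lc(\widetilde{w}_t,\widetilde{u}_t) - \Lc(\widetilde{w}_{t-1},\widetilde{u}_{t-1})\bigr].
\end{equation*}
For the second bracket I insert the midpoint $\Lc(\widetilde{w}_{t-1},\widetilde{u}_t)$ and use the identity $\Lc(w,u) = f(w) - \psi(w,u)$ (which follows directly from \eqref{eq:minimax_prob} and \eqref{eq:SGM3_psi_func}) to convert the $u$-difference into a $\psi$-difference:
\begin{equation*}
-\bigl[\Lc(\widetilde{w}_t,\widetilde{u}_t) - \Lc(\widetilde{w}_{t-1},\widetilde{u}_{t-1})\bigr] \;=\; \bigl[\Lc(\widetilde{w}_{t-1},\widetilde{u}_t) - \Lc(\widetilde{w}_t,\widetilde{u}_t)\bigr] + \bigl[\psi(\widetilde{w}_{t-1},\widetilde{u}_t) - \psi(\widetilde{w}_{t-1},\widetilde{u}_{t-1})\bigr].
\end{equation*}

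Next, I apply the three lemmas to the three pieces: multiply \eqref{eq:SGM2_key_bounds_for_Phi_2} by $(\lambda+1)$, add \eqref{eq:SGM3_key_bounds_for_Phi_2b}, and add \eqref{eq:GA_ut_shuffling_bound1}. The squared-gradient-mapping terms $-(\lambda+1)\tfrac{\eta_t(1-2L_{\Phi_0}\eta_t)}{2}\norms{\Gc_{\eta_t}(\widetilde{w}_{t-1})}^2$ and $-\tfrac{\hat{\eta}_t(1+(\mu_h+\mu_H)\hat{\eta}_t - L_u\hat{\eta}_t)}{2}\norms{\hat{\Gc}_{\hat{\eta}_t}(\widetilde{u}_{t-1})}^2$ appear directly, as do the three inner-product remainder terms in \eqref{eq:SGM3_bounding_V}. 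The coefficient of $\norms{\widetilde{u}_t - \widetilde{u}_{t-1}}^2$ is inherited verbatim from Lemma~\ref{le:GA_ut_shuffling}, equal to $-\tfrac{1-L_u\hat{\eta}_t}{2\hat{\eta}_t}$.

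The only computation that is not immediate is the coefficient of $\norms{\widetilde{w}_t - \widetilde{w}_{t-1}}^2$, where contributions come from both \eqref{eq:SGM2_key_bounds_for_Phi_2} and \eqref{eq:SGM3_key_bounds_for_Phi_2b}. Collecting them gives
\begin{equation*}
-\,\tfrac{(\lambda+1)(1-L_{\Phi_0}\eta_t)}{2\eta_t} \;+\; \tfrac{3 + (L_f+L_w)\eta_t}{2\eta_t} \;=\; -\,\tfrac{\lambda - 2 - [(\lambda+1)L_{\Phi_0} + L_f + L_w]\eta_t}{2\eta_t},
\end{equation*}
which is exactly the coefficient stated in \eqref{eq:SGM3_bounding_V}. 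This sign-tracking is the most delicate step, because the ``$3$'' coming from the Young-inequality tightening in \eqref{eq:SGM3_key_bounds_for_Phi_2b} is what forces the $-2$ in the numerator; choosing $\lambda$ large enough later (in the convergence theorem) is what makes this coefficient negative and hence makes $\Vc_t$ a genuine Lyapunov sequence. Combining everything yields \eqref{eq:SGM3_bounding_V}, completing the proof.
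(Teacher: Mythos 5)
Your proposal is correct and follows essentially the same route as the paper's own proof: the identical split of $\Vc_t - \Vc_{t-1}$ into the $(\lambda+1)$-weighted $\Psi_0$-decrement, the $\Lc(\widetilde{w}_{t-1},\widetilde{u}_t)-\Lc(\widetilde{w}_t,\widetilde{u}_t)$ term, and the $\psi$-difference, each bounded by Lemmas~\ref{le:SGM2_key_bounds_for_Phi_2}, \ref{le:SGM3_key_bounds_for_Phi_2b}, and \ref{le:GA_ut_shuffling}, respectively. Your coefficient bookkeeping for $\norms{\widetilde{w}_t-\widetilde{w}_{t-1}}^2$ matches the paper's computation exactly, so nothing is missing.
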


\begin{proof}
From \eqref{eq:SGM3_potential_func}, if we denote $\Vc_t := \Vc_{\lambda}(\widetilde{w}_t, \widetilde{u}_t)$, then we have
\begin{equation}\label{eq:SGM3_bound_V_proof1} 
\begin{array}{lcl}
\Vc_t - \Vc_{t-1} &= & (\lambda + 1) \big[ \Psi_0(\widetilde{w}_t) - \Psi_0(\widetilde{w}_{t-1}) \big] + \Lc(\widetilde{w}_{t-1}, \widetilde{u}_{t-1}) - \Lc(\widetilde{w}_t, \widetilde{u}_t) \\
& = & (\lambda + 1) \big[ \Psi_0(\widetilde{w}_t) - \Psi_0(\widetilde{w}_{t-1}) \big] + \Lc(\widetilde{w}_{t-1}, \widetilde{u}_{t-1}) - \Lc(\widetilde{w}_{t-1}, \widetilde{u}_t ) \\
&& + {~} \Lc(\widetilde{w}_{t-1}, \widetilde{u}_t ) - \Lc(\widetilde{w}_t, \widetilde{u}_t) \\
& = & (\lambda + 1)  \big[ \Psi_0(\widetilde{w}_t) - \Psi_0(\widetilde{w}_{t-1}) \big]   +  \Lc(\widetilde{w}_{t-1}, \widetilde{u}_t ) - \Lc(\widetilde{w}_t, \widetilde{u}_t) \\
&& + {~} \psi(\widetilde{w}_{t-1}, \widetilde{u}_t ) - \psi(\widetilde{w}_{t-1}, \widetilde{u}_{t-1}).
\end{array}
\end{equation}
Next, from \eqref{eq:SGM2_key_bounds_for_Phi_2}, we have
\begin{equation}\label{eq:SGM3_bound_V_proof2}
\begin{array}{lcl}
\Psi_0(\widetilde{w}_t ) - \Psi_0(\widetilde{w}_{t-1})  &\leq &  -  \frac{\eta_t(1 - 2L_{\Phi_0}\eta_t)}{2}  \Vert {\Gc}_{\eta_t}(\widetilde{w}_{t-1})  \Vert^2  - \frac{(1-L_{\Phi_0}\eta_t)}{2\eta_t} \norms{ \widetilde{w}_t  - \widetilde{w}_{t-1} }^2  \\
&& + {~}  \frac{\eta_t}{2}\norms{ g_t - \nabla{\Phi_0}(\widetilde{w}_{t-1}) }^2.
\end{array}
\end{equation}
From \eqref{eq:SGM3_key_bounds_for_Phi_2b}, we also have
\begin{equation}\label{eq:SGM3_bound_V_proof3} 
\begin{array}{lcl}
\Lc(\widetilde{w}_{t-1}, \widetilde{u}_t ) -  \Lc( \widetilde{w}_t, \widetilde{u}_t) & \leq &  \frac{\eta_t}{2}\norms{ g_t - \nabla_w{\Hc}(\widetilde{w}_{t-1}, \widetilde{u}_t ) }^2 +   \frac{ 3 + (L_f+L_w)\eta_t }{2\eta_t}   \norms{  \widetilde{w}_t -  \widetilde{w}_{t-1} }^2.
\end{array}
\end{equation}
From  \eqref{eq:GA_ut_shuffling_bound1}, we can rewrite it as
\begin{equation}\label{eq:SGM3_bound_V_proof4}
\begin{array}{lcl}
\psi(\widetilde{w}_{t-1},  \widetilde{u}_t ) - \psi(\widetilde{w}_{t-1}, \widetilde{u}_{t-1}) & \leq &  -  \frac{ \hat{\eta}_t [ 1 + (\mu_h + \mu_H) \hat{\eta}_t - L_u\hat{\eta}_t ] }{2}  \Vert \hat{\Gc}_{ \hat{\eta}_t}(\widetilde{u}_{t-1})  \Vert^2  \\
&& + {~}   \frac{\hat{\eta}_t}{2(1 + \mu_h\hat{\eta}_t)}\norms{ v_t  - \nabla_u{\Hc}(\widetilde{w}_{t-1}, \widetilde{u}_{t-1}) }^2 \vspace{0.5ex} \\
&& - {~} \frac{(1-L_u \hat{\eta}_t)}{2\hat{\eta}_t} \norms{ \widetilde{u}_t  - \widetilde{u}_{t-1} }^2.
\end{array}
\end{equation}
Substituting \eqref{eq:SGM3_bound_V_proof2}, \eqref{eq:SGM3_bound_V_proof3}, and \eqref{eq:SGM3_bound_V_proof4} into \eqref{eq:SGM3_bound_V_proof1}, we can derive that
\begin{equation*} 
\begin{array}{lcl}
\Vc_t - \Vc_{t-1} & \leq & - \frac{\lambda - 2 - [(1+\lambda)L_{\Phi_0}+ L_w + L_f ] \eta_t }{2\eta_t} \norms{ \widetilde{w}_t  - \widetilde{w}_{t-1} }^2 -  \frac{(1-L_u \hat{\eta}_t)}{2\hat{\eta}_t} \norms{ \widetilde{u}_t  - \widetilde{u}_{t-1} }^2 \\
&& - {~} \frac{ (\lambda + 1) \eta_t(1 - 2L_{\Phi_0}\eta_t)}{2}  \Vert {\Gc}_{\eta_t}(\widetilde{w}_{t-1})  \Vert^2   -  \frac{ \hat{\eta}_t(1 + (\mu_h + \mu_H)\hat{\eta}_t - L_u\hat{\eta}_t)}{2}  \Vert \hat{\Gc}_{ \hat{\eta}_t}(\widetilde{u}_{t-1})  \Vert^2 \\
&& + {~}  \frac{(\lambda + 1) \eta_t}{2}\norms{ g_t - \nabla{\Phi_0}(\widetilde{w}_{t-1}) }^2 +  \frac{\eta_t}{2}\norms{ g_t - \nabla_w{\Hc}(\widetilde{w}_{t-1}, \widetilde{u}_t ) }^2  \\
&& + {~}  \frac{\hat{\eta}_t}{2(1 + \mu_h\hat{\eta}_t)}\norms{ v_t  - \nabla_u{\Hc}(\widetilde{w}_{t-1}, \widetilde{u}_{t-1}) }^2, 
\end{array}
\end{equation*}
which proves \eqref{eq:SGM3_bounding_V}.
\end{proof}

Next, we further upper bound \eqref{eq:SGM3_bounding_V} from Lemma~\ref{le:SGM3_bounding_V} as follows.

\begin{lemma}\label{le:SGM3_full_shuffling_V_bound1}
Under the same condition as in Lemma~\ref{le:SGM3_bounding_V}, $1 - 3L_w^2\eta_t^2 \geq 0$, $1 - 3L_u^2\hat{\eta}_t^2 \geq 0$, and $\hat{\eta}_t \leq \frac{2}{L_u + \mu_H}$, we have
\begin{equation}\label{eq:SGM3_full_shuffling_V_bound2} 
\arraycolsep=0.1em
\begin{array}{lcl}
\Vc_t - \Vc_{t-1} & \leq &  - \frac{C_0}{2\eta_t } \norms{ \widetilde{w}_t  - \widetilde{w}_{t-1} }^2 -  \frac{1 - (L_u + 3C_1) \hat{\eta}_t}{2\hat{\eta}_t} \norms{ \widetilde{u}_t  - \widetilde{u}_{t-1} }^2  - \frac{C_3 \eta_t}{2}  \Vert {\Gc}_{\eta_t}(\widetilde{w}_{t-1})  \Vert^2  \\
&&   - {~}  \big( \mu_{\psi} C_2 \hat{\eta}_t  -   \frac{3C_1 \eta_t}{4\mu_{\psi}} \big) \big[ \psi( \widetilde{w}_{t-1}, \widehat{u}_t ) - \psi( \widetilde{w}_{t-1}, u_0^{*}( \widetilde{w}_{t-1} )) \big] +  C_4 \eta_t^3 + C_5 \hat{\eta}_t^3,
\end{array}
\end{equation}
where $C_i$ for $i=0,1,\cdots, 5$ are respectively given as follows:
\begin{equation}\label{eq:SGM3_full_shuffling_V_bound2_constants} 
\arraycolsep=0.2em
\left\{\begin{array}{lcl}
C_0 & := &  \lambda - 2 - [(1+\lambda)L_{\Phi_0}+ L_w + L_f ] \eta_t, \\
C_1 & := &   L_u^2[ \lambda + 1 + 4(\lambda + 2) L_w^2 \eta_t^2 ], \\
C_2 & := & 1 - (L_u - \mu_{\psi})\hat{\eta}_t - \frac{ \hat{\Lambda}_0 L_u^2(3 \Theta_u + 2) \hat{\eta}_t^2}{ 1 + \mu_h\hat{\eta}_t} - 3C_1\eta_t\hat{\eta}_t, \\
C_3 &:= &  (\lambda + 1)(1 - 2L_{\Phi_0}\eta_t) - 2\Lambda_0(\lambda + 2) L_w^2(3 \Theta_w + 1) \eta_t^2, \\
C_4 & := &  (\lambda+2)L_w^2\big[ 3\sigma_w^2 +  \Lambda_1 (3 \Theta_w + 1) \big], \\
C_5 &:= & \frac{ \hat{\Lambda}_1 L_u^2(3 \Theta_u + 2)  + 3L_u^2   \sigma_u^2 }{2(1 + \mu_h\hat{\eta}_t)}.
\end{array}\right.
\end{equation}
\end{lemma}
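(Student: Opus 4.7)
The plan is to start from inequality \eqref{eq:SGM3_bounding_V} of Lemma~\ref{le:SGM3_bounding_V} and sharpen its three variance-type remainders by substituting the per-epoch shuffling estimates we have already established. For the two $g_t$-variances I invoke the first and third lines of \eqref{eq:SGM2_key_bounds_for_gt_2}, then expand $\Delta_t$ through \eqref{eq:SGM2_key_bounds_for_wt_2}; for the $v_t$-variance I invoke \eqref{eq:SGM3_key_bounds_for_ut_2b} and expand $\hat\Delta_t$ through \eqref{eq:SGM3_key_bounds_for_ut_2}. After this substitution the right-hand side decomposes into (i) quadratic terms in $\norms{\nabla\Phi_0(\widetilde w_{t-1})}^2$ and $\norms{\nabla_u\Hc(\widetilde w_{t-1},\widetilde u_{t-1})}^2$, (ii) a cross-term $\tfrac{\eta_t C_1}{2}\norms{\widetilde u_t - u_0^{*}(\widetilde w_{t-1})}^2$, where $C_1$ arises by simply collecting the two $L_u^2$-contributions from \eqref{eq:SGM2_key_bounds_for_gt_2} and the $4(\lambda{+}2)L_w^2\eta_t^2$ factor produced when \eqref{eq:SGM2_key_bounds_for_wt_2} is applied to $\Delta_t$, and (iii) pure cubic perturbations $3(\lambda{+}2)L_w^2\sigma_w^2\eta_t^3$ and $\tfrac{3L_u^2\sigma_u^2\hat\eta_t^3}{2(1+\mu_h\hat\eta_t)}$ that will feed $C_4$ and the $\sigma_u$ piece of $C_5$.

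I then apply the two gradient-mapping assumptions. Assumption~\ref{as:A3} replaces $\norms{\nabla\Phi_0(\widetilde w_{t-1})}^2$ by $\Lambda_0\norms{\Gc_{\eta_t}(\widetilde w_{t-1})}^2 + \Lambda_1$: the $\Lambda_0$ portion merges with the leading $(\lambda{+}1)\eta_t(1{-}2L_{\Phi_0}\eta_t)/2$ coefficient to produce exactly $C_3$, while the $\Lambda_1$ portion combines with the $\sigma_w^2$ piece above to form $C_4$. Assumption~\ref{as:A4} likewise replaces $\norms{\nabla_u\Hc(\widetilde w_{t-1},\widetilde u_{t-1})}^2$ by $\hat\Lambda_0\norms{\hat\Gc_{\hat\eta_t}(\widetilde u_{t-1})}^2 + \hat\Lambda_1$, pushing the $\hat\Lambda_0$ piece onto the coefficient of $\norms{\hat\Gc_{\hat\eta_t}}^2$ (one of the three contributions that build up $C_2$) and the $\hat\Lambda_1$ piece into $C_5$.

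The delicate step is converting the residual cross-term $\norms{\widetilde u_t - u_0^{*}(\widetilde w_{t-1})}^2$ into quantities the potential function can absorb, and it is here that Lemma~\ref{le:SGM3_key_properties_of_psi} enters. I split this residual by a three-point Young inequality
\begin{equation*}
\norms{\widetilde u_t - u_0^{*}(\widetilde w_{t-1})}^2 \;\leq\; 3\norms{\widetilde u_t - \widetilde u_{t-1}}^2 + 3\norms{\widetilde u_{t-1} - \widehat u_t}^2 + 3\norms{\widehat u_t - u_0^{*}(\widetilde w_{t-1})}^2,
\end{equation*}
with $\widehat u_t$ the one-step prox-gradient iterate from \eqref{eq:SGM3_gradient_mappings}. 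The first summand is absorbed into the ambient $-\tfrac{1-L_u\hat\eta_t}{2\hat\eta_t}\norms{\widetilde u_t - \widetilde u_{t-1}}^2$ term, producing the declared $-(L_u+3C_1)\hat\eta_t$ form in \eqref{eq:SGM3_full_shuffling_V_bound2}; the second summand equals $\hat\eta_t^2\norms{\hat\Gc_{\hat\eta_t}(\widetilde u_{t-1})}^2$ by the very definition of the gradient mapping in \eqref{eq:SGM3_gradient_mappings}, and is absorbed into the $\norms{\hat\Gc_{\hat\eta_t}}^2$-coefficient, contributing precisely the $-3C_1\eta_t\hat\eta_t$ piece of $C_2$; the third summand is bounded by $\tfrac{1}{2\mu_{\psi}}[\psi(\widetilde w_{t-1},\widehat u_t) - \psi(\widetilde w_{t-1}, u_0^{*}(\widetilde w_{t-1}))]$ via \eqref{eq:SGM3_key_properties_of_psi_p1}, generating the $+\tfrac{3C_1\eta_t}{4\mu_{\psi}}$ part of the $\psi$-gap coefficient.

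Finally, the surviving negative mass $-\tfrac{\hat\eta_t C_2}{2}\norms{\hat\Gc_{\hat\eta_t}(\widetilde u_{t-1})}^2$ is turned into $-\mu_{\psi} C_2\hat\eta_t[\psi(\widetilde w_{t-1},\widehat u_t) - \psi(\widetilde w_{t-1}, u_0^{*}(\widetilde w_{t-1}))]$ via \eqref{eq:SGM3_key_properties_of_psi_p2}, using $\hat\eta_t \leq 2/(L_u+\mu_H)$ to drop the benign factor $1 - 2L_u\mu_H\hat\eta_t/(L_u+\mu_H) \leq 1$. Combining this with the positive $\psi$-gap contribution from the three-point split gives the net coefficient $-(\mu_{\psi} C_2\hat\eta_t - 3C_1\eta_t/(4\mu_{\psi}))$, matching the statement exactly. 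The main obstacle is pure bookkeeping: the three coefficients sitting on $\norms{\widetilde u_t - \widetilde u_{t-1}}^2$, $\norms{\hat\Gc_{\hat\eta_t}(\widetilde u_{t-1})}^2$, and the $\psi$-gap must be updated in lock-step at every substitution, and the uniform factor $3$ in the Young split has to be the same one used to absorb both the second summand (into $\norms{\hat\Gc_{\hat\eta_t}}^2$) and the first summand (into $\norms{\widetilde u_t - \widetilde u_{t-1}}^2$), so that $C_2$ picks up exactly the $3C_1\eta_t\hat\eta_t$ coefficient and $C_0$ sees no contamination from the lower-level update.
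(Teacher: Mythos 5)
Your proposal is correct and follows essentially the same route as the paper's proof: substitute the shuffling variance bounds from Lemmas~\ref{le:SGM2_key_bounds_for_wt_2} and \ref{le:SGM3_key_bounds_for_ut_2} into \eqref{eq:SGM3_bounding_V}, apply Assumptions~\ref{as:A3} and \ref{as:A4}, split $\norms{\widetilde{u}_t - u_0^{*}(\widetilde{w}_{t-1})}^2$ via the same three-point Young inequality together with \eqref{eq:SGM3_key_properties_of_psi_p1}, and finally convert the $\hat{\Gc}_{\hat{\eta}_t}$ term through \eqref{eq:SGM3_key_properties_of_psi_p2}. The resulting bookkeeping for $C_0,\dots,C_5$ matches the paper's derivation.
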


\begin{proof}
First, since $1 - 3L_w^2\eta_t^2 \geq 0$, combining the last line of \eqref{eq:SGM2_key_bounds_for_gt_2} and \eqref{eq:SGM2_key_bounds_for_wt_2} of Lemma~\ref{le:SGM2_key_bounds_for_wt_2}, we can show that
\begin{equation}\label{eq:SGM3_full_shuffling_V_bound1_proof1}
\hspace{-0.25ex}
\begin{array}{lcl}
\norms{ g_t - \nabla{\Phi_0}(\widetilde{w}_{t-1}) }^2 & \overset{ \tiny\eqref{eq:SGM2_key_bounds_for_gt_2} }{ \leq } & \frac{L_w^2}{n}\sum_{j=1}^n \norms{w_{i-1}^{(t)} - w_0^{(t)}}^2 + L_u^2\norms{ \widetilde{u}_t - u_0^{*}(\widetilde{w}_{t-1}) }^2  \\
& \overset{ \tiny\eqref{eq:SGM2_key_bounds_for_wt_2}}{ \leq } &   L_u^2\big( 4 L_w^2\eta_t^2  + 1 \big) \norms{ \widetilde{u}_t - u_0^{*}(\widetilde{w}_{t-1})}^2 +  6 L_w^2\sigma_w^2 \eta_t^2 \\
&& + {~} 2 L_w^2(3 \Theta_w + 1) \eta_t^2 \norms{ \nabla{\Phi_0}(\widetilde{w}_{t-1}) }^2.
\end{array}
\hspace{-3ex}
\end{equation}
Similarly, from the first line of \eqref{eq:SGM2_key_bounds_for_gt_2} and \eqref{eq:SGM2_key_bounds_for_wt_2}, we also have
\begin{equation}\label{eq:SGM3_full_shuffling_V_bound1_proof1b}
\begin{array}{lcl}
\norms{ g_t -  \nabla_w{\Hc}(\widetilde{w}_{t-1}, \widetilde{u}_t)  }^2  &\leq &  4 L_w^2 L_u^2\eta_t^2 \norms{ \widetilde{u}_t - u_0^{*}(\widetilde{w}_{t-1})}^2  +  6 L_w^2\sigma_w^2 \eta_t^2 \\
&& + {~} 2 L_w^2(3 \Theta_w + 1) \eta_t^2 \norms{ \nabla{\Phi_0}(\widetilde{w}_{t-1}) }^2.
\end{array}
\end{equation}
Next, since $1 - 3L_u^2\hat{\eta}_t \geq 0$, combining \eqref{eq:SGM3_key_bounds_for_ut_2} and \eqref{eq:SGM3_key_bounds_for_ut_2b} of Lemma~\ref{le:SGM3_key_bounds_for_ut_2}, and \eqref{eq:grad_mapp_bound2} from Assumption~\ref{as:A4}, we have
\begin{equation}\label{eq:SGM3_full_shuffling_V_bound1_proof2}
\begin{array}{lcl}
\norms{ v_t -   \nabla_u{\Hc}(\widetilde{w}_{t-1}, \widetilde{u}_{t-1}) }^2 &\leq & L_u^2(3 \Theta_u + 2) \hat{\eta}_t^2  \norms{  \nabla_u{\Hc}(\widetilde{w}_{t-1}, \widetilde{u}_{t-1})  }^2 + 3L_u^2   \sigma_u^2 \hat{\eta}_t^2 \\
& \leq & \hat{\Lambda}_0 L_u^2(3 \Theta_u + 2) \hat{\eta}_t^2  \norms{  \hat{\Gc}_{\hat{\eta}_t }( \widetilde{u}_{t-1})  }^2 \\
&& + {~} \big(\hat{\Lambda}_1 L_u^2(3 \Theta_u + 2)  + 3L_u^2   \sigma_u^2 \big) \hat{\eta}_t^2.
\end{array}
\end{equation}
Substituting \eqref{eq:SGM3_full_shuffling_V_bound1_proof1}, \eqref{eq:SGM3_full_shuffling_V_bound1_proof1b}, and \eqref{eq:SGM3_full_shuffling_V_bound1_proof2} into \eqref{eq:SGM3_bounding_V}, and noting that $\mu_{\psi} := \mu_h + \mu_H > 0$, we obtain
\begin{equation}\label{eq:SGM3_full_shuffling_V_bound1_proof3}
\hspace{-0.25ex}
\begin{array}{lcl}
\Vc_t - \Vc_{t-1} & \leq & - \frac{\lambda - 2 - [(1+\lambda)L_{\Phi_0}+ L_w + L_f ] \eta_t }{2\eta_t} \norms{ \widetilde{w}_t  - \widetilde{w}_{t-1} }^2 -  \frac{(1-L_u \hat{\eta}_t)}{2\hat{\eta}_t} \norms{ \widetilde{u}_t  - \widetilde{u}_{t-1} }^2 \\
&& - {~} \frac{ (\lambda + 1) (1 - 2L_{\Phi_0}\eta_t) \eta_t }{2}  \Vert {\Gc}_{\eta_t}(\widetilde{w}_{t-1})  \Vert^2 +  (\lambda + 2) L_w^2(3 \Theta_w + 1) \eta_t^3 \norms{ \nabla{\Phi_0}(\widetilde{w}_{t-1}) }^2 \\
&&   - {~}  \frac{ \hat{\eta}_t}{2} \big[1 - (L_u - \mu_{\psi} )\hat{\eta}_t - \frac{ \hat{\Lambda}_0 L_u^2(3 \Theta_u + 2) \hat{\eta}_t^2}{ 1 + \mu_h\hat{\eta}_t} \big] \Vert \hat{\Gc}_{ \hat{\eta}_t}(\widetilde{u}_{t-1})  \Vert^2 \\
&& + {~}  \frac{L_u^2\eta_t}{2} \big[ 4(\lambda + 2) L_w^2 \eta_t^2  + \lambda+1 \big]  \norms{ \widetilde{u}_t - u_0^{*}(\widetilde{w}_{t-1})}^2 \\
&& + {~}  3(\lambda + 2) L_w^2\sigma_w^2 \eta_t^3  + \frac{ [ \hat{\Lambda}_1 L_u^2(3 \Theta_u + 2)  + 3L_u^2   \sigma_u^2]  \hat{\eta}_t^3}{2(1 + \mu_h\hat{\eta}_t)}. 
\end{array}
\hspace{-4ex}
\end{equation}
Next, by Young's inequality, \eqref{eq:SGM3_gradient_mappings}, and  \eqref{eq:SGM3_key_properties_of_psi_p1}, we can show that
\begin{equation*} 
\begin{array}{lcl}
\norms{\widetilde{u}_t - u_0^{*}(\widetilde{w}_{t-1} ) }^2 &\leq & 3\norms{\widetilde{u}_t - \widetilde{u}_{t-1}}^2 + 3\norms{\widetilde{u}_{t-1} - \widehat{u}_t }^2 + 3\norms{\widehat{u}_t - u_0^{*}(\widetilde{w}_{t-1} ) }^2\\
& \overset{\tiny\eqref{eq:SGM3_gradient_mappings},  \eqref{eq:SGM3_key_properties_of_psi_p1} }{\leq} & 3\norms{\widetilde{u}_t - \widetilde{u}_{t-1}}^2 + 3\hat{\eta}_t^2 \norms{\hat{\Gc}_{\hat{\eta}_t}(\widetilde{u}_{t-1} ) }^2 \\
&& + {~} \frac{3}{2\mu_{\psi}} \big[ \psi( \widetilde{w}_{t-1}, \widehat{u}_t ) - \psi( \widetilde{w}_{t-1}, u_0^{*}( \widetilde{w}_{t-1} )) \big].
\end{array}
\end{equation*}
Substituting the last inequality and $\norms{ \nabla{\Phi_0}(\widetilde{w}_{t-1}) }^2  \leq  \Lambda_0\norms{ {\Gc}_{\eta_t}(\widetilde{w}_{t-1}) }^2 + \Lambda_1$ from \eqref{eq:grad_mapp_bound} of Assumption~\ref{as:A3} into \eqref{eq:SGM3_full_shuffling_V_bound1_proof3}, we can derive that
\begin{equation}\label{eq:SGM3_full_shuffling_V_bound1_proof5}
\begin{array}{lcl}
\Vc_t - \Vc_{t-1} & \leq & - \frac{\lambda - 2 - [(1+\lambda)L_{\Phi_0}+ L_w + L_f ] \eta_t }{2\eta_t} \norms{ \widetilde{w}_t  - \widetilde{w}_{t-1} }^2 -  \frac{1 - (L_u + 3C_1\eta_t) \hat{\eta}_t}{2\hat{\eta}_t} \norms{ \widetilde{u}_t  - \widetilde{u}_{t-1} }^2 \\
&& - {~} \frac{\eta_t}{2} \big[  (\lambda + 1)(1 - 2L_{\Phi_0}\eta_t) - 2\Lambda_0(\lambda + 2) L_w^2(3 \Theta_w + 1) \eta_t^2 \big]  \Vert {\Gc}_{\eta_t}(\widetilde{w}_{t-1})  \Vert^2  \\
&&   - {~}  \frac{ C_2 \hat{\eta}_t}{2} \Vert \hat{\Gc}_{ \hat{\eta}_t}(\widetilde{u}_{t-1})  \Vert^2  +  \frac{3C_1 \eta_t}{4 \mu_{\psi}} \big[ \psi( \widetilde{w}_{t-1}, \widehat{u}_t ) - \psi( \widetilde{w}_{t-1}, u_0^{*}( \widetilde{w}_{t-1} )) \big] \\
&& + {~}  \big[ 3(\lambda + 2) L_w^2\sigma_w^2 +  \Lambda_1(\lambda + 2) L_w^2(3 \Theta_w + 1) \big] \eta_t^3 \\
&& + {~} \frac{\big[ \hat{\Lambda}_1 L_u^2(3 \Theta_u + 2)  + 3L_u^2   \sigma_u^2 \big] \hat{\eta}_t^3}{2(1 + \mu_h\hat{\eta}_t)}, 
\end{array}
\end{equation}
where $C_1 := L_u^2\big[ 4(\lambda + 2) L_w^2 \eta_t^2  + \lambda + 1 \big]$ and $C_2 := 1 - (L_u - \mu_h - \mu_H)\hat{\eta}_t - \frac{ \hat{\Lambda}_0 L_u^2(3 \Theta_u + 2) \hat{\eta}_t^2}{ 1 + \mu_h\hat{\eta}_t} - 3C_1\eta_t\hat{\eta}_t$.

Finally, from \eqref{eq:SGM3_key_properties_of_psi_p2}, since $\hat{\eta}_t \leq \frac{2}{L_u + \mu_H}$, we also have
\begin{equation*}
\arraycolsep=0.2em
\begin{array}{lcl}
-  \norms{ \hat{\Gc}_{\hat{\eta}_t }( \widetilde{u}_{t-1} )  }^2 \leq - \left(1 - \frac{2L_u\mu_H \hat{\eta}_t}{L_u + \mu_H}\right) \norms{ \hat{\Gc}_{\hat{\eta}_t }( \widetilde{u}_{t-1} )  }^2 \leq - 2\mu_{\psi}[ \psi( \widetilde{w}_{t-1}, \widehat{u}_t ) - \psi( \widetilde{w}_{t-1}, u_0^{*}( \widetilde{w}_{t-1} ))].
\end{array}
\end{equation*}
Substituting this inequality into \eqref{eq:SGM3_full_shuffling_V_bound1_proof5}, we arrive at
\begin{equation*} 
\hspace{-0.25ex}
\begin{array}{lcl}
\Vc_t - \Vc_{t-1} & \leq & - \frac{\lambda - 2 - [(1+\lambda)L_{\Phi_0}+ L_w + L_f ] \eta_t }{2\eta_t} \norms{ \widetilde{w}_t  - \widetilde{w}_{t-1} }^2 -  \frac{1 - (L_u + 3C_1\eta_t) \hat{\eta}_t}{2\hat{\eta}_t} \norms{ \widetilde{u}_t  - \widetilde{u}_{t-1} }^2 \\
&& - {~} \frac{\eta_t}{2} \big[  (\lambda + 1)(1 - 2L_{\Phi_0}\eta_t) - 2\Lambda_0( \lambda + 2) L_w^2(3 \Theta_w + 1) \eta_t^2 \big]  \Vert {\Gc}_{\eta_t}(\widetilde{w}_{t-1})  \Vert^2  \\
&&   - {~}  \big( \mu_{\psi} C_2 \hat{\eta}_t  -   \frac{3C_1 \eta_t}{4\mu_{\psi}} \big) \big[ \psi( \widetilde{w}_{t-1}, \widehat{u}_t ) - \psi( \widetilde{w}_{t-1}, u_0^{*}( \widetilde{w}_{t-1} )) \big] \\
&& + {~}  (\lambda + 2)L_w^2 \big[ 3 \sigma_w^2 +  \Lambda_1 (3 \Theta_w + 1) \big] \eta_t^3 + \frac{ \big[ \hat{\Lambda}_1 L_u^2(3 \Theta_u + 2)  + 3L_u^2   \sigma_u^2 \big] \hat{\eta}_t^3}{2(1 + \mu_h\hat{\eta}_t)}, 
\end{array}
\hspace{-4ex}
\end{equation*}
which is exactly \eqref{eq:SGM3_full_shuffling_V_bound2}.
\end{proof}
 
Now, we are ready to prove the convergence of Algorithm~\ref{alg:SGM2} using only \textbf{one epoch} (i.e. $S=1$) of the \textbf{shuffling routine \eqref{eq:SGM4lower_prob}}.
The following theorem is the full version of Theorem~\ref{th:SGM2_full_shuffling_main_result3} in the main text.

\begin{theorem}\label{th:SGM3_full_shuffling_convergence1}
Suppose that Assumptions~\ref{as:A0},~\ref{as:A2},~\ref{as:A3_SGM2}, ~\ref{as:A3},~\ref{as:A4}, and~\ref{as:A5} hold for \eqref{eq:minimax_prob} under the $($NC$)$ setting.
Let $\Psi_0$ be defined by \eqref{eq:upper_level_min}, and ${\Gc}_{\eta}$ be defined by \eqref{eq:upper_level_min_grad_mapping}.
Let us denote  $C_w$ and $C_u$ respectively by 
\begin{equation}\label{eq:SGM3_full_shuffling_convergence1_const}
\begin{array}{lcl}
C_w & := &  5L_w^2\big[  \Lambda_1 (3 \Theta_w + 1) + 3\sigma_w^2 \big] \quad \text{and} \quad
C_u   :=    \frac{L_u^2}{2} \big[ \hat{\Lambda}_1(3 \Theta_u + 2)  + 3\sigma_u^2 \big].
\end{array}
\end{equation}
Let $\sets{(\widetilde{w}_t, \widetilde{u}_t)}$ be generated by Algorithm~\ref{alg:SGM2} using  only \textbf{one epoch} $($i.e. $S=1$$)$ of the \textbf{shuffling routine \eqref{eq:SGM4lower_prob}}, and fixed learning rates $\eta_t := \eta \in (0, \bar{\eta}]$ and $\hat{\eta}_t := \hat{\eta} := 15 \kappa^2\eta$, where
\begin{equation*} 
\begin{array}{lll}
\bar{\eta} := \min\Big\{  \frac{1}{60\kappa^2 L_u}, \   \frac{1}{\sqrt{ 10 \Lambda_0 L_w^2(3 \Theta_w + 1)} }, \   \frac{2\sqrt{L_u}}{\kappa\sqrt{15(4L_u^2 + \mu_{\psi}^2)}}, \  \frac{\sqrt{L_u} }{15 \kappa  \sqrt{ 2L_u^3\hat{\Lambda}_0(3 \Theta_u + 2)  + \mu_{\psi}^2 } }, \ \frac{1}{4L_{\Phi_0}+ L_w + L_f}  \Big\}.
\end{array}
\end{equation*}
Then, the following bound holds:
\begin{equation}\label{SGM3_full_shuffling_convergence1_bound}
\begin{array}{lcl}
\frac{1}{T+1}\sum_{t=0}^T \norms{ {\Gc}_{\eta}(\widetilde{w}_{t}) }^2 & \leq & \frac{24 [ \Psi_0(\widetilde{w}_0) - \Psi_0^{\star}]  + 8[\Psi_0(\widetilde{w}_0) - \Lc(\widetilde{w}_0, \widetilde{u}_0)] }{\eta (T+1)} + 8C_w\eta^2 + \frac{8C_u \hat{\eta}^3 }{\eta} 
\end{array}
\end{equation}
For a given $\epsilon > 0$, if $\eta := \mathcal{O}(\epsilon) \in  (0, \bar{\eta}]$ and $T := \mathcal{O}\big( \frac{1}{\epsilon^3} \big)$, then $\frac{1}{T+1}\sum_{t=0}^T \norms{ {\Gc}_{\eta}(\widetilde{w}_{t}) }^2 \leq \epsilon^2$.

Consequently, Algorithm~\ref{alg:SGM2} requires $\mathcal{O}\big( \frac{n}{\epsilon^3} \big)$ evaluations of both $\nabla_w{\Hc_i}$ and $\nabla_u{\Hc}_i$, and  $\Oc(\epsilon^{-3})$ evaluations of $\prox_{\eta_tf}$ and $\prox_{\hat{\eta}_th}$  to achieve an $\epsilon$-stationary point $\widehat{w}_T$ of \eqref{eq:minimax_prob} computed by \eqref{eq:approx_stationary_point}.
\end{theorem}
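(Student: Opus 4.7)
The plan is to apply Lemma~\ref{le:SGM3_full_shuffling_V_bound1} with the constant choice $\lambda = 3$, $\eta_t \equiv \eta$, and $\hat\eta_t \equiv \hat\eta = 15\kappa^2\eta$, then telescope after showing that every negative term on the right of \eqref{eq:SGM3_full_shuffling_V_bound2} can be dropped to leave a pure descent in $\norms{\Gc_\eta(\widetilde{w}_{t-1})}^2$ plus the two deterministic residuals $C_4\eta^3$ and $C_5\hat\eta^3$. The first item is to verify the three prerequisites of Lemma~\ref{le:SGM3_full_shuffling_V_bound1} ($1 - 3L_w^2\eta^2 \geq 0$, $1 - 3L_u^2\hat\eta^2 \geq 0$, and $\hat\eta \leq \tfrac{2}{L_u+\mu_H}$); each of these follows from the bounds defining $\bar\eta$, in particular the first entry $\eta \leq \tfrac{1}{60\kappa^2 L_u}$, which forces $\hat\eta L_u \leq 1/4$.

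Next, with $\lambda = 3$ substituted into \eqref{eq:SGM3_full_shuffling_V_bound2_constants}, the components of $\bar\eta$ are calibrated term by term so that (i) $C_0 \geq 0$ by the last entry $\eta \leq \tfrac{1}{4L_{\Phi_0}+L_w+L_f}$; (ii) $C_3 \geq 2$ by the entry $\eta \leq 1/\sqrt{10\Lambda_0 L_w^2(3\Theta_w+1)}$, which makes $C_3/2 \geq 1$ after absorbing the $\Oc(\eta^2)$ slack and leaves a descent coefficient no smaller than $\eta/8$; (iii) the coefficient $1-(L_u+3C_1)\hat\eta$ of $\norms{\widetilde{u}_t-\widetilde{u}_{t-1}}^2$ is non-negative because $C_1 \leq 5L_u^2$ in the relevant regime and $\hat\eta L_u^2 \eta$ is then small; and most delicately, (iv) the coupling condition $\mu_\psi C_2 \hat\eta \geq \tfrac{3C_1\eta}{4\mu_\psi}$ holds. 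Since $C_1 \approx (\lambda+1)L_u^2 = 4L_u^2$ at small $\eta$, this coupling forces $\hat\eta/\eta \gtrsim \kappa^2$ and is precisely what motivates the choice $\hat\eta = 15\kappa^2\eta$. The third and fourth entries in $\bar\eta$ are tuned so that $C_2 \geq 1/2$ after this substitution, using Assumption~\ref{as:A4} to convert the variance bound \eqref{eq:SGM3_full_shuffling_V_bound1_proof2} into a term proportional to $\norms{\hat\Gc_{\hat\eta}(\widetilde{u}_{t-1})}^2$.

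Once all four negative brackets in \eqref{eq:SGM3_full_shuffling_V_bound2} are discarded, one obtains the clean recursion
\begin{equation*}
\Vc_t \leq \Vc_{t-1} - \tfrac{\eta}{8}\norms{\Gc_\eta(\widetilde{w}_{t-1})}^2 + C_4\eta^3 + C_5\hat\eta^3.
\end{equation*}
Telescoping over $t=1,\ldots,T+1$, invoking $\Vc_{T+1} \geq 0$ together with $\Vc_0 = 3[\Psi_0(\widetilde{w}_0) - \Psi_0^\star] + [\Psi_0(\widetilde{w}_0) - \Lc(\widetilde{w}_0,\widetilde{u}_0)]$, and dividing through by $(T+1)\eta/8$ yields \eqref{SGM3_full_shuffling_convergence1_bound} after identifying $C_w$ and $C_u$ in \eqref{eq:SGM3_full_shuffling_convergence1_const} with the $C_4$ and $C_5/(1+\mu_h\hat\eta)$ constants from \eqref{eq:SGM3_full_shuffling_V_bound2_constants} (at $\lambda=3$). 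For the complexity, $\eta = \Theta(\epsilon) \in (0,\bar\eta]$ makes both $C_w\eta^2$ and $C_u\hat\eta^3/\eta = 15^3\kappa^6 C_u \eta^2$ of order $\epsilon^2$, so choosing $T = \Theta(\epsilon^{-3})$ drives the leading term $\Vc_0/[(T+1)\eta]$ down to $\Oc(\epsilon^2)$ as well; since each epoch performs exactly one pass over the dataset for both $\nabla_w\Hc_i$ and $\nabla_u\Hc_i$ (the latter because $S=1$), the total is $\Oc(n\epsilon^{-3})$ gradient evaluations, plus $\Oc(\epsilon^{-3})$ evaluations each of $\prox_{\eta f}$ and $\prox_{\hat\eta h}$.

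The hard part will be orchestrating the four coupled step-size constraints simultaneously: $C_2 \geq 1/2$ caps $\hat\eta$; the $\psi$-coupling condition floors $\hat\eta/\eta$ by roughly $\kappa^2$; the descent constraint $C_3 \geq 2$ caps $\eta$; and the residual $\hat\eta^3/\eta$ must still be $\Oc(\eta^2)$ under $\eta = \Theta(\epsilon)$. This quadruple balance is exactly why $\bar\eta$ is expressed as a minimum over five separate quantities, and why Assumption~\ref{as:A4} (converting a bound on $\nabla_u\Hc$ into one on $\hat\Gc_{\hat\eta}$, needed to keep $C_2$ controlled) and Assumption~\ref{as:A5} (enabling the $\Lc(\widetilde{w}_{t-1},\widetilde{u}_t) - \Lc(\widetilde{w}_t,\widetilde{u}_t)$ bound in Lemma~\ref{le:SGM3_key_bounds_for_Phi_2b}, needed for the decomposition of $\Vc_t - \Vc_{t-1}$) are both indispensable for the $S=1$ analysis and cannot be relaxed the way they can in the $S>1$ variants of Theorems~\ref{th:SGM2_full_shuffling_main_result1} and \ref{th:SGM2_full_shuffling_main_result2}.
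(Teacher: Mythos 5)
Your proposal follows essentially the same route as the paper's proof: take $\lambda=3$ in Lemma~\ref{le:SGM3_full_shuffling_V_bound1}, use the five entries of $\bar{\eta}$ together with $\hat{\eta}=15\kappa^2\eta$ to enforce the step-size conditions (the coupling $4C_2\mu_{\psi}^2\hat{\eta}\geq 3C_1\eta$ being exactly what fixes the $\kappa^2$ ratio), drop the nonpositive terms to obtain $\Vc_t \leq \Vc_{t-1} - \tfrac{\eta}{8}\norms{\Gc_{\eta}(\widetilde{w}_{t-1})}^2 + C_4\eta^3 + C_5\hat{\eta}^3$, and telescope with $\Vc_0 = 3[\Psi_0(\widetilde{w}_0)-\Psi_0^{\star}] + \Psi_0(\widetilde{w}_0)-\Lc(\widetilde{w}_0,\widetilde{u}_0)$, exactly as the paper does. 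The only (inconsequential) slip is directional: the paper uses $C_5 \leq C_u$, i.e. $C_u = (1+\mu_h\hat{\eta})C_5$, rather than identifying $C_u$ with $C_5/(1+\mu_h\hat{\eta})$.
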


\begin{proof}
Let us choose $\lambda := 3$, $\eta_t := \eta > 0$ and $\hat{\eta}_t := \hat{\eta} > 0$.
First, we need to guarantee that $1 - 3L_w^2\eta^2 \geq 0$ and $1 - 3L_u^2\hat{\eta}_t \geq 0$ in Theorem~\ref{le:SGM3_full_shuffling_V_bound1}.
Suppose that $\eta \leq \frac{1}{4L_{\Phi_0}+ L_w + L_f}$.
Then, since $L_{\Phi_0} = (1+\kappa)L_w \geq L_w$, we have $\eta \leq \frac{1}{5L_w}$, which obviously guarantees that $1 - 3L_w^2\eta^2 \geq 0$.

Moreover, for $C_i$ for $i=0,\cdots, 5$ defined by \eqref{eq:SGM3_full_shuffling_V_bound2_constants}, we can show that
\begin{equation*} 
\left\{\begin{array}{lcl}
C_0 & := & 1  - (4L_{\Phi_0}+ L_w + L_f) \eta \geq 0, \\
C_1 & := &  L_u^2(4  + 20 L_w^2 \eta^2) \leq 5L_u^2, \\
C_2 & := & 1 - (L_u - \mu_{\psi})\hat{\eta} - \frac{ \hat{\Lambda}_0 L_u^2(3 \Theta_u + 2) \hat{\eta}^2}{ 1 + \mu_h\hat{\eta} } - 6C_1\eta \hat{\eta}^2 \geq 1 - L_u\hat{\eta} -  \hat{\Lambda}_0L_u^2(3 \Theta_u + 2)\hat{\eta}^2 - 30L_u^2 \eta\hat{\eta}^2, \\
C_3 &:= & 4  - 8L_{\Phi_0}\eta  - 10 \Lambda_0 L_w^2(3 \Theta_w + 1) \eta^2 \geq 2\big[ 1  -  5 \Lambda_0 L_w^2(3 \Theta_w + 1) \eta^2 \big], \\
C_4 & := &  5L_w^2\big[  \Lambda_1 (3 \Theta_w + 1) + 3\sigma_w^2 \big] = C_w, \\
C_5 &:= & \frac{\hat{\Lambda}_1 L_u^2(3 \Theta_u + 2)  + 3L_u^2   \sigma_u^2 }{2(1 + \mu_h\hat{\eta})} \leq C_u.
\end{array}\right.
\end{equation*}
Now, suppose that 
\begin{equation}\label{th:SGM3_full_shuffling_convergence1_proof1} 
\left\{\begin{array}{lll}
& L_u\hat{\eta} \leq \frac{1}{4}, \quad \hat{\Lambda}_0L_u^2(3 \Theta_u + 2)\hat{\eta}^2 + 30L_u^2 \eta\hat{\eta}^2 \leq \frac{1}{2}, \quad (L_u + 15L_u^2\eta )\hat{\eta} \leq 1,  \\
& 4C_2 \mu_{\psi}^2\hat{\eta} \geq 3C_1\eta, \quad  \text{and} \quad 5 \Lambda_0 L_w^2(3 \Theta_w + 1) \eta^2 \leq \frac{1}{2},
\end{array}\right.
\end{equation}
then we can easily show that $C_2 \geq \frac{1}{4}$, $C_3 \geq 1$, $1 - (L_u + 3C_1\eta)\hat{\eta} \geq 0$, and $ \mu_{\psi} C_2 \hat{\eta}  -   \frac{3C_1 \eta}{4\mu_{\psi}} \geq 0$.

In this case, \eqref{eq:SGM3_full_shuffling_V_bound2} reduces to
\begin{equation}\label{th:SGM3_full_shuffling_convergence1_proof3}
\begin{array}{lcl}
\Vc_t  & \leq & \Vc_{t-1}  - \frac{\eta}{8}  \Vert {\Gc}_{\eta_t}(\widetilde{w}_{t-1})  \Vert^2 + C_w \eta^3 + C_u \hat{\eta}^3.
\end{array}
\end{equation}
By induction, we obtain \eqref{SGM3_full_shuffling_convergence1_bound} from \eqref{th:SGM3_full_shuffling_convergence1_proof3} and $\Vc_0 := 3[\Psi_0(\widetilde{w}_0) - \Psi^{\star}_0] + \Psi_0(\widetilde{w}_0) - \Lc(\widetilde{w}_0, \widetilde{u}_0)$.

From \eqref{th:SGM3_full_shuffling_convergence1_proof1}, let us choose $\hat{\eta} = \frac{15 L_u^2}{\mu_{\psi}^2}\eta = 15\kappa^2\eta$ with $\kappa := \frac{L_u}{\mu_{\psi}}$.
Then, we can verify the five conditions of \eqref{th:SGM3_full_shuffling_convergence1_proof1} as follows.
\begin{itemize}
\item We have $4C_2\mu_{\psi}^2\hat{\eta} \geq \mu_{\psi}^2\hat{\eta} = 15L_u^2\eta \geq 3C_1\eta$, which satisfies the fourth condition of \eqref{th:SGM3_full_shuffling_convergence1_proof1}.
\item If $\eta \leq \frac{1}{60L_u\kappa^2}$, then the condition $L_u\hat{\eta} \leq \frac{1}{4}$ in \eqref{th:SGM3_full_shuffling_convergence1_proof1} holds. 
This condition also guarantees $1 - 3L_u^2\hat{\eta}^2 \geq 0$.
\item If $\eta \leq \frac{1}{\sqrt{10 \Lambda_0 L_w^2(3 \Theta_w + 1)}}$, then the last condition $5 \Lambda_0 L_w^2(3 \Theta_w + 1) \eta^2 \leq \frac{1}{2}$ of \eqref{th:SGM3_full_shuffling_convergence1_proof1} holds.
\item If $\eta \leq \frac{2\sqrt{L_u}}{\kappa\sqrt{15(4L_u^2 + \mu_{\psi}^2)}}$, then the condition $(L_u + 15L_u^2\eta )\hat{\eta} \leq 1$ of \eqref{th:SGM3_full_shuffling_convergence1_proof1} holds.
\item Finally, if  $\eta \leq \frac{\sqrt{L_u} }{15 \kappa  \sqrt{ 2L_u^3\hat{\Lambda}_0(3 \Theta_u + 2)  + \mu_{\psi}^2 } }$, then the second condition $\hat{\Lambda}_0L_u^2(3 \Theta_u + 2)\hat{\eta}^2 + 30L_u^2 \eta\hat{\eta}^2 \leq \frac{1}{2}$  of \eqref{th:SGM3_full_shuffling_convergence1_proof1} also holds.
\end{itemize}
Overall, we can conclude that if we choose $\eta \in (0, \bar{\eta}]$ as in Theorem~\ref{th:SGM3_full_shuffling_convergence1}, where
\begin{equation*} 
\begin{array}{lll}
\bar{\eta} := \min\Big\{  \frac{1}{60\kappa^2 L_u}, \   \frac{1}{\sqrt{ 10 \Lambda_0 L_w^2(3 \Theta_w + 1)} }, \   \frac{2\sqrt{L_u}}{\kappa\sqrt{15(4L_u^2 + \mu_{\psi}^2)}}, \  \frac{\sqrt{L_u} }{15 \kappa  \sqrt{ 2L_u^3\hat{\Lambda}_0(3 \Theta_u + 2)  + \mu_{\psi}^2 } }, \ \frac{1}{4L_{\Phi_0}+ L_w + L_f}  \Big\},
\end{array}
\end{equation*}
then all the conditions in \eqref{th:SGM3_full_shuffling_convergence1_proof1} are satisfied. 
In addition, since $\mu_H \leq L_u$, we have $L_u + \mu_H \leq 2L_u$.
Thus the condition $\eta \leq \frac{1}{60\kappa^2 L_u}$ implies  $\hat{\eta} \leq \frac{2}{L_u + \mu_H}$ due to $\hat{\eta} = 15\kappa^2\eta$.

Finally, to achieve $\frac{1}{T+1}\sum_{t=0}^T \norms{ {\Gc}_{\eta}(\widetilde{w}_{t}) }^2 \leq\epsilon^2$, we impose
\begin{equation*} 
\begin{array}{lcl}
\frac{24 [ \Psi_0(\widetilde{w}_0) - \Psi_0^{\star}]  + 8[\Psi_0(\widetilde{w}_0) - \Lc(\widetilde{w}_0, \widetilde{u}_0)] }{\eta (T+1)} + 8\big( C_w  + 15^3 \kappa^6 C_u  \big)\eta^2 \leq \epsilon^2. 
\end{array}
\end{equation*}
 If we choose $\eta := \Oc(\epsilon) \in (0, \bar{\eta}]$ sufficiently small, and $T := \Oc(\epsilon^{-3})$, then the last condition holds.

At each epoch $t$, Algorithm~\ref{alg:SGM2} requires $n$ evaluations of both $\nabla_w{\Hc}_i$ and $\nabla_u{\Hc}_i$.
Therefore, the total evaluation of $\nabla_w{\Hc}_i$ and $\nabla_u{\Hc}_i$ is $\Tc_e := n T = \BigO{n\epsilon^{-3}}$.
Similarly, since each epoch $t$, Algorithm~\ref{alg:SGM2} requires one evaluation of $\prox_{\eta_tf}$, and one evaluation of $\prox_{\hat{\eta}_th}$, the total number of  both $\prox_{\eta_tf}$ and $\prox_{\eta_tf}$ evaluations is $T = \Oc(\epsilon^{-3})$.
 \end{proof}

\beforesec
\section{Details and Additional Results of Numerical Experiments}\label{apdx:add_experiments}
\aftersec
This section provides the details of our experiments in Section~\ref{sec:num_experiments} and also adds more experiments to illustrate our algorithms and compares them with two other methods.
All the algorithms we experiment in this paper are implemented in Python and are run on a MacBook Pro.  2.8GHz Quad-Core Intel Core I7, 16Gb Memory.

\beforesubsec
\subsection{Details of Numerical Experiments in Section~\ref{sec:num_experiments}}
\aftersubsec
We have abbreviated Algorithm~\ref{alg:SGM1} by \texttt{SGM} in Figure~\ref{fig:min_max_stochastic_opt}.
Since we have two options to construct estimator $F_i^{(t)}$ for $F(\widetilde{w}_{t-1})$, we name \texttt{SGM-Option 1} for  Algorithm~\ref{alg:SGM1} using  \eqref{eq:approxi_Ft}, and \texttt{SGM-Option 2} for Algorithm~\ref{alg:SGM1} using \eqref{eq:approxi_Ftb}.

\textbf{Implementation details and competitors.}
Since $\phi_0(v) =  \max_{\norms{u}_1 \leq 1}\{\iprods{v, u}\}$ in our model \eqref{eq:min_max_stochastic_opt} is nonsmooth, we have implemented two other algorithms,  \texttt{SGD} in \cite{wang2017stochastic} -- a variant of the stochastic gradient method for compositional minimization, and \texttt{Prox-Linear} in \cite{zhang2020stochastic} -- a type of the Gauss-Newton method with variance-reduction using large mini-batches for compositional minimization. 
Since \texttt{SGD} only works for smooth $\phi_0$, we have smoothed it as in our method, and utilized the estimator and  algorithm from \cite{wang2017stochastic}, but also updated the smoothness parameter as in our method.
Here, we only compare the performance of all algorithms in terms of epochs (i.e. the number of data passes) and ignore their computational time since \texttt{Prox-Linear} becomes slower if $p$ is getting large.
This is due to its expensive subproblem of evaluating the prox-linear operator.

To compare with \texttt{SGD} and \texttt{Prox-Linear}, we only use Algorithm~\ref{alg:SGM1} since both \texttt{SGD} and  \texttt{Prox-Linear} are designed to solve compositional minimization problems of the form \eqref{eq:com_opt0}.
However,  \texttt{Prox-Linear} requires to solve a nonsmooth convex subproblem to evaluate the prox-linear operator. 
Therefore, we have implemented a first-order primal-dual scheme in \cite{Chambolle2011} to evaluate this operator, which we believe that it is an efficient method.

\textbf{Parameter selection.}
To boost the performance of all algorithms, we implement mini-batch variants of these methods instead of a single sample variant.
Our batch size $b$ is computed by $b := \lfloor{\frac{n}{k_b}}\rfloor$, where $n$ is the number of data points and $k_b$ is the number of blocks.
In our experiments, we have also varied the number of blocks $k_b$ to observe the performance of these algorithms.
Since we want to obtain good performance, instead of using their theoretical learning rates, we have carefully tuned the learning rate $\eta$ of all algorithms in a given set of candidates $\sets{100,50, 10, 5, 1, 0.5, 0.1, 0.05, 0.01, 0.001, 0.0001}$. 
We find $\eta = 5$ (i.e. $\eta_t = 10^{-4}$) for \textbf{w8a}  and $\eta = 100$ (i.e. $\eta_t = 5\times 10^{-5}$) for \textbf{rcv1} which work well for our method.
We also update the smoothness parameter $\gamma$ as $\gamma  := \frac{1}{2(t+1)^{1/3}}$ w.r.t. to the epoch counter $t$ instead of fixing it at a small value.
For \textbf{w8a}, we find $\eta = 0.05$ as a good learning rate for both \texttt{SGD} and \texttt{Prox-Linear}.
For \textbf{rcv1}, we get $\eta = 0.5$ for both algorithms.
All experiments are run up to 200 epochs.

\textbf{The convergence of gradient mapping norm.}
Figure~\ref{fig:min_max_stochastic_opt} only reveals the objective values of \eqref{eq:min_max_stochastic_opt} against the number of epochs.
Figure~\ref{fig:min_max_stochastic_opt_grad_norm} below shows the absolute norm of the gradient mapping $\Vert \mathcal{G}_{\eta}(\widetilde{w}_t)\Vert$ for this experiment.

\begin{figure}[ht!]
\centering
\includegraphics[width=0.49\textwidth]{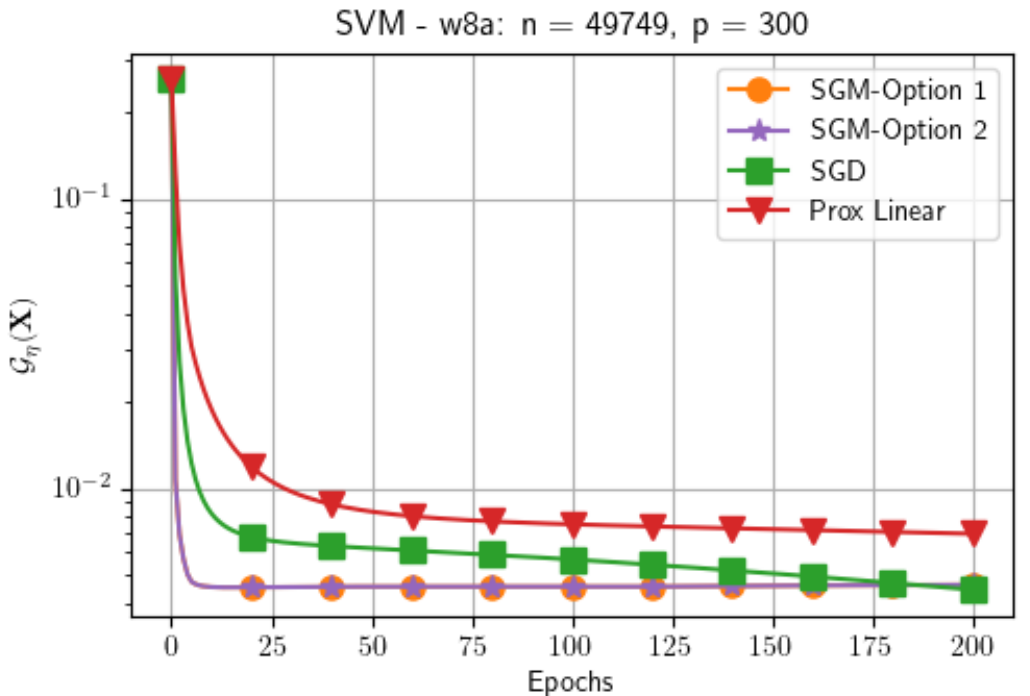}
\includegraphics[width=0.49\textwidth]{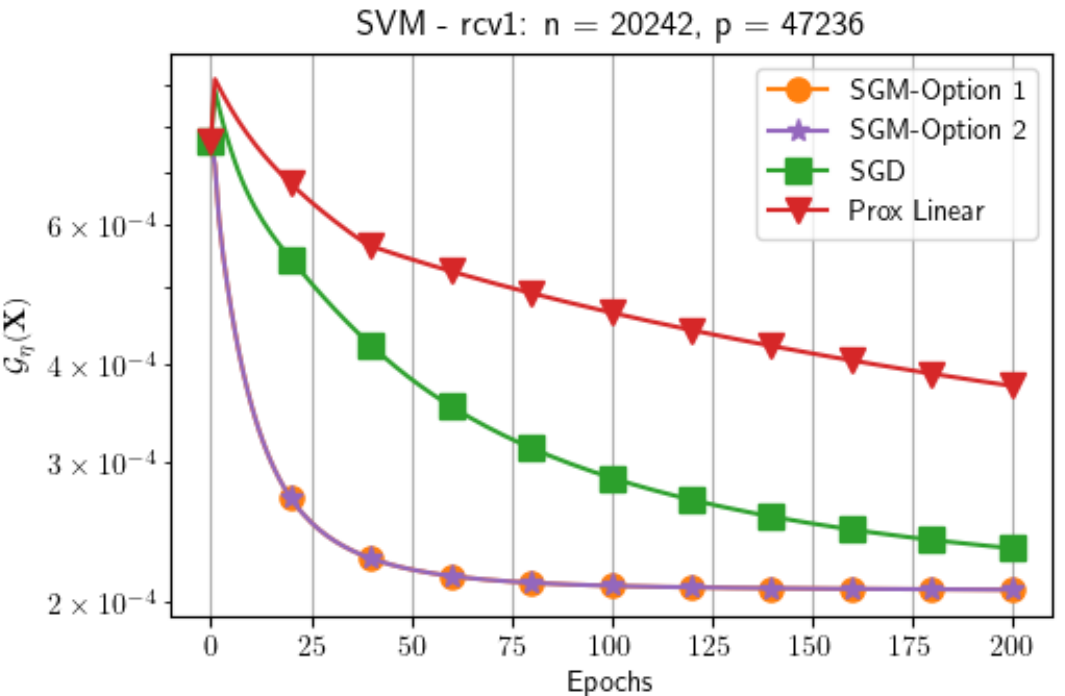}
\vspace{-0.75ex}
\caption{
The performance of 4 algorithms for solving \eqref{eq:min_max_stochastic_opt} in terms of gradient mapping norm.
}
\label{fig:min_max_stochastic_opt_grad_norm}
\vspace{-1ex}
\end{figure}

It seems that both options,  \texttt{SGM-Option 1} and  \texttt{SGM-Option 2} are almost identical for this test. 
For \textbf{w8a}, our methods look like having comparable performance with both  \texttt{SGD} and \texttt{Prox-Linear}, just slightly better.
For \textbf{rcv1}, our methods reach a better approximate solution earlier than   \texttt{SGD}, but after more than 200 epochs,   \texttt{SGD} tends to  approach a similar accuracy level.
 \texttt{Prox-Linear} has a significantly worse performance than ours and  \texttt{SGD} in this particular experiment. 

\beforesubsec
\subsection{Additional Experiments}
\aftersubsec
We provide additional experiments to test our algorithms and compare them with \texttt{SGD} and \texttt{Prox-Linear} as in  Section~\ref{sec:num_experiments}.

\textbf{The effect of mini-batch size.}
Our first test is to verify if the mini-batch size $b$ actually affects the performance of these algorithms.
We use the same datasets and the same parameters as in Section~\ref{sec:num_experiments}, but reduce $b$ by increasing $k_b$ from $32$ to $64$ blocks.
Figure \ref{fig:min_max_stochastic_opt_w8_n64} reveals the performance of 4 algorithms on two datasets  with $k_b = 64$: \textbf{w8a} corresponding to $b = 777$ and \textbf{rcv1} corresponding to $b=316$.

\begin{figure}[ht!]
\vspace{-0.5ex}
\centering
\includegraphics[width=1\textwidth]{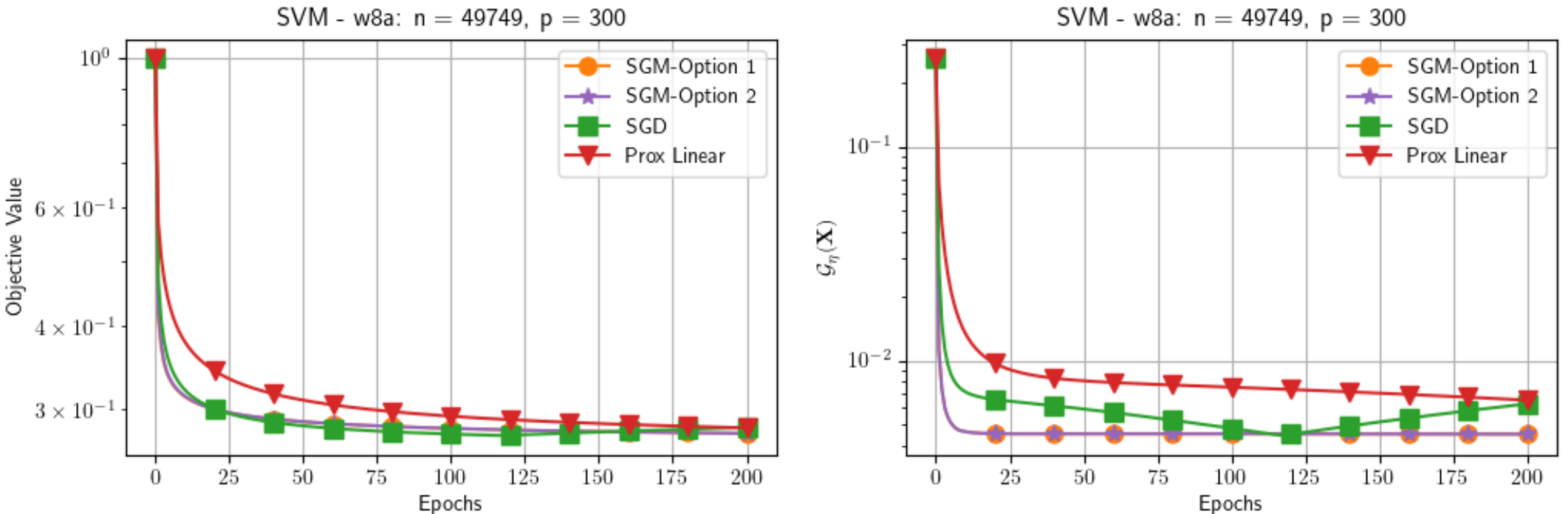}
\includegraphics[width=1\textwidth]{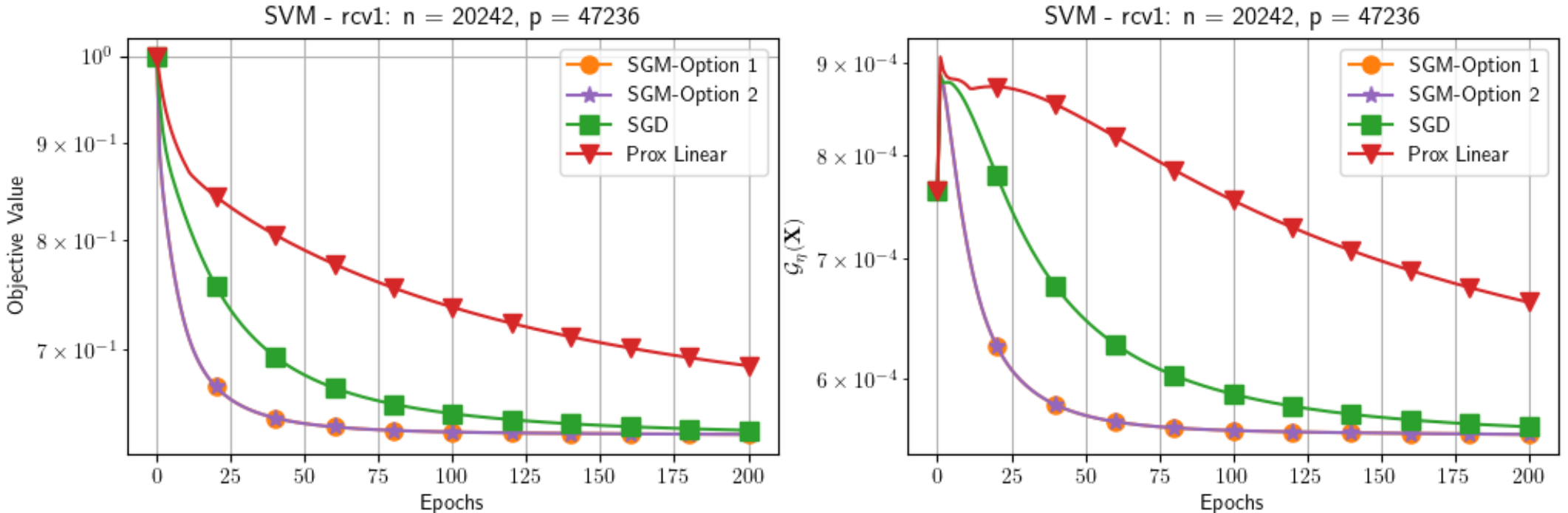}
\vspace{-0.75ex}
\caption{
The performance of 4 algorithms on two different datasets with $k_b = 64$.
}
\label{fig:min_max_stochastic_opt_w8_n64}
\vspace{-1ex}
\end{figure}

 With this choice of mini-batches, our algorithms still have a similar performance as  \texttt{SGD}, while  \texttt{Prox-Linear} does not really improve its performance, and slightly gets worse.
Note that \texttt{Prox-Linear} requires a large mini-batch to achieve a variance reduce, and decreasing this mini-batch size indeed affects its performance.

\textbf{Different learning rates.}
Now, let us test our algorithms using different learning rates, we only focus on \textbf{Option 2} as both options show similar results in our tests.
For \textbf{w8a}, we choose $4$ different learning rates $\eta = 0.5, 2.5, 5.0$, and $7.5$, while maintaining $k_b = 64$.
For \textbf{rcv1}, we also choose $4$ different learning rates $\eta = 25, 50, 100$, and $125$.
The results of this experiment are plotted in Figure~\ref{fig:min_max_stochastic_opt_lr} for both \textbf{w8a} and \textbf{rcv1} datasets.

\begin{figure}[ht!]
\centering
\includegraphics[width=1\textwidth]{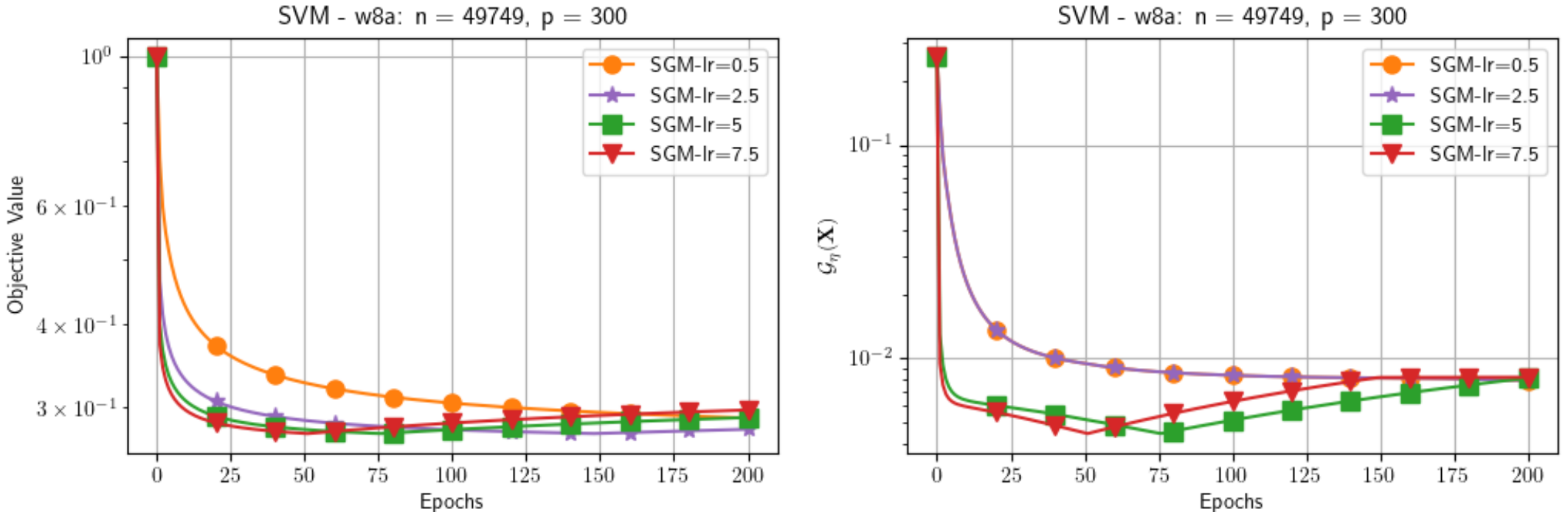}
\includegraphics[width=1\textwidth]{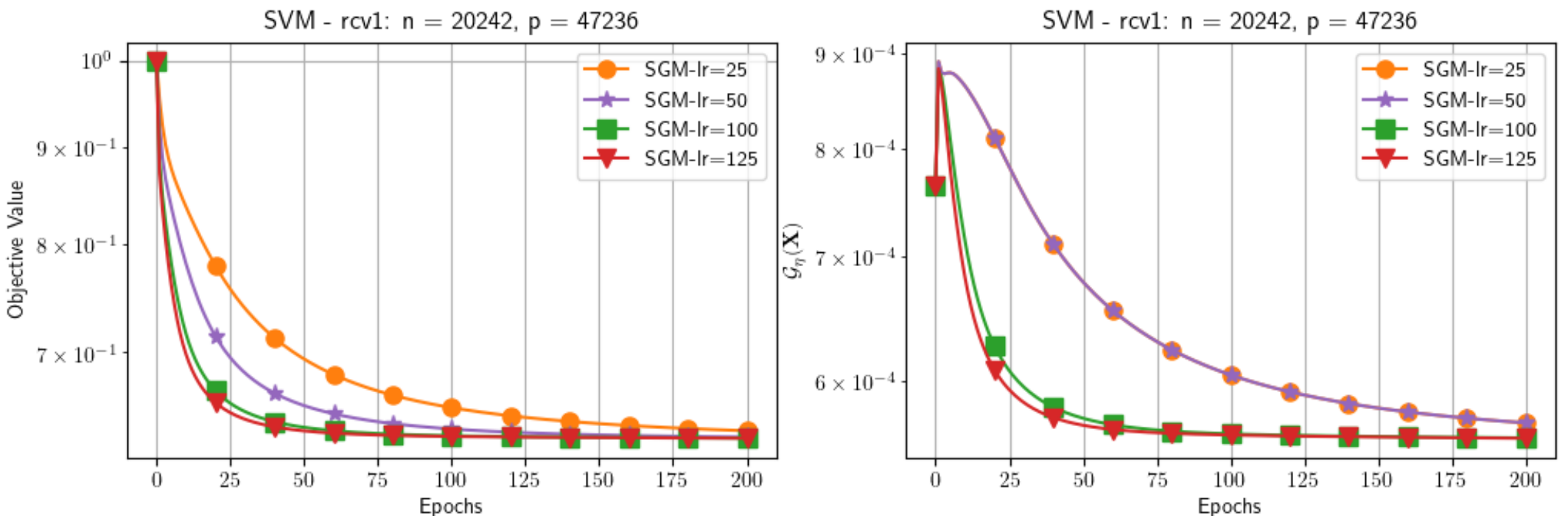}
\vspace{-0.75ex}
\caption{
The performance of Algorithm~\ref{alg:SGM1} with 4 different learning rates $\eta$ and $k_b=64$ on 2 datasets.
}
\label{fig:min_max_stochastic_opt_lr}
\vspace{-3ex}
\end{figure}

As we can see from Figure~\ref{fig:min_max_stochastic_opt_lr} that
\begin{compactitem}
\item For \textbf{w8a}, our method starts diverging when $\eta = 7.5$, while still works well for smaller learning rates.
For $\eta = 0.25$, it indeed has a slow progress in early iterations as often seen in SGD.

\item For \textbf{rcv1}, we also observe similar behaviors as in \textbf{w8a}, but with larger learning rates than $\eta = 125$.
\end{compactitem}

\textbf{Large dataset.}
We have also run our algorithms and their competitors on a bigger dataset from LIBSVM: \textbf{url} with $n = 2,396,130$ and $p = 3,231,951$.
Here, we use a learning rate $\eta = 1$ for our methods, which corresponds to $\eta_t = 4.2\times 10^{-7}$.
For \texttt{SGD}, we use a learning rate $\eta = 0.01$ and for \texttt{Prox-Linear}, we use a learning rate $\eta = 0.01$ after tuning both methods.
We also set $k_b = 64$ for all algorithms.
The results of this experiment are reported in Figure~\ref{fig:min_max_stochastic_opt_url}.

\begin{figure}[ht!]
\centering
\includegraphics[width=1\textwidth]{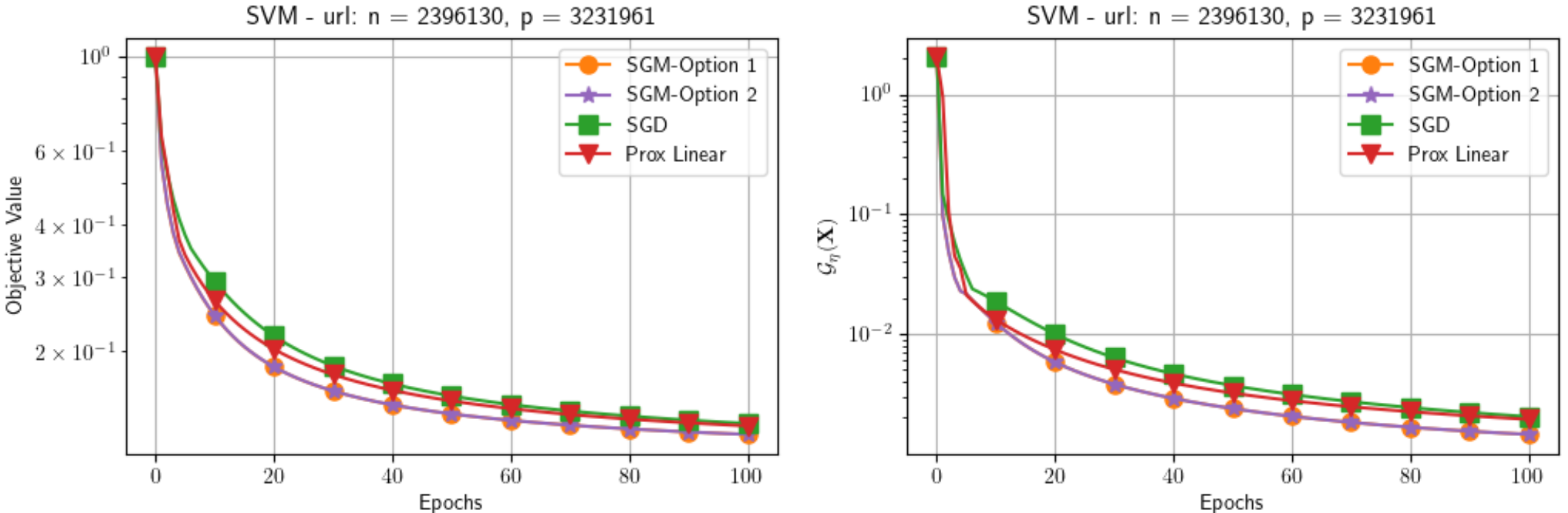}
\vspace{-0.75ex}
\caption{
The performance of 4 algorithms on a large dataset: \textbf{url}.
}
\label{fig:min_max_stochastic_opt_url}
\vspace{-1ex}
\end{figure}

As we can see from Figure~\ref{fig:min_max_stochastic_opt_url}, our methods have a comparable performance with their competitors.
All algorithms have similar behavior in terms of convergence.

%
\bibliographystyle{plain}


\end{document}